\documentclass[
a4paper,
11pt, 
DIV=13,
toc=bibliography, 
headsepline, 
parskip=half-,
abstract=true,
]{scrartcl}

\usepackage[english]{babel}
\usepackage[utf8]{inputenc}
\usepackage[T1]{fontenc}
\usepackage[english,cleanlook]{isodate}
\usepackage{lmodern}
\usepackage[babel=true]{microtype}
\usepackage{amsmath}
\usepackage{mathtools}
\usepackage{amssymb}
\usepackage{amsthm}
\usepackage{scrlayer-scrpage} 
\usepackage{enumitem}
\usepackage{tabularx}
\usepackage{tikz} 
\usepackage{tikz-3dplot}  

\ihead{M.\,Karpukhin, M.\,B.\,Schulz}
\ohead{\csname @title\endcsname}
\setkomafont{pageheadfoot}{\footnotesize}

\providecommand{\C}{\mathbb{C}}
\providecommand{\R}{\mathbb{R}}
\providecommand{\Z}{\mathbb{Z}}
\providecommand{\N}{\mathbb{N}}
\providecommand{\B}{\mathbb{B}}
\providecommand{\T}{\mathbb{T}}
\providecommand{\Sp}{\mathbb{S}} 
\providecommand{\dih}{\mathbb{D}}
\providecommand{\rotation}{\mathsf{R}}
\providecommand{\cA}{\mathcal{A}} 
\providecommand{\IrRep}[1]{\mathrm{IrRep}_{#1}}
\providecommand{\spec}{\mathrm{S}}
\providecommand{\ener}{\mathrm{E}}
\providecommand{\teich}{\mathrm{T}}      
\providecommand{\area}{\mathrm{A}}
\providecommand{\jacobi}{\mathrm{J}}
\providecommand{\PTr}{\mathcal{S}_{\mathrm{tr}}}
\providecommand{\Div}{\mathcal{S}_{\mathrm{div}}}
\providecommand{\DF}{\mathcal{S}_{\mathrm{tt}}}
\providecommand{\VConf}{\mathfrak{C}}
\providecommand{\triv}{\mathbf{1}}
\providecommand{\Or}{\mathbf{Or}}
\providecommand{\st}{\mathbf{St}}
\providecommand{\m}{\mathbf{m}}

\DeclarePairedDelimiter\abs{\lvert}{\rvert}
\DeclarePairedDelimiter\nm{\lVert}{\rVert}
\DeclarePairedDelimiter\brt{\langle}{\rangle} 
\DeclarePairedDelimiter\interval{]}{[}

\DeclareMathOperator{\SOgroup}{SO}  
\DeclareMathOperator{\Ogroup}{O}  
\DeclareMathOperator{\GLgroup}{GL}  
\DeclareMathOperator{\ind}{ind}
\DeclareMathOperator{\nul}{nul} 
\DeclareMathOperator{\Ind}{\mathcal{I}}
\DeclareMathOperator{\Nul}{\mathcal{N}}
\DeclareMathOperator{\tr}{tr}
\DeclareMathOperator{\Fix}{Fix}
\DeclareMathOperator{\Hom}{Hom}
\DeclareMathOperator{\End}{End}
\DeclareMathOperator{\Diff}{Diff}
\DeclareMathOperator{\divop}{div}
\DeclareMathOperator{\II}{I\hspace*{-.5pt}I}

\usetikzlibrary{plotmarks,calc} 
\pgfmathsetmacro{\picscale}{0.47}
\providecommand{\FBMS}[2][\picscale]{\draw(0,0,0)node[inner sep=0]{\includegraphics[width=#1\textwidth]{#2}};}
\pgfmathsetmacro{\thetaO}{72} 
\pgfmathsetmacro{\phiO}{90-45/2} 
\tdplotsetmaincoords{\thetaO}{\phiO}
\newenvironment{unitball}[1][\picscale]{%
\begin{tikzpicture}[scale={#1*\textwidth/2cm},tdplot_main_coords,line cap=round,line join=round,semithick,baseline={(0,0,0)}]
\useasboundingbox[tdplot_screen_coords](-1,-1)rectangle(1,1);
\pgfmathsetmacro{\laengeA}{-135-45/2+\phiO}
\pgfmathsetmacro{\laengeB}{\laengeA+45}
\begin{scope}[black!30]
\path[tdplot_screen_coords,inner color=white,outer color=cyan!5!black!20](0,0)circle(1);
\foreach\laengengrad in {\laengeA,\laengeB,...,\phiO}{
\tdplotsetthetaplanecoords{\laengengrad}
\pgfmathsetmacro{\start}{-atan( 1/(tan(\thetaO)*( cos(\laengengrad)*sin(\phiO)-cos(\phiO)*sin(\laengengrad))))}
\tdplotdrawarc[tdplot_rotated_coords]{(0,0,0)}{1}{180+\start}{360+\start}{}{}
}
\foreach\breitengrad in {-60,-30,...,60}{
\pgfmathsetmacro{\breitenkreisradius}{cos(\breitengrad)}
\pgfmathsetmacro{\hoehe}{sin(\breitengrad)}\tdplotdrawarc{(0,0,\hoehe)}{\breitenkreisradius}{0}{360}{}{}
}
\end{scope}
}{\begin{scope}[black]
\foreach\laengengrad in {\laengeA,\laengeB,...,\phiO}{
\tdplotsetthetaplanecoords{\laengengrad}
\pgfmathsetmacro{\start}{-atan( 1/(tan(\thetaO)*( cos(\laengengrad)*sin(\phiO)-cos(\phiO)*sin(\laengengrad))))}
\tdplotdrawarc[tdplot_rotated_coords]{(0,0,0)}{1}{\start}{180+\start}{}{}
} 
\foreach\breitengrad in {-60,-30,...,60}{
\pgfmathsetmacro{\breitenkreisradius}{1*cos(\breitengrad)}
\pgfmathsetmacro{\hoehe}{1*sin(\breitengrad)}
\pgfmathsetmacro{\visiblerange}{acos(-sign(\breitengrad)*min(1,abs(tan(\breitengrad)*tan(90-\thetaO)))  )}
\ifdim\visiblerange pt>0pt
\tdplotdrawarc{(0,0,\hoehe)}{\breitenkreisradius}{\phiO-90+\visiblerange}{\phiO-90-\visiblerange}{}{}
\fi
}
\draw[tdplot_screen_coords] (0,0) circle (1);
\end{scope}%
\end{tikzpicture}%
}
\makeatletter
\tikzset{
    scale plot marks/.is choice,
    scale plot marks/true/.style={},	
    scale plot marks/false/.code={
        \def\pgfuseplotmark##1{\pgftransformresetnontranslations\csname pgf@plot@mark@##1\endcsname}
    },
every mark/.append style={scale plot marks=false},
bullet/.style={mark=*,mark size=1.125pt},
}
\makeatother
\definecolor{FarbeA}{cmyk}{0,0.5,1,0}
\definecolor{FarbeB}{cmyk}{1,0.5,0,0}	

\newcolumntype{x}[1]{>{\centering\arraybackslash$}p{#1}<{$}}

\usepackage[initials]{amsrefs}
\makeatletter
\renewcommand{\amsrefs@warning}[1]{}  
\makeatother
 
\usepackage[
pdftitle={Index and nullity of minimal surfaces via representation theory},
pdfauthor={Mikhail Karpukhin and Mario B. Schulz}, 
pdfborder={0 0 0},
]{hyperref}
\usepackage{bookmark}

\theoremstyle{plain}
\newtheorem{theorem}{Theorem}[section] 
\newtheorem{lemma}[theorem]{Lemma}
\newtheorem{corollary}[theorem]{Corollary}  
\newtheorem{proposition}[theorem]{Proposition} 

\theoremstyle{definition}
\newtheorem{definition}[theorem]{Definition}
 
\theoremstyle{remark}
\newtheorem{remark}[theorem]{Remark} 
\newtheorem{example}[theorem]{Example} 

\numberwithin{equation}{section} 

\RedeclareSectionCommand[tocbeforeskip=1.5ex plus 1ex minus .5ex]{section}
\RedeclareSectionCommand[tocbeforeskip=0ex plus 1ex]{subsection}
\RedeclareSectionCommand[beforeskip=1ex plus 1.5ex minus .5ex]{paragraph}

\subject{\vspace*{-6ex}} 
\title{Index and nullity of minimal surfaces via~representation theory} 
\author{Mikhail Karpukhin and Mario B. Schulz}
\date{\vspace*{-4ex}} 

\newcommand\printaddress{{%
\setlength{\parindent}{17pt}
\small
\smallskip
\hfill\printdate{02.10.2026}
\par
{\scshape Mikhail Karpukhin}
\newline
University College London, 
Department of Mathematics, 
25 Gordon Street, London, WC1H 0AY, UK
\newline
\textit{E-mail address:} 
\texttt{m.karpukhin@ucl.ac.uk}
\par\medskip
{\scshape Mario B. Schulz}
\newline 
Università di Trento, 
Dipartimento di Matematica, 
via Sommarive 14, 
38123 Povo di Trento, 
Italy 
\newline
\textit{E-mail address:} 
\texttt{mario.schulz@unitn.it}
\par
}}   

\begin{document}

\maketitle

\begin{abstract} 
We develop a representation-theoretic approach to determine the Morse index and nullity of an embedded minimal surface based only on its topology and symmetry group.
The method combines an equivariant comparison between the second variations of area and energy with the representation theory of the dihedral group. 
In the Euclidean unit ball we compute for every $k\geq3$ the Morse index and nullity of any free boundary minimal $k$-noid with prismatic symmetry of order $4k$, 
and of any free boundary minimal surface of genus one with connected boundary and antiprismatic symmetry of order eight.
In the round sphere we obtain an alternative computation for the Morse index and nullity for the first family of the Lawson surfaces, using only half of their symmetry group. 
The results are independent of any particular construction of the surfaces.
\end{abstract}

\tableofcontents

\clearpage

\section{Introduction}

A minimal surface $\Sigma$ in a given Riemannian manifold is a critical point for the area functional.
We either assume that $\Sigma$ and the ambient manifold are both closed, i.\,e.~compact without boundary, or that $\partial\Sigma$ is constrained to the ambient boundary but otherwise ``free''.
Minimal surfaces are characterised by vanishing mean curvature.
Free boundary minimal surfaces $\Sigma$ additionally meet the ambient boundary orthogonally along $\partial\Sigma$. 
The \emph{Morse index} of a minimal surface is defined as the maximal dimension of a space of smooth normal variations on which the second variation of area is negative definite, while the \emph{nullity} is the dimension of its kernel.
The Morse index provides another measure for the complexity of $\Sigma$ besides its topology and quantifies the instability of the minimal surface.

Natural choices for the ambient manifold are the round sphere $\Sp^3$ in the closed setting and the Euclidean unit ball $\B^3$ in the free boundary setting as defined in \eqref{eqn:ball-sphere}. 
In these geometries the ambient coordinate functions restricted to $\Sigma$ solve the Laplace respectively Steklov eigenvalue problem, linking the theory of minimal surfaces to spectral geometry.
While there are many examples of closed minimal surfaces in $\Sp^3$ and free boundary minimal surfaces in $\B^3$, the value of their Morse index is known explicitly only in very few cases: 
\begin{itemize}[nosep]
\item 
The equatorial $2$-sphere in $\Sp^3$ and the equatorial disc in $\B^3$ both have Morse index $1$. 
\item 
Urbano \cite{Urbano1990} proved that the Clifford torus in $\Sp^3$ has Morse index $5$.
\item 
The critical catenoid in $\B^3$ has Morse index $4$ by Devyver \cite{Devyver2019}, Smith--Zhou \cite{SmithZhou2019}, and Tran \cite{Tran2020}.
Any example in $\B^3$ other than the equatorial disc has index at least $4$ by \cite{Devyver2019}.
\item 
Kapouleas and Wiygul \cite{KapouleasWiygul2020} computed the Morse index for the Lawson surfaces $\xi_{1,k-1}\subset\Sp^3$. 
\item 
Kapouleas and Zou \cite{KapouleasZou2025} computed the Morse index for certain minimal doublings of the $2$-sphere in $\Sp^3$, 
provided that their genus is sufficiently large.
\end{itemize} 
The main motivation for the present article is to add the first infinite family of free boundary minimal surfaces in $\B^3$
to this list of examples with known Morse index (see Theorem~\ref{thm:index-fbm-k-noid}).
While exact computations for concrete examples remain challenging, 
several methods have been developed to estimate the Morse index in terms of geometric and topological data. 

\emph{Harmonic forms and test functions:}
Building on ideas introduced by Ros \cite{Ros2006} and Savo \cite{Savo2010}, Ambrozio, Carlotto, and Sharp \cite{AmbrozioCarlottoSharp2018Index,AmbrozioCarlottoSharp2018IndexFBMS} and independently Sargent \cite{Sargent2017}  
developed a general method based on testing the second variation on coordinate components of harmonic $1$-forms:  
\cite[Theorem~C]{AmbrozioCarlottoSharp2018IndexFBMS} states that the Morse index of any compact, orientable, properly embedded free boundary minimal surface of genus $g$ with $b$ boundary components in a mean-convex domain of $\R^3$
is bounded from below by $\frac{1}{3}(2g+b-1)$. 
General upper bounds linear in the genus and area were obtained by Ejiri and Micallef \cite{EjiriMicallef2008} for closed minimal surfaces, and by Lima \cite{Lima2022} in the free boundary setting.
While capturing index growth, such bounds are generally far from sharp.

\emph{Partitioning methods} 
taking symmetry into account can improve general index estimates significantly. 
The idea is to estimate the number of negative eigenvalues of an operator by partitioning the domain suitably, while adjoining Dirichlet boundary conditions on the cuts for lower bounds and Neumann conditions for upper bounds. 
This approach was originally developed by Montiel and Ros \cite{MontielRos1991} in the context of complete minimal surfaces in $\R^3$ with finite total curvature.   
Kapouleas and Wiygul \cite[Theorem~6.21]{KapouleasWiygul2020} provided the first application in the setting of $\Sp^3$. 
The methodology was extended further to compact manifolds with boundary in \cite{CSWSpectral,CSWstackings}, yielding effective two-sided bounds for several families of free boundary minimal surfaces in $\B^3$, and also for hypersurfaces in higher dimensions \cite{CSWIndexgrowth}. 

\emph{Covering arguments} introduced by Song \cite[Theorem~1]{Song2023Index} can be used to bound the Betti numbers of closed minimal hypersurfaces linearly by their Morse index in ambient dimensions $3\leq n+1\leq 7$.
This technique was adapted to properly embedded free boundary minimal hypersurfaces by Cordero-Misteli and Franz \cite[Theorem~1.1]{CorderoMisteliFranz2024}.
While applicable without symmetry or curvature hypotheses, the resulting constants are non-explicit and depend on area bounds.

\emph{Asymptotic analysis:}
For minimal surfaces constructed by PDE gluing methods, Kapouleas and Zou \cite{KapouleasZou2025} developed an approach to compute the exact index and nullity by analysing the spectrum of the Jacobi operator on the individual building blocks and determined Morse index and nullity for a family of minimal doublings in $\Sp^3$. 
Because this approach relies on perturbation techniques around the singular configuration, the exact values are obtained only in the asymptotic regime of sufficiently large topological complexity.

\subsection{Main results}

In this paper, we develop a representation-theoretic approach to compute the Morse index and nullity of an embedded minimal surface with a discrete symmetry group. 
Our results are independent of any specific existence construction since they rely only on assumptions on topology and symmetry group.  
Our first application fully resolves \cite[Conjecture~7.8\,(iv)]{CSWnonuniqueness} and provides the Morse index and nullity for the first infinite family of free boundary minimal surfaces in the Euclidean unit ball. 

\begin{theorem}
\label{thm:index-fbm-k-noid}
Given $3\leq k\in\N$ let $\Sigma_k\subset\B^3$ be any properly embedded free boundary minimal surface of genus zero with $k$ boundary components and prismatic symmetry group $G$ of order~$4k$. 
Then $\Sigma_k$ has Morse index equal to $2k$, nullity $3$, and $G$-equivariant index $1$. 
\end{theorem}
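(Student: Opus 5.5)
The plan is to compare the second variation of area with the second variation of energy for the conformal parametrisation, equivariantly with respect to $G$, and then decompose everything into irreducible representations of the prismatic group $G\cong\dih_k\times\Z_2$.

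\emph{Step 1: relate area index to energy index.} Let $Q$ denote the Jacobi (second variation) form of $\Sigma_k$ with Robin boundary condition $\partial_\eta u=u$. We compare $Q$ with the energy Hessian of the free boundary harmonic map $\Sigma_k\to\B^3$, which is defined on sections of $\Sigma_k\times\R^3$ tangent to $\partial\B^3$ along $\partial\Sigma_k$. The standard relation is
\[
\ind_{\ener}=\ind_{\area}+\dim(\text{Teichmüller directions giving negative directions}),
\]
together with a corresponding relation for nullity. The correction comes from the space of holomorphic quadratic differentials, or more precisely trace-free divergence-free tensors $\DF$ on the bordered surface. We want this relation in equivariant form: for each irreducible representation $\rho$ of $G$, the $\rho$-isotypic part of the energy index equals the $\rho$-part of the area index plus the multiplicity of $\rho$ in the correction space. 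For genus $0$ with $k$ boundary components, the real dimension of the correction space is $3k-6$, and $G$ acts on it. By Riemann--Roch/Schottky double arguments, its character can be computed from the fixed point data of $G$ on the double.

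\emph{Step 2: compute the energy index.} For free boundary harmonic maps into $\B^3$, the energy Hessian on the components $u\,e_i$ reduces to the Steklov problem. Hence $\ind_{\ener}$ is $3$ times the number of Steklov eigenvalues below $1$, and the nullity is $3$ times the multiplicity of the eigenvalue $1$, up to the tangency constraint. Since the coordinate functions are Steklov eigenfunctions with eigenvalue $1$, we need the position of eigenvalue $1$ in the Steklov spectrum, decomposed by representations. The key tool is a Courant-type nodal count within each isotypic component. Using the symmetry, an eigenfunction in a given representation has a nodal set forced by the reflection planes. This bounds the number of Steklov eigenvalues below $1$ in each symmetry class by comparing with the coordinate functions $x_3$ (which transforms under the sign representation associated with the horizontal reflection) and $x_1,x_2$ (the $2$-dimensional standard representation). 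Combined with topology (genus $0$, $k$ boundary curves), this should show that the only eigenvalues below $1$ are the constant and $k-1$ further eigenfunctions of specific symmetry types. It should also show that eigenvalue $1$ has multiplicity exactly $3$, spanned by the coordinates. I expect this to be the main obstacle. The plan is to work on a fundamental domain of $G$, which is a disc-type region with mixed Dirichlet/Neumann conditions determined by the representation, and to use that a Steklov eigenfunction with eigenvalue at most $1$ having too many nodal domains contradicts the variational characterisation or Courant's theorem on that domain.

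\emph{Step 3: assemble.} Sum over $\rho\in\IrRep{G}$ the quantity (energy $\rho$-index) minus (multiplicity of $\rho$ in the correction space). This should give total index $2k$ and nullity $3$, the latter coming from rotations, which are the Killing fields. The $G$-invariant part is obtained from the trivial representation: only the one negative direction (the invariant Robin eigenfunction, e.g. the constant-type variation) survives, and the invariant part of the correction space contributes nothing. This gives $G$-equivariant index $1$. Finally, we check that the nullity contributions from Teichmüller directions cancel, so that the $3$ Jacobi fields are exactly the rotational ones.
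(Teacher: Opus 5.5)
There is a genuine gap: the comparison in your Step~1 points the wrong way, and Step~2 sets out to prove a false spectral statement. Your identification of the correction space $\DF(\Sigma_k)$, of dimension $3k-6$, is sound, and so is your plan to compute its character on the Schottky double; this is what the paper does with the Eichler trace formula.

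The comparison itself, however, goes the other way. Restricting the two-variable energy form to $\mathcal{V}_\Phi\times\{0\}$ gives $\ind_\ener\le\ind_\area$. The equivariant Ejiri--Micallef comparison only gives the upper bounds $\ind_\area^r\le\ind_\ener^r+\dim\DF^r(\Sigma_k)$ for $r\in\IrRep{G}$, and the analogue with nullities added on both sides. It never gives an equality. Here $\ind_\ener=3<2k=\ind_\area$, so no formula of the form $\ind_\ener=\ind_\area+(\text{nonnegative})$ can hold.

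In Step~2 the tangency constraint is essential. It only yields $\Ind_\ener\le\Hom(\cA_\rho,\Ind_\spec)$. On the nullity side it is what cuts $3\cdot3$ down to $\dim\mathfrak{so}(3)=3$, via quadratic independence of the coordinates on $\partial\Sigma_k$. Moreover, there are no $k-1$ further Steklov eigenvalues below $1$: by McGrath and Kusner--McGrath, $\sigma_1(\Sigma_k)=1$. Hence $\ind_\spec=1$ and $\Ind_\ener\le\cA_\rho=(\st_1\boxtimes\triv)\oplus(\triv\boxtimes\Or)$. Your nodal-domain idea on a fundamental domain is the right tool, but what it proves is $\sigma_1=1$.

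Consequently $\ind_\ener^{\triv\boxtimes\triv}=0$. Your Step~3 claim that the invariant part of $\DF$ contributes nothing would then give $\ind_\area^{\triv\boxtimes\triv}\le0$, which contradicts the invariant negative direction $u\equiv1$. In fact $\dim\DF^{\triv\boxtimes\triv}(\Sigma_k)=1$ is exactly what accommodates it.

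Once corrected, your scheme produces only upper bounds, and they are not sharp. Globally you get $\ind_\area\le3+(3k-6)=3k-3$. Even isotypically, $\DF^-(\Sigma_k)$ has dimension $k-3$, carried by classes $\varrho\boxtimes\Or$ with $\varrho\notin\{\triv,\st_1\}$. So you only get $\ind_\area^-\le k-2$, which is not sharp for $k\ge4$. Two ingredients are missing.

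\emph{Lower bounds.} The Jacobi field induced by rotation about the $x_3$-axis is $\tau$-even and vanishes on the $k$ vertical reflection planes. It therefore has at least $2k$ nodal domains, giving $\ind_\area^+\ge2k-1$. This matches the even upper bound $2+(2k-3)$, so it forces equality in each even isotypic estimate, which yields the equivariant index $\ind_\area^{\triv\boxtimes\triv}=1$. In the odd sector, the test function $x_3$ gives $\ind_\area^-\ge1$.

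\emph{A restriction on symmetry types in the odd sector.} Consider the Jacobi form on $\Sigma_k\cap\{x_3\ge0\}$ with Dirichlet condition on $\{x_3=0\}$. A Helffer-type nodal argument shows the first eigenspace is of type $\triv$ and the second of type $a\triv\oplus b\st_1$. A negative second eigenvalue is then ruled out by $\ind_\area^{\triv\boxtimes\Or}\le1$ and $\ind_\area^{\st_1\boxtimes\Or}=0$. These two bounds rest on the trace computation $\dim\DF^{\triv\boxtimes\Or}=\dim\DF^{\st_1\boxtimes\Or}=0$.

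The nullity is obtained the same way. You need $\Nul_\ener=\mathfrak{so}(\cA_\rho)=(\Or\boxtimes\triv)\oplus(\st_1\boxtimes\Or)$, which requires $\ind_\spec=1$, $\nul_\spec=3$, $\ind_\ener=3$ and quadratic independence. You then combine it with the even estimate and the same nodal lemma in the odd sector. Checking that Teichm\"uller contributions cancel does not replace this.
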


\begin{remark}
``Prismatic symmetry'' refers to the symmetry group of a regular $k$-gonal prism and is made precise in Definition~\ref{defn:prismatic}. 
Free boundary minimal surfaces $\Sigma_k$ satisfying the assumptions of Theorem~\ref{thm:index-fbm-k-noid} have been constructed (for sufficiently large $k$) in \cite{FolhaPacardZolotareva2017} and (for all $k\geq3$) in \cite[§\,5]{FranzSchulz2023} (cf.~\cite[Remark~1.2]{FranzKetoverSchulz}) and \cite{KarpukhinKusnerMcGrathStern}. 
It is expected that these constructions yield the same surface for each $k$.  
\end{remark}

After completion of this work, we learned that Tran \cite{Tran2026} independently obtained the Morse index and nullity of the surfaces in Theorem~\ref{thm:index-fbm-k-noid} under additional assumptions on their boundary components, using circle domains instead of the Eichler trace formula stated in Theorem~\ref{thm:Eichler}.

Our theory developed for Theorem~\ref{thm:index-fbm-k-noid} applies equally well to minimal surfaces in the round sphere $\Sp^3$. 
For all $3\leq k\in\N$, the Lawson \cite[§\,6]{Lawson1970} surface $\xi_{1,k-1}\subset\Sp^3$ is a closed minimal surface with genus $k-1$ and symmetry group of order $16k$. 
(See Section \ref{sec:Lawson} for details.) 
Kapouleas and Wiygul \cite[Theorem 6.21]{KapouleasWiygul2020} 
exploited the full symmetry group to prove that $\xi_{1,k-1}$ has Morse index $2k+1$ and nullity $6$ for each $3\leq k\in\N$.  
We prove the following generalisation, which relies only on a subgroup of half the order and is independent of Lawson's \cite[§\,6]{Lawson1970} construction. 

\begin{theorem} 
\label{thm:index-Lawson}
Given $3\leq k\in\N$, any closed, embedded minimal surface of genus $k-1$ in $\Sp^3$ which is invariant under an action of $\dih_k\times\Z_2\times\Z_2$ in the representation \eqref{eqn:repR4} has Morse index equal to $2k+1$ and nullity $6$.
\end{theorem}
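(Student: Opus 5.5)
The plan is to run the same representation-theoretic machinery as for Theorem~\ref{thm:index-fbm-k-noid}, now with $G=\dih_k\times\Z_2\times\Z_2$ acting on $\Sp^3\subset\R^4$ through \eqref{eqn:repR4}.

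\textbf{Step 1: decompose the spaces.} The Jacobi operator $L=\Delta+\abs{A}^2+2$ on $\Sigma$ commutes with $G$, so $L^2(\Sigma)=\bigoplus_{\rho}V_\rho$ splits into isotypic components indexed by the irreducible representations of $G$. These are of the form $\rho\otimes\chi$ with $\rho\in\IrRep{\dih_k}$ and $\chi$ a character of $\Z_2\times\Z_2$, where the action on normal variations is twisted by the orientation character. I would then compute index and nullity of $L$ separately on each component.

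\textbf{Step 2: compare area and energy.} On each component I use the equivariant comparison between the second variation of area and the second variation of energy on the conformal structure. For closed minimal surfaces in $\Sp^3$ this reads, equivariantly,
\[
\ind_\rho(\text{area})=\ind_\rho(\text{energy})-\dim\bigl(\text{$\rho$-part of the relevant space of holomorphic quadratic differentials}\bigr)
\]
up to the nullity corrections. The nullity is correspondingly the energy nullity minus the dimension of Teichmüller directions in that component. The energy index of the coordinate map is controlled by the spectrum of $\Delta$ below $2$: it counts eigenfunctions with eigenvalue $<2$, taken four times via $\R^4$. By embeddedness and Choe--Soret, $2$ is the first nonzero eigenvalue, so the energy index equals the $4$ contributions of the constants, twisted by the $G$-action on $\R^4$. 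The energy nullity is the $2$-eigenspace, which contains the coordinate functions.

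\textbf{Step 3: holomorphic quadratic differentials.} Genus $k-1\geq2$ gives $\dim_\C H^0(K^2)=3(k-2)$. Its decomposition into $G$-isotypic pieces comes from the Eichler trace formula (Theorem~\ref{thm:Eichler}). This needs the fixed-point data of each group element on $\Sigma$, which I would determine from the representation \eqref{eqn:repR4} together with the genus constraint via Riemann--Hurwitz: the quotient $\Sigma/G$ should be a sphere with the branch points forced by the reflections and rotation axes meeting $\Sigma$.

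\textbf{Step 4: assemble and check.} Summing over all $\rho$ should give index $2k+1$ and nullity $6$. The nullity $6=\dim\mathfrak{so}(4)$ corresponds to Killing fields, which are nontrivial because $\Sigma$ is not totally geodesic. The index $2k+1$ should split as $1$ from the constant function, $4$ from the $\R^4$-coordinates (via conformal vector fields), and the rest from quadratic-differential corrections, with the contributions distributed over the two-dimensional $\dih_k$-representations.

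The main obstacle is Step 3 combined with the sign bookkeeping in Step 2. Only half of the Lawson symmetry group is available, so some isotypic components of the Jacobi operator may not be pinned down by the comparison alone. In those components I would first try to show directly that the $\rho$-part of the $2$-eigenspace is exactly spanned by coordinate and Killing-type functions. For this I would use nodal-domain or Courant-type arguments on the fundamental domain, exploiting the reflection symmetries in $\dih_k$ to impose Dirichlet or Neumann conditions. If that fails, I would invoke the genus bound on multiplicity of eigenvalue $2$ componentwise.
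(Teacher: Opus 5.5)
Steps~1 and~3 are sound in outline, but Step~2 is wrong as stated, and everything else depends on it.

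\textbf{The comparison in Step~2.} The area--energy comparison is not an identity, and the transverse-traceless tensors (real parts of holomorphic quadratic differentials) enter with a \emph{plus} sign. For every $r\in\IrRep{G}$ one only has $\ind^r_\area\le\ind^r_\ener+\dim\DF^r$ and $\ind^r_\area+\nul^r_\area\le\ind^r_\ener+\nul^r_\ener+\dim\DF^r$ (Theorem~\ref{thm:equivariant_area_bounds}). In the other direction one merely has $\ind_\ener\le\ind_\area$. Read literally, your formula with $\ind_\ener=4$ would force $\ind_\area\le4<2k+1$, which also contradicts your own Step~4.

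With the correct sign the comparison gives upper bounds only, and these are generally far from sharp. In the notation of Definition~\ref{defn:parity}, $\dim\DF^{++}=\dim\DF^{--}=2k-3$ and $\dim\DF^{+-}=\dim\DF^{-+}=k-3$. The bounds in the sectors $++$, $+-$, $-+$, $--$ are therefore $2k-1$, $k-2$, $k-2$, $2k-3$, whereas the true indices are $2k-1$, $1$, $1$, $0$.

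You also need lower bounds, which you do not supply. In $++$ this comes from the Killing Jacobi field of rotations in the $x_1x_2$-plane: it is odd under every reflection in $\dih_k$, hence has at least $2k$ nodal domains. In $-+$ and $+-$ it comes from Simons' test functions $\brt{e_3,\nu}$ and $\brt{e_4,\nu}$. Summing isotypic contributions therefore cannot produce $2k+1$.

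\textbf{The missing spectral control.} What is missing is spectral control in the sectors where the comparison is not sharp.
\begin{itemize}
\item For $-+$, the paper passes to the Jacobi form on $M_k\cap\{x_3,x_4\ge0\}$ with Dirichlet data on $\Sp^2_\perp$ and Neumann data on $\Sp^2$. It shows (Proposition~\ref{prop:nodal_dihedral}) that the second eigenspace of such a $\dih_k$-symmetric problem contains only the types $\triv$ and $\st_1$. Exactly for these types $\DF^{\triv\boxtimes\Or\boxtimes\triv}=\DF^{\st_1\boxtimes\Or\boxtimes\triv}=0$. The componentwise bounds, together with $\lambda_1<0$ from $\brt{e_3,\nu}$, then force $\lambda_2\ge0$. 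The sector $+-$ is symmetric.
\item For $--$, one compares Dirichlet and mixed eigenvalues on the same domain and uses unique continuation to get $\lambda_2^{--}>\lambda_2^{-+}\ge0$. Hence $\ind^{--}_\area+\nul^{--}_\area\le1$.
\end{itemize}
Your fallback, showing that the $\rho$-part of the Laplace $2$-eigenspace is spanned by coordinate functions, concerns $\nul_\spec$. It says nothing about the negative spectrum of the Jacobi operator, so it cannot close these sectors.

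\textbf{The nullity upper bound.} That fallback is, however, part of what the nullity upper bound needs, and this bound is also missing. Killing fields give only $\nul_\area\ge6$. They are independent because a surface of negative Euler characteristic carries no nonzero tangent Killing field, not merely because it is not totally geodesic; the Clifford torus has nullity $4$.

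The reverse inequality uses $\Nul_\ener=\mathfrak{so}(\cA_\rho)$ from Proposition~\ref{prop:rigidity}. This requires $\ind_\spec=1$, $\nul_\spec=4$ (Kusner--McGrath), $\ind_\ener=4$ (including El Soufi's lower bound), and quadratic independence of the coordinates. It is then fed into the componentwise $\ind+\nul$ bounds together with the eigenvalue arguments above. Your description of the nullity as ``energy nullity minus Teichm\"uller directions'' carries the same sign error as Step~2.
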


\begin{remark}
The group $G=\dih_k\times\Z_2\times\Z_2$ has order $8k$ and its action is made precise in Section~\ref{sec:Lawson}. 
Geometrically, the dihedral group $\dih_k$ acts on the $x_1x_2$-plane while the $\Z_2$-factors act by the reflections across $\{x_3=0\}$ and $\{x_4=0\}$.
The Lawson surfaces $\xi_{1,k-1}$ satisfy the assumptions of Theorem~\ref{thm:index-Lawson}, their full symmetry group being of order $16k$, see Remark~\ref{rem:Lawson_symmetry}. 
Kapouleas and Wiygul \cite[Theorem~4.5]{KapouleasWiygul2022} proved that any closed, embedded minimal surface of genus $k-1$ in $\Sp^3$ which is invariant under the full symmetry group of $\xi_{1,k-1}$ is congruent to $\xi_{1,k-1}$.
The same conclusion is expected under the subgroup $G$ of order $8k$, which consists of those symmetries preserving the two sides of the surface, see \cite[Remark~4.6 and Lemma~3.11\,(ii)]{KapouleasWiygul2022}.
As long as this remains open, Theorem~\ref{thm:index-Lawson} covers a potentially larger class of surfaces.  
\end{remark}

According to \cite[Table~1]{FranzKetoverSchulz}, the next simplest free boundary minimal surface in $\B^3$ beyond the equatorial disc and the critical catenoid has genus one and connected boundary. 
Our third application resolves the conjecture therein about its Morse index and, together with Theorem~\ref{thm:index-fbm-k-noid} and 
\cite{Devyver2019,SmithZhou2019,Tran2020}, reveals the first consecutive triple of known Morse indices for free boundary minimal surfaces: the critical catenoid of index $4$, the punctured torus of index $5$, and the trinoid of index $6$ (see Figure~\ref{fig:fbms}).

\begin{figure}
\begin{unitball}%
\FBMS{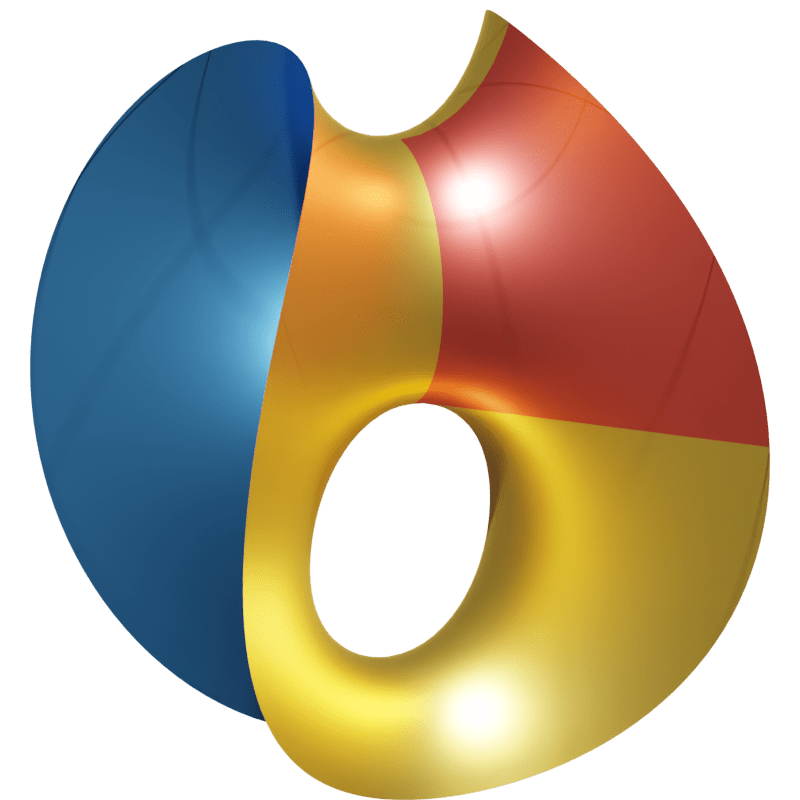}%
\end{unitball}
\hfill
\begin{unitball}%
\FBMS{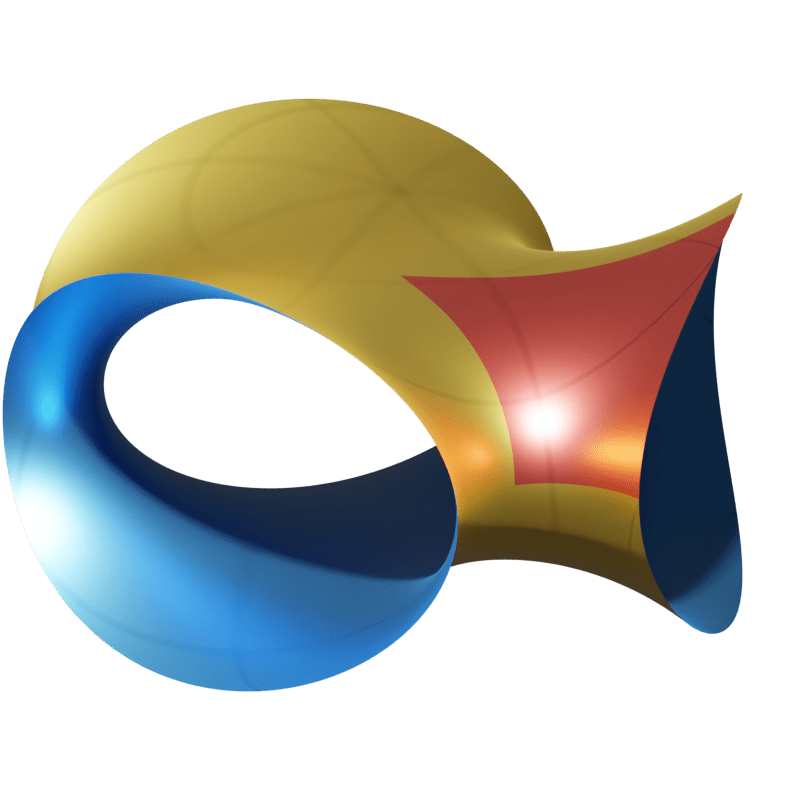}%
\end{unitball}
\caption{Left: 
Free boundary minimal surface in $\B^3$ with Morse index  $5$ 
as in Theorem~\ref{thm:index-fbm-torus}. \\
Right: Free boundary minimal trinoid in $\B^3$ with Morse index $6$ as in Theorem~\ref{thm:index-fbm-k-noid}. \\
Highlighted is a fundamental domain of the respective symmetry group. 
}%
\label{fig:fbms}%
\end{figure}

\begin{theorem}
\label{thm:index-fbm-torus}
Let $M_1\subset\B^3$ be any properly embedded free boundary minimal surface of genus~$1$ with connected boundary and antiprismatic symmetry group of order $8$. 
Then $M_1$ has Morse index equal to $5$ and nullity $3$. 
\end{theorem}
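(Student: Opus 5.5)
The plan is to decompose the index form of $M_1$ according to the irreducible representations of its symmetry group $G$, and to compute the contribution of each isotypic piece. Let $G$ be the antiprismatic group of order $8$, generated by a rotoreflection $S$ of order $4$ about the vertical axis together with vertical mirror planes (equivalently, horizontal $\pi$-rotation axes). This group is isomorphic to the dihedral group $\dih_4$, so it has four one-dimensional irreducible representations and one two-dimensional one. Each isotypic component of the space of normal variations is invariant under the Jacobi operator with its Robin boundary condition. Hence $\ind(M_1)$ and $\nul(M_1)$ split as sums over $\IrRep{G}$ of the equivariant indices and nullities, the two-dimensional representation counting with multiplicity $2$.

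For the nullity, the three rotational Killing fields restricted to $M_1$ are linearly independent Jacobi fields. They transform under $G$ via the representation $\Or\otimes\det$ of $G$ on $\R^3$. This gives $\nul\geq3$, with one one-dimensional summand (the vertical axis) and the two-dimensional summand (the horizontal axes).

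For the index, I would use the equivariant comparison between the second variations of area and energy, applied with conformal coordinates on the annulus-type quotients. The normal component of each coordinate function $x_i$ is governed by the Steklov problem, and $\nu\cdot x$ is the support function. Concretely, I would use the earlier equivariant formula expressing the equivariant index of the area in each irreducible representation $\rho$ in terms of:
\begin{itemize}
\item the number of Steklov eigenvalues below $1$ in $\rho$ (the energy index), and
\item the dimension of the $\rho$-component of the space of holomorphic quadratic differentials with appropriate real boundary conditions (the Teichmüller contribution), corrected by conformal Killing fields.
\end{itemize}
For genus one with one boundary component, the doubled surface has genus $2$, and the relevant space of real dimension $6g-6+3b=3$ on the surface with boundary has to be decomposed under $G$.

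I would compute these characters with the Eichler trace formula (Theorem~\ref{thm:Eichler}). For each $g\in G$ it needs the fixed points of $g$ on $M_1$ and their rotation angles, which come from:
\begin{itemize}
\item the rotoreflection $S$, which fixes the centre point where the vertical axis meets $M_1$;
\item the horizontal rotation axes, which meet $M_1$ in finitely many points or lie on it;
\item the mirror planes, which meet $M_1$ in curves.
\end{itemize}
Embeddedness and the topology (genus $1$, connected boundary) fix these counts, via Riemann--Hurwitz applied to the quotient $M_1/G$, which should be a disc with prescribed corner data.

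The Steklov side also needs a count. The coordinate functions have Steklov eigenvalue $1$, and constants have eigenvalue $0$ in the trivial representation. Using nodal-domain and Courant-type arguments on the fundamental domain with mixed Dirichlet/Neumann conditions along symmetry curves, I would show that in each irreducible representation no further eigenfunctions lie below $1$, except those forced by the first eigenvalue. The aim is to reach total index $1$ (trivial) $+$ contributions summing to $4$ in the other representations, giving $5$, and nullity exactly $3$. I expect the main obstacle to be this sharp, representation-by-representation control of low Steklov eigenvalues on the fundamental domain. The first approach I would try is a Courant nodal-domain count: an eigenfunction in a given symmetry class with eigenvalue at most $1$ would otherwise produce too many nodal domains relative to the coordinate functions $x_3$, $x_1$, $x_2$, which span the relevant representations.
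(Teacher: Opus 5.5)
\paragraph{Gap in the proposal.} You have most of the right ingredients: the isotypic splitting, the Eichler trace formula giving $\DF(M_1)\cong\triv\oplus\m_1\oplus\m_2$, and $\sigma_1(M_1)=1$. But the step that is supposed to produce the number $5$ does not work as you describe it.

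First, the energy index is not the number of Steklov eigenvalues below $1$ in a given isotypic component. The energy form acts on $\R^3$-valued variations, and the correct relation is $\Ind_\ener\leq\Hom(\cA_\rho,\Ind_\spec)$. So $\ind_\spec=1$ (constants, trivial class) gives $\Ind_\ener\leq\cA_\rho=\st_1\oplus\m_1$, not $\triv$. Taken literally, your recipe puts the only energy contribution in $\triv$. It would therefore bound the index by $1+3=4$, which is false.

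Second, and more seriously, the area--energy comparison is only an inequality, $\ind_\area^r\leq\ind_\ener^r+\dim\DF^r$, not a formula for $\ind_\area^r$. With the correct inputs it gives contributions $1,0,2,1,2$ from $\triv,\Or,\m_1,\m_2,\st_1$. That is $\ind_\area(M_1)\leq6$, one more than the truth, and it gives no lower bound at all.

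\paragraph{Missing ideas.} Two ideas are needed to close this.
\begin{itemize}
\item You must show that the $\m_2$-component carries neither index nor nullity. Since $\epsilon=\m_1$, this is the $\Or$-component of the scalar Jacobi form. Such functions are odd under all four reflections, so they vanish on the fixed-point curves, which cut $M_1$ into $8$ congruent pieces. An eigenfunction with nonpositive eigenvalue would then have at least $8$ nodal domains. That forces $\ind(M_1)\geq7$, contradicting the bound $6$.
\item The lower bound $\ind(M_1)\geq5$ cannot come from the comparison. The paper imports it from Devyver ($\ind\geq4$ unless the surface is the flat disc) together with Devyver and Tran ($\ind\neq4$ for embedded surfaces of positive genus). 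Only after this do all the inequalities become equalities, giving area-isotypic indices $1,0,2,0,2$ and $\ind_\ener(M_1)=3$.
\end{itemize}

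\paragraph{Nullity.} Your claim that the nullity is exactly $3$ has no upper-bound argument. You need:
\begin{itemize}
\item $\nul_\spec(M_1)=3$, i.e.\ the Steklov eigenvalue $1$ has multiplicity exactly $3$, not merely that nothing else lies below it;
\item quadratic independence of the coordinate functions on $\partial M_1$;
\item the rigidity argument giving $\Nul_\ener=\mathfrak{so}(\cA_\rho)$ of dimension $3$, which also uses $\ind_\ener(M_1)=3$.
\end{itemize}
Then feed the exact isotypic indices into the index-plus-nullity comparison. You also need $\nul_\area^{\m_2}(M_1)=0$ from the nodal argument, since the comparison alone only gives $\nul_\area^{\m_2}(M_1)\leq1$. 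Together these yield $\nul(M_1)\leq3$.

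The Steklov control you single out as the main obstacle is supplied by Kusner--McGrath once the fundamental-domain structure is established. The real difficulty is that the comparison is off by one in the $\m_2$-component.
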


\begin{remark}
``Antiprismatic symmetry'' refers to the symmetry group of a regular antiprism and is formalised in Definition~\ref{defn:antiprismatic}. 
The existence of a free boundary minimal surface $M_1\subset\B^3$ 
satisfying the assumptions of Theorem~\ref{thm:index-fbm-torus} 
follows from \cite[Theorem~1.1]{CarlottoFranzSchulz2022} and \cite[Remark~1.2]{FranzKetoverSchulz}. 
Independently, the existence of a free boundary minimal surface $\Gamma\subset\B^3$ with Morse index equal to $5$, and the topology of either an annulus or a punctured torus, follows from \cite[§\,2]{FranzSchulz2023} and \cite{ChunPongChu2022}. 
\end{remark} 
 
\begin{remark}[No exceptional Jacobi fields]
In all three applications presented above we may conclude that the nullity is entirely accounted for by the ambient rotations.  
\end{remark}

\subsection{Strategy of the proof} 
Our method requires only three ingredients: 
the symmetry group together with its action on the ambient space, the topology of the surface, and a sharp bound on its first eigenvalue. 
Specific existence results do not play a role. 
Our approach is to control the representation of the symmetry group on the negative space of the Jacobi operator. 
The proof combines an equivariant version of the comparison between energy and area indices~\cite{EjiriMicallef2008, Moore2007, Lima2022}, 
the representation theory of the dihedral group, and, crucially, an abstract 
spectral-theoretic lemma in the spirit of~\cite{Helffer2002} that greatly limits the symmetry types of the low eigenfunctions.
We outline the argument in the case of Theorem~\ref{thm:index-fbm-k-noid}. 
The strategy for Theorems~\ref{thm:index-Lawson} and~\ref{thm:index-fbm-torus} is similar. 

We decompose the Morse index of the surface $\Sigma_k$ from Theorem~\ref{thm:index-fbm-k-noid} into even and odd parts $\ind^\pm(\Sigma_k)$ according to the $\Z_2$-factor of the symmetry group generated by the reflection $x_3\mapsto-x_3$.
The Jacobi field induced by the rotation around the vertical axis is even and gives the lower bound $\ind^+(\Sigma_k)\geq 2k-1$ by nodal domain counting. 
Similarly, considering the rotation around a horizontal axis, we obtain $\ind^-(\Sigma_k)\geq 1$. 
An approach following Kapouleas and Wiygul~\cite{KapouleasWiygul2020} would 
require graphicality of a fundamental domain with respect to rotations around those axes to establish the matching upper bounds. 
Their proof of the graphicality property for the Lawson surfaces uses the symmetries reversing the unit normal, which do not have an analogue for $\Sigma_k$. 
Instead, we prove the upper bounds by representation-theoretic means, 
avoiding graphicality altogether.

The coordinate functions on $\Sigma_k$ being first Steklov eigenfunctions~\cite{McGrath2018,KusnerMcGrath2020} bounds its energy index by $3$. 
By comparing energy and area indices~\cite{Lima2022} we obtain $\ind(\Sigma_k)\leq 3+\dim\DF(\Sigma_k)$, where $\DF(\Sigma_k)$ denotes the space of transverse-traceless bilinear forms on $\Sigma_k$. 
Theorem~\ref{thm:k-noidDF} determines the relevant isotypic components of the induced representation of the symmetry group on $\DF(\Sigma_k)$. 
Together with the energy contribution this yields an explicit representation $\cA$ of degree (i.\,e.~dimension) $3k-3$ which contains the representation on the negative space of the Jacobi operator as a subrepresentation.
The even part of $\cA$ has degree $2k-1$, which matches the lower bound and determines $\ind^+(\Sigma_k)$. 
For $\ind^-(\Sigma_k)$ we prove Proposition~\ref{prop:nodal_dihedral}, which leaves only two symmetry types for the second odd eigenfunction of the Jacobi operator. 
However, the corresponding isotypic components of $\cA$ have total degree $1$. 
Hence, the first two odd eigenfunctions cannot both have negative eigenvalues, which implies $\ind^-(\Sigma_k)=1$, completing the index computation.  

The general theory is developed in Sections~\ref{sec:equivariant-harmonic} and~\ref{sec:area}, where we carefully adapt known results about energy and area indices to the equivariant setting.
Our exposition of the comparison between the area and energy indices is purely Riemannian and avoids complex geometry, unlike the expositions \cite{EjiriMicallef2008, Moore2007, Lima2022}. 
In particular, our results apply directly to non-orientable surfaces.

\paragraph{Acknowledgments.}
This project has received funding from the European Research Council (ERC) under the European Union's Horizon 2020 research and innovation programme (grant agreement No.~947923). 

M.\,K.~would like to thank the Isaac Newton Institute for Mathematical Sciences, Cambridge, for support and hospitality during the programme Geometric spectral theory and applications, where work on this paper was undertaken. This work was supported by EPSRC grant EP/Z000580/1.

This work is supported by ERC grant (EOaMS 101219977). Funded by the European Union. Views and opinions expressed are however those of the authors only and do not necessarily reflect those of the European Union or the European Research Council. Neither the European Union nor the granting authority can be held responsible for them.
 
\section{Index and nullity of equivariant harmonic maps}
\label{sec:equivariant-harmonic}

We define the energy and spectral quadratic forms together with their index and nullity for harmonic maps, and then incorporate a finite symmetry group to obtain the isotypic refinements. 

The domain $(M,g)$ is a compact, connected Riemannian manifold with (possibly empty) boundary $\partial M$. 
Unless stated otherwise, no restriction is placed on the dimension of $M$ in this section. 
(We will specialise to $\dim M=2$ starting in Section~\ref{sec:area}.) 

Given $n\in\N$, we equip $\R^{n+1}$ with the Euclidean inner product $\brt{\cdot,\cdot}$ and Cartesian coordinates $x=(x_1,\ldots,x_{n+1})$ and denote the unit ball and the unit sphere by 
\begin{align}
\label{eqn:ball-sphere}
\begin{aligned}
\B^{n+1}&\vcentcolon=\{x\in\R^{n+1}\mid x_1^2 + \ldots + x_{n+1}^2 \leq 1\}, 
\\[.25ex]
\Sp^n&\vcentcolon=\{x\in\R^{n+1}\mid x_1^2 + \ldots + x_{n+1}^2 = 1\}=\partial\B^{n+1}.
\end{aligned}
\end{align} 

\subsection{Energy index and spectral index}
\label{sec:energy-spectral-index}

The \emph{Dirichlet energy} of a smooth map  
\(
\Phi=(\Phi_1,\ldots,\Phi_{n+1})\colon M\to\R^{n+1}   
\)
is defined as 
\begin{align}\label{eqn:energy}
E_g(\Phi)\vcentcolon=\frac{1}{2}\int_M\abs{d\Phi}^2_g.
\end{align}
Given a potential $q\in C^\infty(M)$ and a boundary potential $\sigma\in C^\infty(\partial M)$, we define the \emph{energy quadratic form} for vector-valued functions $V \in C^\infty(M, \R^{n+1})$ by 
\begin{align}\label{eqn:energy-index-form}
Q_{\ener}(V,V)&\vcentcolon= \int_M \Bigl(\abs{dV}_g^2 - q\abs{V}^2\Bigr) - \int_{\partial M} \sigma \abs{V}^2,
\intertext{and we define the \emph{spectral quadratic form} 
for scalar functions $u \in C^\infty(M)$ by 
}
\label{eqn:spectral-index-form}
Q_{\spec}(u,u)&\vcentcolon= \int_M \Bigl(\abs{du}_g^2 - qu^2\Bigr) - \int_{\partial M} \sigma u^2.
\end{align}
Depending on the geometric setting, we will restrict $Q_\ener$ to a specific space of admissible variations $\mathcal{V}_\Phi \subset C^\infty(M, \R^{n+1})$. 
We focus on the following settings. 

\begin{definition}[Harmonic maps to the sphere]
\label{defn:closed_case}
Given a closed Riemannian manifold $(M,g)$, a map $\Phi\colon M\to\Sp^n\subset\R^{n+1}$ is called \emph{harmonic} if it is a critical point of the  energy \eqref{eqn:energy}. 
The corresponding Euler--Lagrange equation is  
\begin{align}\label{eqn:energy_ELG}
-\Delta_g\Phi-\abs{d\Phi}_g^2\Phi=0,
\end{align}
which means that applying the Laplace--Beltrami operator $\Delta_g$ component-wise to $\Phi$ results in a vector field which is normal to the sphere at each point (see e.\,g.~\cite[Example~4.14 and §\,(3.8)]{EellsLemaire1978}). 
The second variation of the energy \eqref{eqn:energy} at a critical point $\Phi$ is given by the quadratic form 
\eqref{eqn:energy-index-form}
with potentials $q \vcentcolon= \abs{d\Phi}_g^2$ and $\sigma \vcentcolon= 0$, 
restricted to 
\[
\mathcal{V}_\Phi \vcentcolon= \Gamma(\Phi^* T\Sp^n) = \{ V\in C^\infty(M, \R^{n+1}) \mid \brt{V,\Phi}=0 \text{ on } M \}.
\]
\end{definition} 

\begin{definition}[Free boundary harmonic maps to the ball]
\label{defn:free_boundary_case}
Let $(M,g)$ be a compact Riemannian manifold with boundary $\partial M$. 
A map $\Phi\colon M \to\B^{n+1}$ satisfying $\Phi(\partial M) \subset \Sp^n$ is called \emph{free boundary harmonic} if it is a critical point of the energy functional \eqref{eqn:energy} with respect to variations preserving the boundary constraint. 
Denoting the outward unit conormal along $\partial M$ by $\eta$, the associated Euler--Lagrange equation is 
\begin{align}\label{eqn:fbhm_eq}
\left\{\begin{aligned}
-\Delta_g \Phi &= 0 & &\text{ in $M$,} \\
\partial_\eta \Phi &= \abs{\partial_\eta\Phi}\Phi & &\text{ on $\partial M$.}
\end{aligned}\right.
\end{align} 
In the free boundary \emph{minimal} case, $\Phi$ is conformal and $\abs{\partial_\eta\Phi}=1$ if the metric $g$ is induced by the immersion.
The second variation of the energy \eqref{eqn:energy} at a critical point $\Phi$ is given by the quadratic form 
\eqref{eqn:energy-index-form}
with potentials $q \vcentcolon= 0$ and $\sigma \vcentcolon=\abs{\partial_\eta\Phi}$, 
restricted to the space 
\[
\mathcal{V}_\Phi \vcentcolon=  \bigl\{ V \in C^\infty(M, \R^{n+1}) 
\bigm\vert \brt{V(x), \Phi(x)} = 0 \text{ for all } x \in \partial M \bigr\}
\]
consisting of vector fields which are tangent to the boundary sphere along $\partial M$. 
\end{definition} 
 
Although we focus on the two cases above, we remark that the following setting also fits the description~\eqref{eqn:energy-index-form}--\eqref{eqn:spectral-index-form}.

\begin{definition}[Free boundary harmonic maps to spherical caps]
\label{defn:spherical_cap_case}
Let $(M,g)$ be a compact Riemannian manifold with boundary, and given $\theta\in\interval{0,\frac{\pi}{2}}$ let 
$B^n_\theta\vcentcolon=\{x\in\Sp^n\mid x_1\geq\cos\theta\}$ be a spherical cap. 
We consider free boundary harmonic maps $\Phi\colon M\to B^n_\theta$. 
Setting $\Psi=(\Phi_2,\ldots,\Phi_{n+1})$, the corresponding Euler--Lagrange equation is
\begin{align}
\left\{\begin{aligned}
-\Delta_g \Phi - \abs{d\Phi}_g^2\Phi &= 0 & &\text{ in $M$,} 
\\
-\partial_\eta \Phi_1&=\tan\theta\abs{\partial_\eta\Phi} \Phi_1 & &\text{ on $\partial M,$}
\\
\partial_\eta\Psi&=\cot\theta\abs{\partial_\eta\Phi} \Psi & &\text{ on $\partial M$.}
\end{aligned}\right.
\end{align} 
The second variation of energy at $\Phi$ is given by the quadratic form \eqref{eqn:energy-index-form}
with potentials $q \vcentcolon= \abs{d\Phi}_g^2$ and $\sigma \vcentcolon= \cot\theta\abs{\partial_\eta\Phi}$, 
restricted to the space 
\[
\mathcal{V}_\Phi \vcentcolon=  \bigl\{ V \in C^\infty(M, \R^{n+1}) 
\bigm\vert \brt{V(x), \Phi(x)} = 0 \text{ for all } x \in M,~ V_1(x) = 0\text{ for all }x\in\partial M\bigr\}
\]
consisting of vector fields tangent to $\Sp^n$ which are also tangent to the boundary $\partial B^n_\theta$ along $\partial M$. 
\end{definition}

We recall (e.\,g.~from \cite[(2-7)]{CSWSpectral}) that $0\neq V\in \mathcal{V}_\Phi$ is called an \emph{eigenfunction} of $Q_{\ener}$ with \emph{eigenvalue} $\lambda\in\R$ if for all $W\in \mathcal{V}_\Phi$ 
\begin{align}\label{eqn:eigenfunction}
Q_{\ener}(V,W)&=\lambda\brt{V,W}_{L^2(M,g)}.
\end{align}

\begin{definition}[Energy index and nullity]
\label{defn:energy_index}
Let $X^-_{\ener}$ denote the (possibly trivial) linear span, in $\mathcal{V}_\Phi$, of the eigenfunctions of $Q_\ener$ with strictly negative eigenvalue,  
and let $X^0_{\ener}$ denote the kernel of $Q_{\ener}$. 
Elements in this kernel are referred to as \emph{energy Jacobi fields} along $\Phi$. 
The \emph{energy index} and the \emph{energy nullity} of $\Phi$ are defined as  
\begin{align*}
\ind_\ener(\Phi)&\vcentcolon=\dim X^-_{\ener}, &
\nul_\ener(\Phi)&\vcentcolon=\dim X^0_{\ener}.
\end{align*} 
\end{definition}
 
\begin{definition}[Spectral index and nullity]
\label{defn:spectral_index}
Let $X^-_{\spec}$ denote the (possibly trivial) linear span, in $C^\infty(M)$, of the eigenfunctions of $Q_\spec$ with strictly negative eigenvalue, and let $X^0_{\spec}$ denote the kernel of $Q_{\spec}$. 
The \emph{spectral index} and the \emph{spectral nullity} of $\Phi$ are defined as 
\begin{align*}
\ind_\spec(\Phi)&\vcentcolon=\dim X^-_{\spec}, &
\nul_\spec(\Phi)&\vcentcolon=\dim X^0_{\spec}.
\end{align*}  
\end{definition}

\begin{remark}
\label{rem:energy-spectral-inequality}
Note that the energy form \eqref{eqn:energy-index-form} is given by the sum of the spectral forms applied to the Euclidean components of $V$. 
Specifically, if $\{e_1, \dots, e_{n+1}\}$ is the standard basis of $T_x\R^{n+1}$ at each point $x$ and we write the variation as $V = \sum_{i=1}^{n+1} V^i e_i$, then
\begin{align*}
Q_{\ener}(V,V)&=\sum_{i=1}^{n+1} Q_{\spec}(V^i,V^i).
\end{align*}
This decomposition allows for a direct comparison between the indices of the two problems. Indeed, 
\cite[Proposition~1.7]{Karpukhin2021} states that if $\Phi\colon M\to\Sp^n$ is a harmonic map whose image is not contained in any proper equatorial subsphere of $\Sp^n$, 
then 
\[
\ind_\ener(\Phi)\leq(n+1)\ind_\spec(\Phi).
\]
This is not an equality in general because the (vector-valued) energy index problem is subject to the constraint $\brt{V,\Phi} = 0$, while the (scalar) spectral index problem is unconstrained.
\end{remark}

\begin{remark}[Spectral index and nullity]
\label{rem:spectral_nullity}
Any harmonic map $\Phi=(\Phi_1,\ldots,\Phi_{n+1})\colon M\to\Sp^n$ 
on a closed Riemannian manifold $(M,g)$ satisfies $Q_{\spec}(\Phi_j,\cdot)\equiv0$ for any $j\in\{1,\ldots,n+1\}$. 
This follows by integrating each component of equation \eqref{eqn:energy_ELG} by parts. 
In particular, if $\Phi$ is linearly full in the sense that the linear span of its image is all of $\R^{n+1}$, then
\[
\nul_\spec(\Phi)\geq n+1.
\]
In the special case where $\Phi\colon M\to\Sp^n$ is an isometric immersion in the sense that $g=\Phi^*g_{\R^{n+1}}$ is the induced metric on $M$, the map $\Phi$ is harmonic if and only if it is a minimal immersion and the spectral quadratic form \eqref{eqn:spectral-index-form} reads
\[
Q_\spec(u,u)=\int_M\bigl(\abs{du}_g^2 - (\dim M)u^2\bigr).
\]
In particular, the components of $\Phi$ are eigenfunctions of the Laplace--Beltrami operator $-\Delta_g$ with eigenvalue $\dim M$. 
Moreover, $\nul_\spec(\Phi)$ is the multiplicity of that eigenvalue and $\ind_\spec(\Phi)$ is the number of eigenvalues $\lambda_i(M,g)$ satisfying $\lambda_i(M,g)<\dim M$.

Similarly, any free boundary harmonic map $\Phi=(\Phi_1,\ldots,\Phi_{n+1})\colon M\to\B^{n+1}$ on a compact Riemannian manifold $(M,g)$ with boundary satisfies $Q_{\spec}(\Phi_j,\cdot)=0$ for any $j\in\{1,\ldots,n+1\}$. This follows by integrating by parts and substituting the boundary condition from equation \eqref{eqn:fbhm_eq}. 
Again, this implies $\nul_\spec(\Phi)\geq n+1$ if $\Phi$ is linearly full.
In the special case where $g=\Phi^*g_{\R^{n+1}}$, the components of $\Phi$ are Steklov eigenfunctions with eigenvalue $1$. 
Moreover, $\nul_\spec(\Phi)$ is the multiplicity of that eigenvalue and $\ind_\spec(\Phi)$ is the number of eigenvalues $\sigma_i(M,g)$ satisfying $\sigma_i(M,g)<1$.
\end{remark}

\subsection{Group actions and isotypic decomposition}
\label{sec:indE_equiv}

Let $G$ be a finite subgroup of the diffeomorphism group $\Diff(M)$ acting on $(M,g)$ by isometries in the sense that $\gamma^*g=g$ for all $\gamma\in G$. In particular, this action naturally preserves the boundary $\partial M$ as a set. 
The set of points fixed by a single group element $\gamma\in G$ is denoted by 
\begin{align*}
\Fix(\gamma)\vcentcolon=\{x\in M\mid\gamma x=x\}.
\end{align*}

\begin{definition}[Equivariance]
\label{defn:equivariance}
Let $\rho\colon G\to\Ogroup(n+1)$ be an orthogonal representation on the ambient Euclidean space $\R^{n+1}$. 
A map $\Phi\colon M\to\R^{n+1}$ is called \emph{$G$-equivariant} in the representation $\rho$ if for all $\gamma\in G$ and $x \in M$,
\[
\Phi(\gamma x) = \rho(\gamma) \Phi(x).
\]
If a submanifold $\Sigma\subset\R^{n+1}$ satisfies $\rho(\gamma)\Sigma=\Sigma$ as a set for all $\gamma\in G$, then $G$ acts on $\Sigma$ by restriction and the inclusion map is $G$-equivariant in the representation $\rho$. 
In this situation we also call $\Sigma\subset\R^{n+1}$ itself $G$-equivariant in the representation $\rho$.
\end{definition}

The action of $G$ on $M$ and its representation $\rho$ on $\R^{n+1}$ induce natural representations on the relevant function spaces. On smooth scalar functions $u \in C^\infty(M)$, we denote by $\rho_\spec$ the pullback representation given by 
\begin{align}
\label{eqn:rho_S}
(\rho_\spec(\gamma) u)(x) &\vcentcolon= u(\gamma^{-1} x) 
\intertext{for each $\gamma\in G$. 
Analogously, for vector-valued functions $V \in C^\infty(M, \R^{n+1})$, we define}
\label{eqn:rho_E}
(\rho_\ener(\gamma) V)(x) &\vcentcolon= \rho(\gamma) V(\gamma^{-1} x).
\end{align}
The following statements ensure that these representations preserve the variational structure of the problem, implying that in this setting, $X^-_{\ener}$ and $X^0_{\ener}$ are $G$-invariant subspaces. 

\begin{lemma}\label{lem:invariant_potentials}
Let $\Phi$ be as in Definition~\ref{defn:closed_case} respectively Definition~\ref{defn:free_boundary_case}. 
If $\Phi$ is $G$-equivariant then the potentials $q$ and $\sigma$ are $G$-invariant functions.  
\end{lemma}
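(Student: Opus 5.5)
The plan is to treat the two settings separately and, in each, to write the potential as a pointwise quantity built from $d\Phi$ and the metric $g$. Equivariance, together with the fact that $G$ acts by isometries on $M$ and by orthogonal maps on $\R^{n+1}$, should then make that quantity invariant.

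\emph{Closed case (Definition~\ref{defn:closed_case}).} Here $\sigma=0$ is trivially invariant, so the work is in $q=\abs{d\Phi}_g^2$. Fix $\gamma\in G$ and differentiate the identity $\Phi\circ\gamma=\rho(\gamma)\circ\Phi$. This gives
\[
d\Phi_{\gamma x}\circ d\gamma_x=\rho(\gamma)\circ d\Phi_x .
\]
Since $\gamma^*g=g$, the map $d\gamma_x\colon T_xM\to T_{\gamma x}M$ is a linear isometry, so it sends a $g$-orthonormal basis $(e_i)$ of $T_xM$ to a $g$-orthonormal basis $(d\gamma_x e_i)$ of $T_{\gamma x}M$. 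Computing $q$ at $\gamma x$ in this basis, we get
\[
q(\gamma x)=\sum_i\abs{d\Phi_{\gamma x}(d\gamma_x e_i)}^2=\sum_i\abs{\rho(\gamma)\,d\Phi_x(e_i)}^2=\sum_i\abs{d\Phi_x(e_i)}^2=q(x),
\]
where the third equality uses that $\rho(\gamma)$ is orthogonal. Hence $q\circ\gamma=q$ for every $\gamma\in G$.

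\emph{Free boundary case (Definition~\ref{defn:free_boundary_case}).} Here $q=0$, and the work is in $\sigma=\abs{\partial_\eta\Phi}$ on $\partial M$. First I check that $\gamma$ preserves $\partial M$: it is a diffeomorphism, so it maps boundary to boundary. Its differential $d\gamma_x$ is an isometry $T_xM\to T_{\gamma x}M$ that maps $T_x\partial M$ onto $T_{\gamma x}\partial M$ and maps inward-pointing vectors to inward-pointing vectors, because $\gamma$ maps $M$ into itself. Consequently $d\gamma_x\eta(x)=\eta(\gamma x)$. Applying the differentiated equivariance identity to $\eta(x)$ then gives
\[
\partial_\eta\Phi(\gamma x)=\rho(\gamma)\,\partial_\eta\Phi(x).
\]
Taking norms and using that $\rho(\gamma)$ is orthogonal yields $\sigma(\gamma x)=\sigma(x)$.

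The only step that is not purely mechanical is the identity $d\gamma_x\eta(x)=\eta(\gamma x)$. It follows from three facts: the conormal is the unique unit vector that is orthogonal to $T\partial M$ and outward pointing; $d\gamma_x$ preserves orthogonality and length; and $d\gamma_x$ preserves the outward side of the boundary. Apart from this, the argument is direct, so I expect no serious obstacle.
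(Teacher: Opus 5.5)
Your proof is correct and follows the same route as the paper: the zero potentials are trivially invariant, $q=\abs{d\Phi}_g^2$ is invariant by differentiating the equivariance relation and using that $G$ acts by isometries and $\rho(\gamma)$ is orthogonal, and $\sigma=\abs{\partial_\eta\Phi}$ is handled the same way once the conormal is known to satisfy $d\gamma_x\eta(x)=\eta(\gamma x)$. The paper states all of this in one line, and you have written out the details with orthonormal frames and an explicit justification of the conormal's equivariance.
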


\begin{proof}
In the setting of Definition~\ref{defn:closed_case}, $\sigma = 0$ is trivially invariant, and $q=\abs{d\Phi}_g^2$ is invariant because $\Phi$ is $G$-equivariant, $\rho(\gamma)$ is an orthogonal map, and $G$ acts by isometries.

In the setting of Definition~\ref{defn:free_boundary_case}, $q = 0$ is trivially invariant, and $\sigma = \abs{\partial_\eta\Phi}$ is invariant by the same argument as above, using in addition that the outward unit conormal $\eta$ is $G$-equivariant along $\partial M$.
\end{proof}

\begin{lemma}\label{lem:preserved}
The maps \eqref{eqn:rho_S} and \eqref{eqn:rho_E} define linear representations $\rho_\spec\colon G\to\End(C^\infty(M))$ and $\rho_\ener\colon G\to\End(C^\infty(M,\R^{n+1}))$ which act by $L^2(M,g)$-isometries. 
If $\Phi$ is $G$-equivariant as in Lemma~\ref{lem:invariant_potentials}, then $\rho_\ener(\gamma)$ preserves the subspace $\mathcal{V}_\Phi$, and $\rho_\spec(\gamma)$ and $\rho_\ener(\gamma)$ preserve the quadratic forms $Q_\spec$ and $Q_\ener$, respectively, for all $\gamma\in G$. 
\end{lemma}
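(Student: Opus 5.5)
The plan is to verify each claim directly from the definitions \eqref{eqn:rho_S} and \eqref{eqn:rho_E}, using that $G$ acts on $(M,g)$ by isometries and that $\rho$ takes values in $\Ogroup(n+1)$.

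\emph{Representation property.} Linearity is immediate. Next check the homomorphism property. For $\gamma,\delta\in G$,
\[
(\rho_\ener(\gamma)\rho_\ener(\delta)V)(x)=\rho(\gamma)\rho(\delta)V(\delta^{-1}\gamma^{-1}x)=\rho(\gamma\delta)V((\gamma\delta)^{-1}x),
\]
and the scalar case is the same with $\rho$ replaced by the identity. The inverse $\gamma^{-1}$ in the definitions is exactly what makes this a left action.

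\emph{$L^2$-isometries.} Since $\gamma^{-1}$ is an isometry of $(M,g)$, it preserves the Riemannian volume measure. Changing variables $y=\gamma^{-1}x$ gives $\int_M u(\gamma^{-1}x)^2=\int_M u^2$. In the vector case one additionally uses $\abs{\rho(\gamma)w}=\abs{w}$. Smoothness is preserved because $\gamma$ is a diffeomorphism.

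\emph{Preservation of $\mathcal{V}_\Phi$.} Let $V\in\mathcal{V}_\Phi$. Equivariance gives $\Phi(x)=\rho(\gamma)\Phi(\gamma^{-1}x)$, so by orthogonality
\[
\brt{\rho(\gamma)V(\gamma^{-1}x),\Phi(x)}=\brt{V(\gamma^{-1}x),\Phi(\gamma^{-1}x)}.
\]
This vanishes for all $x\in M$ in the setting of Definition~\ref{defn:closed_case}. In the setting of Definition~\ref{defn:free_boundary_case} it vanishes for $x\in\partial M$, because $\gamma^{-1}$ maps $\partial M$ to itself; that holds since isometries preserve the boundary as a set.

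\emph{Preservation of the quadratic forms.} This is the step needing the most care. Write $u_\gamma=u\circ\gamma^{-1}$. Because $\gamma^{-1}$ is an isometry, pointwise $\abs{du_\gamma}_g(x)=\abs{du}_g(\gamma^{-1}x)$. For the vector case, $d(\rho(\gamma)V\circ\gamma^{-1})=\rho(\gamma)\,dV\circ d\gamma^{-1}$, and the norm is again unchanged since $\rho(\gamma)$ is orthogonal and $d\gamma^{-1}$ is a linear isometry. By Lemma~\ref{lem:invariant_potentials}, $q(\gamma^{-1}x)=q(x)$ and $\sigma(\gamma^{-1}x)=\sigma(x)$, so the potential terms transform in the same way. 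Finally, change variables in both the interior integral and the boundary integral. This is valid because $\gamma^{-1}$ restricts to an isometry of $\partial M$ with its induced metric and hence preserves the boundary measure. The result is $Q_\spec(u_\gamma,u_\gamma)=Q_\spec(u,u)$ and $Q_\ener(\rho_\ener(\gamma)V,\rho_\ener(\gamma)V)=Q_\ener(V,V)$. Polarization extends the invariance to the associated bilinear forms.
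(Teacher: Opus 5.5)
Your proposal is correct and follows essentially the same route as the paper: the same homomorphism computation, the same orthogonality identity $\brt{\rho(\gamma)V(\gamma^{-1}x),\Phi(x)}=\brt{V(\gamma^{-1}x),\Phi(\gamma^{-1}x)}$ for preserving $\mathcal{V}_\Phi$, and the same change of variables $x\mapsto\gamma^{-1}x$. That change of variables combines invariance of the pointwise norms of differentials, of the potentials from Lemma~\ref{lem:invariant_potentials}, and of the interior and boundary measures. Your remarks on smoothness and on polarization to the bilinear forms are harmless additions the paper leaves implicit.
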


\begin{proof}
The linearity of the maps is immediate. 
For any $\gamma_1,\gamma_2\in G$, any $u\in C^\infty(M)$, any $V\in C^\infty(M,\R^{n+1})$ and any $x\in M$, we have 
\begin{align*}
(\rho_\spec(\gamma_1\gamma_2)u)(x) &= u(\gamma_2^{-1}\gamma_1^{-1}x) = (\rho_\spec(\gamma_1)\rho_\spec(\gamma_2)u)(x), 
\\[1ex]
(\rho_\ener(\gamma_1\gamma_2)V)(x) &= \rho(\gamma_1)\rho(\gamma_2)V(\gamma_2^{-1}\gamma_1^{-1}x) = (\rho_\ener(\gamma_1)\rho_\ener(\gamma_2)V)(x).
\end{align*}
Since $G$ acts by isometries of $(M,g)$ and $\rho(\gamma)\in\Ogroup(n+1)$ for all $\gamma\in G$, a change of variable $x\mapsto\gamma^{-1}x$ shows that $\rho_\spec(\gamma)$ and $\rho_\ener(\gamma)$ preserve the respective $L^2(M,g)$-inner products. 

Suppose that $\Phi$ is $G$-equivariant and let $\gamma\in G$ be arbitrary. 
Then, for all $V\in C^\infty(M,\R^{n+1})$ and $x\in M$,
\begin{align}\label{eqn:20260925}
\brt[\big]{(\rho_\ener(\gamma)V)(x),\Phi(x)}
=\brt[\big]{V(\gamma^{-1} x), \rho(\gamma)^{-1}\Phi(x) } 
=\brt[\big]{V(\gamma^{-1} x), \Phi(\gamma^{-1} x)}
\end{align}
since $\rho(\gamma)\in\Ogroup(n+1)$. 
If $V\in\mathcal{V}_\Phi$, then \eqref{eqn:20260925} vanishes for all $x\in M$ in the setting of Definition~\ref{defn:closed_case}, and for all $x\in \partial M$ in the setting of Definition~\ref{defn:free_boundary_case}, since $G$ preserves $\partial M$. 
This proves that $\rho_\ener(\gamma)$ preserves the subspace $\mathcal{V}_\Phi$. 

Furthermore, since $\gamma^{-1}\in G$ acts as an isometry of $(M,g)$, the differentials satisfy
\begin{align*}
\abs[\big]{d\bigl(\rho_\spec(\gamma)u\bigr)}^2_g(x) &= \abs{du}^2_g(\gamma^{-1}x), 
&
\abs[\big]{d\bigl(\rho_\ener(\gamma)V\bigr)}^2_g(x) &= \abs{dV}^2_g(\gamma^{-1}x).
\end{align*}
Since the potentials $q$ and $\sigma$ are $G$-invariant by Lemma~\ref{lem:invariant_potentials}, we have $q(x) = q(\gamma^{-1}x)$ and $\sigma(x) = \sigma(\gamma^{-1}x)$. 
Combined with the invariance of the volume forms on $M$ and $\partial M$, a change of variable $x\mapsto \gamma^{-1}x$ yields  
\begin{align*}
Q_{\spec}\bigl(\rho_\spec(\gamma)u,\rho_\spec(\gamma)u\bigr) &= Q_{\spec}(u,u), 
&
Q_{\ener}\bigl(\rho_\ener(\gamma)V,\rho_\ener(\gamma)V\bigr) &= Q_{\ener}(V,V) 
\end{align*}
for all $u\in C^\infty(M)$ and $V\in\mathcal{V}_\Phi$. 
\end{proof}

Since the quadratic form $Q_{\ener}$ is preserved by the representation $\rho_\ener$ by Lemma \ref{lem:preserved}, the corresponding eigenspaces are $G$-invariant subspaces. 
Hence, each of them decomposes into a direct sum of $G$-invariant subspaces on which the induced representation of $G$ is irreducible \cite[Theorem~2]{Serre1977}. 

For the following definitions we recall that two representations, $\rho_1\colon G\to\End(X_1)$, and $\rho_2\colon G\to\End(X_2)$ are called \emph{isomorphic} if there exists a linear isomorphism $X_1\to X_2$ that commutes with the group action.

\begin{definition}[Isotypic index and nullity]
\label{defn:Isotypic}
Let $\IrRep{G}$ denote the set of isomorphism classes of irreducible representations of $G$ and let $r\in\IrRep{G}$ be arbitrary. 
\begin{itemize} 
\item 
The \emph{$r$-isotypic component} $\mathcal{V}_\Phi^r$ is defined as the sum of all $G$-invariant subspaces $W \subseteq \mathcal{V}_\Phi$ such that the subrepresentation on $W$ belongs to the isomorphism class $r$.
\item 
The \emph{$r$-isotypic energy index}, denoted $\ind_\ener^r(\Phi)$, is defined as 
the maximal dimension of a subspace of the $r$-isotypic component $\mathcal{V}_\Phi^r$ on which $Q_{\ener}$ is negative definite: 
\[
\ind_\ener^r(\Phi)=\dim\Bigl(X_\ener^-\cap\mathcal{V}_\Phi^r\Bigr)
\]
\item 
The \emph{$r$-isotypic energy nullity}, denoted $\nul_\ener^r(\Phi)$, is defined as 
the dimension of the kernel of the restriction of $Q_{\ener}$ to the $r$-isotypic component:
\[
\nul_\ener^r(\Phi)=\dim\Bigl(X_\ener^0\cap\mathcal{V}_\Phi^r\Bigr)
\]
\item $\Ind_\ener(\Phi)$ denotes the isomorphism class of the representation on $X_\ener^-$. 
\item $\Nul_\ener(\Phi)$ denotes the isomorphism class of the representation on $X_\ener^0$. 
\item
Similarly, we decompose the space of smooth scalar functions $C^\infty(M)$ into isotypic components $C^\infty(M)^r$ and define the $r$-isotypic spectral index $\ind_\spec^r(\Phi)$ 
and nullity $\nul_\spec^r(\Phi)$, 
and the spectral representation classes $\Ind_\spec(\Phi)$ and $\Nul_\spec(\Phi)$. 
\end{itemize}
\end{definition}
 
\begin{remark}[Isomorphism classes of representations]
\label{rem:classes_vs_spaces}
If $r\in\IrRep{G}$ occurs with multiplicity $m$ in $\Ind_\ener(\Phi)$, then $\ind_\ener^r(\Phi)=m\deg(r)$, and similarly for the nullity. 
Since $\mathcal{V}_\Phi$ is the direct sum of its isotypic components, the index and nullity of $\Phi$ are the sums 
\begin{align*}
\ind_\ener(\Phi)&=\sum_{r\in\IrRep{G}}\ind_\ener^r(\Phi), &
\nul_\ener(\Phi)&=\sum_{r\in\IrRep{G}}\nul_\ener^r(\Phi)
\end{align*}
of their isotypic counterparts (cf.~\cite[(2.14)]{CSWSpectral}). 
Summing over $\IrRep{G}$ requires its elements to be isomorphism classes.
We define $\Ind_\ener(\Phi)$ and its relatives as isomorphism classes to state bounds which apply to any surface with the given symmetry.
Between isomorphism classes we use the symbols $=$ and $\leq$ (in the sense of Definition~\ref{defn:representation_inequalities} below), while $\cong$ indicates isomorphisms between representations on concrete vector spaces. 
Relations between isomorphism classes are typically obtained by constructing equivariant maps between explicit spaces.
\end{remark}

\begin{definition}
\label{defn:representation_inequalities}
Given two representations $\rho_1\colon G\to \End(X_1)$ and $\rho_2\colon G\to\End(X_2)$ with finite degree, we write $\rho_1\leq\rho_2$ if there exists an injective linear map $X_1 \hookrightarrow X_2$ which commutes with the group action, i.\,e.~if $\rho_1$ can be realised as a subrepresentation of $\rho_2$.  
This notation extends naturally to isomorphism classes of representations.  
\end{definition}

\begin{remark}
For representations of finite groups with finite degree, complete reducibility ensures that $\rho_1 \leq \rho_2$ and $\rho_2 \leq \rho_1$ hold simultaneously if and only if $\rho_1 \cong \rho_2$. 
Indeed, the relation $\rho_1 \leq \rho_2$ is equivalent to the multiplicity of each irreducible factor in $\rho_1$ being bounded above by its corresponding multiplicity in $\rho_2$.
\end{remark}

The space $\Hom(X_1, X_2)$ of linear maps has a natural representation of $G$ with the action given by
\begin{align}
\label{eqn:action_on_Hom}
(\gamma f)(x)&\vcentcolon= \rho_2(\gamma) \Bigl( f\bigl(\rho_1(\gamma^{-1})x\bigr) \Bigr), &
&
\gamma\in G,\quad 
f \in \Hom(X_1, X_2),\quad  
x \in X_1.
\end{align}
The isomorphism class of this representation depends only on the isomorphism classes of $\rho_1$ and $\rho_2$. 
Indeed, given equivariant isomorphisms $T_i\colon X_i \to X_i'$ for $i\in\{1,2\}$, the map $f \mapsto T_2 \circ f \circ T_1^{-1}$ defines an equivariant isomorphism from $\Hom(X_1, X_2)$ to $\Hom(X'_1, X'_2)$.
Thus, $\Hom$ becomes a well-defined operation on isomorphism classes of representations.

\begin{proposition}
\label{prop:index-bound}
Let $\Phi$ be a $G$-equivariant harmonic map as in Definition~\ref{defn:closed_case} respectively Definition~\ref{defn:free_boundary_case}, and let $\cA_\rho$ denote the isomorphism class of the representation $\rho\colon G\to\Ogroup(n+1)$ on the ambient space $\R^{n+1}$. 
Then we have the following inequality of representation classes: 
\[
\Ind_\ener(\Phi)\leq\Hom\bigl(\cA_\rho, \Ind_\spec(\Phi)\bigr).
\] 
\end{proposition}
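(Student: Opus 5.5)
We construct an injective $G$-equivariant linear map from $X_\ener^-$ into $\Hom(\R^{n+1},X_\spec^-)$. Here $\R^{n+1}$ carries the representation $\rho$, $X_\spec^-$ carries $\rho_\spec$, and $\Hom$ carries the action \eqref{eqn:action_on_Hom}. Let $P\colon C^\infty(M)\to X_\spec^-$ be the $L^2(M,g)$-orthogonal projection. Since $\rho_\spec$ acts by $L^2$-isometries and preserves $Q_\spec$ (Lemma~\ref{lem:preserved}), the subspace $X_\spec^-$ is $G$-invariant, and hence $P$ commutes with $\rho_\spec(\gamma)$. Define
\[
T\colon X_\ener^-\to\Hom(\R^{n+1},X_\spec^-),\qquad T(V)(a)\vcentcolon= P\brt{V,a},\quad a\in\R^{n+1}.
\]
Equivalently, $T(V)$ collects the projections $P V^i$ of the Euclidean components of $V$.

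\textbf{Equivariance.} We have
\[
T(\rho_\ener(\gamma)V)(a)=P\bigl(x\mapsto\brt{\rho(\gamma)V(\gamma^{-1}x),a}\bigr)=P\,\rho_\spec(\gamma)\brt{V,\rho(\gamma)^{-1}a}=\rho_\spec(\gamma)\,T(V)(\rho(\gamma^{-1})a).
\]
The middle step uses orthogonality of $\rho(\gamma)$, and the last step uses that $P$ commutes with $\rho_\spec$. The right-hand side is exactly $(\gamma\,T(V))(a)$ by \eqref{eqn:action_on_Hom}.

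\textbf{Injectivity.} Suppose $V\in X_\ener^-$ satisfies $T(V)=0$. Then every component $V^i$ is $L^2$-orthogonal to $X_\spec^-$, and therefore lies in the closure of the span of eigenfunctions of $Q_\spec$ with nonnegative eigenvalues. This gives $Q_\spec(V^i,V^i)\geq0$. For smooth $V^i$ this follows by expanding in the eigenbasis with respect to the form domain $H^1$, since $Q_\spec$ is the closed form of a self-adjoint operator with discrete spectrum: Schrödinger in the closed case, and the Steklov-type Robin form in the free boundary case. By Remark~\ref{rem:energy-spectral-inequality}, $Q_\ener(V,V)=\sum_iQ_\spec(V^i,V^i)\geq0$. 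On the other hand, $Q_\ener$ is negative definite on $X_\ener^-$. Hence $V=0$.

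\textbf{Conclusion and main obstacle.} An injective equivariant map $X_\ener^-\hookrightarrow\Hom(\R^{n+1},X_\spec^-)$ gives the claimed inequality of classes via Definition~\ref{defn:representation_inequalities}, since $\Hom$ is well defined on isomorphism classes. The only delicate point is the functional-analytic one in the injectivity step. It requires that the spectral problem has discrete spectrum with a complete $L^2$ eigenbasis, that $X_\spec^-$ is finite-dimensional, and that negative definiteness of $Q_\ener$ on $X_\ener^-$ holds (which follows from the definition as a span of negative eigenfunctions). In the free boundary case this means treating $Q_\spec$ as a form on $H^1(M)$ with the $L^2(M)$ inner product, where the boundary term is form-bounded by trace inequalities. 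This is standard, so the spectral theorem applies.
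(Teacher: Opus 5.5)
Your proposal is correct and follows the paper's proof essentially step for step. It uses the same map $V\mapsto\bigl(a\mapsto P\langle V,a\rangle\bigr)$ into $\Hom(\R^{n+1},X_\spec^-)$, the same equivariance computation built on orthogonality of $\rho$ and on $P$ commuting with $\rho_\spec$, and the same injectivity argument via $Q_\ener(V,V)=\sum_i Q_\spec(V^i,V^i)\geq 0$. Your functional-analytic remarks only make explicit the spectral-theorem step the paper takes for granted.
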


\begin{remark}
Recalling the notation from Definitions~\ref{defn:energy_index}--\ref{defn:spectral_index}, Proposition~\ref{prop:index-bound} states that the representation of $G$ on $X_\ener^-$ is isomorphic to a subrepresentation of the representation \eqref{eqn:action_on_Hom} on $\Hom(\R^{n+1},X_\spec^-)$. 
Comparing degrees, we recover the inequality $\ind_\ener(\Phi)\leq(n+1)\ind_\spec(\Phi)$ mentioned in Remark~\ref{rem:energy-spectral-inequality}.
\end{remark} 

\begin{proof}
Let $P \colon C^\infty(M) \to X_\spec^-$ denote the $L^2(M,g)$-orthogonal projection onto the spectral negative subspace (see Definition \ref{defn:spectral_index}). 
Note that $P$ commutes with the group action since $\rho_\spec$ acts by $L^2(M,g)$-isometries and preserves $X_\spec^-$ by Lemma~\ref{lem:preserved}.
Consider the linear map 
\begin{align}\label{eqn:injectionPsi}
\Psi\colon X_\ener^-\subset\mathcal{V}_\Phi \to \Hom(\R^{n+1},X_\spec^-)
\end{align}
defined by assigning to each section $V\in X_\ener^-$ the map $\Psi(V)$ given by 
\(
\Psi(V)(x)\vcentcolon=P \brt{V, x} 
\)
for each $x\in \R^{n+1}$. 
As before, $\brt{\cdot, \cdot}$ denotes the Euclidean inner product in $\R^{n+1}$. 

\emph{Claim 1.} $\Psi$ is injective.
If $\Psi(V) = 0$ then, for every $x\in\R^{n+1}$, the scalar function $u_x \vcentcolon= \brt{V, x}$ lies in the kernel of $P$, which is the $L^2(M,g)$-orthogonal complement of $X_\spec^-$. 
Consequently, $Q_\spec(u_x, u_x)\geq 0$ for all $x\in \R^{n+1}$.
Summing over an orthonormal basis $e_1,\ldots,e_{n+1}$ of $\R^{n+1}$, we obtain (cf.~Remark~\ref{rem:energy-spectral-inequality})
\[
Q_{\ener}(V,V) = \sum_{i=1}^{n+1} Q_{\spec}(\brt{V, e_i}, \brt{V, e_i}) \ge 0.
\]
However, if $V \in X_\ener^-$, we must have $Q_{\ener}(V,V) < 0$ unless $V=0$. 
Thus, the kernel of $\Psi$ is trivial. 

\emph{Claim 2.} $\Psi$ is $G$-equivariant with respect to the representation $\rho_\ener$ from Lemma~\ref{lem:preserved} and the induced representation on $\Hom(\R^{n+1}, X_\spec^-)$. 
Indeed, for any $\gamma\in G$ and $x\in \R^{n+1}$, we have 
\begin{align*}
\bigl(\gamma\Psi(V)\bigr)(x)
&=\rho_\spec(\gamma)\Bigl(\Psi(V)\bigl(\rho(\gamma^{-1})x\bigr)\Bigr)
\tag{by definition \eqref{eqn:action_on_Hom}}
\\
&=\rho_\spec(\gamma)\Bigl(P\brt[\big]{V,\rho(\gamma^{-1})x}\Bigr)
\tag{by definition of $\Psi$}
\\
&=P\brt[\big]{V\circ\gamma^{-1},\rho(\gamma^{-1})x} 
\tag{by definition \eqref{eqn:rho_S} of $\rho_\spec$}
\\
&=P\brt[\big]{\rho(\gamma)V\circ\gamma^{-1},x} 
\tag{by orthogonality of $\rho$}
\\
&=P\brt[\big]{\rho_\ener(\gamma)V,x} 
\tag{by definition \eqref{eqn:rho_E} of $\rho_\ener$}
\\
&=\Psi\bigl(\rho_\ener(\gamma)V\bigr)(x).
\end{align*} 
By Claims~1 and~2, the map \eqref{eqn:injectionPsi} is a $G$-equivariant injection, which concludes the proof.
\end{proof}

\begin{corollary}\label{cor:indE}
Let $\Phi$ be a $G$-equivariant harmonic map as in Definition~\ref{defn:closed_case} respectively Definition~\ref{defn:free_boundary_case}.  
If $\ind_\spec(\Phi)=1$, then $\Ind_\ener(\Phi)\leq \cA_\rho$.
\end{corollary}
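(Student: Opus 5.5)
The corollary will follow directly from Proposition~\ref{prop:index-bound}, which gives $\Ind_\ener(\Phi)\leq\Hom\bigl(\cA_\rho,\Ind_\spec(\Phi)\bigr)$. So it suffices to identify the representation class $\Hom\bigl(\cA_\rho,\Ind_\spec(\Phi)\bigr)$ with $\cA_\rho$ when $\ind_\spec(\Phi)=1$.

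\emph{Step 1: $G$ acts trivially on $X_\spec^-$.} Since $\ind_\spec(\Phi)=1$, the space $X_\spec^-$ is spanned by a single eigenfunction $u_1$ of $Q_\spec$ with negative eigenvalue, namely the lowest one. By Lemma~\ref{lem:preserved}, $\rho_\spec$ preserves $X_\spec^-$ and acts by $L^2$-isometries, so $\rho_\spec(\gamma)u_1=\pm u_1$. The lowest eigenfunction of the Schrödinger-type form \eqref{eqn:spectral-index-form} (with Robin/Steklov boundary term in the free boundary case) does not change sign on the connected manifold $M$. This follows from the standard argument that $\abs{u_1}$ is also a minimiser of the Rayleigh quotient, combined with the strong maximum principle or unique continuation. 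Hence $u_1\circ\gamma^{-1}=-u_1$ is impossible, and the sign must be $+$. Thus $\Ind_\spec(\Phi)$ is the trivial one-dimensional representation. This is the only step needing a genuine argument, and the positivity of the ground state is the main point to justify, particularly with the boundary potential $\sigma$. The variational characterisation handles this, since $Q_\spec(\abs{u},\abs{u})=Q_\spec(u,u)$.

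\emph{Step 2: identify the Hom representation.} By \eqref{eqn:action_on_Hom} with $\rho_2$ trivial, $(\gamma f)(x)=f(\rho(\gamma)^{-1}x)$. So $\Hom(\R^{n+1},\R)$ carries the dual (contragredient) representation $\rho^*$. Because $\rho$ is orthogonal, the map $\R^{n+1}\to\Hom(\R^{n+1},\R)$, $y\mapsto\brt{y,\cdot}$, is $G$-equivariant: it sends $\rho(\gamma)y$ to $\brt{\rho(\gamma)y,\cdot}=\brt{y,\rho(\gamma)^{-1}\cdot}$, which is $\gamma$ applied to $\brt{y,\cdot}$. It is also a linear isomorphism. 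Hence $\Hom(\cA_\rho,\triv)=\cA_\rho$ as classes, and combining this with Proposition~\ref{prop:index-bound} gives $\Ind_\ener(\Phi)\leq\cA_\rho$.
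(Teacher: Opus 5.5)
Your proposal is correct and follows the paper's proof. Both arguments show that the one-dimensional space $X_\spec^-$ is spanned by a sign-definite ground state, so $G$ acts trivially on it, and then identify $\Hom(\cA_\rho,\triv)$ with the dual $\cA_\rho^*\cong\cA_\rho$ using orthogonality of $\rho$. Your explicit equivariant isomorphism $y\mapsto\brt{y,\cdot}$ and the $\abs{u_1}$-minimiser argument only spell out what the paper states in one line each.
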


\begin{proof}
Since $\ind_\spec(\Phi)=1$, the lowest eigenvalue of $Q_\spec$ is negative and simple, and $X_\spec^-$ is spanned by a corresponding first eigenfunction $\varphi$, which we may assume to be nonnegative.  
The $G$-invariance of $X_\spec^-$ then implies $\rho_\spec(\gamma)\varphi=\pm\varphi$ for every $\gamma\in G$, and the sign is positive because $\rho_\spec(\gamma)\varphi=\varphi\circ\gamma^{-1}\geq0$ by definition \eqref{eqn:rho_S}. 
Hence $\Ind_\spec(\Phi)$ is the isomorphism class of the trivial representation, and Proposition~\ref{prop:index-bound} implies 
\[
\Ind_\ener(\Phi)
\leq\Hom(\cA_\rho,\Ind_\spec(\Phi)) 
=\cA_\rho^* 
=\cA_\rho,
\]
where we used the fact that $\rho$ is an orthogonal representation to identify $\cA_\rho$ with its dual.
\end{proof}

In the following we specialise the setting of Definition~\ref{defn:closed_case} to minimal immersions and build upon the fact that the projections of constant vectors in $\R^{n+1}$ onto the tangent bundle of the sphere provide a natural $(n+1)$-dimensional space of unstable directions for minimal surfaces in $\Sp^n$, see e.\,g.~\cite[§\,5.1]{Simons1968}. 

\begin{proposition}
\label{prop:Arho_lb}
Let $\Phi\colon M\to\Sp^n$ be a $G$-equivariant harmonic map as in Definition~\ref{defn:closed_case} which is a conformal immersion whose image is not a totally geodesic subsphere of $\Sp^n$. 
Then $\cA_\rho\leq\Ind_\ener(\Phi)$.
In particular, if $\ind_\spec(\Phi)=1$, then $\Ind_\ener(\Phi)=\cA_\rho$.
\end{proposition}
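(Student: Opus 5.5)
The plan is to build an explicit $G$-equivariant injection of $\R^{n+1}$, carrying the representation $\rho$, into the negative space $X_\ener^-$. The natural candidate sends a constant vector $a\in\R^{n+1}$ to its tangential projection onto $T\Sp^n$ along $\Phi$:
\[
V_a\vcentcolon= a-\brt{a,\Phi}\Phi\in\mathcal{V}_\Phi .
\]

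First I check equivariance. Using $\Phi(\gamma^{-1}x)=\rho(\gamma)^{-1}\Phi(x)$ and the orthogonality of $\rho(\gamma)$, we get
\[
(\rho_\ener(\gamma)V_a)(x)=\rho(\gamma)a-\brt{a,\rho(\gamma)^{-1}\Phi(x)}\Phi(x)=V_{\rho(\gamma)a}(x).
\]
So the linear map $\iota\colon a\mapsto V_a$ intertwines $\rho$ with $\rho_\ener$. It is injective: if $V_a\equiv0$, then $\Phi\equiv\pm a/\abs{a}$ is constant, which is impossible for an immersion.

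Second I show $Q_\ener(V_a,V_a)<0$ for every $a\neq0$. Write $u=\brt{a,\Phi}$. Then $\abs{V_a}^2=\abs{a}^2-u^2$ and $dV_a=-du\,\Phi-u\,d\Phi$. Since $\brt{d\Phi,\Phi}=0$, this gives $\abs{dV_a}^2=\abs{du}^2+u^2\abs{d\Phi}^2$. Hence
\[
Q_\ener(V_a,V_a)=\int_M\Bigl(\abs{du}^2+2u^2\abs{d\Phi}^2-\abs{a}^2\abs{d\Phi}^2\Bigr).
\]
Integrating the harmonic map equation \eqref{eqn:energy_ELG} against $u$ gives $\int\abs{du}^2=\int\abs{d\Phi}^2u^2$ (Remark~\ref{rem:spectral_nullity}). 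Therefore
\[
Q_\ener(V_a,V_a)=\int_M\abs{d\Phi}^2\bigl(3u^2-\abs{a}^2\bigr).
\]
After normalising $\abs{a}=1$, we need $3\int\abs{d\Phi}^2\brt{a,\Phi}^2<\int\abs{d\Phi}^2$. The quantity $\abs{d\Phi}^2\,dv_g$ is conformally invariant in dimension two, and $\Phi$ is conformal, so $\abs{d\Phi}^2dv_g=2\,dA_\Phi$, twice the induced area form. Summing the identity $\int\abs{du}^2=\int\abs{d\Phi}^2u^2$ over $a=e_1,\ldots,e_{n+1}$ is consistent but does not alone give the bound. Instead I use the classical Simons-type computation. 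In the induced metric the form $\int(3u^2-1)\abs{d\Phi}^2$ becomes $2\int_\Sigma(3\brt{a,x}^2-1)\,dA$. Because $\Delta_\Sigma\brt{a,x}=-2\brt{a,x}$ on a minimal surface, we have $\int\abs{\nabla\brt{a,x}}^2=2\int\brt{a,x}^2$. Together with $\abs{\nabla\brt{a,x}}^2=\abs{a^T}^2\le 1-\brt{a,x}^2$, this yields $3\int\brt{a,x}^2\le\int 1$. Equality forces $a$ to be tangent to the cone over $\Sigma$ everywhere, meaning $a-\brt{a,x}x\in T\Sigma$ wherever it is defined. This holds only if $\Sigma$ is totally geodesic.

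This equality analysis is the main obstacle. Strictness requires that $V_a$ have a nonzero normal component somewhere; otherwise the projection of $a$ is a tangent conformal field, and I must argue that this forces $\Sigma$ to be a totally geodesic sphere. I would do this via the second fundamental form: the normal part of $a$ is $\brt{a,\nu}$, and if it vanishes then differentiating gives $\II(a^T,\cdot)=0$. Together with the Gauss equation this forces $\II\equiv0$ on an open set, and then everywhere by analyticity.

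Finally I pass from negativity of the quadratic form to containment in $X_\ener^-$. The subspace $\iota(\R^{n+1})$ is $G$-invariant and $Q_\ener$ is negative definite on it. The $Q_\ener$-orthogonal structure then gives that the spectral projection onto $X_\ener^-$ restricted to $\iota(\R^{n+1})$ is injective and commutes with $G$, because the spectral projections commute with $\rho_\ener$ by Lemma~\ref{lem:preserved}. This gives $\cA_\rho\le\Ind_\ener(\Phi)$. Combined with Corollary~\ref{cor:indE} when $\ind_\spec(\Phi)=1$, the two inequalities give $\Ind_\ener(\Phi)=\cA_\rho$.
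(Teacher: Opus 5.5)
Your proof is correct and follows the same approach as the paper: the same test fields $V_\xi=\xi-\brt{\xi,\Phi}\Phi$, the same equivariance identity $\rho_\ener(\gamma)V_\xi=V_{\rho(\gamma)\xi}$, the same $G$-equivariant $L^2$-projection onto $X_\ener^-$, and then Corollary~\ref{cor:indE}. The only difference is that the paper cites El~Soufi for negative definiteness of $Q_\ener$ on $\{V_\xi\}$, while you derive it directly, and only for $\dim M=2$, which is the case the paper actually uses. Your computation amounts to $Q_\ener(V_a,V_a)=-2\int_M\abs{a^\perp}^2\,dA$. If $a^\perp$ vanishes, then $\II(a^T,\cdot)=0$, and since $\II$ is trace-free on a $2$-plane this forces $\II=0$ away from the discrete set $\Phi^{-1}(\pm a)$, so the Gauss equation and analyticity are not needed.
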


\begin{proof}
From Definitions~\ref{defn:closed_case} and~\ref{defn:energy_index} we recall that $Q_\ener$ is defined on the space $\mathcal{V}_\Phi=\Gamma(\Phi^*T\Sp^n)$.
Given any vector $\xi\in\R^{n+1}$ we consider the variation field $V_\xi\vcentcolon=\xi-\brt{\xi,\Phi}\Phi\in\mathcal{V}_\Phi$. 
Then $V_\xi\neq0$ whenever $\xi\neq0$ because $\Phi$ is not constant. 
Since $\Phi$ is harmonic and conformal (hence minimal), $Q_\ener$ is negative definite on the $(n+1)$-dimensional space $\{V_\xi\mid\xi\in\R^{n+1}\}$ by \cite[Theorem~2.5\,(ii) and Remark~2.6]{ElSoufi1993}.  
We claim that the linear map $j\colon\R^{n+1}\to\mathcal{V}_\Phi$ given by $j(\xi)\vcentcolon=V_\xi$ is $G$-equivariant. 
Indeed, for any $\gamma \in G$ and $x \in M$, 
\begin{align*}
\bigl(\rho_\ener(\gamma)V_\xi\bigr)(x) 
&=\rho(\gamma) V_\xi(\gamma^{-1}x) 
\\
&=\rho(\gamma) \Bigl( \xi - \brt[\big]{\xi, \Phi(\gamma^{-1}x)}\Phi(\gamma^{-1}x) \Bigr) 
\\
&= \rho(\gamma)\xi - \brt[\big]{\rho(\gamma)\xi,\rho(\gamma)\Phi(\gamma^{-1}x)}
\rho(\gamma)\Phi(\gamma^{-1}x)
\tag{since $\rho(\gamma) \in \Ogroup(n+1)$} 
\\[.5ex]
&= \rho(\gamma)\xi - \brt[\big]{\rho(\gamma)\xi, \Phi(x)} \Phi(x) \tag{by equivariance of $\Phi$} 
\\[.5ex]
&=V_{\rho(\gamma)\xi}(x).
\end{align*}
Let $P^-$ denote the $L^2(M,g)$-orthogonal projection of $\mathcal{V}_\Phi$ onto $X_\ener^-$.
By Lemma~\ref{lem:preserved} the representation $\rho_\ener$ defined in \eqref{eqn:rho_E} acts by $L^2(M,g)$-isometries and preserves $X_\ener^-$. 
Hence, the projection $P^-$ commutes with $\rho_\ener$.
Since moreover $Q_\ener\geq0$ on the $L^2(M,g)$-orthogonal complement of $X_\ener^-$ in $\mathcal{V}_\Phi$, the map $P^-\circ j\colon\R^{n+1}\to X_\ener^-$ is injective.
Thus, $P^-\circ j$ is a $G$-equivariant injection and $\cA_\rho\leq\Ind_\ener(\Phi)$. 
Finally, if $\ind_\spec(\Phi) = 1$, Corollary~\ref{cor:indE} implies $\Ind_\ener(\Phi) \leq \cA_\rho$, and we conclude $\Ind_\ener(\Phi)=\cA_\rho$.
\end{proof}

\begin{remark} 
\label{rem:fb_energy_unknown}  
We are not aware of an analogue of Proposition~\ref{prop:Arho_lb} for free boundary minimal surfaces in the unit ball $\B^{n+1}$. 
However, Propositions~\ref{prop:knoid_isotypic} and~\ref{prop:torus_isotypic} show that the free boundary minimal surfaces in $\B^3$ considered in Theorems~\ref{thm:index-fbm-k-noid} and~\ref{thm:index-fbm-torus} have energy index equal to $3$.
\end{remark}

\begin{definition}
\label{defn:quadratic_independence}
The components of a map $\Phi=(\Phi_1,\ldots,\Phi_{n+1})\colon M\to\R^{n+1}$ are called \emph{quadratically independent} if the set of product functions $\{\Phi_i \Phi_j \}_{1 \leq i \leq j \leq n+1}$ is linearly independent over $\R$.
Equivalently, the image $\Phi(M)$ is not contained in any non-trivial quadratic cone in the sense that 
if $\brt{B\Phi(x), \Phi(x)} = 0$ for all $x \in M$ and some symmetric matrix $B\in\R^{(n+1)\times(n+1)}$, then  $B=0$.  
\end{definition}

\begin{remark}
\label{rem:quadratic_linear}
Quadratic independence implies linear fullness. 
Indeed, if $\Phi\colon M\to\R^{n+1}$ is not linearly full, there exists $0\neq v \in \R^{n+1}$ such that 
\[
0 = \brt{v, \Phi(x)}^2 = (\Phi(x)^\intercal v)(v^\intercal \Phi(x)) = \brt{v v^\intercal \Phi(x), \Phi(x)} 
\]
for all $x\in M$. Since $B= v v^\intercal\ne0$ is a symmetric matrix, $\Phi$ is not quadratically independent. 
\end{remark}

\begin{definition}[Conjugation representation]
\label{defn:so_A_rho}
The representation $\rho\colon G\to\Ogroup(n+1)$ induces an action of $G$ on the space $\mathfrak{so}(n+1)$ of skew-symmetric matrices by conjugation $B\mapsto\rho(\gamma)B\rho(\gamma)^{-1}$.
We denote the isomorphism class of this representation by $\mathfrak{so}(\cA_\rho)$.
\end{definition}

\begin{proposition}[Energy nullity]
\label{prop:rigidity}
Let $\Phi=(\Phi_1,\ldots,\Phi_{n+1})\colon(M,g)\to\R^{n+1}$ be a $G$-equivariant harmonic map as in Definitions~\ref{defn:closed_case}--\ref{defn:free_boundary_case}. 
Suppose that the components of $\Phi$ are quadratically independent on $M$ in the closed case, respectively on $\partial M$ in the free boundary case. 
If 
\begin{align*}
\ind_\spec(\Phi)&= 1, & 
\nul_\spec(\Phi)&= n+1, & 
\ind_\ener(\Phi)&= n+1, 
\end{align*} 
then $\Nul_\ener(\Phi)=\mathfrak{so}(\cA_\rho)$ and in particular 
\(
\nul_\ener(\Phi) = \tfrac{1}{2}n(n+1).
\) 
\end{proposition}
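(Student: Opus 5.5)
We show two inclusions: first that the rotation fields $B\Phi$ with $B\in\mathfrak{so}(n+1)$ lie in $X^0_\ener$ and realise $\mathfrak{so}(\cA_\rho)$ there, and then that every energy Jacobi field is of this form.

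\textbf{Lower bound.} For $B\in\mathfrak{so}(n+1)$ put $V_B\vcentcolon=B\Phi$. Since $\brt{B\Phi,\Phi}=0$ pointwise, $V_B\in\mathcal{V}_\Phi$ in both settings. Moreover $V_B$ is the derivative of the family $e^{tB}\Phi$ of harmonic (respectively free boundary harmonic) maps with constant energy. Hence $Q_\ener(V_B,\cdot)\equiv0$ on $\mathcal{V}_\Phi$, which one can also verify directly by integrating by parts with \eqref{eqn:energy_ELG} or \eqref{eqn:fbhm_eq}.

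The map $B\mapsto V_B$ is injective. Indeed, if $B\Phi\equiv0$ then every row $b_i$ of $B$ satisfies $\brt{b_i,\Phi}=0$, so $B=0$ by linear fullness, which follows from Remark~\ref{rem:quadratic_linear}. In the free boundary case it suffices to use $\partial M$.

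The map is also equivariant: $\rho(\gamma)B\Phi(\gamma^{-1}x)=\rho(\gamma)B\rho(\gamma)^{-1}\Phi(x)$. This gives $\mathfrak{so}(\cA_\rho)\leq\Nul_\ener(\Phi)$.

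\textbf{Upper bound.} Let $V\in X^0_\ener$. The plan is to exploit the fact that $X^-_\ener$ is already as large as allowed by the Hom-bound, and then to perturb. By Corollary~\ref{cor:indE} and its proof, $X^-_\spec$ is spanned by a positive invariant first eigenfunction $\varphi$. As in Proposition~\ref{prop:index-bound}, consider
\[
\Psi\colon X^-_\ener\oplus X^0_\ener\to\Hom(\R^{n+1},X^-_\spec)\cong\R^{n+1},\qquad V\mapsto\bigl(x\mapsto P\brt{V,x}\bigr).
\]
Its restriction to $X^-_\ener$ is injective, and since $\ind_\ener=n+1$ it is an isomorphism onto $\R^{n+1}$. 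Hence for $V\in X^0_\ener$ we find $W\in X^-_\ener$ with $\Psi(V-W)=0$, meaning every component $u_x=\brt{V-W,x}$ is $L^2$-orthogonal to $\varphi$. Since $Q_\ener(V,W)=0$ and $Q_\ener(V,V)=0$, we get
\[
Q_\ener(V-W,V-W)=Q_\ener(W,W)\leq0.
\]
On the other hand,
\[
Q_\ener(V-W,V-W)=\sum_i Q_\spec(u_{e_i},u_{e_i})\geq0.
\]
Both inequalities then force $W=0$ and $Q_\spec(u_{e_i},u_{e_i})=0$ with $u_{e_i}\perp\varphi$. By the variational characterisation, each component $V^i$ then lies in the spectral kernel $X^0_\spec$. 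Because $\nul_\spec=n+1$ and the components $\Phi_j$ already span $X^0_\spec$ (Remark~\ref{rem:spectral_nullity} together with linear independence of the $\Phi_j$), we obtain $V=A\Phi$ for some matrix $A\in\R^{(n+1)\times(n+1)}$.

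\textbf{Conclusion.} Finally we impose $V\in\mathcal{V}_\Phi$. This gives $\brt{A\Phi,\Phi}=0$ on $M$ (closed case) or on $\partial M$ (free boundary case). Equivalently the symmetric part of $A$ vanishes by quadratic independence, so $A\in\mathfrak{so}(n+1)$. Thus $X^0_\ener=\{B\Phi\mid B\in\mathfrak{so}(n+1)\}$, which as a representation is $\mathfrak{so}(\cA_\rho)$, of dimension $\tfrac12 n(n+1)$.

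The main obstacle is the middle step, showing that energy Jacobi fields have components in the spectral kernel. The point requiring care is that $W$ absorbs exactly the projection onto $\varphi$, and that equality $Q_\spec(u,u)=0$ for $u\perp X^-_\spec$ forces $u\in X^0_\spec$. In the free boundary case this equality argument must be run in the Steklov-type setting, where it still holds by the spectral decomposition of $Q_\spec$.
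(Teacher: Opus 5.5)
Your proposal is correct and follows essentially the same route as the paper: the projection map onto $\varphi$, the sign argument forcing the components into the spectral kernel spanned by $\Phi_1,\ldots,\Phi_{n+1}$, and quadratic independence making the matrix skew-symmetric. The only difference is bookkeeping. The paper shows $\ker\Psi\cap(X^-_\ener\oplus X^0_\ener)\subseteq\mathcal{K}$ and then concludes by a dimension count, whereas you use $\ind_\ener=n+1$ to make $\Psi|_{X^-_\ener}$ surjective and run the same positivity argument directly on each $V\in X^0_\ener$.
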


\begin{proof}
Let $\mathcal{K} \subset \mathcal{V}_\Phi$ denote the subspace of Killing fields induced by the action of $\SOgroup(n+1)$ on the target space. 
These are vector fields of the form $V_B(x) \vcentcolon= B\Phi(x)$ for $B \in \mathfrak{so}(n+1)$. 
Because $B$ is skew-symmetric, $\brt{V_B(x), \Phi(x)} = \brt{B\Phi(x), \Phi(x)} = 0$ for all $x \in M$, so $V_B \in \mathcal{V}_\Phi$ in both settings. 
Since target rotations preserve the Dirichlet energy and the boundary constraint, $\mathcal{K} \subseteq X_\ener^0$. 
Furthermore, since $\Phi$ is linearly full by Remark~\ref{rem:quadratic_linear}, the map $B\mapsto V_B$ is injective, so 
\[
\nul_\ener(\Phi)\geq\dim\mathcal{K}=\tfrac{1}{2}n(n+1). 
\]
It remains to show the reverse inequality which, by assumption $\ind_\ener(\Phi) = n+1$, is equivalent to the claim 
\begin{align}\label{eqn:20260211-1}
\dim(X_\ener^- \oplus X_\ener^0) \leq (n+1) + \tfrac{1}{2}n(n+1).
\end{align}
Since $\ind_\spec(\Phi)=1$, the space $X_\spec^-$ is one-dimensional, spanned by some eigenfunction $\varphi_0$.
Consider the subspace 
\[
\mathcal{V} \vcentcolon= \{ V \in X_\ener^- \oplus X_\ener^0 \mid \brt{V^j, \varphi_0}_{L^2(M,g)} = 0 \text{ for all } j=1,\dots,n+1\} 
\]
of sections whose scalar components are $L^2(M,g)$-orthogonal to $\varphi_0$. 
Then, as $\mathcal{V}$ is defined as the kernel of a linear map $X_\ener^- \oplus X_\ener^0\to\R^{n+1}$, 
\begin{align}\label{eqn:20260211-2}
\dim (X_\ener^- \oplus X_\ener^0)\leq (n+1) + 
\dim \mathcal{V}.
\end{align}
On the one hand, $Q_\ener(V,V) \leq 0$ for any $V\in\mathcal{V}$ because $\mathcal{V}\subset X_\ener^- \oplus X_\ener^0$. 
On the other hand, each component $V^j$ of any $V \in \mathcal{V}$ is orthogonal to $X_\spec^-$, which implies $Q_\spec(V^j, V^j) \geq 0$ and hence $Q_\ener(V,V) = \sum^{n+1}_{j=1} Q_\spec(V^j, V^j) \geq 0$.
Consequently, $Q_\ener(V,V) = 0$ and $Q_\spec(V^j, V^j) = 0$ for all $j\in\{1,\ldots,n+1\}$, so each $V^j$ lies in the kernel of $Q_\spec$ since $Q_\spec\geq0$ on the $L^2(M,g)$-orthogonal complement of $\varphi_0$.
 
By assumption, the spectral kernel has dimension $\nul_\spec(\Phi)=n+1$ and is thus spanned by the coordinate functions $\Phi_1, \dots, \Phi_{n+1}$ (see Remark~\ref{rem:spectral_nullity}). 
Therefore, any $V \in \mathcal{V}$ must be of the form $V(x) = B\Phi(x)$ for some constant matrix $B$. 
The condition $V \in \mathcal{V}_\Phi$ implies $\brt{B\Phi(x), \Phi(x)} = 0$ for all $x \in M$ in the closed setting, respectively for all $x \in \partial M$ in the free boundary setting. 
Since the components of $\Phi$ are quadratically independent on $M$ (respectively on $\partial M$), the symmetric part of $B$ must vanish. 
Thus $B \in \mathfrak{so}(n+1)$, which implies $\mathcal{V} \subseteq \mathcal{K}$. 
We conclude that $\dim \mathcal{V}\leq\dim\mathcal{K}=n(n+1)/2$, which proves \eqref{eqn:20260211-1} using \eqref{eqn:20260211-2}.  

Since $\dim X_\ener^-=\ind_\ener(\Phi)=n+1$, inequality \eqref{eqn:20260211-1} yields $\dim X_\ener^0\leq\dim\mathcal{K}$, and hence $X_\ener^0=\mathcal{K}$. 
Since $\rho_\ener(\gamma)V_B=V_{\rho(\gamma)B\rho(\gamma)^{-1}}$ for all $\gamma\in G$ by equivariance of $\Phi$, the map $B\mapsto V_B$ is a $G$-equivariant isomorphism from $\mathfrak{so}(\cA_\rho)$ onto $X_\ener^0$.
\end{proof}

\section{Index and nullity of equivariant minimal surfaces}
\label{sec:area} 

In this section, we assume that $(M,g)$ is a compact, connected, $2$-dimensional Riemannian manifold, possibly with boundary $\partial M$, 
and consider maps $\Phi\colon M\to\R^{n+1}$ for any $2\leq n\in\N$. 
The target will be restricted to $\Sp^n$ as in Definition~\ref{defn:closed_case} if $M$ is closed, respectively to $\B^{n+1}$ as in Definition~\ref{defn:free_boundary_case} if $\partial M\neq\emptyset$.   
In the closed case, Sections~\ref{sub:teich}--\ref{sub:area} follow \cite{Moore2007}, and we extend the theory to the free boundary case. 
Sections~\ref{sub:equivariant-area}--\ref{sub:two-sided} then adapt this material to the $G$-equivariant setting.

If $\Phi$ is the inclusion map of $M$ as a minimal submanifold of the target, we relax the notation by writing $\ind_\bullet(M)=\ind_\bullet(\Phi)$ and $\nul_\bullet(M)=\nul_\bullet(\Phi)$ for each type $\bullet$ of index and nullity.
 
Given two $\R^{n+1}$-valued $1$-forms $\alpha,\beta$ on $M$, we understand the tensor product $\alpha\otimes\beta$  
as the real-valued $(0,2)$-tensor defined on tangent vector fields $X, Y \in \Gamma(TM)$ by 
\begin{align*}
(\alpha \otimes \beta)(X,Y)&\vcentcolon=\brt[\big]{\alpha(X), \beta(Y)},  
\end{align*}
using the standard Euclidean inner product $\brt{\cdot,\cdot}$ in $\R^{n+1}$.

\subsection{Two-variable energy index}
\label{sub:teich}

The relation between the area index and the energy index is most conveniently explained using the two-variable energy functional 
\[
\widetilde{E}(\Phi,g)\vcentcolon=E_g(\Phi)
\]
defined as in \eqref{eqn:energy} but considered as a function of both the Riemannian metric $g$ and the map $\Phi$ rather than keeping the metric fixed \cite[equation~(8)]{Moore2007}. 
An elementary computation shows that the first variation of $\widetilde{E}$ at a pair $(\Phi,g)$ in the direction of a variation vector field $V \in \mathcal{V}_\Phi$ (recalling Definitions~\ref{defn:closed_case}--\ref{defn:free_boundary_case}) and a metric variation $h \in \mathcal{S}(M)$ is given by
\[
d\widetilde{E}(V,h) 
=dE_g(V)-\frac{1}{2}\int_{(M,g)} 
\brt[\Big]{ d\Phi\otimes d\Phi - \frac{1}{2}\abs{d\Phi}_g^2\, g, h }_g.
\] 
The quadratic form in the inner product is known as the \emph{Hopf differential} of the map $\Phi$. 
In particular, the critical points of $\widetilde{E}$ are pairs $(\Phi, g)$ where $\Phi$ is harmonic as well as weakly conformal with respect to $g$, or, equivalently, a branched minimal immersion.

The second variation of $\widetilde{E}$ is a quadratic form on the product space $\mathcal{V}_\Phi\times \mathcal{S}(M)$, where $\mathcal{S}(M)$ denotes the space of symmetric $(0,2)$-tensors on $M$. 
We denote this quadratic form by $Q_\teich$. 
(The subscript $\teich$ stands for ``Two-variable''.)
Following the computation in~\cite[equations (9) and (16)]{Moore2007}, for any variation $(V,h)$, we have
\begin{align}
\label{eq:QtildeE}
\begin{aligned}
Q_{\teich}\bigl((V,h), (V,h)\bigr) 
= Q_{\ener}(V,V) 
&- 2\int_{(M,g)} \brt[\Big]{ d\Phi\otimes dV - \frac{1}{2}\brt{d\Phi, dV}g, h }_g  
\\
&+\frac{1}{2}\int_{(M,g)} \abs{d\Phi}_g^2 \Bigl( \frac{1}{4}(\tr_g h)^2 - \det\nolimits_g(h) \Bigr). 
\end{aligned}
\end{align}
Compared to the single-variable energy, the two-variable energy has a large symmetry group. 
First, $\widetilde{E}$ is conformally invariant in the sense that for any $u\in C^\infty(M)$
\begin{align}
\label{eqn:conformal_invariance}
\widetilde{E}(\Phi,g)&= \widetilde{E}(\Phi,e^{2u}g).
\shortintertext{Second, $\widetilde{E}$ is invariant under the action of the diffeomorphism group $\Diff(M)$ in the sense that for any $f\in \Diff(M)$}
\label{eqn:diffeo_invariance}
\widetilde{E}(f^*\Phi, f^*g)&= \widetilde{E}(\Phi, g).
\end{align}
These symmetries lead to an infinite-dimensional kernel for $Q_{\teich}$. 
Consequently, establishing a well-defined notion of index and nullity for $\widetilde{E}$ requires restricting the domain of the quadratic form to a slice transversal to these symmetries.
Let the metric $g$ on $M$ be fixed and let 
$\eta$ denote the outward unit conormal along $\partial M$. 
We denote the space of smooth vector fields on $M$ which are tangent to the boundary by
\begin{align}
\label{eqn:partialTM}
\Gamma_\partial(TM) \vcentcolon= \{ Y \in \Gamma(TM) \mid g(Y, \eta) = 0 \text{ on } \partial M \}. 
\end{align}
(Note that the space \eqref{eqn:partialTM} contains \emph{intrinsic} vector fields on the domain manifold, while the space $\mathcal{V}_\Phi$ contains \emph{extrinsic} vector fields along the map $\Phi$.)

Instrumental in our approach is the $L^2(M,g)$-orthogonal decomposition  
\begin{align}
\label{eq:metric_decomp}
\mathcal{S}(M)=\PTr\oplus\Div\oplus\DF,
\end{align}
of the space of symmetric bilinear forms on $M$, 
where the three subspaces are defined as follows.
\begin{align*}
\PTr&\vcentcolon=\{ u g \mid u \in C^\infty(M) \}, 
\tag{pure trace} 
\\
\Div&\vcentcolon=\{ \mathcal{L}_Y g - (\divop_gY)g \mid Y\in\Gamma_\partial(TM) \},  
\tag{pure divergence} 
\\
\DF&\vcentcolon=\{ h \in \mathcal{S}(M) \mid \tr_g h = 0,~\divop_g h = 0,~h(\eta, \cdot)\vert_{T\partial M} = 0 \}.
\tag{transverse traceless} 
\end{align*}  
The decomposition \eqref{eq:metric_decomp} is classical when $M$ is closed. 
We include a proof in Appendix~\ref{sec:decomposition} which also covers the case $\partial M\neq\emptyset$. 
Note that the definition of $\Div$ uses the trace-free part of the Lie derivative, since $\tr_g(\mathcal{L}_Yg)=2\divop_gY$ in dimension $2$. 
This space corresponds to infinitesimal diffeomorphisms that change the conformal structure.

\begin{lemma}
\label{lem:L_operator}
The linear map $L\colon\Gamma_\partial(TM)\to \Div$ given by 
$L(Y)=\mathcal{L}_Y g - (\divop_g Y)g$ 
is surjective and its kernel is the finite-dimensional space $\VConf(M,g)$ of conformal vector fields tangent to the boundary.  
\end{lemma}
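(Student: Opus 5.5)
Surjectivity of $L$ onto $\Div$ holds by the very definition of $\Div$ as the image of $\Gamma_\partial(TM)$ under $Y\mapsto\mathcal{L}_Yg-(\divop_gY)g$. The plan therefore reduces to two points: identifying the kernel with $\VConf(M,g)$, and showing that this kernel is finite-dimensional.

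\emph{Kernel.} In dimension $2$ we have $\tr_g(\mathcal{L}_Yg)=2\divop_gY$, so $L(Y)$ is exactly the trace-free part of $\mathcal{L}_Yg$. Hence $L(Y)=0$ if and only if $\mathcal{L}_Yg=(\divop_gY)g$, which is the conformal Killing equation. Together with the constraint $Y\in\Gamma_\partial(TM)$ this says precisely that $Y$ is a conformal vector field tangent to the boundary, so $\ker L=\VConf(M,g)$.

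\emph{Finite dimensionality.} The operator $Y\mapsto\mathcal{L}_Yg-(\divop_gY)g$ is the conformal Killing operator. In local isothermal coordinates $z$ it becomes the Cauchy--Riemann operator $\partial_{\bar z}$ acting on $Y=Y^z\partial_z$: conformal vector fields correspond to holomorphic sections of $T^{1,0}M$. For closed $M$, the space of holomorphic sections of a holomorphic line bundle on a compact Riemann surface is finite-dimensional (e.g.\ by Riemann--Roch, or by ellipticity of $\bar\partial$).

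If $\partial M\neq\emptyset$, I would pass to the Schottky double $\widehat M$, which carries the reflected conformal structure. A conformal field tangent to $\partial M$ is compatible with the reflection, so by Schwarz reflection it extends to a holomorphic vector field on the closed surface $\widehat M$. This gives an injection $\VConf(M,g)\hookrightarrow H^0(\widehat M,T^{1,0}\widehat M)$, whose target is finite-dimensional. Alternatively, one can appeal directly to the ellipticity of $\partial_{\bar z}$ with the boundary condition $g(Y,\eta)=0$, which is a Lopatinskii-type condition; elliptic boundary value problems on compact manifolds have finite-dimensional kernels.

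\emph{Main obstacle.} The delicate step is justifying the reflection across the boundary. Tangency to $\partial M$ means that, in a boundary chart mapping $\partial M$ to the real axis, the component $Y^z$ is real on the axis. This is exactly the condition needed for Schwarz reflection, after which the smoothness of $Y$ up to the boundary ensures the extension is holomorphic. Conformal vector fields on a compact surface with boundary are known to form a space of dimension at most $3$, but we only need finiteness here.
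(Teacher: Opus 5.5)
Your surjectivity and kernel arguments are exactly the paper's proof, which consists of just these two observations. $\Div$ is the image of $L$ by definition, and $L(Y)$ is the trace-free part of $\mathcal{L}_Yg$, so $L(Y)=0$ is the conformal Killing equation.

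The paper does not prove finite-dimensionality inside this proof. Remark~\ref{rmk:dim_vconf} sketches the same complex-analytic picture you use: Riemann--Roch on a compact Riemann surface obtained by doubling. The actual proof is Lemma~\ref{lem:surface-L-coercivity} in the appendix. It derives the Korn-type estimate $\nm{X}_{H^1}^2\leq C(\nm{LX}_{L^2}^2+\nm{X}_{L^2}^2)$ for fields tangent to $\partial M$ from an integrated Weitzenb\"ock formula and a trace inequality, and then concludes by Rellich compactness.

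Your Schottky-double argument yields more: it identifies $\VConf(M,g)$ with reflection-compatible holomorphic vector fields on $\widehat M$, and hence gives the explicit dimensions. As written, however, it presupposes an orientation in order to speak of $T^{1,0}M$. Section~\ref{sec:area} explicitly allows non-orientable $M$; the Klein bottle, M\"obius band and $\mathbb{RP}^2$ all appear in Remark~\ref{rmk:dim_vconf}. You should either first lift to the orientation double cover, onto which conformal fields tangent to the boundary lift injectively, or rely on your elliptic boundary-value alternative.

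That alternative is essentially the paper's appendix route. The difference is that the paper proves the needed a priori estimate directly instead of invoking the Lopatinskii--Shapiro condition, and it needs neither an orientation nor a complex structure.
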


\begin{proof}
Surjectivity follows immediately by definition of the subspace $\Div$. 
We have $L(Y)=0$ if and only if the Lie derivative $\mathcal{L}_Y g$ is a pure trace term in the sense that $\mathcal{L}_Y g = ug$ for some function $u\in C^\infty(M)$. 
The latter is the definition of a conformal vector field.  
\end{proof}
 
\begin{remark}
\label{rmk:dim_vconf}
The dimension of $\VConf(M,g)$ depends only on the topology of the surface $M$, vanishes if $M$ has Euler characteristic $\chi(M)<0$, and equals 
\begin{itemize}[nosep]
\item $6$ for the sphere $\Sp^2$,
\item $3$ for the projective space $\mathbb{RP}^2$ and the unit disc $\B^2$,
\item $2$ for the torus $\T^2$,
\item $1$ for the Klein bottle, the annulus, and the Möbius band.
\end{itemize}
Indeed, for closed orientable surfaces the space of conformal fields can be identified with the space of (anti)holomorphic vector fields and classical computations using Riemann--Roch yield the dimensions listed above, see e.\,g.~\cite{Forster1991}. 
Bordered and non-orientable cases can be treated uniformly using the framework of Klein surfaces. 
In particular, there always exists a compact Riemann surface $\widetilde{M}$ together with an anti-holomorphic involution $\tau$ such that the Klein surface can be seen as a quotient $\widetilde{M}/\tau$, see~\cite[Theorem~1.6.1]{AllingGreenleaf1971}. 
Since $\tau$ acts anti-linearly on the space of complex vector fields on $\widetilde{M}$, one has $2\dim\VConf(M)=\dim\VConf(\widetilde{M})$.
\end{remark}

Given any metric variation $h\in\mathcal{S}(M)$, the decomposition \eqref{eq:metric_decomp} and 
Lemma~\ref{lem:L_operator}
imply that there exist a vector field $Y\in\Gamma_\partial(TM)$ and a function $u\in C^\infty(M)$ such that 
\begin{align}\label{eqn:h-L(Y)-ug}
h - L(Y) - ug \in \DF(M).  
\end{align} 
The terms $L(Y)$ and $ug$ correspond to infinitesimal diffeomorphisms and conformal changes, respectively.
Recalling the symmetries \eqref{eqn:conformal_invariance}--\eqref{eqn:diffeo_invariance}
of $\widetilde{E}$, its second variation must vanish in these directions.
This allows us to restrict the variational problem to the slice where the metric variation is transverse traceless. 

\begin{definition}[Two-variable index and nullity]
\label{defn:2index}
The \emph{two-variable index} $\ind_{\teich}(\Phi,g)$ is the maximal dimension of a subspace of \(\mathcal{V}_\Phi\times\DF(M)\) on which the quadratic form $Q_{\teich}$ defined in~\eqref{eq:QtildeE} is negative definite, and the \emph{two-variable nullity} $\nul_{\teich}(\Phi,g)$ is the dimension of the kernel of $Q_{\teich}$ restricted to \(\mathcal{V}_\Phi\times\DF(M)\). 
\end{definition}

\begin{lemma}
\label{lem:projection_Pi}
Let $\Phi\colon M\to\R^{n+1}$ be nonconstant such that $(\Phi,g)$ is a critical point of $\widetilde{E}$. 
Given $(V,h)\in\mathcal{V}_\Phi\times\mathcal{S}(M)$, let $u$ and $Y$ be as in \eqref{eqn:h-L(Y)-ug} such that $V-d\Phi(Y)$ is $L^2(M,g)$-orthogonal to the subspace $d\Phi(\VConf(M,g))$. 
Then the projection 
\begin{align}\label{eqn:projection_Pi}
\begin{aligned}
\Pi\colon\mathcal{V}_\Phi\times\mathcal{S}(M)
&\to\mathcal{V}_\Phi \times\DF(M)
\\
(V,h) &\mapsto \bigl(V - d\Phi(Y), h - L(Y) - ug\bigr)
\end{aligned}
\end{align}
is a well-defined linear map satisfying
\begin{align}\label{eqn:QTinvariance}
Q_{\teich}\bigl(\Pi(V,h),\Pi(V,h)\bigr)=Q_{\teich}\bigl((V,h), (V,h)\bigr)
\end{align}
for all $(V,h)\in\mathcal{V}_\Phi\times\mathcal{S}(M)$. Moreover
\begin{align}
\label{eq:nultildeE}
\nul_{\teich}(\Phi,g)
&= \dim\VConf(M,g) + \dim\bigl(\ker Q_{\teich} \cap \operatorname{Im}(\Pi)\bigr).
\end{align}
\end{lemma}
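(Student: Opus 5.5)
We split the argument into three parts: well-definedness of $\Pi$, the invariance \eqref{eqn:QTinvariance}, and the nullity formula \eqref{eq:nultildeE}.

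\emph{Well-definedness.} By \eqref{eq:metric_decomp}, the pure-trace part $ug$ and the divergence part $L(Y)$ of $h$ are uniquely determined as orthogonal projections. Lemma~\ref{lem:L_operator} then shows that $Y$ is determined modulo $\VConf(M,g)$. We first check that $d\Phi(Y)\in\mathcal{V}_\Phi$ for $Y\in\Gamma_\partial(TM)$:
\begin{itemize}
\item in the closed case, $\Phi$ takes values in $\Sp^n$, so $d\Phi(Y)$ is tangent to the sphere;
\item in the free boundary case, $Y$ is tangent to $\partial M$ and $\Phi(\partial M)\subset\Sp^n$, so $\brt{d\Phi(Y),\Phi}=0$ on $\partial M$.
\end{itemize}
Fix one choice $Y_0$. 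Every other choice has the form $Y_0+Z$ with $Z\in\VConf$. We must select $Z$ so that $V-d\Phi(Y_0)-d\Phi(Z)\perp d\Phi(\VConf)$, which means taking $d\Phi(Z)$ to be the $L^2$-orthogonal projection of $V-d\Phi(Y_0)$ onto the finite-dimensional space $d\Phi(\VConf)$. This determines $d\Phi(Y)$ uniquely. It determines $Z$ itself uniquely as well, since $d\Phi$ is injective on $\VConf$: a nonconstant weakly conformal harmonic map is a branched immersion with isolated branch points. Linearity of $\Pi$ follows because every step is a linear projection, and $h-L(Y)-ug$ does not depend on the choice within $Y_0+\VConf$.

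\emph{Invariance of $Q_\teich$.} We linearise the symmetries \eqref{eqn:conformal_invariance}--\eqref{eqn:diffeo_invariance} at the critical point $(\Phi,g)$. Conformal invariance gives that $(0,ug)$ lies in the kernel of the bilinear form associated with $Q_\teich$. The reason is that the map $(\Phi,g)\mapsto \widetilde E(\Phi,e^{2u}g)$ is constant along conformal directions, so differentiating once in $u$ and once more in an arbitrary direction, at a critical point, yields $Q_\teich((0,ug),(V,h))=0$. This is also directly visible from \eqref{eq:QtildeE}, since $\brt{d\Phi\otimes dV-\tfrac12\brt{d\Phi,dV}g, ug}=0$ and $\tfrac14(\tr h)^2-\det h$ is the squared norm of the traceless part.

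Diffeomorphism invariance gives $Q_\teich((d\Phi(Y),\mathcal{L}_Yg),\cdot)=0$ for $Y\in\Gamma_\partial(TM)$. Such $Y$ generate flows preserving $M$ and $\partial M$, and at a critical point the second derivative along an orbit direction paired with anything vanishes. Writing $\mathcal{L}_Yg=L(Y)+(\divop_gY)g$ and using the conformal kernel element, we conclude that $(d\Phi(Y),L(Y))$ is also in the kernel. Expanding $Q_\teich$ at $(V,h)=\Pi(V,h)+(d\Phi(Y),L(Y))+(0,ug)$ bilinearly, all cross terms involving kernel vectors vanish, which gives \eqref{eqn:QTinvariance}.

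\emph{Nullity formula.} Restricted to $\mathcal{V}_\Phi\times\DF$, the kernel of $Q_\teich$ consists of those $(V,h)$ in this space that are $Q_\teich$-orthogonal to $\mathcal{V}_\Phi\times\DF$. By the invariance just shown, together with surjectivity of $\Pi$ up to the symmetry directions, this is the same as being orthogonal to all of $\mathcal{V}_\Phi\times\mathcal{S}$. The space $\mathcal{V}_\Phi\times\DF$ splits as $\operatorname{Im}\Pi\oplus\{(d\Phi(Z),0)\mid Z\in\VConf\}$, and the second summand lies in the kernel because $L(Z)=0$. Its dimension is $\dim\VConf$, again by injectivity of $d\Phi$ on $\VConf$. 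Since the kernel contains the second summand, it splits as that summand plus its intersection with $\operatorname{Im}\Pi$, which yields \eqref{eq:nultildeE}.

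\emph{Main obstacle.} We expect the hardest step to be justifying the linearised diffeomorphism invariance in the free boundary case, where one must verify that boundary terms vanish. The first attempt would be to compute $Q_\teich((d\Phi(Y),\mathcal{L}_Yg),(V,h))$ directly from \eqref{eq:QtildeE} and integrate by parts. The boundary contributions should cancel using $g(Y,\eta)=0$ and the boundary condition \eqref{eqn:fbhm_eq}.
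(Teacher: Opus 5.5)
Your proposal is correct and follows essentially the same route as the paper: you show $d\Phi(Y)\in\mathcal{V}_\Phi$ by tangency and use the orthogonality condition to make $d\Phi(Y)$ unique. You then write $(V,h)-\Pi(V,h)=(d\Phi(Y),\mathcal{L}_Yg)+(0,(u-\divop_gY)g)$ as a sum of symmetry directions in $\ker Q_\teich$, and split $\mathcal{V}_\Phi\times\DF(M)$ into $\operatorname{Im}(\Pi)$ and $d\Phi(\VConf(M,g))\times\{0\}$, using injectivity of $d\Phi$ on $\VConf(M,g)$. The boundary computation you flag as the main obstacle is not needed: flows of $Y\in\Gamma_\partial(TM)$ preserve $\partial M$ and hence the constraint defining $\mathcal{V}_\Phi$, so the abstract orbit argument you already gave covers the free boundary case, as in the paper.
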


\begin{remark}
The first term in \eqref{eq:nultildeE} counts the kernel directions arising from conformal vector fields, while the second term quantifies the remaining ones orthogonal to the symmetries. 
One could use just the second term in \eqref{eq:nultildeE} as definition for $\nul_{\teich}(\Phi,g)$. 
However, including the dimension of $d\Phi\bigl(\VConf(M,g)\bigr)\subset\ker Q_{\ener}$ makes the connection with $\nul_{\ener}(\Phi)$ more straightforward.
\end{remark}

\begin{proof}
Since $\abs{\Phi}=1$ on $M$ in the closed case, respectively on $\partial M$ in the free boundary case, and since $Y$ is tangent to $\partial M$, we have $\brt{d\Phi(Y),\Phi}=\frac{1}{2}Y\abs{\Phi}^2=0$ wherever the constraint in the definition of $\mathcal{V}_\Phi$ is imposed, hence $d\Phi(Y)\in\mathcal{V}_\Phi$. 
Moreover, $u=\frac{1}{2}\tr_gh$ is uniquely determined, $Y$ is unique up to $\ker L=\VConf(M,g)$, and the orthogonality condition determines $d\Phi(Y)$ uniquely, so $\Pi$ is well-defined and linear. 
Given any $(V,h)\in\mathcal{V}_\Phi\times \mathcal{S}(M)$ we have, by definition, 
\begin{align*}
(V,h)-\Pi(V,h) &= \bigl(d\Phi(Y), L(Y) + ug\bigr)
\\
&=\bigl(d\Phi(Y), \mathcal{L}_Y g\bigr) + \bigl(0, (u - \divop_g Y)g\bigr)
\in\ker Q_{\teich}
\end{align*}
due to the symmetries \eqref{eqn:conformal_invariance} and \eqref{eqn:diffeo_invariance} of $\widetilde{E}$. 
(Indeed, the first summand is the tangent to the orbit of the diffeomorphism group and 
the second summand is a pure trace term corresponding to an infinitesimal conformal change.)
Since $Q_\teich$ is a symmetric bilinear form, claim \eqref{eqn:QTinvariance} follows.

To prove the nullity formula \eqref{eq:nultildeE}, let $(V,h)$ be any element of the kernel of $Q_{\teich}$ restricted to the subspace $\mathcal{V}_\Phi\times \DF(M)$.  
Since both $(V,h)$ and $\Pi(V,h)$ have metric components in $\DF(M)$, their difference $L(Y) + ug$ must also lie in $\DF(M)$. 
However, the operator $L$ defined in Lemma~\ref{lem:L_operator} maps into the subspace $\Div \oplus \PTr$, which is $L^2$-orthogonal to $\DF(M)$. 
Thus, $L(Y) + ug = 0$, which implies $Y\in\VConf(M,g)$, as in the proof of Lemma \ref{lem:L_operator}. Thus, 
\((V,h)-\Pi(V,h)=(d\Phi(Y), 0)\),
which means that $(V,h)$ can be uniquely decomposed into a summand generated by a conformal vector field and its image under $\Pi$. 
Since $\Phi$ is a branched minimal immersion and non-zero conformal vector fields vanish only at isolated points, $d\Phi$ is injective on $\VConf(M,g)$, and hence the space of variations $(d\Phi(Y), 0)$ with $Y\in\VConf(M,g)$ has dimension $\dim\VConf(M,g)$.
Furthermore, this space is disjoint from the image of $\Pi$ because $\Pi$ requires the vector field component to be $L^2$-orthogonal to $d\Phi(\VConf(M,g))$. 
The total dimension of the kernel is therefore the sum of the dimension of the space of conformal variations and the dimension of the kernel within the image of $\Pi$, as claimed.
\end{proof}

\subsection{Relation with the area index}
\label{sub:area}

In the setting of Definition~\ref{defn:closed_case}, let $\Phi\colon M\to\Sp^n$ be an unbranched minimal immersion with the induced metric $g = \Phi^*g_{\Sp^n}$. 
The second variation of area with respect to normal variations, i.\,e.~sections $V \in \Gamma(N\Phi)$ of the normal bundle along $\Phi$, is given by the area quadratic form 
\begin{align}\label{eqn:area-index-form-closed}
Q_\area(V,V)\vcentcolon= \int_{(M,g)} \Bigl( \abs{\nabla^\perp V}^2 - \abs{\brt{\II(\cdot,\cdot),V}}^2 - 2\abs{V}^2 \Bigr),
\end{align}
where $\nabla^\perp$ is the connection on the normal bundle and $\II$ denotes the second fundamental form. 
The constant $2$ in \eqref{eqn:area-index-form-closed} arises from the trace of the ambient curvature tensor of $\Sp^n$ over the two-dimensional tangent space of $M$. 

In the setting of Definition~\ref{defn:free_boundary_case}, let $\Phi\colon M\to\B^{n+1}$ be an unbranched free boundary minimal immersion with the induced metric $g = \Phi^*g_{\B^{n+1}}$. 
Since the orthogonal boundary contact implies that the outward unit conormal satisfies $\eta=\Phi$ along $\partial M$, any normal variation $V\in\Gamma(N\Phi)$ satisfies $\brt{V,\Phi}=0$ along $\partial M$ and is thus tangent to the boundary sphere. 
The second variation of area is then given by the area quadratic form 
\begin{align}\label{eqn:area-index-form-fb}
Q_\area(V,V)\vcentcolon=\int_{(M,g)}\Bigl(\abs{\nabla^\perp V}^2 - \abs{\brt{\II(\cdot,\cdot),V}}^2 \Bigr) - \int_{\partial M} \abs{V}^2.
\end{align}
The boundary integral in \eqref{eqn:area-index-form-fb} arises from the second fundamental form of $\Sp^n$.

\begin{definition}[Area index and nullity]
\label{defn:area_index}
The \emph{area index} $\ind_\area(\Phi)$ is the maximal dimension of a subspace of $\Gamma(N\Phi)$ on which $Q_\area$ is negative definite, and the \emph{area nullity} $\nul_\area(\Phi)$ is the dimension of the kernel of $Q_\area$. 
Elements of that kernel are called \emph{Jacobi fields} along $\Phi$. 
\end{definition}

Given any variation field $V\in\mathcal{V}_\Phi$, let $\Phi_t\colon M\to\R^{n+1}$ be a smooth family of maps satisfying the relevant constraints of Definition~\ref{defn:closed_case} respectively Definition~\ref{defn:free_boundary_case} with $\Phi_0 = \Phi$ and $\frac{\partial}{\partial t}\big\vert_{t=0}\Phi_t = V$. 
The variation of the induced metric $g_t \vcentcolon= \Phi_t^* g_{\R^{n+1}}$ and its trace are then given by  
\begin{align}\label{eqn:variation_induced_metric}
\tfrac{d}{dt}\big\vert_{t=0} g_t
&= dV \otimes d\Phi + d\Phi \otimes dV,
\\[1ex]\notag
\tr_g(dV \otimes d\Phi) 
&= \brt[\big]{dV(E_1), d\Phi(E_1)} + \brt[\big]{dV(E_2), d\Phi(E_2)} 
= \brt{d\Phi, dV}_g,
\end{align}
where $\{E_1, E_2\}$ denotes a local $g$-orthonormal basis for the tangent space of $M$. 
We denote the trace-free part of the metric variation \eqref{eqn:variation_induced_metric} by  
\begin{align}
\label{eq:eta_V}
h_V\vcentcolon= dV \otimes d\Phi + d\Phi \otimes dV - \brt{d\Phi, dV}_g \, g.
\end{align}  
Geometrically, $h_V$ measures the infinitesimal change in the conformal structure of $(M,g)$ induced by the variation $V\in\mathcal{V}_\Phi$. 
We denote the projection of $V$ onto the normal bundle of $M$ by $V^\perp$.

\begin{lemma}
\label{lem:QA_minimization}
For any vector field $V\in\mathcal{V}_\Phi$ and any trace-free metric variation $h\in\mathcal{S}(M)$
\begin{align}\notag
Q_\area(V^\perp, V^\perp)&\leq Q_{\teich}\bigl((V,h), (V,h)\bigr),
\intertext{with equality if and only if $h=h_V$ defined in \eqref{eq:eta_V}. 
The value at  $h=h_V$ relates to the energy index form via
}
\label{eq:QE_minus_hV}
Q_{\teich}\bigl((V,h_V), (V,h_V)\bigr) 
&= Q_{\ener}(V,V) - \smash{\frac{1}{2}\int_M \abs{h_V}_g^2.}
\end{align}   
\end{lemma}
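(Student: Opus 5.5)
The plan is to view $Q_\teich((V,h),(V,h))$, for fixed $V$, as a quadratic polynomial in the trace-free variation $h$ and to complete the square.

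\textbf{Step 1: the $h$-quadratic term.} For trace-free $h$ we have $\tr_g h=0$ and, in dimension two, $-\det_g(h)=\tfrac12\abs{h}_g^2$. The conformality of $\Phi$ with respect to the induced metric gives $\abs{d\Phi}_g^2=2$. Hence the last term of \eqref{eq:QtildeE} equals $\tfrac12\int_M\abs{h}_g^2$.

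\textbf{Step 2: the cross term.} Since $h$ is trace free, the pure-trace part of $d\Phi\otimes dV-\tfrac12\brt{d\Phi,dV}g$ pairs to zero with $h$. Because $h$ is symmetric, the pairing only sees the symmetrisation, which is $\tfrac12 h_V$. The cross term is therefore $-\int_M\brt{h_V,h}_g$.

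\textbf{Step 3: completing the square.} Combining Steps 1 and 2,
\[
Q_\teich\bigl((V,h),(V,h)\bigr)=Q_\ener(V,V)+\tfrac12\int_M\abs{h-h_V}_g^2-\tfrac12\int_M\abs{h_V}_g^2 .
\]
This expression is minimised over trace-free $h$ exactly at $h=h_V$, which is itself trace free by construction. Evaluating at $h=h_V$ gives \eqref{eq:QE_minus_hV} directly.

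\textbf{Step 4: the area identity.} It remains to prove
\[
Q_\ener(V,V)-\tfrac12\int_M\abs{h_V}_g^2=Q_\area(V^\perp,V^\perp).
\]
This is the main obstacle and the classical Ejiri--Micallef/Moore identity. I would prove it in two parts.

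\emph{Tangential part.} Write $V=V^\perp+d\Phi(Y)$. The tangential field $Y$ is tangent to $\partial M$ in the free boundary case, because $\eta=\Phi$ there and $V\in\mathcal{V}_\Phi$. Then $(V,h_V)$ and $(V^\perp,h_{V^\perp})$ differ by $(d\Phi(Y),\mathcal{L}_Yg-(\divop_gY)g)$ plus pure-trace terms, since $h_{d\Phi(Y)}$ is the trace-free part of $\mathcal{L}_Yg$. Lemma~\ref{lem:projection_Pi} (the symmetry/kernel argument behind \eqref{eqn:QTinvariance}) shows such directions lie in $\ker Q_\teich$. Hence $Q_\teich((V,h_V),(V,h_V))=Q_\teich((V^\perp,h_{V^\perp}),(V^\perp,h_{V^\perp}))$, and we reduce to normal $V$.

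\emph{Normal part.} For normal $V$, decompose $dV=\nabla^\perp V+(dV)^\top$ and use the Weingarten relation $\brt{dV(X),d\Phi(Z)}=-\brt{\II(X,Z),V}$. This makes $h_V=-2\brt{\II,V}$, which is already trace free by minimality, so $\tfrac12\abs{h_V}^2=2\abs{\brt{\II,V}}^2$. It also gives $\abs{dV}^2=\abs{\nabla^\perp V}^2+\abs{\brt{\II,V}}^2$, up to the sphere-normal component $-\brt{V,\Phi}$-type terms in the closed case. Those terms vanish for $V$ tangent to $\Sp^n$ along $\Phi$ but shift the potential: $q=\abs{d\Phi}^2=2$ reproduces the $-2\abs{V}^2$ in \eqref{eqn:area-index-form-closed}. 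In the free boundary case $\sigma=1$ reproduces the boundary term of \eqref{eqn:area-index-form-fb}. Subtracting yields exactly $Q_\area(V^\perp,V^\perp)$.

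The delicate points are the sign and normalisation bookkeeping in the cross term, with the factor $2$ from symmetrisation, and verifying that the tangential reduction respects the boundary constraints.
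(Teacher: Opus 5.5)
Your proposal is correct. Steps~1--3 coincide with the paper's argument: the identities $\abs{d\Phi}_g^2=2$ and $\tfrac14(\tr_g h)^2-\det_g h=\tfrac12\abs{h}_g^2$, the cross term $-\int_M\brt{h_V,h}_g$, and completing the square. Combined with Step~4 they give
\[
Q_\teich((V,h),(V,h))=Q_\area(V^\perp,V^\perp)+\tfrac12\int_M\abs{h-h_V}_g^2,
\]
which is the inequality together with its equality case.

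The difference lies in Step~4. The paper justifies $Q_\area(V^\perp,V^\perp)=Q_\teich((V,h_V),(V,h_V))$ only conceptually. It uses three facts: area is the minimum of $\widetilde E(\Phi,\cdot)$ over metrics; the minimising metric variation is $h_V$; and diffeomorphism invariance discards the tangential part. It then cites Moore's equation~(28) for the actual identity, remarking only that the boundary term matches because $\sigma=1$.

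You instead verify the identity directly.
\begin{itemize}
\item \emph{Tangential part.} You use $h_{d\Phi(Y)}=L(Y)$, which holds exactly, so the extra ``pure-trace terms'' are not needed. You combine this with the kernel directions from Lemma~\ref{lem:projection_Pi}, which is legitimate since $Y\in\Gamma_\partial(TM)$ because $\eta=\Phi$ on $\partial M$.
\item \emph{Normal part.} For normal $V$ you compute with the Weingarten relation.
\end{itemize}
One point of your closed-case bookkeeping can be stated more cleanly. For $V$ normal to $M$ and tangent to $\Sp^n$, one has $\brt{dV(X),\Phi}=-\brt{V,d\Phi(X)}=0$. Hence $\abs{dV}^2=\abs{\nabla^\perp V}^2+\abs{\brt{\II,V}}^2$ holds exactly, and nothing shifts the potential: the term $-2\abs{V}^2$ simply comes from $q=2$ in $Q_\ener$.

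What each route buys: yours is self-contained, treats the free boundary term explicitly, and stays within the purely Riemannian framework the paper advertises, without the min-over-metrics principle or the external citation. The paper's route is shorter and explains conceptually why $h_V$, the trace-free part of the induced metric variation, is the minimiser.
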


\begin{proof}
Since $\Phi$ is a minimal immersion and $g$ the induced metric on $M$, we have $\abs{d\Phi}_g^2=2$. 
Since $h$ is trace-free, $\frac{1}{4}(\tr_g h)^2-\det_g(h)=\frac{1}{2}\abs{h}_g^2$ and the second integral in \eqref{eq:QtildeE} becomes
\begin{align*}
\frac{1}{2}\int_{(M,g)} \abs{d\Phi}_g^2 \Bigl( \frac{1}{4}(\tr_g h)^2 - \det\nolimits_g(h) \Bigr)= \frac{1}{2} \int_{(M,g)} \abs{h}_g^2
\end{align*}
while the first integral in \eqref{eq:QtildeE}, i.\,e.~the cross-term, is 
\begin{align*}
-2\int_{(M,g)} \brt[\Big]{d\Phi\otimes dV-\frac{1}{2}\brt{d\Phi,dV}g,h}_g  
=-\int_{(M,g)} \brt{h_V, h}_g.
\end{align*}
Combining these, the two-variable quadratic form is
\begin{align*}
Q_{\teich}\bigl((V,h), (V,h)\bigr) 
= Q_{\ener}(V,V) + \int_{(M,g)} \Bigl( \frac{1}{2}\abs{h}_g^2 - \brt{h_V,h}_g \Bigr). 
\end{align*}
Completing the square 
\(
\frac{1}{2}\abs{h}_g^2 - \brt{h_V, h}_g = \frac{1}{2}\abs{h - h_V}_g^2 - \frac{1}{2}\abs{h_V}_g^2
\)
in the integrand, it is evident that this quantity is minimised (for fixed $V$) by the trace-free tensor $h = h_V$, and the minimum value is \eqref{eq:QE_minus_hV}.
To prove the equality 
\[
Q_\area(V^\perp, V^\perp) = Q_{\teich}\bigl((V,h_V), (V,h_V)\bigr)
\]
we recall that the area functional can be recovered as the minimum of $E_{\tilde g}(\Phi)$ over all metrics $\tilde{g}$. 
For a critical point $(\Phi, g)$, area coincides with $E_g(\Phi)$ and the second variation $Q_\area(V,V)$ of area coincides with the second variation of the two-variable energy $\widetilde{E}$ evaluated at the pair $(V,h)$, where $h$ is a metric variation that minimises the energy quadratic form for fixed~$V$. 
As shown above, we may conclude $h=h_V$. 
Since $\widetilde{E}$ is invariant under the action of $\Diff(M)$, the result depends only on the normal component $V^\perp$. 
In the free boundary case, the boundary term $-\int_{\partial M}\abs{V}^2$ in \eqref{eqn:area-index-form-fb} is already accounted for, since $Q_\ener$ carries the same term via $\sigma=\abs{\partial_\eta\Phi}=1$ in \eqref{eqn:energy-index-form}. 
For the remaining identity we refer to \cite[equation~(28)]{Moore2007}.  
\end{proof} 

The following theorem recovers the equivalence underlying \cite[equation~(28)]{Moore2007} in the closed case, while the free boundary case is new: 
it relies on the tangency condition $g(Y,\eta)=0$ built into  \eqref{eqn:partialTM} and the boundary condition $h(\eta,\cdot)|_{T\partial M}=0$ built into $\DF$, both introduced in Section~\ref{sub:teich}.

\begin{theorem} 
\label{thm:AtildeE}
Let $\Phi$ be a minimal immersion as in Definitions~\ref{defn:closed_case}--\ref{defn:free_boundary_case}, and let $g$ be the induced metric. 
Then 
\begin{align*}
\ind_\area(\Phi)&=\ind_{\teich}(\Phi,g),  & \\
\nul_\area(\Phi)+\dim\VConf(M,g)&=\nul_{\teich}(\Phi,g).
\end{align*}
\end{theorem}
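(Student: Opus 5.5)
We plan to compare the two quadratic forms through two linear maps between $\Gamma(N\Phi)$ and the slice $\mathcal{V}_\Phi\times\DF(M)$. The first goes from normal sections to the slice: given $V\in\Gamma(N\Phi)\subset\mathcal{V}_\Phi$, set $\Theta(V)\vcentcolon=\Pi(V,h_V)$. Note that $h_V$ is trace-free, so Lemma~\ref{lem:projection_Pi} and Lemma~\ref{lem:QA_minimization} give
\[
Q_\teich\bigl(\Theta(V),\Theta(V)\bigr)=Q_\teich\bigl((V,h_V),(V,h_V)\bigr)=Q_\area(V,V).
\]
The second goes from the slice to normal sections: $(V,h)\mapsto V^\perp$. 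For $h\in\DF(M)$ trace-free, Lemma~\ref{lem:QA_minimization} gives $Q_\area(V^\perp,V^\perp)\leq Q_\teich((V,h),(V,h))$.

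\textbf{Index inequalities.} To get $\ind_\teich\leq\ind_\area$, take a subspace $W\subset\mathcal{V}_\Phi\times\DF(M)$ on which $Q_\teich$ is negative definite. Then $Q_\area(V^\perp,V^\perp)<0$ for every $0\neq(V,h)\in W$. We must check that $(V,h)\mapsto V^\perp$ is injective on $W$. Suppose $V^\perp=0$. Then $V=d\Phi(Y)$ for some tangential $Y$, and $Y\in\Gamma_\partial(TM)$ because $V\in\mathcal{V}_\Phi$ and $\eta=\Phi$ on $\partial M$. We then apply $\Pi$ to $(V,h)$ in its own decomposition $h\in\DF$. Since $(d\Phi(Y),\mathcal{L}_Yg)\in\ker Q_\teich$, the pair $(V,h)$ is $Q_\teich$-equivalent to $(0,h-L(Y)-(\divop_gY)g)$, and $Q_\teich$ of that pair is $\tfrac12\int|h-L(Y)|^2\geq0$ (with $u$ the trace part). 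This contradicts negativity unless the vector is zero, so the map is injective. For the reverse inequality $\ind_\area\leq\ind_\teich$, take a negative subspace $U\subset\Gamma(N\Phi)$ and map it by $\Theta$. $\Theta$ is injective on $U$ because $Q_\teich(\Theta V,\Theta V)=Q_\area(V,V)<0$ for $V\neq0$. Linearity of $\Theta$ follows from the linearity of $V\mapsto h_V$ and of $\Pi$.

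\textbf{Nullity.} By \eqref{eq:nultildeE} it suffices to show that $\ker Q_\area\cong\ker Q_\teich\cap\operatorname{Im}(\Pi)$. The plan is to show that $\Theta$ restricts to an isomorphism between $\ker Q_\area$ and $\ker(Q_\teich|_{\mathcal{V}_\Phi\times\DF})\cap\operatorname{Im}\Pi$, with inverse $(V,h)\mapsto V^\perp$.

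First, $\Theta$ maps the kernel into the kernel. We polarise the identity $Q_\teich(\Theta V,\Theta V)=Q_\area(V,V)$ together with the inequality $Q_\area(W^\perp)\leq Q_\teich(W,k)$. Since the minimiser is $h=h_V$, minimising over $h$ commutes with taking the kernel: if $V$ is $Q_\area$-null, then for any $(W,k)$ in the slice and any $t$,
\[
Q_\teich\bigl(\Theta V+t(W,k)\bigr)\geq Q_\area\bigl((V+tW)^\perp\bigr)=t^2Q_\area(W^\perp)\cdot(\ldots),
\]
and the first-order term in $t$ must vanish. Some care is needed here, and we would also use that $Q_\teich$ is constant along $\ker$-directions of the symmetries.

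Conversely, if $(V,h)$ is $Q_\teich$-null in the slice, then minimising over $h$ shows $h=h_V$ up to a correction in $\Div\oplus\PTr$. Here we use that $Q_\teich$ restricted to the $h$-directions is the positive form $\tfrac12\int|h|^2$, so a kernel element must satisfy $h=$ the $\DF$-projection of $h_V$. From this, $V^\perp$ is $Q_\area$-null.

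Injectivity in both directions follows as in the index argument: $V^\perp=0$ forces $(V,h)$ into the symmetry directions, which are excluded by the orthogonality to $d\Phi(\VConf)$ built into $\operatorname{Im}\Pi$.

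\textbf{Main obstacle.} The hard part is the free boundary case. We must verify that every tangential part $V^\top=d\Phi(Y)$ of some $V\in\mathcal{V}_\Phi$ has $Y\in\Gamma_\partial(TM)$, so that the diffeomorphism symmetry \eqref{eqn:diffeo_invariance} applies within the constraint set. We must also verify that the decomposition \eqref{eq:metric_decomp} with the boundary condition $h(\eta,\cdot)|_{T\partial M}=0$ is exactly what makes the cross term $\int\langle h_V,h\rangle$ see only the $\DF$-projection of $h_V$, with no boundary contributions. We would check this by integrating by parts $\int\langle L(Y),k\rangle=-2\int\langle Y,\divop k\rangle+\int_{\partial M}2k(\eta,Y)$, which vanishes for $k\in\DF$ and $Y$ tangent to $\partial M$.
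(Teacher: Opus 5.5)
Your index argument is the paper's. You use the same two maps, $V\mapsto\Pi(V,h_V)$ and $(V,h)\mapsto V^\perp$, with Lemmata~\ref{lem:projection_Pi} and~\ref{lem:QA_minimization} supplying the signs. Your injectivity check via $\tfrac12\int\abs{h-L(Y)}_g^2$ is correct, though it is more than needed: $0=Q_\area(0,0)\leq Q_\teich\bigl((V,h),(V,h)\bigr)<0$ already gives a contradiction.

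For the nullity you take a genuinely different route. The paper never compares kernels directly. It compares maximal nonpositive subspaces in both directions ($P(X_\area^{\leq0})$ and the image of $X_\teich^{\leq0}$ under $(V,h)\mapsto V^\perp$) and then subtracts the already proven index equality. So it only ever uses the quadratic relations and the injectivity arguments from the index part. You instead aim for an explicit isomorphism $\ker Q_\area\to\ker Q_\teich\cap\operatorname{Im}(\Pi)$.

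Your forward inclusion is fine once the garbled display is read as $Q_\area(U+tW^\perp,U+tW^\perp)=t^2Q_\area(W^\perp,W^\perp)$. The converse, however, is only asserted. Knowing that $h$ is the $\DF$-projection of $h_V$ does not by itself show $V^\perp\in\ker Q_\area$. You can close this with the same first-variation trick you already used, applied to arbitrary normal $U$ rather than only null ones. For $U\in\Gamma(N\Phi)$ and $(W,k)\in\mathcal{V}_\Phi\times\DF(M)$, set
\[
F(t)\vcentcolon=Q_\teich\bigl(\Theta U+t(W,k),\Theta U+t(W,k)\bigr)-Q_\area\bigl(U+tW^\perp,U+tW^\perp\bigr).
\]
This function is nonnegative by Lemma~\ref{lem:QA_minimization}, since the normal part of the first component is $U+tW^\perp$ and the metric component is trace-free. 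It also vanishes at $t=0$. Hence $F'(0)=0$, that is,
\[
Q_\teich\bigl(\Theta U,(W,k)\bigr)=Q_\area(U,W^\perp).
\]
This single identity gives both inclusions: take $U\in\ker Q_\area$ for one, and $(W,k)$ in the kernel of $Q_\teich$ on the slice for the other. The computation of $h$ then becomes unnecessary. Injectivity follows as you indicate, since $(\Theta U)^\perp=U$ and kernel elements with $V^\perp=0$ are forced into $d\Phi(\VConf)$.

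What your route buys is that the isomorphism is the explicit equivariant map $\Theta=P$. The paper has to recover exactly this separately in the proof of Theorem~\ref{thm:equivariant_area_bounds}, via Lemma~\ref{lem:class_well_defined} and polarisation, to obtain $P(\ker Q_\area)=K$. The paper's route avoids any bilinear or variational argument at this stage, at the cost of that later extra step.
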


\begin{proof}  
To prove $\ind_\area(\Phi)\leq\ind_{\teich}(\Phi,g)$, 
let $X_\area^-\subset\Gamma(N\Phi)$ be any subspace on which $Q_\area$ is negative definite. 
Recalling the projection $\Pi\colon\mathcal{V}_\Phi\times\mathcal{S}(M)
\to\mathcal{V}_\Phi\times\DF$ 
from \eqref{eqn:projection_Pi} and the metric variation $h_V$ from \eqref{eq:eta_V}, 
we define a linear map $P\colon X_\area^-\to\mathcal{V}_\Phi\times\DF$ by
\begin{align}\label{eqn:20260226}
P(V)\vcentcolon=\Pi(V, h_V)
\end{align}
and claim that $P$ is injective. 
Indeed, if $P(V)=0$ then the normal vector field $V\in X_\area^-$ is equal to the tangential vector field $d\Phi(Y)$ and thus vanishes. 
By Lemmata~\ref{lem:projection_Pi} and~\ref{lem:QA_minimization}, 
\[
Q_{\teich}\bigl(P(V),P(V)\bigr) = Q_{\teich}\bigl( (V, h_V), (V, h_V) \bigr)
=Q_\area(V,V) < 0
\]
for any $0\neq V\in X_\area^-$ 
and hence $\ind_{\teich}(\Phi,g)\geq\dim X_\area^-$. 
To prove the reverse inequality $\ind_\area(\Phi)\geq\ind_{\teich}(\Phi,g)$, let $X_\teich^-\subset\mathcal{V}_\Phi\times \DF$ be any subspace on which $Q_{\teich}$ is negative definite and $0\neq(V,h)\in X_\teich^-$ arbitrary. 
By Lemma~\ref{lem:QA_minimization} 
\begin{align*}
Q_\area(V^\perp, V^\perp) &\leq Q_{\teich}\bigl((V,h), (V,h)\bigr)<0. 
\end{align*}
In particular, $V^\perp\neq0$ and we obtain $\ind_{\area}(\Phi)\geq\dim X_\teich^-$. 
Hence, $\ind_\area(\Phi)=\ind_{\teich}(\Phi,g)$ as claimed. 
For the nullity formula, by \eqref{eq:nultildeE}, it suffices to prove 
\begin{align*}
\nul_{\area}(\Phi) = \dim\bigl( \ker Q_{\teich} \cap \operatorname{Im}(\Pi) \bigr).
\end{align*}
Let $X_\area^{\leq0}\subset\Gamma(N\Phi)$ be a maximal subspace on which $Q_\area$ is nonpositive. 
We extend the map defined in \eqref{eqn:20260226} to $P\colon X_\area^{\leq0}\to\mathcal{V}_\Phi\times \DF$. 
Then $P$ is still injective by the same argument as above, and Lemmata~\ref{lem:projection_Pi} and~\ref{lem:QA_minimization} yield 
\[
Q_{\teich}\bigl(P(V),P(V)\bigr)=Q_{\teich}\bigl( (V, h_V), (V, h_V) \bigr)
=Q_\area(V,V) \leq 0
\]
for any $V\in X_\area^{\leq0}$. 
Thus, $Q_\teich$ is nonpositive on the subspace $P(X_\area^{\leq 0})\subset\operatorname{Im}(\Pi)$ which implies
\[
\ind_{\teich}(\Phi,g) + \dim\bigl( \ker Q_{\teich} \cap \operatorname{Im}(\Pi) \bigr) \geq \dim X_\area^{\leq0} = \ind_{\area}(\Phi) + \nul_{\area}(\Phi).
\]
Since we proved $\ind_{\teich}(\Phi,g) = \ind_{\area}(\Phi)$ in the first part, $\dim\bigl(\ker Q_{\teich} \cap \operatorname{Im}(\Pi) \bigr) \geq \nul_{\area}(\Phi)$.

Conversely, let $X_{\teich}^{\leq0} \subset \operatorname{Im}(\Pi)$ be a maximal subspace on which $Q_{\teich}$ restricted to $\operatorname{Im}(\Pi)$ is nonpositive. 
We recall from Lemma \ref{lem:projection_Pi} that $\mathcal{V}_\Phi\times \DF$ is the direct sum of $\operatorname{Im}(\Pi)$ and the kernel directions generated by conformal vector fields. 
Hence,   
\[
\dim X_{\teich}^{\leq0} = \ind_{\teich}(\Phi,g) + \dim\bigl( \ker Q_{\teich} \cap \operatorname{Im}(\Pi) \bigr).
\]
We claim that the projection $X_{\teich}^{\leq 0}\to\Gamma(N\Phi)$
given by $(V,h) \mapsto V^\perp$ is injective and that $Q_\area$ is nonpositive on its image.  
Indeed, by Lemma \ref{lem:QA_minimization},  
\begin{align}\label{eqn:20260226-2}
Q_{\area}(V^\perp, V^\perp) \leq Q_{\teich}\bigl((V,h), (V,h)\bigr) \leq 0
\end{align}
for any $(V,h) \in X_{\teich}^{\leq 0}$.  
If $V^\perp = 0$, then $V=d\Phi(Y)$ is a tangential vector field. 
For such $V$, the trace-free part of the induced metric variation coincides with the trace-free Lie derivative of $g$, so $h_V = L(Y)$.  
Moreover, inequality \eqref{eqn:20260226-2} implies $Q_{\teich}((V,h), (V,h)) = 0$, which by Lemma~\ref{lem:QA_minimization} requires $h = h_V$.  
Because $(V,h) \in \operatorname{Im}(\Pi)$, we have $h \in \DF$. 
Since $h = L(Y) \in \Div \oplus \PTr$, the orthogonal decomposition \eqref{eq:metric_decomp} yields $h = 0$ and $L(Y)=0$. 
By Lemma \ref{lem:L_operator}, $Y$ is a conformal vector field. 
However, since $(V,h) \in \operatorname{Im}(\Pi)$, the vector field $V = d\Phi(Y)$ is $L^2$-orthogonal to $d\Phi(\VConf(M,g))$, 
which implies $V = 0$ and thus $(V,h)=0$.
Consequently, 
\[
\ind_{\area}(\Phi) + \nul_{\area}(\Phi)\geq\dim X_{\teich}^{\leq 0} = \ind_{\teich}(\Phi,g) + \dim\bigl( \ker Q_{\teich} \cap \operatorname{Im}(\Pi) \bigr).
\]
Subtracting again $\ind_{\area}(\Phi) = \ind_{\teich}(\Phi,g)$, we obtain $\nul_{\area}(\Phi) \geq \dim\bigl( \ker Q_{\teich} \cap \operatorname{Im}(\Pi)\bigr)$. 
Combined with the reverse inequality proven above, the claim follows. 
\end{proof}

\begin{theorem}
\label{thm:energy_teich_bounds}
Let $\Phi$ be a minimal immersion as in Definitions~\ref{defn:closed_case}--\ref{defn:free_boundary_case}, and let $g$ be the induced metric. 
Then 
\begin{align*}
\ind_{\teich}(\Phi,g) 
&\leq \dim\DF(M) + \ind_\ener(\Phi), 
\\
\ind_{\teich}(\Phi,g) + \nul_{\teich}(\Phi,g) 
&\leq \dim\DF(M) + \ind_\ener(\Phi) + \nul_\ener(\Phi).
\end{align*}
\end{theorem}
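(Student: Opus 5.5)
The plan is to bound $Q_\teich$ on $\mathcal{V}_\Phi\times\DF(M)$ using only the sign of $Q_\ener$ on the first factor. The tool is a codimension count. Consider the linear projection
\[
\pi\colon \mathcal{V}_\Phi\times\DF(M)\to \DF(M)\times X_\ener^-,\qquad (V,h)\mapsto \bigl(h,\,P^-V\bigr),
\]
where $P^-$ denotes the $L^2(M,g)$-orthogonal projection of $\mathcal{V}_\Phi$ onto $X_\ener^-$. Let $X\subset\mathcal{V}_\Phi\times\DF(M)$ be any subspace on which $Q_\teich$ is negative definite. I claim that $\pi$ is injective on $X$, which immediately yields the first inequality $\dim X\leq\dim\DF(M)+\ind_\ener(\Phi)$. (If $\dim\DF(M)=\infty$ there is nothing to prove. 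In the applications it is finite, and I would remark on this.)

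For injectivity, take $(V,h)\in X\cap\ker\pi$. Then $h=0$, and $V$ is $L^2$-orthogonal to $X_\ener^-$, so $Q_\ener(V,V)\geq0$. This uses the standard spectral fact, already invoked in the proofs of Propositions~\ref{prop:index-bound} and~\ref{prop:Arho_lb}, that $Q_\ener$ is nonnegative on the orthogonal complement of its negative eigenspace in $\mathcal{V}_\Phi$. Setting $h=0$ in \eqref{eq:QtildeE} kills the cross term and the quadratic term in $h$, so
\[
Q_\teich\bigl((V,0),(V,0)\bigr)=Q_\ener(V,V)\geq0 .
\]
Negative definiteness on $X$ then forces $(V,h)=0$. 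An equivalent way to phrase this is that $Q_\teich$ is nonnegative on $\ker\pi$, which has codimension at most $\dim\DF+\ind_\ener$.

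For the second inequality I will run the same argument with nonpositive subspaces, using the projection onto $X_\ener^-\oplus X_\ener^0$ in place of $P^-$. Let $X$ be a maximal subspace of $\mathcal{V}_\Phi\times\DF(M)$ on which $Q_\teich\leq0$; its dimension is $\ind_\teich+\nul_\teich$. This uses the characterisation of index plus nullity as the maximal dimension of a nonpositive subspace, which is justified as in the proof of Theorem~\ref{thm:AtildeE} since $Q_\teich$ has discrete spectrum on this space. The projection now goes to $\DF(M)\times(X_\ener^-\oplus X_\ener^0)$. If $(V,h)\in X$ lies in the kernel, then $h=0$ and $V\perp X_\ener^-\oplus X_\ener^0$. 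Hence $Q_\ener(V,V)\geq c\nm{V}_{L^2}^2$ for some $c>0$, namely the first positive eigenvalue of $Q_\ener$. Together with $Q_\teich((V,0),(V,0))=Q_\ener(V,V)\leq0$ this gives $V=0$.

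The main obstacle is functional-analytic rather than algebraic. One has to make sure that the spectral decomposition of $Q_\ener$ on $\mathcal{V}_\Phi$ is complete, so that orthogonality to $X_\ener^-$ (respectively to $X_\ener^-\oplus X_\ener^0$) really gives nonnegativity (respectively a positive lower bound). One also has to make sure that index plus nullity of $Q_\teich$ equals the maximal dimension of a nonpositive subspace. Both points follow from standard elliptic theory for $Q_\ener$ with the boundary conditions of Definitions~\ref{defn:closed_case}--\ref{defn:free_boundary_case}. On the $\DF$ factor, $Q_\teich$ with $V$ fixed is simply a positive multiple of the $L^2$ norm plus a linear term, and $\DF$ is finite-dimensional (Appendix~\ref{sec:decomposition}). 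So $Q_\teich$ is a finite-rank perturbation of $Q_\ener\oplus\tfrac12\nm{\cdot}^2$, which ensures that these variational characterisations hold.
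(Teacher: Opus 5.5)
Your proof is correct and is essentially the paper's argument. The paper intersects a maximal negative definite subspace $X_\teich^-$ (respectively $X_\teich^-\oplus X_\teich^0$) with $\mathcal{V}_\Phi\times\{0\}$, which has codimension $\dim\DF(M)$ and on which $Q_\teich$ restricts to $Q_\ener$, so the intersection has dimension at most $\ind_\ener(\Phi)$ (respectively $\ind_\ener(\Phi)+\nul_\ener(\Phi)$); your injection $(V,h)\mapsto(h,P^-V)$ just packages this codimension count and the spectral bound for $Q_\ener$ into a single map.
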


\begin{proof}
By Definition~\ref{defn:2index}, the two-variable quadratic form $Q_\teich$ is evaluated on the domain $\mathcal{W}\vcentcolon=\mathcal{V}_\Phi\times \DF(M)$. 
By \eqref{eq:QtildeE}, we have 
$Q_\teich\bigl((V,0), (V,0)\bigr) = Q_\ener(V,V)$. 
The restriction of $Q_\teich$ to the subspace $\mathcal{W}_0\vcentcolon=\mathcal{V}_\Phi\times\{0\}\subset\mathcal{W}$ thus coincides with the energy quadratic form.
Let $X_{\teich}^{-}\subset\mathcal{W}$ be a subspace on which $Q_\teich$ is negative definite, chosen such that $\dim X_{\teich}^- = \ind_\teich(\Phi,g)$. 
Since $\dim \DF$ is the codimension of $\mathcal{W}_0$ in $\mathcal{W}$, we have 
\begin{align*} 
\ind_\ener(\Phi)
&\geq\dim(X_{\teich}^{-}\cap \mathcal{W}_0) \geq \dim X_{\teich}^{-} - \dim \DF(M)  
\end{align*}
which proves the first inequality. 
The second inequality follows similarly by replacing $X_{\teich}^{-}$ with $X_{\teich}^{-}\oplus X_{\teich}^{0}$, where $X_{\teich}^{0}\subset \mathcal{W}$ is the kernel of $Q_\teich$ restricted to $\mathcal{W}$.  
\end{proof}

\subsection{Equivariant area index}
\label{sub:equivariant-area}

Given a finite subgroup $G$ of $\Diff(M)$ acting on $(M,g)$ by isometries, let $\Phi\colon M\to\R^{n+1}$ be an unbranched minimal immersion as in Definitions~\ref{defn:closed_case}--\ref{defn:free_boundary_case} which is $G$-equivariant with respect to an orthogonal representation $\rho\colon G \to \Ogroup(n+1)$. 
The normal bundle $\Gamma(N\Phi)$ is a $G$-invariant subspace of the space $\mathcal{V}_\Phi$ of admissible variations. 
Indeed, the orthogonality of the representation $\rho(\gamma)$ and the equivariance of $d\Phi$ ensure that $\rho_\ener$ maps normal variations to normal variations. Thus, we obtain a linear representation $\rho_\area$ on $\Gamma(N\Phi)$ by restricting the representation $\rho_\ener$ on $\mathcal{V}_\Phi$ defined in \eqref{eqn:rho_E}.

\begin{lemma}
\label{lem:QApreserved}
The representation $\rho_\area$ preserves the area quadratic form $Q_\area$ given in \eqref{eqn:area-index-form-closed} respectively \eqref{eqn:area-index-form-fb}.
\end{lemma}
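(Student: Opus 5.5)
We verify directly that each ingredient of $Q_\area$ in \eqref{eqn:area-index-form-closed} respectively \eqref{eqn:area-index-form-fb} transforms naturally under $\rho_\area$, and then change variables, exactly as in the proof of Lemma~\ref{lem:preserved}. Fix $\gamma\in G$ and $V\in\Gamma(N\Phi)$, and set $\tilde V\vcentcolon=\rho_\area(\gamma)V$, i.\,e.\ $\tilde V(x)=\rho(\gamma)V(\gamma^{-1}x)$. The aim is to prove the pointwise identities
\begin{align*}
\abs{\nabla^\perp\tilde V}^2(x)&=\abs{\nabla^\perp V}^2(\gamma^{-1}x), &
\abs{\brt{\II(\cdot,\cdot),\tilde V}}^2(x)&=\abs{\brt{\II(\cdot,\cdot),V}}^2(\gamma^{-1}x), &
\abs{\tilde V}^2(x)&=\abs{V}^2(\gamma^{-1}x).
\end{align*}
Then invariance of the area forms on $M$ and of the length form on $\partial M$ under the isometry $\gamma^{-1}$, which preserves $\partial M$, gives $Q_\area(\tilde V,\tilde V)=Q_\area(V,V)$ in both settings. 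By polarisation the bilinear form is preserved as well.

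The third identity is immediate from $\rho(\gamma)\in\Ogroup(n+1)$. For the first two, I would differentiate the equivariance relation $\Phi\circ\gamma=\rho(\gamma)\Phi$. This gives $d\Phi_{\gamma y}\circ d\gamma_y=\rho(\gamma)\,d\Phi_y$, so $\rho(\gamma)$ maps the tangent plane $d\Phi(T_yM)$ onto $d\Phi(T_{\gamma y}M)$. Being orthogonal, it also maps the normal space $N_y\Phi$ onto $N_{\gamma y}\Phi$, and this is already what makes $\rho_\area$ well defined. The second fundamental form is the normal part of the Euclidean second derivative of $\Phi$. In the sphere case, the extra $\Phi$-component is also equivariant, since $\rho(\gamma)\Phi=\Phi\circ\gamma$. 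Differentiating the equivariance relation twice therefore yields
\[
\II_{\gamma y}(d\gamma\, X,d\gamma\, Y)=\rho(\gamma)\,\II_y(X,Y).
\]
Likewise, the normal connection $\nabla^\perp_XV=(dV(X))^\perp$ satisfies
\[
\nabla^\perp_{d\gamma X}\tilde V=\rho(\gamma)\,\nabla^\perp_X V,
\]
because $d\tilde V_{\gamma y}(d\gamma X)=\rho(\gamma)\,dV_y(X)$ and normal projection commutes with $\rho(\gamma)$. When we take norms with respect to $g$, using that $d\gamma$ is a $g$-isometry and $\rho(\gamma)$ is a Euclidean isometry, and sum over a $g$-orthonormal frame, the first two pointwise identities follow.

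The main point requiring care is this naturality of $\II$ and $\nabla^\perp$. In the spherical case one must check that the normal bundle means the normal bundle in $T\Sp^n$, which is orthogonal to both $d\Phi(TM)$ and $\Phi$. This is again preserved because $\rho(\gamma)$ fixes the relevant splitting via $\rho(\gamma)\Phi(y)=\Phi(\gamma y)$. The constant potential $2$ and the boundary potential $1$ are trivially $G$-invariant, consistent with Lemma~\ref{lem:invariant_potentials}.
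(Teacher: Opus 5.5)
Your proof is correct and follows essentially the same route as the paper. Both arguments rest on the naturality of $\II$ and $\nabla^\perp$ under the equivariant isometric action, the orthogonality of $\rho(\gamma)$, a change of variables by $\gamma^{-1}$, and the invariance of the boundary measure in the free boundary case; you spell out the pointwise identities that the paper states in a single sentence.
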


\begin{proof}
Because $G$ acts on $(M,g)$ by isometries and $\Phi$ is equivariant, the second fundamental form $\II$ and the connection $\nabla^\perp$ on the normal bundle are preserved in the sense that they commute with the group action. 
The invariance of $Q_\area$ then follows from the change of variables formula and the orthogonality of $\rho(\gamma)$. 
In the free boundary case, the invariance of the boundary integral follows additionally from the fact that the isometric action of $G$ maps $\partial M$ to itself and preserves its induced Riemannian boundary measure.
\end{proof}

The action of $G$ on $M$ also induces a natural representation on the space $\mathcal{S}(M)$ of symmetric $(0,2)$-tensors given by $h \mapsto (\gamma^{-1})^* h$ for $\gamma \in G$. 
Because $G$ acts by isometries ($\gamma^* g = g$), this pullback operation commutes with the trace and divergence operators computed with respect to $g$. 
Furthermore, recalling the identity 
\begin{align}\label{eqn:pullbackLie}
\gamma^* (\mathcal{L}_Y g) = \mathcal{L}_{(\gamma^*Y)} \bigl(\gamma^* g\bigr) = \mathcal{L}_{(\gamma^*Y)} g
\end{align}
for the pullback of a Lie derivative for any vector field $Y \in \Gamma_\partial(TM)$, we see that the pure divergence terms are mapped to pure divergence terms. 
Consequently, the $L^2(M,g)$-orthogonal decomposition \eqref{eq:metric_decomp} is preserved by the group action. 
In particular, $\DF(M)$ is a $G$-invariant subspace, and for any class $r \in \IrRep{G}$, we denote its $r$-isotypic component by $\DF^r(M)$ (cf.~Definition~\ref{defn:Isotypic}). 
We can thus define a linear representation $\rho_\teich$ of $G$ on  
$\mathcal{V}_\Phi\times \mathcal{S}(M)$ by 
\begin{align}\label{eqn:rho_T}
\rho_\teich(\gamma)(V,h) 
&\vcentcolon=\bigl( \rho_\ener(\gamma)V,\,(\gamma^{-1})^* h \bigr).
\end{align}

\begin{lemma}
\label{lem:equivariance_teich}
The linear representation $\rho_\teich$ defined in \eqref{eqn:rho_T} satisfies the following properties:
\begin{enumerate}[label={\normalfont(\roman*)}]
\item\label{lem:equivariance_teich-i} 
The quadratic form $Q_\teich$ given in \eqref{eq:QtildeE} is preserved by $\rho_\teich$ in the sense that 
\[
Q_{\teich}\bigl(\rho_\teich(\gamma)(V,h),\,\rho_\teich(\gamma)(V,h)\bigr)=Q_{\teich}\bigl((V,h),\,(V,h)\bigr).
\]
\item\label{lem:equivariance_teich-ii} 
The projection $\Pi\colon\mathcal{V}_\Phi\times\mathcal{S}(M)\to\mathcal{V}_\Phi\times\DF(M)$ defined in \eqref{eqn:projection_Pi} is $G$-equivariant in the sense that 
\(\rho_\teich(\gamma) \circ \Pi = \Pi \circ \rho_\teich(\gamma)\) for all $\gamma \in G$. 
\item\label{lem:equivariance_teich-iii} 
The map $V \mapsto h_V$ defined in \eqref{eq:eta_V} is $G$-equivariant in the sense that
\(h_{(\rho_\ener(\gamma)V)} = (\gamma^{-1})^* h_V \) for all $V\in\mathcal{V}_\Phi$ and all $\gamma\in G$.
\end{enumerate}
\end{lemma}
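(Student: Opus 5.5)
I will verify the three properties directly from the definitions, in the order (iii), (i), (ii), because (iii) is used to handle the cross term in (i) and the uniqueness argument in (ii) relies on (i)-type invariances. Throughout, the basic facts are: $\gamma$ acts by isometries of $(M,g)$; $\Phi\circ\gamma^{-1}=\rho(\gamma)^{-1}\Phi$ (equivalently $\Phi\circ\gamma=\rho(\gamma)\Phi$), so $d\Phi$ is equivariant; and $\rho(\gamma)$ is orthogonal. I write $W\vcentcolon=\rho_\ener(\gamma)V=\rho(\gamma)\,V\circ\gamma^{-1}$.

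For \ref{lem:equivariance_teich-iii}, differentiate the defining relations to get $dW=\rho(\gamma)\,(\gamma^{-1})^*dV$ and $d\Phi=\rho(\gamma)\,(\gamma^{-1})^*d\Phi$, as $\R^{n+1}$-valued $1$-forms. Pairing via the Euclidean inner product and using orthogonality gives $dW\otimes d\Phi=(\gamma^{-1})^*(dV\otimes d\Phi)$, and symmetrically for the other tensor product. Isometry invariance yields $\brt{d\Phi,dW}_g=\brt{d\Phi,dV}_g\circ\gamma^{-1}$ and $(\gamma^{-1})^*g=g$. Inserting these into \eqref{eq:eta_V} gives $h_W=(\gamma^{-1})^*h_V$.

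For \ref{lem:equivariance_teich-i}, treat the three terms of \eqref{eq:QtildeE} separately. The term $Q_\ener(V,V)$ is invariant by Lemma~\ref{lem:preserved}. In the last term, $\abs{d\Phi}_g^2$ is $G$-invariant by Lemma~\ref{lem:invariant_potentials}, while $\tr_g$ and $\det_g$ commute with pullback by isometries; a change of variables $x\mapsto\gamma^{-1}x$ then finishes it. The cross term equals $-2\int\brt{d\Phi\otimes dV-\tfrac12\brt{d\Phi,dV}g,h}_g$. By the computation in (iii), its integrand at $(W,(\gamma^{-1})^*h)$ is the pullback under $\gamma^{-1}$ of the integrand at $(V,h)$, so it is invariant by the same change of variables.

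For \ref{lem:equivariance_teich-ii}, let $(V,h)$ be given and let $u,Y$ be as in \eqref{eqn:h-L(Y)-ug}, with $V-d\Phi(Y)\perp d\Phi(\VConf)$. I claim that for $\rho_\teich(\gamma)(V,h)$ one may take $\tilde u=u\circ\gamma^{-1}$ and $\tilde Y=\gamma_*Y=(\gamma^{-1})^*Y$. The verification has three parts.
\begin{itemize}
\item Since $\gamma$ preserves $\partial M$ and the conformal normal $\eta$, the field $\tilde Y$ lies in $\Gamma_\partial(TM)$. By \eqref{eqn:pullbackLie}, $L(\tilde Y)=(\gamma^{-1})^*L(Y)$.
\item $(\gamma^{-1})^*h-L(\tilde Y)-\tilde u g=(\gamma^{-1})^*(h-L(Y)-ug)$, which lies in $\DF$ because $\DF$ is $G$-invariant.
\item Equivariance of $d\Phi$ gives $d\Phi(\tilde Y)=\rho_\ener(\gamma)\,d\Phi(Y)$. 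Moreover $\VConf$ is $G$-invariant, so $d\Phi(\VConf)$ is $\rho_\ener$-invariant, and $\rho_\ener$ is an $L^2$-isometry. Hence the orthogonality condition is preserved.
\end{itemize}
By the uniqueness established in Lemma~\ref{lem:projection_Pi}, $\Pi$ is well defined, so $\Pi(\rho_\teich(\gamma)(V,h))=\rho_\teich(\gamma)\Pi(V,h)$.

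The main obstacle is the bookkeeping in (ii). The pushforward of vector fields must be matched with the convention $(\gamma^{-1})^*$ on tensors, and the orthogonality normalisation of $Y$ modulo $\VConf$ must be handled; invariance of $\VConf$ and the isometric nature of $\rho_\ener$ are what make the projection equivariant. The remaining steps are routine change-of-variables computations.
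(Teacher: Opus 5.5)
Your proposal is correct and follows essentially the same route as the paper. Both arguments prove invariance of the three terms of $Q_\teich$ from the pointwise relations $d\Phi=\rho(\gamma)\,(\gamma^{-1})^*d\Phi$ and $d(\rho_\ener(\gamma)V)=\rho(\gamma)\,(\gamma^{-1})^*dV$ together with a change of variables. Both prove equivariance of $\Pi$ by transporting the decomposition $h=ug+L(Y)+h_0$ with $(\gamma^{-1})^*$, then checking $d\Phi((\gamma^{-1})^*Y)=\rho_\ener(\gamma)\,d\Phi(Y)$ and that orthogonality to $d\Phi(\VConf(M,g))$ is preserved.

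The differences are cosmetic. You prove (iii) first and reuse it for the cross term, and you phrase (ii) through the uniqueness in Lemma~\ref{lem:projection_Pi}. One small citation point: in the free boundary case, invariance of $\abs{d\Phi}_g^2$ is not literally what Lemma~\ref{lem:invariant_potentials} states, since there $q=0$. It does follow from the same argument, or from your own computation in (iii).
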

 
\begin{proof}
By \eqref{eq:QtildeE} the quadratic form $Q_\teich$ has three terms. 
The first term is $Q_\ener(V,V)$ which is preserved by $\rho_\ener$ (and hence $\rho_\teich$) as stated in Lemma~\ref{lem:preserved}. 
In the second and third terms we apply the change of variables $x \mapsto \gamma^{-1}x$ when integrating over $M$. 
Since $G$ acts on $(M,g)$ by isometries, the trace and determinant operators commute with the pullback by $\gamma^{-1}$, and the volume form is $G$-invariant.   
By equivariance of $\Phi$, we have $\Phi \circ \gamma = \rho(\gamma) \circ \Phi$, so differentiating at $\gamma^{-1}x$ yields 
$d\Phi\vert_x \circ d\gamma\vert_{\gamma^{-1}x} = \rho(\gamma) \circ d\Phi\vert_{\gamma^{-1}x}$ and hence 
\begin{align}\label{eqn:dPhi_pullback}
d\Phi\vert_x &= \rho(\gamma) \circ d\Phi\vert_{\gamma^{-1}x} \circ d(\gamma^{-1})\vert_x.
\intertext{Similarly, for $V\in\mathcal{V}_\Phi$,}
\label{eqn:dV_pullback}
d\bigl(\rho_\ener(\gamma)V\bigr)\vert_x &= \rho(\gamma) \circ dV\vert_{\gamma^{-1}x} \circ d(\gamma^{-1})\vert_x.
\end{align}
Because $\rho(\gamma)\in\Ogroup(n+1)$, the pointwise inner products of the differentials \eqref{eqn:dPhi_pullback}--\eqref{eqn:dV_pullback} coincide with the inner products of $d\Phi$ and $dV$ at $\gamma^{-1}x$ pulled back by $\gamma^{-1}$. 
This completes the proof of \ref{lem:equivariance_teich-i} in view of \eqref{eq:QtildeE}. 
 
To prove \ref{lem:equivariance_teich-ii}, let $(V,h) \in\mathcal{V}_\Phi\times\mathcal{S}(M)$. 
We recall the decomposition $h = u g + L(Y) + h_{0}$ with $h_{0} \in \DF$
and $Y \in \Gamma_\partial(TM)$ from \eqref{eq:metric_decomp}, as well as the maps
\(L(Y)=\mathcal{L}_Y g - (\divop_g Y)g\) and  
\(\Pi(V,h)=\bigl(V - d\Phi(Y), h_0\bigr)\) 
from Lemmata~\ref{lem:L_operator} and \ref{lem:projection_Pi}. 
Pulling back by $\gamma^{-1}$ and using identity \eqref{eqn:pullbackLie}, we obtain
\[
(\gamma^{-1})^* h = (u \circ \gamma^{-1}) g + L\bigl((\gamma^{-1})^* Y\bigr) + (\gamma^{-1})^* h_{0}. 
\]
Since $G$ acts by isometries, $(\gamma^{-1})^* Y$ is the corresponding vector field for the pure divergence part of $(\gamma^{-1})^* h$. 
Applying the projection $\Pi$ to the transformed pair yields
\[
\Pi\Bigl(\rho_\teich(\gamma)(V,h)\Bigr)
=\Pi\Bigl(\rho_\ener(\gamma)V, (\gamma^{-1})^* h\Bigr) 
=\Bigl(\rho_\ener(\gamma)V - d\Phi\bigl((\gamma^{-1})^* Y\bigr), (\gamma^{-1})^* h_{0}\Bigr).
\]
We compare this to $\rho_\teich(\gamma) \Pi(V,h) = \bigl(\rho_\ener(\gamma)(V - d\Phi(Y)), (\gamma^{-1})^* h_{0}\bigr)$. Evaluating the vector field $d\Phi((\gamma^{-1})^* Y)$ at any $x \in M$, we have
\begin{align*}
d\Phi\vert_x\bigl( ((\gamma^{-1})^* Y)(x) \bigr)
&=d\Phi\vert_x \bigl( d\gamma\vert_{\gamma^{-1}x}(Y(\gamma^{-1}x)) \bigr)
= d(\Phi \circ \gamma)\vert_{\gamma^{-1}x}(Y(\gamma^{-1}x)) 
\\
&= d(\rho(\gamma)\Phi)\vert_{\gamma^{-1}x}(Y(\gamma^{-1}x))
= \rho(\gamma) \bigl( d\Phi\vert_{\gamma^{-1}x}(Y(\gamma^{-1}x)) \bigr) 
\\
&= \bigl(\rho_\ener(\gamma)(d\Phi(Y))\bigr)(x).
\end{align*}
Thus, $d\Phi((\gamma^{-1})^* Y) = \rho_\ener(\gamma)(d\Phi(Y))$. Because conformal vector fields are preserved by isometries, the required $L^2$-orthogonality to $d\Phi(\VConf(M,g))$ is preserved as well, which proves \ref{lem:equivariance_teich-ii}.

Finally, \ref{lem:equivariance_teich-iii} follows immediately from the definition of $h_V$ in \eqref{eq:eta_V} together with the pointwise relations \eqref{eqn:dPhi_pullback} and \eqref{eqn:dV_pullback}.  
\end{proof}
 
\begin{lemma}
\label{lem:class_well_defined}
Let $Q$ be a $G$-invariant symmetric bilinear form on a $G$-invariant vector space $W$ with finite index.
Then there exists a $G$-invariant subspace of $W$ of maximal dimension on which $Q$ is negative definite, and any two such subspaces are $G$-equivariantly isomorphic. 
Moreover, every subspace of $W$ which is maximal with respect to inclusion among subspaces on which $Q$ is negative semi-definite contains the kernel of $Q$. 
\end{lemma}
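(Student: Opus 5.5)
The plan is a standard averaging argument combined with a minimax characterisation. Write $k\vcentcolon=\ind(Q)<\infty$ for the maximal dimension of a negative definite subspace of $W$. Since $W$ may be infinite-dimensional and need not carry a spectral decomposition, I will not diagonalise $Q$ on all of $W$. Instead I will work with an arbitrary finite-dimensional negative definite subspace $N\subset W$ of dimension $k$ and make it $G$-invariant.

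To construct a $G$-invariant one, take such an $N$ and let $\widetilde W\vcentcolon=\sum_{\gamma\in G}\gamma N$. This space is finite-dimensional and $G$-invariant because $G$ is finite. Average any fixed inner product on $\widetilde W$ over $G$ to obtain a $G$-invariant inner product, and diagonalise $Q|_{\widetilde W}$ with respect to it. Because both forms are $G$-invariant, the self-adjoint operator representing $Q$ commutes with $G$. Hence its negative eigenspace $\widetilde W^-$ is $G$-invariant, and $Q$ is negative definite on it. By the minimax principle inside $\widetilde W$, $\dim\widetilde W^-\geq\dim N=k$. Maximality gives $\dim\widetilde W^-\leq k$, so $\widetilde W^-$ is the desired subspace.

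For uniqueness up to isomorphism, let $N_1,N_2$ be two $G$-invariant negative definite subspaces of dimension $k$. Set $U\vcentcolon=N_1+N_2$ and choose on it a $G$-invariant inner product as above. Let $U^{\geq0}$ denote the span of the eigenvectors of $Q|_U$ with eigenvalue $\geq0$; it is $G$-invariant, and $U=U^-\oplus U^{\geq0}$ with $\dim U^-=k$ by the same argument. Let $P_-$ be the projection onto $U^-$ along $U^{\geq0}$; it commutes with $G$. Restricted to $N_i$, $P_-$ is injective: if $P_-v=0$ with $v\in N_i$, then $v\in U^{\geq0}$, so $Q(v,v)\geq0$ and therefore $v=0$. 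Dimensions agree, so each $N_i\cong U^-$ equivariantly, and hence $N_1\cong N_2$.

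For the final claim, let $S$ be maximal under inclusion among subspaces on which $Q\leq0$, and let $z\in\ker Q$, meaning $Q(z,\cdot)=0$ on $W$. For $s\in S$ and $t\in\R$ we have $Q(s+tz,s+tz)=Q(s,s)\leq0$, so $Q$ is negative semi-definite on $S+\R z$. Maximality then forces $z\in S$.

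The main obstacle is the first step, because $W$ is infinite-dimensional and no spectral theorem is assumed. The reduction to the finite-dimensional $G$-invariant spaces $\widetilde W$ and $U$ avoids this, and it is the only place where finiteness of $G$ is used.
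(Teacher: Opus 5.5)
Your proof is correct and follows the paper's argument. The existence step and the kernel claim are essentially identical to the paper's: you average $N$ over $G$, take the negative eigenspace of $Q$ on the finite-dimensional sum with respect to a $G$-invariant inner product, and adjoin $\ker Q$ to a maximal semi-definite subspace. The uniqueness step differs only slightly: the paper projects equivariantly onto $X$ along its $Q$-orthogonal complement in all of $W$ (where $Q\geq0$ by maximality), while you project onto the negative eigenspace of $Q|_{N_1+N_2}$. Injectivity follows from the same sign argument in both cases.
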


\begin{proof}
Given any subspace $X\subset W$ of maximal dimension on which $Q$ is negative definite, the finite-dimensional subspace $Z\vcentcolon=\sum_{\gamma\in G}\gamma X$ is $G$-invariant (but $Q$ is not necessarily negative definite on $Z$). 
Let $f$ denote the endomorphism of $Z$ induced by $Q\vert_Z$ with respect to a $G$-invariant inner product on $Z$. 
Since $f$ commutes with the action of $G$, the sum $Z^{-}$ of the eigenspaces of $f$ with negative eigenvalues is $G$-invariant. 
Moreover, $Q$ is negative definite on $Z^{-}$, hence $\dim Z^{-}\leq \dim X$. 
Conversely, $Q$ is nonnegative on the sum $Z^{+}$ of the eigenspaces of $f$ with nonnegative eigenvalues, so $X\cap Z^{+}=\{0\}$, and $X\subset Z=Z^{-}\oplus Z^{+}$ implies $\dim X\leq\dim Z^{-}$. 
This proves existence. 

Let $X,X'\subset W$ be $G$-invariant subspaces of maximal dimension on which $Q$ is negative definite. 
Choosing a basis $b_1,\ldots,b_m$ of $X$ with $Q(b_i,b_j)=-\delta_{ij}$, the linear map $P\colon W\to X$ given by $P(w)\vcentcolon=-\sum_{i=1}^mQ(w,b_i)\,b_i$ restricts to the identity on $X$, and its kernel is the $Q$-orthogonal complement $X^{\perp}$ of $X$. 
Hence $W=X\oplus X^{\perp}$, where $X^{\perp}$ is $G$-invariant because $Q$ and $X$ are, and $Q\geq0$ on $X^{\perp}$ by maximality of $X$. 
Consequently, $P$ is $G$-equivariant, and its restriction to $X'$ is injective. 
Since $\dim X=\dim X'$, this restriction is an equivariant isomorphism. 
If $Q$ is negative semi-definite on a subspace $U\subset W$, then $Q$ is also negative semi-definite on $U+\ker Q$, hence $\ker Q\subset U$ if $U$ is maximal with respect to inclusion.
\end{proof}

By Lemmata~\ref{lem:QApreserved} and~\ref{lem:equivariance_teich}\;\ref{lem:equivariance_teich-i}, the quadratic forms $Q_\area$ on $\Gamma(N\Phi)$ and $Q_\teich$ on $\mathcal{V}_\Phi\times\DF(M)$ are $G$-invariant, and they have the same finite index by Theorem~\ref{thm:AtildeE}.
Hence, by Lemma~\ref{lem:class_well_defined}, the isomorphism class of the representation of $G$ on a $G$-invariant subspace of maximal dimension on which the respective quadratic form is negative definite is well-defined, and so is the isomorphism class of the representation on the kernel of the quadratic form. 
As in Definition~\ref{defn:Isotypic} we denote these isomorphism classes of representations by 
$\Ind_\area(\Phi)$, 
$\Ind_\teich(\Phi,g)$, 
$\Nul_\area(\Phi)$, 
$\Nul_\teich(\Phi,g)$, 
and introduce for any $r \in \IrRep{G}$ the corresponding $r$-isotypic indices 
$\ind_\area^r$, 
$\ind_\teich^r$, and nullities 
$\nul_\area^r$, 
$\nul_\teich^r$.

We denote by $\cA_{\mathrm{tt}}$ respectively $\cA_\VConf$ the isomorphism class of the representation of $G$ on the space $\DF(M)$ of transverse-traceless tensors, 
respectively the space $\VConf(M,g)$ of conformal vector fields. 
The following statement is an equivariant version of Theorems 
\ref{thm:AtildeE} and \ref{thm:energy_teich_bounds}.

\begin{theorem}
\label{thm:equivariant_area_bounds}
Let $\Phi$ be a $G$-equivariant minimal immersion as in Definitions~\ref{defn:closed_case}--\ref{defn:free_boundary_case}, and let $g$ be the induced metric. 
Then  
\begin{align}
\label{thm:equivariant_area_bounds-i}
\Ind_\area(\Phi) 
&=\Ind_{\teich}(\Phi,g), 
&
\Nul_\area(\Phi)\oplus\cA_\VConf 
&=\Nul_{\teich}(\Phi,g). 
\intertext{Furthermore, we have the following ordering of representation classes:} 
\label{thm:equivariant_area_bounds-ii}
\Ind_\area(\Phi) & \leq 
\Ind_\ener(\Phi) \oplus 
\cA_{\mathrm{tt}}, 
&
\Ind_\area(\Phi) \oplus 
\Nul_\area(\Phi) \oplus 
\cA_\VConf & \leq 
\Ind_\ener(\Phi)  \oplus 
\Nul_\ener(\Phi) \oplus 
\cA_{\mathrm{tt}}.
\end{align} 
In particular, for any $r\in\IrRep{G}$, the isotypic indices and nullities satisfy
\begin{align*}
\ind^r_\area(\Phi) &\leq \ind^r_\ener(\Phi) + \dim\DF^r(M), 
\\[1ex]
\ind^r_\area(\Phi) + \nul^r_\area(\Phi) &\leq \ind^r_\ener(\Phi) + \nul^r_\ener(\Phi) + \dim\DF^r(M).
\end{align*}
\end{theorem}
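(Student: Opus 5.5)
We plan to rerun the proofs of Theorems~\ref{thm:AtildeE} and~\ref{thm:energy_teich_bounds}, choosing every subspace $G$-invariant and checking that every auxiliary map is $G$-equivariant. Lemma~\ref{lem:class_well_defined} lets us realise each class $\Ind_\bullet$ and $\Nul_\bullet$ by concrete $G$-invariant subspaces. Throughout, two representations are compared by producing equivariant injections between them, in the sense of Definition~\ref{defn:representation_inequalities}.

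\emph{Step 1: the equalities \eqref{thm:equivariant_area_bounds-i}.}
Take a $G$-invariant maximal negative subspace $X_\area^-\subset\Gamma(N\Phi)$ and the map $P(V)=\Pi(V,h_V)$ from \eqref{eqn:20260226}. By Lemma~\ref{lem:equivariance_teich}\,\ref{lem:equivariance_teich-ii}--\ref{lem:equivariance_teich-iii}, $P$ is $G$-equivariant, and the proof of Theorem~\ref{thm:AtildeE} shows it is injective with $Q_\teich<0$ on its image. Since the dimensions agree by Theorem~\ref{thm:AtildeE}, $P(X_\area^-)$ is a $G$-invariant maximal negative subspace for $Q_\teich$. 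Lemma~\ref{lem:class_well_defined} then gives $\Ind_\area=\Ind_\teich$.

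For the nullity we use the decomposition from Lemma~\ref{lem:projection_Pi}: $\ker Q_\teich$ restricted to $\mathcal{V}_\Phi\times\DF$ splits as $\{(d\Phi(Y),0):Y\in\VConf\}\oplus(\ker Q_\teich\cap\operatorname{Im}\Pi)$. Both summands are $G$-invariant. For the first, $Y\mapsto(d\Phi(Y),0)$ is equivariant by the computation in the proof of Lemma~\ref{lem:equivariance_teich}. For the second, $\Pi$ is equivariant, so $\operatorname{Im}\Pi$ is invariant. The first summand is therefore isomorphic to $\cA_\VConf$.

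It remains to identify $\ker Q_\teich\cap\operatorname{Im}\Pi$ with $\ker Q_\area$ equivariantly. The candidate is $V\mapsto P(V)=\Pi(V,h_V)$ on $\ker Q_\area$, which is equivariant and injective. The delicate point is that $P$ maps $\ker Q_\area$ into $\ker Q_\teich$, not just into the nonpositive cone. We argue via Lemma~\ref{lem:class_well_defined}, and this is the step we expect to be the main obstacle.

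Set $X=X_\area^-\oplus\ker Q_\area$, a $G$-invariant subspace maximal among those on which $Q_\area\le0$. Its image $P(X)$ is a $G$-invariant subspace of $\operatorname{Im}\Pi$ on which $Q_\teich\le0$. By the dimension count in Theorem~\ref{thm:AtildeE}, $P(X)$ has maximal dimension, hence is maximal with respect to inclusion among such subspaces of $\operatorname{Im}\Pi$. Lemma~\ref{lem:class_well_defined}, applied to $Q_\teich$ on $\operatorname{Im}\Pi$, then gives $\ker(Q_\teich|_{\operatorname{Im}\Pi})\subset P(X)$.

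Next, $\ker(Q_\teich|_{\operatorname{Im}\Pi})=\ker Q_\teich\cap\operatorname{Im}\Pi$. This holds because $\mathcal{V}_\Phi\times\DF$ is $\operatorname{Im}\Pi$ plus the conformal kernel directions, which lie in $\ker Q_\teich$.

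We then compare classes of invariant subspaces. The space $P(X)$ is a $G$-invariant semi-negative space containing the kernel, so its negative part can be taken equal to $P(X_\area^-)$ using the $Q_\teich$-orthogonal splitting. As representations, $P(X)\cong\Ind_\teich\oplus(\ker Q_\teich\cap\operatorname{Im}\Pi)$ and $X\cong\Ind_\area\oplus\Nul_\area$. Cancelling the equal classes $\Ind_\area=\Ind_\teich$, which is legitimate by complete reducibility, gives $\Nul_\area=[\ker Q_\teich\cap\operatorname{Im}\Pi]$. Adding $\cA_\VConf$ yields the second equality.

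\emph{Step 2: the orderings \eqref{thm:equivariant_area_bounds-ii}.}
Mimic Theorem~\ref{thm:energy_teich_bounds}. Let $X_\teich^-\subset\mathcal{W}=\mathcal{V}_\Phi\times\DF$ be $G$-invariant and maximal negative, and consider the equivariant projection $\mathrm{pr}_2\colon X_\teich^-\to\DF$. Its kernel $K=X_\teich^-\cap\mathcal{W}_0$ is $G$-invariant, and $Q_\ener=Q_\teich|_{\mathcal{W}_0}$ is negative definite on it. Hence $[K]\le\Ind_\ener$, obtained by projecting $L^2$-orthogonally onto $X_\ener^-$, which is equivariant and injective as in Proposition~\ref{prop:Arho_lb}.

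By complete reducibility, $X_\teich^-\cong K\oplus\mathrm{pr}_2(X_\teich^-)\le\Ind_\ener\oplus\cA_{\mathrm{tt}}$. Combined with Step~1 this proves the first ordering.

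For the second ordering, replace $X_\teich^-$ by $X_\teich^-\oplus X_\teich^0$. Its intersection with $\mathcal{W}_0$ is a $G$-invariant subspace on which $Q_\ener\le0$. It embeds equivariantly into $X_\ener^-\oplus X_\ener^0$ via the $L^2$-projection, which is injective since $Q_\ener>0$ on the orthogonal complement. The class of $X_\teich^-\oplus X_\teich^0$ equals $\Ind_\teich\oplus\Nul_\teich=\Ind_\area\oplus\Nul_\area\oplus\cA_\VConf$ by Step~1.

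Finally, the isotypic inequalities follow by comparing multiplicities of each $r$, using Remark~\ref{rem:classes_vs_spaces}.
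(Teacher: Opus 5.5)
Your proposal is correct. For the index equality and for both orderings in \eqref{thm:equivariant_area_bounds-ii} it is the paper's argument.

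The one divergence is how you identify $K=\ker Q_\teich\cap\operatorname{Im}(\Pi)$ with $\Nul_\area(\Phi)$. The paper settles precisely the point you call delicate. Polarising $Q_\teich(P(V),P(V))=Q_\area(V,V)$ shows that the radical of $Q_\teich$ restricted to $P(X)$ is $P(\ker Q_\area)$. This radical contains $K$, since $K\subset P(X)$ by Lemma~\ref{lem:class_well_defined}. The equality $\dim K=\nul_\area(\Phi)$ then forces $P(\ker Q_\area)=K$, which is an explicit equivariant isomorphism.

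You instead cancel $\Ind_\area=\Ind_\teich$ at the level of classes. That is legitimate, but your claim $P(X)\cong\Ind_\teich\oplus[K]$ skips a step. From $K\subset P(X)\cap\ker Q_\teich$ alone you only get that $K$ lies in the radical of $Q_\teich|_{P(X)}$, not that it equals it. Close this as follows:
\begin{itemize}
\item $P(X_\area^-)\cap K=\{0\}$, since $Q_\teich$ is negative definite on the first space and vanishes on the second.
\item $\dim P(X_\area^-)+\dim K=\ind_\area(\Phi)+\nul_\area(\Phi)=\dim P(X)$, using $\dim K=\nul_\area(\Phi)$ from the proof of Theorem~\ref{thm:AtildeE}.
\end{itemize}
Hence $P(X)=P(X_\area^-)\oplus K$ as $G$-spaces.

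With that splitting in hand, the cancellation is actually superfluous. By polarisation, $P(\ker Q_\area)$ is $Q_\teich$-orthogonal to all of $P(X)$, so it lies in the radical of $Q_\teich|_{P(X)}$. That radical is now exactly $K$, and comparing dimensions gives $P(\ker Q_\area)=K$. You thus recover the paper's concrete isomorphism $P\colon\ker Q_\area\to K$, rather than only the equality of classes that your cancellation yields (which is, admittedly, all the theorem asserts).
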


\begin{proof}
To prove \eqref{thm:equivariant_area_bounds-i}, we recall from \eqref{eqn:20260226} that 
$P\colon\Gamma(N\Phi)\to\mathcal{V}_\Phi\times\DF(M)$ defined by $P(V) = \Pi(V, h_V)$ is an injective linear map. 
By Lemma~\ref{lem:equivariance_teich}\;\ref{lem:equivariance_teich-ii}--\ref{lem:equivariance_teich-iii}, we have 
\[
P\bigl(\rho_\area(\gamma)V\bigr) = \Pi\bigl(\rho_\ener(\gamma)V, h_{\rho_\ener(\gamma)V}\bigr) = \Pi\bigl(\rho_\ener(\gamma)V, (\gamma^{-1})^* h_V\bigr) = \rho_\teich(\gamma) P(V),
\]
which shows that $P$ is also $G$-equivariant. 
Furthermore, Lemma~\ref{lem:QA_minimization} and Lemma~\ref{lem:projection_Pi} ensure that 
\[
Q_\area(V,V) = Q_\teich(P(V), P(V)).
\]
Consequently, $P$ maps a $G$-invariant subspace of maximal dimension on which $Q_\area$ is negative definite equivariantly onto a $G$-invariant subspace on which $Q_\teich$ is negative definite, which has maximal dimension by Theorem~\ref{thm:AtildeE}. 
By Lemma~\ref{lem:class_well_defined}, this proves $\Ind_\area(\Phi)=\Ind_\teich(\Phi,g)$. 

For the nullity, let $K\vcentcolon=\ker Q_\teich\cap\operatorname{Im}(\Pi)$ and let $X_\area^{\leq0}$ be the sum of $\ker Q_\area$ with a maximal negative definite subspace for $Q_\area$. 
By Theorem~\ref{thm:AtildeE}, $P(X_\area^{\leq0})\subset\operatorname{Im}(\Pi)$ has maximal dimension among subspaces of $\operatorname{Im}(\Pi)$ on which $Q_\teich$ is negative semi-definite, and hence contains $K$ by Lemma~\ref{lem:class_well_defined}. 
Since $Q_\teich(P(V),P(W))=Q_\area(V,W)$ for all $V,W\in\Gamma(N\Phi)$ by polarisation, the kernel of the restriction of $Q_\teich$ to $P(X_\area^{\leq0})$ is $P(\ker Q_\area)$, and it contains $K$ because $K\subset P(X_\area^{\leq0})$ lies in the kernel of $Q_\teich$.
Since $\dim K=\nul_\area(\Phi)$ by Theorem~\ref{thm:AtildeE}, we obtain $P(\ker Q_\area)=K$. 
The kernel of $Q_\teich$ restricted to $\mathcal{V}_\Phi\times\DF(M)$ is the direct sum of $K$ and the $G$-invariant subspace $d\Phi(\VConf(M,g))\times\{0\}$ by Lemma~\ref{lem:projection_Pi}, and $d\Phi$ is $G$-equivariant as shown in the proof of Lemma~\ref{lem:equivariance_teich}\,\ref{lem:equivariance_teich-ii}, which yields $\Nul_\area(\Phi)\oplus\cA_\VConf=\Nul_\teich(\Phi,g)$.

To prove the first claim in \eqref{thm:equivariant_area_bounds-ii}, let $\pi\colon\mathcal{V}_\Phi\times\DF(M)\to\DF(M)$ be the ($G$-equivariant) projection onto the second factor, and $X_\teich^-$ a $G$-invariant subspace of maximal dimension on which $Q_\teich$ is negative definite.
Then,
\[
X_\teich^- \cong \ker(\pi|_{X_\teich^-}) \oplus \pi(X_\teich^-).
\]
Elements in $\ker(\pi|_{X_\teich^-})$ are of the form $(V, 0)$ and satisfy $Q_\teich((V, 0), (V, 0)) = Q_\ener(V, V) < 0$. 
The $L^2(M,g)$-orthogonal projection of $\mathcal{V}_\Phi$ onto $X_\ener^-$ is $G$-equivariant by Lemma~\ref{lem:preserved} and injective on $\ker(\pi|_{X_\teich^-})$ because $Q_\ener\geq0$ on the orthogonal complement of $X_\ener^-$, so the representation on $\ker(\pi|_{X_\teich^-})$ is a subrepresentation of $\Ind_\ener(\Phi)$.
Arguing similarly with $\pi(X_\teich^-)\subset\DF(M)$, we obtain 
\[
\Ind_{\teich}(\Phi,g)\leq\Ind_\ener(\Phi) \oplus 
\cA_{\mathrm{tt}}. 
\]
The second claim in \eqref{thm:equivariant_area_bounds-ii} follows by the same argument applied to the $G$-invariant subspace $X_\teich^-\oplus X_\teich^0\subset\mathcal{V}_\Phi\times\DF(M)$, where $X_\teich^0$ denotes the kernel of $Q_\teich$ restricted to $\mathcal{V}_\Phi\times\DF(M)$, using the $L^2(M,g)$-orthogonal projection onto $X_\ener^-\oplus X_\ener^0$, which is injective on subspaces on which $Q_\ener$ is negative semi-definite.
\end{proof}

\subsection{Two-sided surfaces in codimension one}
\label{sub:two-sided}

As before, $\Phi$ denotes a $G$-equivariant minimal immersion of a surface $M$. 
This subsection specialises to the codimension-one case where the target of $\Phi$ is three-dimensional: 
either $\Sp^3$ as in Definition~\ref{defn:closed_case} if $M$ is closed, or $\B^3$ as in Definition~\ref{defn:free_boundary_case} if $M$ has boundary. 
Moreover, we assume that $\Phi$ is \emph{two-sided}, meaning that there exists a global unit normal section $N \in \Gamma(N\Phi)$ along $\Phi(M)$. Any normal variation field $V\in\Gamma(N\Phi)$ is then uniquely given by $V = fN$ for some scalar function $f\in C^\infty(M)$. 
Under this identification, the area quadratic form \eqref{eqn:area-index-form-closed} in the closed case,
respectively \eqref{eqn:area-index-form-fb} in the free boundary case, 
corresponds to the \emph{scalar Jacobi form} $Q_\jacobi(f,f)$ defined on $C^\infty(M)$ by
\begin{align}\label{eqn:jacobi_form_closed}
Q_\jacobi(f,f)&\vcentcolon=\int_{(M,g)} \Bigl( \abs{df}_g^2 - (\abs{\II}^2 + 2)f^2 \Bigr)
\shortintertext{respectively}
\label{eqn:jacobi_form_fb}
Q_\jacobi(f,f)&\vcentcolon=\int_{(M,g)} \Bigl( \abs{df}_g^2 - \abs{\II}^2 f^2 \Bigr) - \int_{\partial M} f^2.
\end{align}
In analogy with Definition~\ref{defn:Isotypic}, we denote the representation classes associated with $Q_\jacobi$ by $\Ind_\jacobi(\Phi)$ and $\Nul_\jacobi(\Phi)$.

Because $\rho(\gamma)$ is an isometry of the ambient space and $N(x)$ is the unit normal at $\Phi(x)$, the vector $\rho(\gamma)N(\gamma^{-1}x)$ is necessarily a unit normal at $\Phi(x)$. 
Since the surface is two-sided, there are only the two choices $\pm N(x)$ for the unit normal at each point. 
The following definition allows us to track whether the group action preserves or reverses the orientation of the normal bundle.

\begin{definition}[Relative orientation representation]
\label{defn:relative_orientation_representation}
In the setting above, the \emph{relative orientation representation} of $\Phi$ is the representation $\epsilon\colon G\to\Ogroup(1)\cong\{\pm1\}$ of degree $1$ defined by 
\(
\epsilon(\gamma) N =\rho_\ener(\gamma) N
\) 
for each $\gamma\in G$, recalling $\rho_\ener$ from \eqref{eqn:rho_E}. 
\end{definition}

\begin{definition} 
\label{defn:tensor_representation}
Given a representation $\rho\colon G\to\Ogroup(V)$ and a representation $\varrho\colon G\to\Ogroup(W)$, we write $\rho\otimes\varrho$ for the representation on $V\otimes W$ defined by $(\rho\otimes\varrho)(\gamma)
\vcentcolon=\rho(\gamma)\otimes\varrho(\gamma)$ 
for all $\gamma\in G$.
For any representation class $\cA = \sum_{r\in\IrRep{G}} m_r r$ and any representation $\varrho$, the tensor product $\cA\otimes\varrho$ denotes the isomorphism class of $\sum_{r\in\IrRep{G}} m_r (r \otimes \varrho)$.
\end{definition}

\begin{proposition}
\label{prop:indJ_indA}
The following identities of representation classes hold:
\begin{align*}
\Ind_\jacobi(\Phi)&=\Ind_\area(\Phi)\otimes\epsilon, &
\Nul_\jacobi(\Phi)&=\Nul_\area(\Phi)\otimes\epsilon.
\end{align*}
In particular, for any $r\in\IrRep{G}$, we have 
$\ind_\jacobi^r(\Phi)=\ind_\area^{r\otimes\epsilon}(\Phi)$ and 
$\nul_\jacobi^r(\Phi)=\nul_\area^{r\otimes\epsilon}(\Phi)$.
\end{proposition}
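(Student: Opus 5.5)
The plan is to construct an explicit linear isomorphism $T\colon C^\infty(M)\to\Gamma(N\Phi)$, $T(f)\vcentcolon= fN$, and to check how it intertwines the representations. By definition of the scalar Jacobi form, $Q_\jacobi(f,f)=Q_\area(fN,fN)$, so $T$ carries $Q_\jacobi$ to $Q_\area$. The only issue is that $T$ is not $G$-equivariant for $\rho_\spec$ and $\rho_\area$. Instead, it twists them by the relative orientation character $\epsilon$.

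The key computation uses the definitions \eqref{eqn:rho_S}, \eqref{eqn:rho_E} and Definition~\ref{defn:relative_orientation_representation}:
\[
\bigl(\rho_\area(\gamma)(fN)\bigr)(x)=\rho(\gamma)f(\gamma^{-1}x)N(\gamma^{-1}x)=f(\gamma^{-1}x)\,\bigl(\rho_\ener(\gamma)N\bigr)(x)=\epsilon(\gamma)\bigl(\rho_\spec(\gamma)f\bigr)(x)N(x).
\]
This gives $\rho_\area(\gamma)\circ T=T\circ\bigl(\epsilon(\gamma)\rho_\spec(\gamma)\bigr)$. Since $\epsilon(\gamma)\in\{\pm1\}$, the right-hand side is exactly the representation $\rho_\spec\otimes\epsilon$ on $C^\infty(M)\otimes\R\cong C^\infty(M)$. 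So $T$ is an equivariant isomorphism from $(C^\infty(M),\rho_\spec\otimes\epsilon)$ onto $(\Gamma(N\Phi),\rho_\area)$ preserving the quadratic forms. Here $\rho_\spec$ denotes the natural representation on scalar functions used to define $\Ind_\jacobi$. I should also note that $\epsilon$ is a well-defined homomorphism: it is constant in $x$ by connectedness of $M$ and continuity, and multiplicative because $\rho_\ener$ is a representation.

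Next I transport the index and nullity data. Let $X^-\subset C^\infty(M)$ be a $\rho_\spec$-invariant maximal negative subspace for $Q_\jacobi$, which exists by Lemma~\ref{lem:class_well_defined} since $Q_\jacobi$ is $G$-invariant. Then $X^-$ is also invariant under $\rho_\spec\otimes\epsilon$, because that action differs only by scalars. Its image $T(X^-)$ is a $G$-invariant maximal negative subspace for $Q_\area$, and $T$ restricts to an isomorphism $(X^-,\rho_\spec\otimes\epsilon)\cong(T(X^-),\rho_\area)$. This gives $\Ind_\area=\Ind_\jacobi\otimes\epsilon$. Tensoring both sides with $\epsilon$ and using $\epsilon\otimes\epsilon=\triv$ yields the claimed identity. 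The kernel case works the same way, since $T(\ker Q_\jacobi)=\ker Q_\area$.

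For the isotypic statement I use that $r\mapsto r\otimes\epsilon$ is an involution on $\IrRep{G}$, because $r\otimes\epsilon$ is again irreducible of the same degree. The multiplicity of $r$ in $\Ind_\jacobi=\Ind_\area\otimes\epsilon$ equals the multiplicity of $r\otimes\epsilon$ in $\Ind_\area$. Remark~\ref{rem:classes_vs_spaces} then gives $\ind_\jacobi^r=\ind_\area^{r\otimes\epsilon}$, and likewise for the nullities.

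I expect no serious obstacle. The only delicate point is the sign bookkeeping in the displayed computation, and confirming that the maximal-subspace representation classes are taken with respect to the correct actions. Lemma~\ref{lem:class_well_defined} handles the latter.
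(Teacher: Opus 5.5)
Your proposal is correct and follows the paper's proof essentially verbatim. It uses the same isomorphism $f\mapsto fN$ carrying $Q_\jacobi$ to $Q_\area$, the same computation $\rho_\area(\gamma)\circ T=\epsilon(\gamma)\,T\circ\rho_\spec(\gamma)$ identifying $\rho_\area$ with $\rho_\spec\otimes\epsilon$, and the same final step $\epsilon\otimes\epsilon=\triv$. Your extra remarks (well-definedness of $\epsilon$, invariant maximal negative subspaces via Lemma~\ref{lem:class_well_defined}, and the involution $r\mapsto r\otimes\epsilon$ on $\IrRep{G}$ for the isotypic identities) only make explicit what the paper leaves implicit.
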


\begin{proof}
Consider the linear isomorphism $\mathcal{T}\colon C^\infty(M)\to\Gamma(N\Phi)$ defined by $\mathcal{T}(f) \vcentcolon= fN$. 
Recalling the respective forms \eqref{eqn:jacobi_form_closed} and \eqref{eqn:area-index-form-closed} in the closed case, respectively \eqref{eqn:jacobi_form_fb} and \eqref{eqn:area-index-form-fb} in the free boundary case, we have 
\(Q_\jacobi(f,f) = Q_\area(fN, fN)\), 
so $\mathcal{T}$ maps any subspace of maximal dimension on which $Q_\jacobi$ is negative definite to a subspace of maximal dimension on which $Q_\area$ is negative definite (and similarly for the kernels). 
To relate the representation classes, let $\gamma \in G$ and $f \in C^\infty(M)$. 
By definition \eqref{eqn:rho_E} of $\rho_\ener$, 
\begin{align*}
\bigl(\rho_\area(\gamma) \mathcal{T}(f)\bigr)(x) 
= \rho(\gamma) \bigl( f(\gamma^{-1}x) N(\gamma^{-1}x) \bigr) 
&= f(\gamma^{-1}x) \bigl( \rho_\ener(\gamma) N \bigr)(x) 
\\
&= (\rho_\spec(\gamma) f)(x) \, \epsilon(\gamma) N(x) 
= \epsilon(\gamma) \mathcal{T}\bigl(\rho_\spec(\gamma) f\bigr)(x).
\end{align*}
The identity $\rho_\area(\gamma) \circ \mathcal{T} = \epsilon(\gamma) (\mathcal{T} \circ \rho_\spec(\gamma))$ implies an isomorphism between the representation $\rho_\spec \otimes \epsilon$ on functions and the representation $\rho_\area$ on the normal bundle. 
Consequently, $\Ind_\area(\Phi)=\Ind_\jacobi(\Phi)\otimes\epsilon$. 
Since $\epsilon\otimes\epsilon$ is the trivial representation, the asserted isomorphism follows. 
The argument for the nullity is analogous.
\end{proof}

\section{Representation theory of the dihedral group}\label{sec:dihedral}

Given $2\leq k\in\N$ the dihedral group $\dih_k$ of order $2k$ is usually understood as the symmetry group of a regular $k$-gon (see Figure~\ref{fig:k-gon}).
Abstractly, $\dih_k$ admits a Coxeter presentation as the group generated by two involutions $\gamma_1, \gamma_2$ satisfying 
\begin{align*}
\gamma_1^2=\gamma_2^2=(\gamma_1 \gamma_2)^k=1.
\end{align*} 
We call $\gamma\in\dih_k$ a \emph{reflection} (respectively a \emph{rotation}) if it is the product of an odd (respectively even) number of generators. 
In this sense, the group $\dih_k$ contains $k$ reflections and $k$ rotations (including the identity $1$). 
Moreover, it contains the cyclic group $\Z_k$ of order $k$ 
as a subgroup generated by $\gamma_1\gamma_2$. 
Any representation $\rho\colon\dih_k\to\Ogroup(n)$ is uniquely determined by the two orthogonal matrices $\rho(\gamma_1)$ and $\rho(\gamma_2)$, provided that $\rho(\gamma_1)^2 = \rho(\gamma_2)^2 =(\rho(\gamma_1)\rho(\gamma_2))^k$ all coincide with the identity matrix.
We identify each irreducible representation with its isomorphism class, so that $\IrRep{\dih_k}$ consists of the following maps (see \cite[§\,5.3]{Serre1977}).
\begin{itemize} 
\item The \emph{trivial representation} $\triv\colon \dih_k\to\Ogroup(1)\cong\{\pm 1\}$ given by 
$\triv(\gamma_1)=1$ and $\triv(\gamma_2)=1$.  
\item The \emph{orientation representation} $\Or \colon \dih_k\to\Ogroup(1)$ given by 
$\Or(\gamma_1)=-1$ and $\Or(\gamma_2)=-1$, 
mapping every reflection to $-1$, and every rotation to $1$. 
\item If $k$ is even, there are two additional representations $\m_1, \m_2 \colon \dih_k\to\Ogroup(1)$ of degree $1$ given by 
$\m_1(\gamma_1)=-1$ and $\m_1(\gamma_2)= 1$, respectively 
$\m_2(\gamma_1)= 1$ and $\m_2(\gamma_2)=-1$.

\item A set of \emph{standard representations} $\st_j \colon \dih_k \to \Ogroup(2)$, of degree $2$, for $j \in \{ 1, \dots, \lfloor\frac{k-1}{2} \rfloor \}$. In terms of the standard basis of $\R^2$, these are given by
\begin{align}
\label{eqn:St-matrix}
\st_j(\gamma_1) &= \begin{pmatrix} 1 & 0 \\ 0 & -1 \end{pmatrix}, & 
\st_j(\gamma_2) &= \begin{pmatrix} \cos(2\pi j/k) & \hphantom{-}\sin(2\pi j/k) \\ \sin(2\pi j/k) & -\cos(2\pi j/k) \end{pmatrix}.
\end{align}
This choice of matrices corresponds to $\gamma_1$ acting as a reflection across the $x_1$-axis in the plane and the product $\gamma_2\gamma_1$ acting as a rotation by angle $2\pi j/k$ around the origin. 
\end{itemize}
In particular, $\IrRep{\dih_k}=\bigl\{\triv, \Or, \m_1, \m_2, \st_1, \ldots,\st_{\lfloor\frac{k-1}{2}\rfloor}\bigr\}$, omitting $\m_1$ and $\m_2$ if $k$ is odd. 
Note that if $k=2$ then $\st_1$ defined as in \eqref{eqn:St-matrix} would be reducible and equal to $\m_1\oplus\m_2$. 

\begin{figure}
\centering
\pgfmathsetmacro{\npar}{5} 
\pgfmathsetmacro{\globalscale}{3} 
\begin{tikzpicture}[line cap=round,line join=round,semithick,scale=\globalscale]
\pgfmathsetmacro{\nminusone}{\npar-1}
\foreach[count=\i]\k in {0,...,\npar}{
\coordinate(p\k) at (-180/\npar+\k*360/\npar:1);
}  
\foreach[count=\i]\k in {0,...,\nminusone}{
\fill[black!10](0,0)--(p\k)--(p\i)--cycle;
}
\foreach[count=\i]\k in {0,...,\nminusone}{ 
\begin{scope}[rotate=\k*360/\npar]
\draw[dotted]
(0,0)--({cos(180/\npar)},0)
(0,0)--(180/\npar:1)
;
\end{scope}
}
\draw[very thick,FarbeA](-1,0)--({cos(180/\npar)},0)node[above left,inner sep=1pt]{$\Fix(\gamma_1)$};
\draw[very thick,FarbeB](180/\npar:1)--(180/\npar:{-cos(180/\npar)})node[pos=0.82,sloped,above,inner sep=1pt]{$\Fix(\gamma_2)$};
\foreach[count=\i]\k in {0,...,\nminusone}{
\draw[thick](p\k)--(p\i);
\draw plot[bullet](p\i)node[anchor={180-180/\npar+\i*360/\npar}]{$p_{\i}$}; 
} 
\end{tikzpicture}
\hfill
\pgfmathsetmacro{\npar}{6} 
\begin{tikzpicture}[line cap=round,line join=round,semithick,scale=\globalscale]
\pgfmathsetmacro{\nminusone}{\npar-1}
\foreach[count=\i]\k in {0,...,\npar}{
\coordinate(p\k) at (-180/\npar+\k*360/\npar:1);
}  
\foreach[count=\i]\k in {0,...,\nminusone}{
\fill[black!10](0,0)--(p\k)--(p\i)--cycle;
}
\foreach[count=\i]\k in {0,...,\nminusone}{ 
\begin{scope}[rotate=\k*360/\npar]
\draw[dotted]
(0,0)--({cos(180/\npar)},0)
(0,0)--(180/\npar:1)
;
\end{scope}
}
\draw[very thick,FarbeA]({-cos(180/\npar)},0)--({cos(180/\npar)},0)node[above left,inner sep=1pt]{$\Fix(\gamma_1)$};
\draw[very thick,FarbeB](180/\npar:1)--(180/\npar:-1)node[pos=0.82,sloped,above,inner sep=1pt]{$\Fix(\gamma_2)$};
\foreach[count=\i]\k in {0,...,\nminusone}{
\draw[thick](p\k)--(p\i);
\draw plot[bullet](p\i)node[anchor={180-180/\npar+\i*360/\npar}]{$p_{\i}$}; 
} 
\end{tikzpicture}
\caption{The dihedral group $\dih_k$ acting on a regular $k$-gon for $k=5$ and $k=6$.}
\label{fig:k-gon}
\end{figure}

\begin{example}[Dihedral symmetry]
\label{rem:dihedral_representations}
In the context of minimal surfaces in $\R^3$, the dihedral group often appears in the representation $\st_1\oplus \Or\colon\dih_k\to\SOgroup(3)$.
This means that $\dih_k$ acts on the $x_1x_2$-plane in the standard representation $\st_1$ and on the $x_3$-axis in the orientation representation $\Or$. 
Consequently, every reflection also flips the $x_3$-coordinate, acting as a rotation by $\pi$ around an axis in the $x_1x_2$-plane. 
This action of the dihedral group has proven to be extremely fruitful for the variational construction of equivariant minimal surfaces in various ambient manifolds 
\cite{Ketover2016FBMS,CarlottoFranzSchulz2022,BuzanoNguyenSchulz2021,FranzSchulz2023,SchulzEllipsoids,SchulzToroids}, 
not least because it acts by orientation-preserving isometries of the ambient space. 
Indeed, for every $\gamma\in\dih_k$ we have 
\(
\det(\st_1(\gamma))
=\Or(\gamma)
\)
and thus
\[
\det\bigl((\st_1\oplus\Or)(\gamma)\bigr)
=\Or(\gamma)\Or(\gamma)=1.
\]
Allowing for orientation-reversing ambient isometries, the dihedral group also appears in the context of minimal surfaces with \emph{prismatic} respectively \emph{antiprismatic} symmetry, see Definition~\ref{defn:prismatic} respectively Definition~\ref{defn:antiprismatic}, and the examples in 
\cite{FranzKetoverSchulz,FolhaPacardZolotareva2017} respectively 
\cite{CSWnonuniqueness,CSWstackings}. 
\end{example}

\paragraph{Characters.}
Given a representation $\rho\colon G\to\GLgroup(V)$ of a finite group $G$ on a finite-dimensional real vector space $V$, its \emph{character} is the function 
\begin{align}
\label{eqn:character}
\begin{aligned}
\chi_{\rho}\colon G&\to\R
\\
\gamma&\mapsto\tr\bigl(\rho(\gamma)\bigr).
\end{aligned}
\end{align}
Then $\chi_{\rho}(e)=\dim V$ for the neutral element $e\in G$ and, since the trace is invariant under conjugation, $\chi_{\rho}$ is constant on conjugacy classes of $G$. 
Characters are additive under direct sums, and $\chi_{\rho\otimes\varsigma}=\chi_{\rho}\chi_\varsigma$ for any representation $\varsigma$ as in Definition~\ref{defn:tensor_representation}. 

Given two representations $\rho,\rho'$ of $G$ we define 
\begin{align}
\label{eqn:character-pairing}
\brt{\chi_{\rho},\chi_{\rho'}}
&\vcentcolon=\frac{1}{\abs{G}}\sum_{\gamma\in G}\chi_{\rho}(\gamma)\chi_{\rho'}(\gamma).
\end{align}
Suppose that $\brt{\chi_r,\chi_r}=1$ for every $r\in\IrRep{G}$. 
Then by \cite[Theorems~4--5]{Serre1977} the multiplicity $m_r$ of any $r\in\IrRep{G}$ in any representation $\rho$ of $G$ is given by 
\begin{align}\label{eqn:20260318}
m_r=\brt{\chi_\rho,\chi_r}.
\end{align} 
The assumption is satisfied for $G=\dih_k$. 
Indeed, if $r\in\{\triv,\Or,\m_1,\m_2\}$, then $\chi_r(\gamma)\in\{\pm1\}$ for all $\gamma\in\dih_k$ and hence $\brt{\chi_r,\chi_r}=1$. 
Moreover, \eqref{eqn:St-matrix} implies $\chi_{\st_j}(\gamma)=0$ for every reflection $\gamma\in\dih_k$, while $\chi_{\st_j}\bigl((\gamma_1\gamma_2)^s\bigr)=2\cos(2\pi js/k)$ for $0\leq s<k$. 
Since $\sum_{s=0}^{k-1}\cos(4\pi js/k)=0$ for $1\leq j\leq\lfloor(k-1)/2\rfloor$, we obtain 
\[
\brt{\chi_{\st_j},\chi_{\st_j}}=\frac{1}{2k}\sum_{s=0}^{k-1}4\cos^2(2\pi js/k)=\frac{1}{2k}\sum_{s=0}^{k-1}\bigl(2+2\cos(4\pi js/k)\bigr)=1.
\]
 
The following statement is a variation on \cite[Theorem~1.3]{Helffer2002} and isolates the nodal domain argument used in the following sections. 

\begin{proposition}
\label{prop:nodal_dihedral}
Given $3\leq k\in\N$, let $\Omega$ be a compact, connected surface with an isometric action of $\dih_k$ such that the fixed point sets of the $k$ reflections in $\dih_k$ divide $\Omega$ into $2k$ copies of a fundamental domain, cyclically ordered around a common fixed point of $\dih_k$. 
Assume also that the generators $\gamma_1,\gamma_2\in\dih_k$ fix adjacent sides of a fundamental domain, see Figure~\ref{fig:20260820}. 
Let $\lambda_1<\lambda_2$ denote the first two distinct eigenvalues of the spectral quadratic form \eqref{eqn:spectral-index-form} with $\dih_k$-invariant potentials $q$ and $\sigma$, restricted to functions vanishing on a (possibly empty) $\dih_k$-invariant subset of $\partial\Omega$, and let $\mathcal E_{\lambda_i}$ be the isomorphism class of the induced representation of $\dih_k$ on the $i$th eigenspace. 
Then, for some integers $a,b\geq0$,
\begin{align*}
\mathcal E_{\lambda_1}&=\triv, & 
\mathcal E_{\lambda_2}&=a\triv\oplus b\st_1.
\end{align*}
\end{proposition}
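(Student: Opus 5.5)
The plan has two parts. The first eigenvalue is handled by the standard positivity argument. For the second eigenspace, we exclude every irreducible representation other than $\triv$ and $\st_1$ by a nodal domain count, in the spirit of \cite[Theorem~1.3]{Helffer2002}.

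\emph{First eigenvalue.} The form $Q_\spec$ with the given mixed Dirichlet/Robin-type conditions has a simple lowest eigenvalue $\lambda_1$. Its eigenfunction $\varphi_1$ can be chosen positive in the interior, by the usual argument: $\abs{u}$ is again a minimiser, then apply the strong maximum principle or Harnack inequality on the connected surface $\Omega$. Invariance of the eigenspace gives $\rho_\spec(\gamma)\varphi_1=\pm\varphi_1$, and positivity forces the sign $+$, exactly as in the proof of Corollary~\ref{cor:indE}. Hence $\mathcal E_{\lambda_1}=\triv$.

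\emph{Second eigenvalue.} Courant's theorem applies in this setting: any eigenfunction $u$ for $\lambda_2$ has exactly two nodal domains. Indeed, $u$ is orthogonal to $\varphi_1>0$, so it changes sign, and the restrictions of $u$ to three nodal domains would give a three-dimensional space on which the Rayleigh quotient is at most $\lambda_2$, contradicting simplicity of $\lambda_1$. Now suppose some $r\in\IrRep{\dih_k}\setminus\{\triv,\st_1\}$ occurs in $\mathcal E_{\lambda_2}$. I pick an eigenfunction $u$ in the $r$-isotypic component with prescribed symmetry and count its nodal domains, aiming for at least three. The cases are as follows.
\begin{itemize}
\item If $r\in\{\Or,\m_1,\m_2\}$, some reflection $\gamma$ satisfies $u\circ\gamma=-u$, so $u$ vanishes on $\Fix(\gamma)$. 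For $\Or$ this holds for all $k$ reflections. For $\m_1$ or $\m_2$ it holds for the $k/2$ reflections conjugate to $\gamma_1$ or to $\gamma_2$, which are at least $2$ since $k\geq4$ is even. Their fixed sets cut $\Omega$ into at least $k\geq3$ (respectively at least $4$) sectors, cyclically arranged around the common fixed point.
\item If $r=\st_j$ with $j\geq2$, I choose $u$ in the line of the $r$-component that is odd under $\gamma_1$. In the basis \eqref{eqn:St-matrix} this corresponds to the second basis vector. Then $u$ vanishes on $\Fix(\gamma_1)$ and on the fixed sets of all conjugates of $\gamma_1$ by the subgroup $\ker\st_j$, which contains rotations when $\gcd(j,k)>1$. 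More robustly, $u$ changes sign under rotation by $2\pi/k$ with winding number $j$: as one goes around the cyclic order of fundamental domains, $u$ restricted to each domain is determined by its restriction to one domain via the matrices $\st_j(\gamma)$, and its sign pattern alternates at least $2j\geq4$ times.
\end{itemize}

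\emph{Main obstacle.} The step I expect to be hardest is turning these vanishing and sign-pattern statements into a genuine count of at least three nodal domains. Sectors bounded by fixed lines of reflections can still be connected through one another, for example near the boundary or, when $\Omega$ has topology, through other regions.

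My first attempt is the following. In each sector between consecutive zero lines, $u$ is not identically zero by unique continuation. Adjacent sectors are related by an odd reflection, so they carry opposite signs. Restricting $u$ to positive and negative parts sector by sector yields, in every case above, at least three functions with disjoint supports. Each has Rayleigh quotient at most $\lambda_2$. If necessary, first take each restricted function on one sector with Dirichlet condition on its sides and reflect it equivariantly.

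The test space spanned by these functions has dimension at least $3$. A suitable combination is then orthogonal to $\varphi_1$ and also vanishes on an open set, which forces it to be an eigenfunction for $\lambda_2$ vanishing on an open set. This contradicts unique continuation. Applied to $\st_j$, the same argument requires the alternating sign pattern to produce at least four sign-alternating sectors. I will verify this via the character identity $\chi_{\st_j}((\gamma_1\gamma_2)^s)=2\cos(2\pi js/k)$, which shows that $u$ composed with the rotation generates a $j$-fold winding.
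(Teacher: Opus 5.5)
Your treatment of $\mathcal E_{\lambda_1}$, the Courant step (exactly two nodal domains) and the classes $\Or,\m_1,\m_2$ is sound. In those cases $u$ vanishes on the sides of at least three sectors, so its restrictions to the sectors are admissible test functions with Rayleigh quotient $\lambda_2$. The same argument works for $\st_j$ when $\gcd(j,k)\geq2$, provided you take as zero lines the fixed sets of all reflections in the coset $\gamma_1\ker\st_j$, under each of which $u$ is odd. This gives $2\gcd(j,k)\geq4$ sectors. By contrast, the conjugates of $\gamma_1$ by $\ker\st_j$ that you propose reduce to $\gamma_1$ alone when $\gcd(j,k)=2$ (e.g.\ $k=6$, $j=2$).

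The genuine gap is $\st_j$ with $j\geq2$ and $\gcd(j,k)=1$, which already includes the first case $k=5$, $j=2$. There $u$ vanishes only on $\Fix(\gamma_1)$, and your ``$j$-fold winding'' does not produce three test functions. On a fundamental domain $D$, the function $u\circ\gamma$ is a combination of $u|_D$ and of the partner $v|_D$, and neither has a sign, so sectors carry no sign. Since $u\neq0$ on the other reflection lines, its cut-offs to sectors are not in $H^1$, and Dirichlet-reflected substitutes lose the bound $Q_\spec\leq\lambda_2\nm{\cdot}^2$. The alternation coming from $\chi_{\st_j}$ only concerns values along a single orbit, and sign changes along orbits do not by themselves contradict two (possibly interleaving) nodal domains.

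The missing idea, which is how the paper concludes, is to feed the two-nodal-domain conclusion back in. Since $u=0$ on $\Fix(\gamma_1)$, which separates $\Omega$ into halves $\Omega^\pm$ swapped by $\gamma_1$, each half contains exactly one nodal domain, so $u\geq0$ on $\Omega^+$ after relabelling. Next take the $\gamma_1$-even partner $v$ in the same irreducible block. As $\partial_\nu v=0$ on $\Fix(\gamma_1)$, unique continuation gives a point $p\in\Fix(\gamma_1)$, away from the centre, with $v(p)\neq0$. Then $u((\gamma_1\gamma_2)^sp)=-\sin(2\pi sj/k)\,v(p)$ for $1\leq s<k/2$. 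All these points lie in $\Omega^+$, yet the right-hand side takes both strict signs because $2\leq j<k/2$. This argument covers every $j\geq2$ without any case distinction on $\gcd(j,k)$.
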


\begin{proof}
Since the action of $\dih_k$ preserves the metric, the potentials, and the boundary conditions, each eigenspace $E_{\lambda_i}$ is a $\dih_k$-invariant subspace, so the representation classes $\mathcal{E}_{\lambda_i}$ are well-defined. 
The first eigenvalue $\lambda_1$ on the connected domain $\Omega$ is simple with an eigenfunction $u_1$ of constant sign in the interior of $\Omega$. 
In particular, 
\begin{align}\label{eqn:lambda1simple}
\dim E_{\lambda_1}=1.
\end{align}
For any $\gamma\in\dih_k$, the function $u_1\circ \gamma^{-1}$ is again an eigenfunction with eigenvalue $\lambda_1$, so simplicity implies $u_1 \circ \gamma^{-1} = u_1$, which proves $\mathcal{E}_{\lambda_1}=\triv$.

Given $\varrho\in\IrRep{\dih_k}$ with $\varrho\leq\mathcal{E}_{\lambda_2}$, let $u\in E_{\lambda_2}$ be a nonzero function in a $\dih_k$-invariant subspace of $E_{\lambda_2}$ on which the induced representation has class $\varrho$.
Being orthogonal to the function $u_1$ of constant sign, $u$ has at least two nodal domains. 
If $u$ had more than two nodal domains, \cite[Proposition~3.1\,(i)]{CSWSpectral} would imply $\dim E_{\lambda_1}\geq2$, contradicting \eqref{eqn:lambda1simple}. 
Thus, $u$ has exactly two nodal domains.

We prove the claim $\varrho \in \{\triv, \st_1\}$ by ruling  out the other classes as follows. 
If $\varrho = \Or$, then for every reflection $\gamma \in \dih_k$ and every $p\in\Fix(\gamma)$,
\[
u(p) = u(\gamma^{-1} p) = \Or(\gamma) u(p) = -u(p),
\]
so $u(p)=0$. 
The assumptions on $\Omega$ then imply that $u$ has at least $2k$ nodal domains, and \cite[Proposition~3.1\,(i)]{CSWSpectral} would imply $\dim E_{\lambda_1}\geq2k-1\geq5$ in contradiction with \eqref{eqn:lambda1simple}.

If $k$ is even and $\varrho \in \{\m_1, \m_2\}$, then $u$ is odd under the $k/2$ reflections conjugate to the corresponding generator. 
Their fixed point sets divide $\Omega$ into $k$ sectors, so $u$ has at least $k \geq 4$ nodal domains which is again a contradiction. 

If $\varrho=\st_j$ for some $j\geq2$ (which requires $k\geq5$), we may choose a basis $(v,u)$ of a $\dih_k$-invariant subspace $W\subset E_{\lambda_2}$ on which the induced representation has class $\st_j$, such that for all $\gamma\in\dih_k$ and all $x\in\Omega$ 
\begin{align}\label{eqn:Stj}
\begin{pmatrix}v(\gamma x)\\u(\gamma x)\end{pmatrix} 
&=\st_j(\gamma)
\begin{pmatrix}v(x)\\u(x)\end{pmatrix} . 
\end{align}
Then $v$ is $\gamma_1$-even and $u$ is $\gamma_1$-odd, so $u$ vanishes on $\Fix(\gamma_1)$ which separates $\Omega$ into two parts $\Omega^{\pm}$. 
Since the nodal set of $u$ contains $\Fix(\gamma_1)$ and $u$ has exactly two nodal domains, these are given by $\Omega^{+}$ and $\Omega^{-}$. 
Since $v$ is $\gamma_1$-even, its normal derivative along $\Fix(\gamma_1)$ vanishes. 
If $v$ vanished identically on $\Fix(\gamma_1)$, the unique continuation principle would imply $v \equiv 0$. 
Thus, there exists an interior point $p \in \Fix(\gamma_1)$ with $v(p) \neq 0$.

\begin{figure}
\centering
\pgfmathsetmacro{\npar}{7}
\pgfmathsetmacro{\spar}{ceil((\npar)/4)}
\begin{tikzpicture}[line cap=round,line join=round,semithick,scale=2.75]
\pgfmathsetmacro{\nminusone}{\npar-1}
\pgfmathsetmacro{\wA}{60/\npar} 
\foreach[count=\i]\k in {0,...,\npar}{
\coordinate(L\k) at (\k*360/\npar+\wA:1);
\coordinate(R\k) at (\k*360/\npar-\wA:1);
} 
\draw[thin,dotted](-1,0)--(1.5,0)
node[anchor=-70,inner sep=5ex]{$\Omega^+$}
node[anchor=70,inner sep=5ex]{$\Omega^-$}
;
\foreach[count=\i]\k in {0,...,\nminusone}{
\begin{scope}[rotate=\k*360/\npar]
\fill[black!10]
(0:0.85)to[out=90,in=180+\wA](\wA:1)arc(\wA:180/\npar:1)
--(0,0)--cycle;
\fill[black!10,yscale=-1]
(0:0.85)to[out=90,in=180+\wA](\wA:1)arc(\wA:180/\npar:1)
--(0,0)--cycle;
\end{scope}
}
\foreach[count=\i]\k in {0,...,\nminusone}{
\begin{scope}[rotate=\k*360/\npar]
\draw[dotted](0,0)--(0:0.85)
(0,0)--(180/\npar:1);
\draw 
(0:0.85)to[out=90,in=180+\wA](\wA:1)arc(\wA:180/\npar:1);
\draw[yscale=-1]
(0:0.85)to[out=90,in=180+\wA](\wA:1)arc(\wA:180/\npar:1);
\end{scope}
}
\draw[very thick,FarbeB,rotate=180/\npar](-0.85,0)--(1,0)node[pos=0,anchor=15]{$\Fix(\gamma_2)$};
\draw[very thick,FarbeA](0.85,0)--(-1,0)node[left]{$\Fix(\gamma_1)$};
\draw[FarbeA] plot[bullet](0,0);
\draw[] plot[bullet](180:0.6)node[below]{$p$}coordinate(p);
\draw[] plot[bullet](180-360*\spar/\npar:0.6)node[anchor=150]{$\gamma p$};
\draw[-stealth,densely dashed](p)arc(180:180-360*\spar/\npar+3:0.6);
\end{tikzpicture}
\caption{Surface $\Omega$ with an $\dih_k$-action dividing it into $2k$ fundamental domains for $k=\npar$, \\
and a point $p\in\Omega$ rotated by $\gamma=(\gamma_1\gamma_2)^s$ for $s=2$.}
\label{fig:20260820}
\end{figure}

Given an integer $1\leq s<k/2$, let 
$\gamma\vcentcolon=(\gamma_1\gamma_2)^s\in\dih_k$.
On the one hand, $\gamma$ acts on $\Omega$ as the rotation by angle $-2\pi s/k\in\interval{-\pi,0}$. 
Since $p\in \Fix(\gamma_1)$ we have $\gamma p\in\Omega^+$ for any choice of $1\leq s<k/2$ 
(up to relabeling the domains, see Figure~\ref{fig:20260820}). 
Therefore, $u(\gamma p)\neq0$ has the same sign for all integers $1\leq s<k/2$. 
On the other hand, equation \eqref{eqn:Stj} and the definition of $\st_j$ imply 
\begin{align}\label{eqn:20260814}
u(\gamma p) = -\sin\Bigl(\frac{2\pi sj}{k}\Bigr)v(p).
\end{align}
Since $j\geq2$ the right hand side of \eqref{eqn:20260814} changes sign depending on $1\leq s<k/2$. 
Indeed, since $j<k/2$ we have $0<2\pi sj/k<\pi$ for $s=1$ and $\pi\leq 2\pi sj/k<2\pi$ for $s=\lceil\frac{k}{2j}\rceil$. 
This contradiction concludes the proof.
\end{proof}

\section{Surfaces with prismatic symmetry}
\label{sec:conformal}

The symmetry group of the minimal surfaces considered in this section has the structure of a direct product $G=H\times K$. 

\begin{definition}\label{defn:boxtimes}
Let $\rho_1 \colon H \to \Ogroup(V_1)$ and $\rho_2 \colon K \to \Ogroup(V_2)$ be representations of the finite groups $H$ and $K$. We denote by $\rho_1 \boxtimes \rho_2$ the representation of the group $G=H \times K$ on the tensor product space $V_1 \otimes V_2$ defined for all $(h, k) \in H \times K$ by 
\begin{align*} 
(\rho_1 \boxtimes \rho_2)(h, k)(v_1 \otimes v_2) \vcentcolon= \rho_1(h)v_1 \otimes \rho_2(k)v_2.
\end{align*} 
This extends associatively to any finite direct product $H_1\times\cdots\times H_m$ of finite groups and descends to isomorphism classes of representations.
\end{definition}
 
In analogy with the notation introduced in Section~\ref{sec:dihedral} 
we denote the two irreducible representations of the group $\Z_2=\{1, \tau\}$ by 
\begin{align*}
\triv\colon\Z_2&\to\Ogroup(1)\cong\{\pm1\}, 
& 
\Or\colon\Z_2&\to\Ogroup(1), \\
\tau&\mapsto 1, & 
\tau&\mapsto-1. 
\end{align*}
The symmetry group of a regular $k$-gonal prism
(cf.~\cite[Figure~2]{CSWnonuniqueness} and \cite[Table~2]{FranzKetoverSchulz})
can then be formalised as follows.

\begin{definition}[Prismatic symmetry]
\label{defn:prismatic}
Given $2\leq k\in\N$, the \emph{prismatic} group of order $4k$ is the direct product $\dih_k\times\Z_2$ acting on $\R^3$ in the representation 
\begin{align}
\label{eqn:prismatic}
(\st_1\boxtimes\triv)\oplus(\triv\boxtimes\Or)
\colon \dih_k\times\Z_2 
&\to\Ogroup(3).
\end{align}
\end{definition}

In standard Cartesian coordinates $(x_1,x_2,x_3)$ of $\R^3$, 
Definition \ref{defn:prismatic} means that $\dih_k$ acts in the standard representation $\st_1$ on the $x_1x_2$-plane and trivially on the $x_3$-axis, while the nontrivial element $\tau\in\Z_2$ acts as the reflection $(x_1,x_2,x_3)\mapsto(x_1,x_2,-x_3)$ across the $x_1x_2$-plane. 

Given $m\in\N$, every irreducible representation $\varrho$ of $\dih_k\times(\Z_2)^m$ is isomorphic to an external tensor product of an irreducible representation of $\dih_k$ with a copy of $\triv$ or $\Or$ for each $\Z_2$-factor. 
Indeed, for any $\gamma\in\dih_k\times(\Z_2)^m$ and the generator $\tau$ of any factor $\Z_2$ one has $\gamma\tau=\tau\gamma$, so that the operator $\varrho(\gamma)$ preserves the eigenspaces of the orthogonal involution $\varrho(\tau)$. 
As $\varrho$ is irreducible, $\varrho(\tau)=\pm\mathrm{Id}$ for each such $\tau$, which proves the claim. 

The character of $\varrho\boxtimes\varsigma$ with $\varrho\in\IrRep{\dih_k}$ and $\varsigma\in\IrRep{(\Z_2)^m}$ is given by $\chi_{\varrho\boxtimes\varsigma}(\gamma,\gamma')=\chi_\varrho(\gamma)\chi_\varsigma(\gamma')$, 
as the trace of a tensor product of linear maps is the product of their traces.  
Consequently, the pairing \eqref{eqn:character-pairing} factorises, and $\brt{\chi_r,\chi_r}=1$ holds for every $r\in\IrRep{\dih_k\times(\Z_2)^m}$, so that \eqref{eqn:20260318} applies.

Proposition~\ref{prop:rigidity} identifies the energy nullity with $\mathfrak{so}(\cA_\rho)$, which decomposes as follows.

\begin{lemma}
\label{lem:so-Lemma}
Let $\rho\colon G\to\Ogroup(3)$ be the prismatic representation \eqref{eqn:prismatic}. 
Then,   
\[
\mathfrak{so}(\cA_\rho) = (\Or\boxtimes\triv)\oplus(\st_1\boxtimes\Or).
\]
\end{lemma}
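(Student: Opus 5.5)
We identify $\mathfrak{so}(3)$ with $\R^3$ via the cross product: the map $\R^3\to\mathfrak{so}(3)$, $\omega\mapsto[\omega]_\times$ with $[\omega]_\times x=\omega\times x$, satisfies $R[\omega]_\times R^{-1}=[(\det R)R\omega]_\times$ for all $R\in\Ogroup(3)$. Hence the conjugation representation of Definition~\ref{defn:so_A_rho} is isomorphic to $\cA_\rho\otimes\det\rho$, where $\det\rho\colon G\to\Ogroup(1)$ is of degree one. This is the standard fact that $\mathfrak{so}(3)\cong\Lambda^2\R^3$ is the pseudovector representation. The identity itself follows from $(R\omega)\times(Rx)=(\det R)R(\omega\times x)$.

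Next we compute $\det\rho$ for the prismatic representation \eqref{eqn:prismatic}. By Example~\ref{rem:dihedral_representations}, $\det\st_1(\gamma)=\Or(\gamma)$ for $\gamma\in\dih_k$, and $\tau$ acts with determinant $-1$. Therefore
\[
\det\rho=\Or\boxtimes\Or .
\]

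Finally we tensor summandwise, using $\st_1\otimes\Or\cong\st_1$ for $\dih_k$ (the characters agree, since $\chi_{\st_1}$ vanishes on reflections) and $\Or\otimes\Or=\triv$:
\[
(\st_1\boxtimes\triv)\otimes(\Or\boxtimes\Or)=(\st_1\otimes\Or)\boxtimes\Or=\st_1\boxtimes\Or,
\]
\[
(\triv\boxtimes\Or)\otimes(\Or\boxtimes\Or)=\Or\boxtimes\triv .
\]
This gives the claim.

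As an alternative check, one can compute characters directly. The conjugation action on $\mathfrak{so}(3)$ has character $\chi(\gamma)=\tfrac12(\chi_\rho(\gamma)^2-\chi_\rho(\gamma^2))$, the character of $\Lambda^2$. Evaluating this on conjugacy classes and using \eqref{eqn:20260318} recovers the same multiplicities. For $k=2$, $\st_1$ is reducible, but the identity of classes still holds with $\st_1$ read as $\m_1\oplus\m_2$.

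The only real care needed is the sign convention in the cross-product identity, namely the factor $\det R$ in front of $R\omega$. We verify it on a single reflection. For $R=\mathrm{diag}(1,1,-1)$ (the element $\tau$), the generator of rotations about the $x_3$-axis is invariant under conjugation. This matches $(\triv\boxtimes\Or)\otimes\det$ evaluated at $\tau$, which gives $(-1)(-1)=1$.
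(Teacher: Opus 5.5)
Your proof is correct, but it takes a different route from the paper. The paper also passes to $\Lambda^2\R^3$, but it then decomposes $\Lambda^2\R^3$ directly along the splitting $\R^3=\R^2\oplus\R$ of $\rho$:
\begin{itemize}
\item $e_1\wedge e_2$ spans a line on which $\dih_k$ acts by $\det\st_1=\Or$ and $\tau$ acts trivially, which gives $\Or\boxtimes\triv$;
\item $\mathrm{span}\{e_1\wedge e_3,e_2\wedge e_3\}$ is identified with the $x_1x_2$-plane via $v\wedge e_3\mapsto v$, which is $\dih_k$-equivariant as it stands because $\dih_k$ fixes $e_3$, while $\tau$ acts by $-1$; this gives $\st_1\boxtimes\Or$.
\end{itemize}
You instead use the cross product to obtain the uniform formula $\mathfrak{so}(\cA_\rho)=\cA_\rho\otimes\det\rho$, with the sign identity $(Ra)\times(Rb)=(\det R)R(a\times b)$ handled correctly. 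This formula is valid for every finite subgroup of $\Ogroup(3)$ and would, for instance, settle the antiprismatic case immediately. The price is the extra twist by $\det\rho=\Or\boxtimes\Or$, which you then have to undo with $\st_1\otimes\Or\cong\st_1$. That twist comes from the quarter-turn built into the cross product, which sends $e_1\wedge e_3,\,e_2\wedge e_3$ to $-e_2,\,e_1$; the paper's map $v\wedge e_3\mapsto v$ avoids it. The paper's bivector decomposition, in turn, is the one that carries over verbatim to $\mathfrak{so}(4)\cong\Lambda^2\R^4$ in Lemma~\ref{lem:so-Lemma-Lawson}. There $\Lambda^2\R^4$ is six-dimensional and is not a twist of the defining representation, so your approach has no analogue.
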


\begin{proof} 
By Definition~\ref{defn:so_A_rho}, $\mathfrak{so}(\cA_\rho)$ is the isomorphism class of the representation on $\mathfrak{so}(3)$ given by conjugation $B\mapsto\rho(\gamma)B\rho(\gamma)^{-1}$. 
Identifying $B\in\mathfrak{so}(3)$ with the alternating bilinear form $(u,v)\mapsto\brt{Bu,v}$, this representation becomes the representation $u\wedge v\mapsto\rho(\gamma)u\wedge\rho(\gamma)v$ on the exterior square $\Lambda^2\R^3$ spanned by $e_1\wedge e_2$, $e_1\wedge e_3$, $e_2\wedge e_3$, where $e_1,e_2,e_3$ denotes the standard basis of $\R^3$. 
\begin{itemize}
\item 
The element $\tau\in\Z_2$ preserves $e_1\wedge e_2$, while any $\gamma\in\dih_k$ acts on $e_1\wedge e_2$ by multiplication with $\det\st_1(\gamma)=\Or(\gamma)$, which yields the summand $\Or\boxtimes\triv$. 
\item 
On the span of $e_1\wedge e_3$ and $e_2\wedge e_3$, the map $v\wedge e_3\mapsto v$ is a $\dih_k$-equivariant isomorphism onto the $x_1x_2$-plane, while $\tau$ acts by $-1$, which yields the summand $\st_1\boxtimes\Or$. \qedhere
\end{itemize}
\end{proof}

\subsection{\texorpdfstring{Free boundary minimal $k$-noids in the unit ball}
{Free boundary minimal k-noids in the unit ball}}

In analogy with the complete minimal surfaces in $\R^3$ of genus zero with $k$ catenoidal ends and dihedral symmetry constructed by Jorge and Meeks \cite[§\,5]{JorgeMeeks1983} and called $k$-noids in \cite[(2.3.5)]{Karcher1989}, we also call a compact surface of genus zero with $k$ boundary components, equipped with a Riemannian metric $g$ and an isometric action of a finite group $G$, a \emph{$k$-noid}.
 
In this section we prove Theorem~\ref{thm:index-fbm-k-noid} stating that any properly embedded free boundary minimal $k$-noid in $\B^3$, which is invariant under the prismatic representation \eqref{eqn:prismatic} of $G=\dih_k\times\Z_2$, has Morse index equal to $2k$ and nullity $3$. 
The key idea is to combine Theorem~\ref{thm:equivariant_area_bounds} with the dimensions of the isotypic components of $\DF(\Sigma_k)$ computed in the following theorem. 

\begin{theorem}
\label{thm:k-noidDF}
Given $3\leq k\in\N$, let $\Sigma_k\subset\B^3$ be a properly embedded free boundary minimal $k$-noid which is $G=\dih_k\times\Z_2$-equivariant in the prismatic representation \eqref{eqn:prismatic}. 
Then the induced representation of $G$ on the space $\DF(\Sigma_k)$ of transverse-traceless tensors has the following dimensions of isotypic components. 
\begin{enumerate}[label={\normalfont(\roman*)}]
\item\label{thm:k-noidDF-i} The $\tau$-invariant subspace $\bigoplus_{\varrho\in\IrRep{\dih_k}}\DF^{\varrho\boxtimes\triv}(\Sigma_k)$ has dimension $2k-3$. 
\item\label{thm:k-noidDF-ii} $\dim\DF^{\st_1\boxtimes\Or}(\Sigma_k)=0$.
\item\label{thm:k-noidDF-iii} $\dim\DF^{\triv\boxtimes\Or}(\Sigma_k)=0$.
\item\label{thm:k-noidDF-iv} $\dim\DF^{\triv\boxtimes\triv}(\Sigma_k)=1$.
\end{enumerate}
\end{theorem}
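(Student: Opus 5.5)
The plan is to identify $\DF(\Sigma_k)$ with a space of holomorphic quadratic differentials and then compute its isotypic components with characters.

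\textbf{Step 1: reduction to quadratic differentials.} In an isothermal coordinate $z$, a trace-free symmetric tensor $h$ is the real part of $\phi\,dz^2$. The conditions $\divop_g h=0$ and $h(\eta,\cdot)|_{T\partial\Sigma_k}=0$ say that $\phi\,dz^2$ is holomorphic and real on $\partial\Sigma_k$. Doubling $\Sigma_k$ across its boundary gives a closed Riemann surface $\widehat\Sigma$ of genus $k-1$ with an antiholomorphic involution $\beta$. Then $\DF(\Sigma_k)$ is the $\beta$-real part of $H^0(\widehat\Sigma,K^2)$. Its real dimension is $\dim_\C H^0(K^2)=3(k-1)-3=3k-6$, as Riemann--Roch gives.

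The action of $G$ extends to $\widehat\Sigma$ and commutes with $\beta$. For each $\gamma\in G$ we get $\chi_{\mathrm{tt}}(\gamma)=\operatorname{Re}\operatorname{tr}\bigl(\gamma^*|_{H^0(K^2)}\bigr)$. This holds for orientation-preserving $\gamma$; for orientation-reversing $\gamma$ one composes with $\beta$ to obtain a holomorphic automorphism of $\widehat\Sigma$ and takes the trace on the real form.

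\textbf{Step 2: characters.} I would evaluate these traces with the Eichler trace formula (Theorem~\ref{thm:Eichler}). For each nontrivial holomorphic automorphism it expresses the trace on $H^0(K^2)$ through its fixed points and rotation angles. The fixed-point data come from the geometry of $\Sigma_k$.
\begin{itemize}
\item A rotation $(\gamma_1\gamma_2)^s$ fixes exactly the two points where the $x_3$-axis meets $\Sigma_k$, together with their mirror copies in $\widehat\Sigma$.
\item $\tau$ fixes the curve $\Sigma_k\cap\{x_3=0\}$, which consists of $k$ arcs joining consecutive boundary components.
\item The reflections of $\dih_k$ fix the arcs in the vertical planes.
\item The products with $\tau$ (rotations by $\pi$ about horizontal axes, and the rotoreflections) have isolated fixed points, located where the horizontal axes pierce $\Sigma_k$ or its boundary.
\end{itemize}
Topology forces all this data (genus zero, $k$ boundary components, embeddedness, and the symmetry). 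Only the combinatorics of the fundamental domain is needed, and this is the ingredient that makes the result independent of any construction.

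\textbf{Step 3: cross-check by quotient moduli counts.} Before pairing characters I would verify (i) and (iv) by a direct count. $\tau$-even elements of $\DF$ correspond to transverse-traceless tensors on $\Sigma_k\cap\{x_3\geq0\}$ that are additionally tangential-free along the symmetry arcs. This half is a topological disc whose boundary alternates between $k$ free-boundary arcs on $\Sp^2$ and $k$ arcs in the plane $\{x_3=0\}$. Such tensors are the Teichmüller directions of a disc with $2k$ marked boundary points, of dimension $2k-3$, which gives (i).

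Quotienting further by $\dih_k$ leaves a quadrilateral bounded by a planar arc, a spherical arc and two vertical-plane arcs. Its conformal modulus is one real parameter, so the $\triv\boxtimes\triv$ component has dimension $1$, which gives (iv). Consequently the $\tau$-odd part has dimension $k-3$.

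\textbf{Step 4: pairing.} For (ii) and (iii) I would compute the character of the $\tau$-odd part on the rotations and the reflections using Step 2. Pairing via \eqref{eqn:20260318} with $\triv\boxtimes\Or$ and $\st_1\boxtimes\Or$ should give $0$. Heuristically, the $k-3$ odd directions come from the modes $\st_j$ with $j\geq2$ and possibly $\Or$ and $\m_i$, rather than from $\triv$ or $\st_1$.

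\textbf{Main obstacle.} The main difficulty is Step 2 for the orientation-reversing elements and for $\tau$-twisted elements. Passing through $\beta\gamma$ and correctly determining the rotation angles at the fixed points, including fixed points lying on $\partial\Sigma_k$, is delicate. I would first test the resulting characters against the total dimension $3k-6$ and the independent counts in Step 3 before trusting the pairings for (ii) and (iii).
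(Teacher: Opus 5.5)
Your overall strategy (Schottky double, Eichler trace formula, character pairing) is the paper's. Your Step~3 is a genuinely different and valid route to (i) and (iv). The local models $f(z)=g(z^2)$ at right-angled corners and $f(z)=z^{k-2}g(z^k)$ at the $\pi/k$-corner on the $x_3$-axis identify $\tau$-even (resp.\ $G$-invariant) tensors with real quadratic differentials on a doubled sphere. These have at most simple poles at the $2k$ (resp.\ $4$) corners, which gives dimensions $2k-3$ and $1$. You should include this local check, since it is what makes ``Teichm\"uller directions of a polygon'' precise. The paper instead reads off (i) from $a+b=3k-6$ and $a-b=\chi_{\mathrm{tt}}(\tau)$, and gets (iv) from the same character table it uses for (ii)--(iii).

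The gap is (ii) and (iii), which are exactly what Proposition~\ref{prop:knoid_isotypic} needs. Step~4 only says the pairings ``should give $0$''. The dimension $k-3$ of the $\tau$-odd part alone does not determine whether $\triv\boxtimes\Or$ or $\st_1\boxtimes\Or$ occurs. What is missing is the explicit character, and your Step~2 data goes wrong precisely where you flag difficulty.

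For the orientation-reversing elements ($\tau$, the reflections $\gamma\in\dih_k$, and the rotoreflections $(\gamma_1\gamma_2)^s\tau$), Eichler is applied to $\tau\iota$, $\gamma\iota$ and $(\gamma_1\gamma_2)^s\tau\iota$. Their fixed points are boundary points of $\Sigma_k$, not the fixed curves you list:
\begin{itemize}
\item $\tau\iota$ fixes the $2k$ points of $\partial\Sigma_k\cap\{x_3=0\}$ with $\zeta_p=-1$, so $\chi_{\mathrm{tt}}(\tau)=k$.
\item $(\gamma_1\gamma_2)^s\tau\iota$ has no fixed points at all, since the linear map fixes only the origin. So $\chi_{\mathrm{tt}}$ vanishes there, contrary to your claim that rotoreflections have isolated fixed points.
\item $\gamma\iota$ fixes $\Fix(\gamma)\cap\partial\Sigma_k$. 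This has $2$ points if $k$ is odd, and $4$ or $0$ points if $k$ is even, depending on the conjugacy class. The half-turn $\gamma\tau$ correspondingly has $2$, $0$ or $4$ fixed points.
\end{itemize}
You also need $\chi_{\mathrm{tt}}((\gamma_1\gamma_2)^s)=-2-4\cos\theta_s$, coming from $\zeta=e^{\pm i\theta_s}$ at $q$, $\tau(q)$ and their copies. This fixed-point data rests on Lemma~\ref{lem:fundamental_domain_knoid} and Corollary~\ref{cor:half_knoid}, which you assert (``topology forces all this data'') but do not prove.

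Only with these values do the pairings close. First, $4k\,m_{\st_1\boxtimes\Or}=2(2k-6)+\sum_{s=1}^{k-1}2\cos\theta_s(-2-4\cos\theta_s)=0$. Second, $m_{\triv\boxtimes\Or}=0$: the rotation terms sum to zero, and the reflection terms $\chi_{\mathrm{tt}}(\gamma)-\chi_{\mathrm{tt}}(\gamma\tau)$ vanish for odd $k$ and cancel between the two classes for even $k$. Without these computations, (ii) and (iii) remain unproved.
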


\begin{proof} 
By Lemma~\ref{lem:chi_DF} stated in the appendix, we may compute the character $\chi_{\mathrm{tt}}$ of the representation of $G$ on $\DF(\Sigma_k)$ using the Eichler trace formula stated in Theorem~\ref{thm:Eichler} on the Schottky double $\hat{\Sigma}_k$ of genus $k-1\geq2$. 
As in \eqref{eqn:20260927}, let $\iota\colon\hat{\Sigma}_k\to\hat{\Sigma}_k$ be the involution on the Schottky double. 
We apply Theorem~\ref{thm:Eichler} with $T=\gamma$ if $\gamma\in G$ preserves the orientation of $\Sigma_k$ and with $T=\gamma\iota$ otherwise, as in Remark~\ref{rem:Eichler}. 

The group generators $\gamma_1$, $\gamma_2$ and $\tau$ all reverse the orientation of $\Sigma_k$, because they fix arcs in $\Sigma_k$ by Corollary~\ref{cor:half_knoid}\,\ref{cor:half_knoid-i}--\ref{cor:half_knoid-ii}, while nontrivial orientation-preserving isometries have isolated fixed points. 
Hence, the rotations $(\gamma_1\gamma_2)^s$ and the half-turns $\gamma\tau$ with reflections $\gamma\in\dih_k$ preserve the orientation, while the reflections $\gamma\in\dih_k$, the element $\tau$ and the rotoreflections $(\gamma_1\gamma_2)^s\tau$ reverse it. 

Let $\gamma=\tau$. 
Since $\tau\iota$ exchanges the interiors of $\Sigma_k$ and its copy in $\hat{\Sigma}_k$, its fixed points are the $2k$ points of $\partial\Sigma_k\cap\{x_3=0\}$ given by Corollary~\ref{cor:half_knoid}\,\ref{cor:half_knoid-iii}. 
At each of them, $d\tau$ fixes the inward normal and reverses the orientation, so $\tau\iota$ looks like $z\mapsto-z$ by \eqref{eqn:20260927}. 
Hence, by Theorem~\ref{thm:Eichler},
\[
\chi_{\mathrm{tt}}(\tau)=2k\frac{(-1)^2}{1-(-1)}=k. 
\]
Let $\gamma\in\dih_k$ be a reflection. 
The fixed point set of $\rho(\gamma)$ is a vertical plane consisting of two half-planes bounded by the $x_3$-axis. 
By Corollary~\ref{cor:half_knoid}\,\ref{cor:half_knoid-ii} and $\tau$-invariance, each such half-plane either intersects $\partial\Sigma_k$ in two points and does not intersect $\Sigma_k\cap\{x_3=0\}$, or it does not intersect $\partial\Sigma_k$ and intersects $\Sigma_k\cap\{x_3=0\}$ in a single interior point. 
These two types alternate around the $x_3$-axis, so the two half-planes forming $\Fix(\rho(\gamma))$ are of different type if $k$ is odd and of the same type if $k$ is even. 

The half-turn $\gamma\tau$ is a rotation by $\pi$ around the horizontal line $\Fix(\rho(\gamma))\cap\{x_3=0\}$. 
At each of its fixed points in $\Sigma_k$ and their copies, it looks like $z\mapsto-z$. 
Similarly, $\gamma\iota$ looks like $z\mapsto-z$ around its fixed points, which are the points of $\Fix(\gamma)\cap\partial\Sigma_k$. 
If $k$ is odd, then $\gamma\iota$ has two fixed points and $\gamma\tau$ has one fixed point in $\Sigma_k$ and one on its copy, so that 
\begin{align}
\label{eqn:chitt1}
\chi_{\mathrm{tt}}(\gamma)=\chi_{\mathrm{tt}}(\gamma\tau)=2\frac{(-1)^2}{1-(-1)}=1. 
\end{align}
If $k$ is even, there are two conjugacy classes of reflections in $\dih_k$ (cf.~Figure~\ref{fig:k-gon}). 
If $\Fix(\gamma)$ intersects $\partial\Sigma_k$, then $\gamma\iota$ has $4$ fixed points and $\gamma\tau$ has none, so that $\chi_{\mathrm{tt}}(\gamma)=2$ and $\chi_{\mathrm{tt}}(\gamma\tau)=0$. 
Otherwise, $\gamma\iota$ has no fixed points and $\gamma\tau$ has $4$ fixed points, two on each copy of $\Sigma_k$, so that $\chi_{\mathrm{tt}}(\gamma)=0$ and $\chi_{\mathrm{tt}}(\gamma\tau)=2$.

For $1\leq s<k$, the rotation $(\gamma_1\gamma_2)^s$ by the angle $\theta_s\vcentcolon=2\pi s/k$ has four fixed points: the interior points $q,\tau(q)\in\Sigma_k$ on the $x_3$-axis given by Corollary~\ref{cor:half_knoid}\,\ref{cor:half_knoid-i} and their copies on the double $\hat\Sigma_k$. 
Since the tangent planes of $\Sigma_k$ at $q$ and $\tau(q)$ are horizontal, $(\gamma_1\gamma_2)^s$ has the form $z\mapsto e^{\pm i\theta_s}z$ on $\Sigma_k$ and $z\mapsto e^{\mp i\theta_s}z$ on the copy. 
Hence 
\[
\chi_{\mathrm{tt}}\bigl((\gamma_1\gamma_2)^s\bigr)=4\operatorname{Re}\biggl(\frac{e^{2i\theta_s}}{1-e^{i\theta_s}}\biggr)=-2-4\cos\theta_s. 
\]
Furthermore, $(\gamma_1\gamma_2)^s\tau\iota$ has no fixed points, since $\rho\bigl((\gamma_1\gamma_2)^s\tau\bigr)$ fixes only the origin, which does not lie on $\partial\Sigma_k\subset\partial\B^3$, so that 
\[
\chi_{\mathrm{tt}}\bigl((\gamma_1\gamma_2)^s\tau\bigr)=0.
\]
Finally, $\chi_{\mathrm{tt}}(e)=\dim\DF(\Sigma_k)=3k-6$ by Lemma~\ref{lem:DF_quadratic}. 

Since $\tau$ acts on $\DF(\Sigma_k)$ as an involution, the dimensions $a$ and $b$ of its eigenspaces for the eigenvalues $1$ and $-1$ satisfy 
\begin{align*}
a+b&=\chi_{\mathrm{tt}}(e)=3k-6, &
a-b&=\chi_{\mathrm{tt}}(\tau)=k,
\end{align*}
hence the $\tau$-invariant subspace has dimension $a=2k-3$, which proves~\ref{thm:k-noidDF-i}.

To prove~\ref{thm:k-noidDF-ii} and~\ref{thm:k-noidDF-iii}, we recall from \eqref{eqn:20260318} that the multiplicity of any $r\in\IrRep{G}$ in the representation of $G$ on $\DF(\Sigma_k)$ is given by 
\[
m_r=\frac{1}{4k}\sum_{g\in G}\chi_{\mathrm{tt}}(g)\chi_r(g).
\]
Every element of $G$ is of the form $\gamma$ or $\gamma\tau$ with $\gamma\in\dih_k$, and for $\varrho\in\IrRep{\dih_k}$ we have $\chi_{\varrho\boxtimes\Or}(\gamma)=\chi_\varrho(\gamma)$ and $\chi_{\varrho\boxtimes\Or}(\gamma\tau)=-\chi_\varrho(\gamma)$. 
Hence 
\begin{align}\label{eqn:mult_knoid}
4k\,m_{\varrho\boxtimes\Or}=\sum_{\gamma\in\dih_k}\chi_\varrho(\gamma)\bigl(\chi_{\mathrm{tt}}(\gamma)-\chi_{\mathrm{tt}}(\gamma\tau)\bigr). 
\end{align}
Since $\sum_{s=0}^{k-1}e^{i\theta_s}=0$ and, as $e^{2i\theta_1}\neq1$ for $k\geq3$, also $\sum_{s=0}^{k-1}e^{2i\theta_s}=0$, we have 
\begin{align*}
\sum_{s=1}^{k-1}\cos\theta_s&=-1, &
\sum_{s=1}^{k-1}\cos^2\theta_s&=\sum_{s=1}^{k-1}\frac{1+\cos(2\theta_s)}{2}=\frac{k-2}{2}.
\end{align*}
For $\varrho=\st_1$, only the rotations in $\dih_k$ contribute to \eqref{eqn:mult_knoid}, because $\chi_{\st_1}$ vanishes on reflections and $\chi_{\st_1}\bigl((\gamma_1\gamma_2)^s\bigr)=2\cos\theta_s$. 
Inserting the values of $\chi_{\mathrm{tt}}$ computed above, we obtain 
\[
4k\,m_{\st_1\boxtimes\Or}=2\bigl((3k-6)-k\bigr)+\sum_{s=1}^{k-1}2\cos\theta_s\bigl(-2-4\cos\theta_s\bigr)=(4k-12)+4-4(k-2)=0, 
\]
which proves~\ref{thm:k-noidDF-ii}. 
For $\varrho=\triv$, the rotations contribute 
\[
\bigl((3k-6)-k\bigr)+\sum_{s=1}^{k-1}(-2-4\cos\theta_s)=(2k-6)-2(k-1)+4=0
\]
 to \eqref{eqn:mult_knoid}. 
The reflections contribute $1-1=0$ each if $k$ is odd, while for even $k$ half of them contribute $2-0$ and the other half $0-2$. 
Hence $m_{\triv\boxtimes\Or}=0$, which proves~\ref{thm:k-noidDF-iii}.

Analogously, $4k\,m_{\triv\boxtimes\triv}=\sum_{\gamma\in\dih_k}\bigl(\chi_{\mathrm{tt}}(\gamma)+\chi_{\mathrm{tt}}(\gamma\tau)\bigr)$, where the identity contributes $(3k-6)+k$, the nontrivial rotations contribute $\sum_{s=1}^{k-1}(-2-4\cos\theta_s)=6-2k$, and each of the $k$ reflections contributes $2$. 
Hence $m_{\triv\boxtimes\triv}=1$, which proves~\ref{thm:k-noidDF-iv}.
\end{proof}

\begin{remark}
\label{rem:knoid-halfsphere}
Theorem~\ref{thm:k-noidDF} holds verbatim for any properly embedded free boundary minimal $k$-noid in the half-sphere $\{x\in\Sp^3\mid x_4\geq0\}$ which is $G=\dih_k\times\Z_2$-equivariant in the prismatic representation \eqref{eqn:prismatic}, where $G$ acts trivially on the $x_4$-coordinate, because the proof only relies on Lemma~\ref{lem:fundamental_domain_knoid} and Corollary~\ref{cor:half_knoid}, which extend to this setting by Remark~\ref{rem:half-sphere}. 
\end{remark}

\begin{lemma}
\label{lem:spec_ener_knoid}
Given $3\leq k\in\N$, let $\Sigma_k\subset\B^3$ be any properly embedded free boundary minimal $k$-noid which is $\dih_k\times\Z_2$-equivariant in the prismatic representation \eqref{eqn:prismatic}. 
Then 
\begin{align*}
\Ind_\area(\Sigma_k)&=\Ind_\jacobi(\Sigma_k), &
\Nul_\area(\Sigma_k)&=\Nul_\jacobi(\Sigma_k), &
\ind_\spec(\Sigma_k)&=1. 
\end{align*} 
\end{lemma}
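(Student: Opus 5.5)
The first two identities say that the relative orientation representation $\epsilon$ of Definition~\ref{defn:relative_orientation_representation} is trivial; the third follows from first Steklov eigenvalue results.

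\textbf{Two-sidedness and triviality of $\epsilon$.} A properly embedded surface in the ball separates $\B^3$, so it is two-sided and has a global unit normal $N$. By Proposition~\ref{prop:indJ_indA} it then suffices to show $\epsilon\equiv\triv$. Since $\epsilon$ is a homomorphism, it is enough to check this on the generators $\gamma_1,\gamma_2,\tau$.

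Each generator acts on $\R^3$ as a reflection across a plane $P$, and it fixes an arc of $\Sigma_k$ lying in $P$ (Corollary~\ref{cor:half_knoid}\,\ref{cor:half_knoid-i}--\ref{cor:half_knoid-ii}). Take an interior point $x$ of such an arc. The surface is invariant under the reflection and contains the arc $\Sigma_k\cap P$ through $x$, so $\Sigma_k$ meets $P$ orthogonally there. Hence $N(x)\in T_xP$, and therefore
\[
\rho(\gamma)N(\gamma^{-1}x)=\rho(\gamma)N(x)=N(x).
\]
This gives $\epsilon(\gamma)=+1$ for each generator. The only point to verify here is that the fixed arcs exist. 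Transversality is automatic: if $\Sigma_k$ were tangent to $P$ along the arc, it would coincide with $P$ by unique continuation, but $\Sigma_k$ is not a planar disc. So $\epsilon=\triv$, and $\Ind_\area=\Ind_\jacobi$ and $\Nul_\area=\Nul_\jacobi$.

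\textbf{Spectral index.} By Remark~\ref{rem:spectral_nullity}, $\ind_\spec(\Sigma_k)$ is the number of Steklov eigenvalues $\sigma_i<1$ of $\Sigma_k$ with the induced metric. Since $\sigma_0=0$ always, $\ind_\spec\geq1$. For the reverse inequality we need $\sigma_1=1$, i.e.\ that the coordinate functions are first Steklov eigenfunctions. For free boundary minimal surfaces in $\B^3$ with enough reflective symmetry, McGrath \cite{McGrath2018} and Kusner--McGrath \cite{KusnerMcGrath2020} prove exactly this: if the surface is divided by the reflection planes into simply connected fundamental pieces, then $\sigma_1=1$.

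The hypothesis amounts to a nodal domain count. The reflections in $\dih_k\times\Z_2$ cut $\Sigma_k$ into $4k$ simply connected fundamental domains (Lemma~\ref{lem:fundamental_domain_knoid}). A first Steklov eigenfunction has exactly two nodal domains. Combining this with the symmetry decomposition, in the spirit of Proposition~\ref{prop:nodal_dihedral}, rules out any eigenvalue in $(0,1)$. So I would cite \cite{McGrath2018,KusnerMcGrath2020} and check their hypotheses using Lemma~\ref{lem:fundamental_domain_knoid} and Corollary~\ref{cor:half_knoid}.

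\textbf{Main obstacle.} The difficult step is verifying that $\Sigma_k$ satisfies the hypotheses of McGrath's theorem, namely the simply connected fundamental domain bounded by arcs in the reflection planes. The parity computation for $\epsilon$ is routine.
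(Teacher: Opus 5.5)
Your proposal is correct and follows essentially the paper's route. It shows two-sidedness and trivial $\epsilon$ via normals at fixed points of the generators, applies Proposition~\ref{prop:indJ_indA}, and gets $\sigma_1(\Sigma_k)=1$ from \cite{McGrath2018} via Lemma~\ref{lem:fundamental_domain_knoid}, so that $Q_\spec\geq0$ on the codimension-one subspace of boundary-mean-zero functions.

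The only difference is that for $\dih_k$ the paper checks $\epsilon$ at the single point $q$ on the $x_3$-axis, where $\nu(q)=\pm e_3$ is fixed by all of $\dih_k$, rather than on the fixed arcs in the vertical planes. Both work. Your non-tangency remark is better stated pointwise: an embedded surface invariant under reflection across $P$ and tangent to $P$ at one point coincides with $P$ locally.
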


\begin{proof}
Since $\Sigma_k\subset\B^3$ is properly embedded, it is two-sided and admits a global unit normal field $\nu$. 
Being smooth, connected, invariant under the action of the generator $\tau\in\Z_2$, and not a disc, $\Sigma_k$ intersects $\Fix(\rho(\tau))=\{x_3=0\}$ orthogonally, so $\nu$ is preserved by $\rho(\tau)$. 
By Corollary~\ref{cor:half_knoid}\,\ref{cor:half_knoid-i}, the $x_3$-axis intersects $\Sigma_k$ orthogonally at the point $q\in\Sigma_k$, so that $\nu(q)=\pm e_3$. 
Since $q$ and $e_3$ are both fixed by $\rho(\gamma)$ for every $\gamma\in\dih_k$, the unit normal field $\nu$ is also preserved. 
Thus, the relative orientation representation $\epsilon$ (see Definition~\ref{defn:relative_orientation_representation}) is trivial and 
Proposition~\ref{prop:indJ_indA} yields the equivariant isomorphisms $\Ind_\area(\Sigma_k)=\Ind_\jacobi(\Sigma_k)$ and $\Nul_\area(\Sigma_k)=\Nul_\jacobi(\Sigma_k)$.

The spectral quadratic form \eqref{eqn:spectral-index-form} on the free boundary minimal surface $(\Sigma_k,g)$ in $\B^3$ is given by
\begin{align}\label{eqn:20260826}
Q_\spec(u,u) = \int_{\Sigma_k} \abs{du}_g^2 - \int_{\partial \Sigma_k} u^2.
\end{align}
Since $Q_\spec(1,1)< 0$ we have $\ind_\spec(\Sigma_k) \geq 1$.
Let $W \vcentcolon=\{ u \in C^\infty(\Sigma_k) \mid \int_{\partial \Sigma_k} u = 0 \}$ be the subspace of functions with mean value zero on the boundary.
The first Steklov eigenvalue  
\begin{align}\label{eqn:sigma1def}
\sigma_1(\Sigma_k) = \inf_{u \in W \setminus \{0\}} \frac{\int_{\Sigma_k} \abs{du}_g^2}{\int_{\partial \Sigma_k} u^2}
\end{align}
satisfies $\sigma_1(\Sigma_k) = 1$ by \cite[Theorem 4.2]{McGrath2018}, which applies due to Lemma~\ref{lem:fundamental_domain_knoid} stated in the appendix.
(Alternatively, one could apply the recent result~\cite{ChodoshGianocca2026}.) 
Consequently, $Q_\spec(u,u) \geq 0$ for all $u \in W$.
Because $Q_\spec$ is positive semi-definite on the codimension-one subspace $W$, the maximal dimension of any subspace on which $Q_\spec$ is negative definite cannot exceed $1$.
Therefore, $\ind_\spec(\Sigma_k) = 1$ as claimed. 
\end{proof}

To efficiently track the isotypic components associated with the $\Z_2$ factor of the symmetry group, we introduce the following parity notation, following \cite[Example~2.3]{CSWSpectral}.

\begin{definition}[Parity index and nullity] 
\label{defn:parity_single}
Recalling the general notation for isotypic components from Definition~\ref{defn:Isotypic}, we define, for each $\mathord\bullet\in\{\ener,\area,\jacobi\}$, the parity indices
\begin{align*}
\ind^+_\bullet(\Sigma_k)
&\vcentcolon=\sum_{\varrho\in \IrRep{\dih_k}} 
\ind^{\varrho\boxtimes\triv}_\bullet(\Sigma_k), & 
\ind^-_\bullet(\Sigma_k)
&\vcentcolon=\sum_{\varrho\in \IrRep{\dih_k}} 
\ind^{\varrho\boxtimes\Or}_\bullet(\Sigma_k), 
\intertext{and similarly the nullities $\nul^\pm_\bullet(\Sigma_k)$. 
Furthermore, we define the corresponding parity components of the space of transverse-traceless tensors as}
\DF^+(\Sigma_k)
&\vcentcolon=\bigoplus_{\varrho\in \IrRep{\dih_k}} 
\DF^{\varrho\boxtimes\triv}(\Sigma_k), & 
\DF^-(\Sigma_k)
&\vcentcolon=\bigoplus_{\varrho\in \IrRep{\dih_k}} 
\DF^{\varrho\boxtimes\Or}(\Sigma_k).
\end{align*}
\end{definition}

Concretely, $\ind^\pm_\jacobi(\Sigma_k)$ is the maximal dimension of a subspace of functions $f\in C^\infty(\Sigma_k)$ satisfying $f \circ \tau = \pm f$ on which the Jacobi quadratic form $Q_\jacobi$ is negative definite.

\begin{proposition}[Index]
\label{prop:knoid_isotypic}
Given $3\leq k\in\N$, let $\Sigma_k\subset\B^3$ be any properly embedded free boundary minimal $k$-noid which is $\dih_k\times\Z_2$-equivariant in the prismatic representation $\rho$ given in \eqref{eqn:prismatic}. 
Then $\ind_\ener(\Sigma_k)=3$ and $\ind_\area(\Sigma_k)=2k$. 
Moreover,  
\begin{align*}
&\begin{aligned}
\ind_\ener^+(\Sigma_k)&=2, \\
\ind_\area^+(\Sigma_k)&=2k-1, 
\end{aligned}
&
\ind_\area^{\triv\boxtimes\triv}(\Sigma_k)&=1,
&
\ind_\area^{\varrho \boxtimes \Or}(\Sigma_k)&=\begin{cases}
1 & \text{ if $\varrho=\triv$,}\\
0 & \text{ otherwise.}
\end{cases}
\end{align*}
\end{proposition}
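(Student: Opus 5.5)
The plan is to pinch the area index between nodal-domain lower bounds and the representation-theoretic upper bounds of Theorem~\ref{thm:equivariant_area_bounds}, treating the $\tau$-even and $\tau$-odd parts separately. Throughout, Lemma~\ref{lem:spec_ener_knoid} lets me identify $\Ind_\area=\Ind_\jacobi$, since the relative orientation representation $\epsilon$ is trivial.

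\emph{Energy upper bounds.} By Lemma~\ref{lem:spec_ener_knoid}, $\ind_\spec(\Sigma_k)=1$. Corollary~\ref{cor:indE} then gives
\[
\Ind_\ener(\Sigma_k)\leq\cA_\rho=(\st_1\boxtimes\triv)\oplus(\triv\boxtimes\Or).
\]
Hence $\ind_\ener\leq3$, $\ind_\ener^+\leq2$, and the odd energy index is concentrated in the class $\triv\boxtimes\Or$, where it is at most $1$. Combining the isotypic inequality $\ind^r_\area\leq\ind^r_\ener+\dim\DF^r$ with Theorem~\ref{thm:k-noidDF} gives the following bounds:
\begin{itemize}
\item $\ind_\area^+\leq2+(2k-3)=2k-1$;
\item $\ind_\area^{\triv\boxtimes\triv}\leq0+1=1$, since $\triv\boxtimes\triv\not\leq\cA_\rho$;
\item $\ind_\area^{\triv\boxtimes\Or}\leq1+0=1$;
\item $\ind_\area^{\st_1\boxtimes\Or}\leq0+0=0$.
\end{itemize}

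\emph{Lower bounds via Jacobi fields.}
\begin{itemize}
\item \emph{Vertical rotations.} Let $f=\brt{e_3\times x,\nu}$ be the Jacobi field generated by rotation about the $x_3$-axis. Since $\nu$ is preserved by $\tau$ and $\rho(\tau)$ commutes with this rotation, $f$ is $\tau$-even. It is odd under every reflection of $\dih_k$, so it vanishes on the $2k$ half-arcs $\Fix(\gamma)$ and has at least $2k$ nodal domains. The standard Courant-type argument (\cite[Proposition~3.1]{CSWSpectral} applied to $Q_\jacobi$ restricted to $\tau$-even functions, where $0$ is an eigenvalue) then yields $\ind_\jacobi^+\geq2k-1$.
\item \emph{Horizontal rotations.} A Jacobi field from rotation about a horizontal axis is $\tau$-odd and nontrivial, so $0$ is an eigenvalue of $Q_\jacobi$ on odd functions. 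Such a field vanishes on $\{x_3=0\}$ and on a vertical plane, hence it is not a first odd eigenfunction. This gives $\ind_\jacobi^-\geq1$.
\item \emph{Trivial class.} The first $\tau$-even eigenfunction is positive, so it lies in $\triv\boxtimes\triv$. Its eigenvalue is negative because $\ind^+_\jacobi\geq1$, so $\ind_\area^{\triv\boxtimes\triv}\geq1$.
\end{itemize}
Together with the upper bounds, this gives $\ind_\area^+=2k-1$ and $\ind_\area^{\triv\boxtimes\triv}=1$. Since $\ind^+_\area\leq\ind^+_\ener+2k-3$, it also forces $\ind_\ener^+=2$.

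\emph{Odd part (main step).} The obstacle is that $\dim\DF^-$ may be nonzero in classes $\varrho\boxtimes\Or$ with $\varrho\notin\{\triv,\st_1\}$, so the bound above alone does not give $\ind^-_\area\leq1$. Here I apply Proposition~\ref{prop:nodal_dihedral} to $\Omega=\Sigma_k\cap\{x_3\geq0\}$. Odd functions correspond to functions on $\Omega$ with Dirichlet condition on the $\dih_k$-invariant boundary curve $\Sigma_k\cap\{x_3=0\}$. The hypotheses require that $\Omega$ be connected and that the reflections cut it into $2k$ fundamental domains arranged around the fixed point $q$ on the $x_3$-axis; I expect this to follow from Lemma~\ref{lem:fundamental_domain_knoid} and Corollary~\ref{cor:half_knoid}, and it is the part to check carefully. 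The proposition then says the first odd eigenfunction has class $\triv\boxtimes\Or$, and the second odd eigenspace involves only $\triv\boxtimes\Or$ and $\st_1\boxtimes\Or$. The total odd area index in these two classes is at most $1+0=1$, so the second odd eigenvalue cannot be negative. Hence $\ind_\area^-=1$, realized in $\triv\boxtimes\Or$, with all other odd isotypic indices zero.

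\emph{Conclusion.} From $1=\ind_\area^{\triv\boxtimes\Or}\leq\ind_\ener^{\triv\boxtimes\Or}$ we get $\ind_\ener^-=1$, so $\ind_\ener=3$. Finally $\ind_\area=(2k-1)+1=2k$.
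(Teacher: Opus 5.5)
Your proposal is correct and follows essentially the same route as the paper: the bound $\Ind_\ener(\Sigma_k)\leq\cA_\rho$ from Corollary~\ref{cor:indE}, the isotypic upper bounds from Theorem~\ref{thm:equivariant_area_bounds} and Theorem~\ref{thm:k-noidDF}, and Proposition~\ref{prop:nodal_dihedral} applied on $\Sigma_k\cap\{x_3\geq0\}$ to rule out a second negative odd eigenvalue. The differences are minor. You derive $\ind^+_\area\geq2k-1$ and $\ind^-_\area\geq1$ directly from the rotational Jacobi fields, where the paper cites \cite[Proposition~4.2]{CSWSpectral} or uses $x_3$, and you obtain $\ind_\area^{\triv\boxtimes\triv}\geq1$ from positivity of the first Jacobi eigenfunction, where the paper reads it off from equality in the summed even estimate.
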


\begin{proof}
Since $\ind_\spec(\Sigma_k) = 1$ by Lemma~\ref{lem:spec_ener_knoid}, Corollary~\ref{cor:indE} yields $\Ind_\ener(\Sigma_k)\leq\cA_\rho$. 
Of the two summands in $\rho=(\st_1\boxtimes\triv)\oplus(\triv\boxtimes\Or)$, the first contributes to the parity-even part in the sense of Definition~\ref{defn:parity_single} and has degree $2$, the second to the parity-odd part and has degree $1$.
Consequently, 
\begin{align}
\label{eqn:knoid_ener_index_bounds}
\ind_\ener^+(\Sigma_k)&\leq 2, &
\ind_\ener^{\triv\boxtimes\Or}(\Sigma_k)
\leq \ind_\ener^-(\Sigma_k)
 &\leq 1, & 
\ind_\ener^{\st_1\boxtimes\Or}(\Sigma_k) &= 0. 
\end{align}
By Theorem~\ref{thm:k-noidDF}\,\ref{thm:k-noidDF-i} we have 
\(\dim \DF^+(\Sigma_k) = 2k-3\). 
Theorem~\ref{thm:equivariant_area_bounds} and \eqref{eqn:knoid_ener_index_bounds} then imply 
\begin{align}
\label{eqn:plus_upperbound_knoid}
\ind_\area^+(\Sigma_k)&\leq \ind_\ener^+(\Sigma_k) + \dim \DF^+(\Sigma_k) \leq 2+(2k-3)=2k-1.
\end{align}
The matching lower bound 
$\ind_\area^+(\Sigma_k)\geq 2k-1$  
follows directly from \cite[Proposition~4.2\,(c)]{CSWSpectral}, applied with the $x_3$-axis as $\xi$, and the planes $\{x_2=0\}$, $\{x_3=0\}$ as $\Pi_1, \Pi_\perp$ in the notation therein, and we may conclude 
\begin{align}\label{eqn:plus_equality_knoid}
\ind_\area^+(\Sigma_k)=2k-1.
\end{align}
In particular, we have equality in \eqref{eqn:plus_upperbound_knoid}, which is the sum over all $\varrho\in\IrRep{\dih_k}$ of the estimates 
\begin{align}\label{eqn:varrhoboxtimestriv}
\ind_\area^{\varrho\boxtimes\triv}(\Sigma_k)\leq\ind_\ener^{\varrho\boxtimes\triv}(\Sigma_k)+\dim\DF^{\varrho\boxtimes\triv}(\Sigma_k)
\end{align}
given by Theorem~\ref{thm:equivariant_area_bounds}. 
Thus we also have equality in \eqref{eqn:varrhoboxtimestriv}. 
For $\varrho=\triv$ we have $\ind_\ener^{\triv\boxtimes\triv}(\Sigma_k)=0$ because $\Ind_\ener(\Sigma_k)\leq\cA_\rho$, and $\dim\DF^{\triv\boxtimes\triv}(\Sigma_k)=1$ by Theorem~\ref{thm:k-noidDF}\,\ref{thm:k-noidDF-iv}, hence $\ind_\area^{\triv\boxtimes\triv}(\Sigma_k)=1$.
Similarly, Theorem~\ref{thm:equivariant_area_bounds} implies for any $\varrho \in \IrRep{\dih_k}$ 
\begin{align}
\label{eqn:minus_upperbound_knoid}
\ind_\area^{\varrho \boxtimes \Or}(\Sigma_k)&\leq \ind_\ener^{\varrho \boxtimes \Or}(\Sigma_k) + \dim \DF^{\varrho \boxtimes \Or}(\Sigma_k).
\end{align}
Together with \eqref{eqn:knoid_ener_index_bounds} and Theorem~\ref{thm:k-noidDF}\,\ref{thm:k-noidDF-ii}--\ref{thm:k-noidDF-iii} estimate \eqref{eqn:minus_upperbound_knoid} yields 
\begin{align}
\label{eqn:area_minus_bounds_partial}
\ind_\area^{\triv\boxtimes\Or}(\Sigma_k) &\leq 1, &
\ind_\area^{\st_1\boxtimes\Or}(\Sigma_k) &= 0.
\end{align}

Any function $u \in C^\infty(\Sigma_k)$ satisfying $u \circ \tau = -u$ vanishes along $\Sigma_k \cap \{x_3 = 0\}$. 
Consequently, evaluating $\ind_\area^-(\Sigma_k)$ reduces to counting the negative eigenvalues $\lambda_i$ of the scalar Jacobi form \eqref{eqn:jacobi_form_fb} on $\Sigma_k^+ \vcentcolon= \Sigma_k \cap \{x_3 \ge 0\}$ with Dirichlet boundary conditions along $\Sigma_k \cap \{x_3 = 0\}$. 
By Lemma~\ref{lem:fundamental_domain_knoid} and Corollary~\ref{cor:half_knoid}\,\ref{cor:half_knoid-i}, Proposition~\ref{prop:nodal_dihedral} applies on $\Sigma_k^+$ 
(note that the form \eqref{eqn:jacobi_form_fb} is a special case of \eqref{eqn:spectral-index-form})
and yields that the isomorphism classes of the induced representation of $\dih_k$ on the first two eigenspaces satisfy 
\begin{align}\label{eqn:20260824}
\mathcal{E}_{\lambda_1} &=\triv, & 
\mathcal{E}_{\lambda_2} &=a\triv \oplus b\st_1
\end{align}
for some $a,b\in\N\cup\{0\}$. 
Suppose that $\lambda_2<0$. 
Then by \eqref{eqn:20260824} it suffices to distinguish the following cases: 
\begin{enumerate}[label={(\alph*)}]
\item $\triv\leq\mathcal{E}_{\lambda_2}$. 
Since we also have $\triv\leq\mathcal{E}_{\lambda_1}$ and since the eigenspaces corresponding to $\lambda_1$ and $\lambda_2$ are orthogonal, we would obtain 
$\ind_\area^{\triv \boxtimes \Or}(\Sigma_k)\geq 2$ contradicting \eqref{eqn:area_minus_bounds_partial}. 
\item $\st_1\leq\mathcal{E}_{\lambda_2}$. 
Then, since $\st_1$ has degree $2$, we have $\ind_\area^{\st_1 \boxtimes \Or}(\Sigma_k)\geq2$ contradicting \eqref{eqn:area_minus_bounds_partial}. 
\end{enumerate}
Consequently, $\lambda_2\geq0$ and $\ind_\area^-(\Sigma_k)\leq1$. 
The reverse inequality $\ind_\area^-(\Sigma_k) \geq 1$ follows directly from \cite[Proposition~4.2\,(b)]{CSWSpectral} (applied with parameter $k=2$ in the notation therein). 
Alternatively, $\ind_\area^-(\Sigma_k) \geq 1$ can also be concluded by considering the vertical coordinate function $u=x_3$, which is harmonic on $\Sigma_k$ with the correct boundary condition to yield a negative direction for the scalar Jacobi form \eqref{eqn:jacobi_form_fb} restricted to functions with odd parity.
Hence $\ind_\area^-(\Sigma_k)=1$. 
Adding \eqref{eqn:plus_equality_knoid} proves the claim 
\[
\ind_\area(\Sigma_k)=2k. 
\]
Since $\lambda_1<0\leq\lambda_2$, the only negative eigenvalue is $\lambda_1$, and its eigenspace is $\triv$-isotypic by \eqref{eqn:20260824}. 
Hence the unique negative direction contributes to the component $\triv\boxtimes\Or$ and all others vanish, which proves 
\begin{align}\label{eqn:minus_equality_knoid}
\ind_\area^{\varrho \boxtimes \Or}(\Sigma_k)&=\begin{cases}
1 & \text{ if $\varrho=\triv$,}\\
0 & \text{ otherwise.}
\end{cases}
\end{align}
It remains to show $\ind_\ener(\Sigma_k)=3$. 
By \eqref{eqn:plus_equality_knoid}, all inequalities in \eqref{eqn:plus_upperbound_knoid} are equalities, which implies $\ind_\ener^+(\Sigma_k)=2$.
Similarly, evaluating \eqref{eqn:minus_upperbound_knoid} for $\varrho=\triv$ and recalling $\dim\DF^{\triv\boxtimes\Or}(\Sigma_k)=0$ from Theorem~\ref{thm:k-noidDF}\,\ref{thm:k-noidDF-iii}, equality \eqref{eqn:minus_equality_knoid} implies $\ind_\ener^{\triv\boxtimes\Or}(\Sigma_k)\geq1$. 
Hence all inequalities in \eqref{eqn:knoid_ener_index_bounds} are equalities, and in particular $\ind_\ener^-(\Sigma_k)=1$. 
Thus, $\ind_\ener(\Sigma_k)=\ind_\ener^+(\Sigma_k)+\ind_\ener^-(\Sigma_k)=3$ as claimed.
\end{proof}

\begin{proposition}[Nullity]
\label{prop:knoid_nullity}
Given $3\leq k\in\N$, let $\Sigma_k\subset\B^3$ be any properly embedded free boundary minimal $k$-noid which is $G=\dih_k\times\Z_2$-equivariant in the representation \eqref{eqn:prismatic}. 
Then 
\[
\nul_\area(\Sigma_k)=3.
\] 
\end{proposition}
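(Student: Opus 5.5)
The plan is to get the lower bound $\nul_\area(\Sigma_k)\geq3$ from ambient rotations, and the matching upper bound isotypic component by isotypic component, using Theorem~\ref{thm:equivariant_area_bounds} for the even part and Proposition~\ref{prop:nodal_dihedral} for the odd part. The total bound $\ind_\area+\nul_\area\leq\ind_\ener+\nul_\ener+\dim\DF$ alone only gives $\nul_\area\leq k$, which is why the refinement is needed.

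\emph{Lower bound.} The three rotations of $\R^3$ induce Jacobi fields $\brt{B\Phi,\nu}$ with $B\in\mathfrak{so}(3)$. These are linearly independent, since $\Sigma_k$ is not rotationally symmetric about any axis ($k\geq3$ boundary components, genus zero, not a disc). Their isotypic types are read off from Lemma~\ref{lem:so-Lemma} together with the triviality of $\epsilon$ (Lemma~\ref{lem:spec_ener_knoid}). The vertical rotation gives the even type $\Or\boxtimes\triv$, and the two horizontal rotations give the odd type $\st_1\boxtimes\Or$.

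\emph{Energy nullity.} I will apply Proposition~\ref{prop:rigidity} with $n=2$. From Lemma~\ref{lem:spec_ener_knoid} and Proposition~\ref{prop:knoid_isotypic} we already have $\ind_\spec=1$ and $\ind_\ener=3$. For $\nul_\spec=3$ I need the multiplicity of $\sigma_1=1$ to be exactly $3$; I would cite the same McGrath/Kusner--McGrath argument as in Lemma~\ref{lem:spec_ener_knoid}, where the first Steklov eigenfunctions are the coordinates.

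For quadratic independence on $\partial\Sigma_k$, I would argue directly. The set $\{x\in\Sp^2\mid\brt{Bx,x}=0\}$ with $B\neq0$ symmetric has at most two closed components. In the degenerate case it is a union of two great circles, so any embedded closed curve in it is a whole great circle, and two of these must intersect. Either way it cannot contain $k\geq3$ pairwise disjoint closed boundary curves. Hence $\Nul_\ener=(\Or\boxtimes\triv)\oplus(\st_1\boxtimes\Or)$.

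\emph{Even part.} We have $\dim\VConf=0$ because $\chi(\Sigma_k)=2-k<0$. Summing the isotypic inequality of Theorem~\ref{thm:equivariant_area_bounds} over $\varrho\boxtimes\triv$ and inserting $\ind^+_\area=2k-1$, $\ind^+_\ener=2$, $\nul^+_\ener=1$ and $\dim\DF^+=2k-3$ gives
\[
2k-1+\nul^+_\area\leq 2+1+2k-3,
\]
so $\nul^+_\area\leq1$. The vertical rotation shows this is an equality.

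\emph{Odd part.} Odd Jacobi fields are Dirichlet eigenfunctions with eigenvalue $0$ on $\Sigma_k^+$. From the proof of Proposition~\ref{prop:knoid_isotypic} we know $\lambda_1<0\leq\lambda_2$. The isotypic inequalities bound the odd nullity in each relevant component: for $\triv\boxtimes\Or$ we get $1+\nul\leq1+0+0$, hence $\nul^{\triv\boxtimes\Or}_\area=0$, and for $\st_1\boxtimes\Or$ we get $0+\nul\leq0+2+0$, hence at most $2$.

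The main obstacle is showing $\lambda_2=0$, so that every odd Jacobi field lies in $\mathcal E_{\lambda_2}=a\triv\oplus b\st_1$; the bounds above then force $a=0$ and $b\leq1$, so $\nul^-_\area\leq2$. I would try to show that the horizontal-rotation Jacobi field odd under $\gamma_1$ (rotation about the $x_2$-axis) has exactly two nodal domains on $\Sigma_k^+$, separated by $\Fix(\gamma_1)$. This needs a graphicality-type sign statement on the half $\Sigma_k^+\cap\{x_2>0\}$, which I would attempt via Lemma~\ref{lem:fundamental_domain_knoid} and Corollary~\ref{cor:half_knoid}, in the spirit of \cite[Proposition~4.2]{CSWSpectral}. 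Granting this, the field is orthogonal to the positive first eigenfunction and has only two nodal domains. By Courant-type reasoning it then belongs to $\lambda_2$, since a zero eigenvalue above $\lambda_2$ would force more nodal domains by \cite[Proposition~3.1]{CSWSpectral}. So $\lambda_2=0$, and $\nul^-_\area=2$. Together with the even part this gives $\nul_\area(\Sigma_k)=1+2=3$.
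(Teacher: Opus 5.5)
Your outline follows the paper's proof. You get $\Nul_\ener=(\Or\boxtimes\triv)\oplus(\st_1\boxtimes\Or)$ via Proposition~\ref{prop:rigidity}, and $\nul^+_\area=1$ from the summed isotypic inequality plus the vertical rotation. For the odd part you combine the bounds $\nul_\area^{\triv\boxtimes\Or}=0$ and $\nul_\area^{\st_1\boxtimes\Or}\leq2$ with Proposition~\ref{prop:nodal_dihedral}.

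The step you call the main obstacle, however, is immediate, and you should drop the detour you propose for it. The horizontal-rotation Jacobi fields from your own lower bound are nonzero and $\tau$-odd. Their restrictions to $\Sigma_k^+$ are therefore eigenfunctions of the mixed problem with eigenvalue $0$. Since $0\neq\lambda_1$ and $\lambda_2$ is the smallest eigenvalue above $\lambda_1$, this gives $\lambda_2\leq0$. Together with $\lambda_2\geq0$ (from $\ind^-_\area=1$) you get $\lambda_2=0$ at once, which is exactly how the paper argues.

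As written, your version instead rests on an unproven graphicality claim ("granting this"). The inference you attach to it is also not sound. \cite[Proposition~3.1]{CSWSpectral} says that many nodal domains force many eigenvalues below, not the converse. An eigenfunction with only two nodal domains can belong to an arbitrarily high eigenvalue. Here the worry is vacuous anyway, since an eigenvalue $0>\lambda_2$ is ruled out by $\lambda_2\geq0$.

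The one genuinely different ingredient is your proof of quadratic independence on $\partial\Sigma_k$. The paper's Lemma~\ref{lem:knoid_quadratic_independence} uses the sign-change symmetries $\gamma_1,\tau$ to kill the cross terms. It then uses $\Z_k$-averaging and the fact that $x_3$ vanishes somewhere but not identically on $\partial\Sigma_k$.

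You instead note that for symmetric $B\neq0$ the set $\{x\in\Sp^2\mid\brt{Bx,x}=0\}$ is one of the following: empty, two points, one great circle, two great circles meeting at antipodal points, or two disjoint ovals. It therefore contains at most two pairwise disjoint smooth closed curves, which is incompatible with $k\geq3$ boundary components on $\Sp^2$. This is correct, shorter, and uses no symmetry. It also explains Remark~\ref{rem:quadratic_knoid-diag}: the critical catenoid with $k=2$ is exactly the borderline case.
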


\begin{proof}
By Lemma~\ref{lem:spec_ener_knoid} we have $\ind_\spec(\Sigma_k)=1$. 
By \cite[Theorem~5.2]{KusnerMcGrath2020}, which applies due to Lemma~\ref{lem:fundamental_domain_knoid}, the first Steklov eigenspace of $\Sigma_k$ coincides with the span of the ambient coordinate functions restricted to $\Sigma_k$, so that $\nul_\spec(\Sigma_k)=3$. 
By Lemma~\ref{lem:knoid_quadratic_independence} (or alternatively Remark~\ref{rem:quadratic_knoid-diag}) the components of the inclusion $\Sigma_k\subset\B^3$ are quadratically independent on $\partial\Sigma_k$. 
Since moreover $\ind_\ener(\Sigma_k)=3$ by Proposition~\ref{prop:knoid_isotypic}, Proposition~\ref{prop:rigidity} applies with $n=2$ and yields 
\[
\Nul_\ener(\Sigma_k)=\mathfrak{so}(\cA_\rho) = (\Or\boxtimes\triv)\oplus(\st_1\boxtimes\Or)
\]
by Lemma~\ref{lem:so-Lemma}. 
Since $\Or$ has degree $1$ and $\st_1$ has degree $2$, we obtain 
\begin{align}
\label{eqn:knoid_ener_nullity}
\nul_\ener^{\Or\boxtimes\triv}(\Sigma_k)&=1, &
\nul_\ener^{\st_1\boxtimes\Or}(\Sigma_k)&=2, &
\nul_\ener^{r}(\Sigma_k)&=0 
\end{align}
for every other $r\in\IrRep{G}$, and in particular $\nul_\ener^+(\Sigma_k)=1$.
Theorem~\ref{thm:equivariant_area_bounds} implies  
\begin{align}\label{eqn:20260409} 
\ind_\area^r(\Sigma_k) + \nul_\area^r(\Sigma_k) 
&\leq 
\ind_\ener^r(\Sigma_k) + \nul_\ener^r(\Sigma_k) + \dim\DF^r(\Sigma_k). 
\end{align}
Summing \eqref{eqn:20260409} over all $r\in\IrRep{G}$ of the form $r=\varrho\boxtimes\triv$, and recalling $\ind_\area^+(\Sigma_k)=2k-1$ and $\ind_\ener^+(\Sigma_k)=2$ from Proposition~\ref{prop:knoid_isotypic} and $\dim\DF^+(\Sigma_k)=2k-3$ from Theorem~\ref{thm:k-noidDF}\,\ref{thm:k-noidDF-i}, estimate \eqref{eqn:20260409} implies 
$\nul_\area^+(\Sigma_k)\leq1$.

Since $\Sigma_k$ has negative Euler characteristic, it does not admit any nonzero Killing field tangent to $\partial\Sigma_k$ by Remark~\ref{rmk:dim_vconf}, because Killing fields are conformal.  
Hence the ambient rotations induce three linearly independent Jacobi fields on $\Sigma_k$.
By Lemma~\ref{lem:spec_ener_knoid} and the existence of the Jacobi field $u$ with even parity induced by 
ambient rotations around the $x_3$-axis, we have $\nul_\area^+(\Sigma_k)=\nul_\jacobi^+(\Sigma_k)\geq1$. 
Therefore
\begin{align}\label{eqn:even_nullity}
\nul_\area^+(\Sigma_k)=1.
\end{align}
It remains to prove $\nul_\area^-(\Sigma_k)=2$. 
The ambient rotations around the axes in the $x_1x_2$-plane induce two linearly independent Jacobi fields with odd parity, hence 
\begin{align}\label{eqn:odd_nullity_lower_bound}
\nul_\area^-(\Sigma_k)\geq2.
\end{align}
Since $\ind_\area^-(\Sigma_k)=1$ by Proposition~\ref{prop:knoid_isotypic}, 
the scalar Jacobi form \eqref{eqn:jacobi_form_fb} on 
$\Sigma_k^+\vcentcolon= \Sigma_k \cap\{x_3 \geq0\}$ with Dirichlet boundary conditions along $\Sigma_k \cap \{x_3=0\}$ has only one negative eigenvalue $\lambda_1<0$ and the 
next eigenvalue is $\lambda_2=0$ by \eqref{eqn:odd_nullity_lower_bound}. 
As in \eqref{eqn:20260824} we have $\mathcal{E}_{\lambda_2}=a\triv\oplus b\st_1$ for some $a,b\in\N\cup\{0\}$ by Proposition~\ref{prop:nodal_dihedral}, and therefore 
$\nul_\area^{\varrho\boxtimes\Or}(\Sigma_k)=0$ 
for every $\varrho\in\IrRep{\dih_k}\setminus\{\triv,\st_1\}$.
We evaluate estimate \eqref{eqn:20260409} in the remaining two cases. 
\begin{itemize} 
\item 
For $r=\triv\boxtimes\Or$, we have $\ind_\area^r(\Sigma_k)=1$ by \eqref{eqn:minus_equality_knoid} and $\ind_\ener^r(\Sigma_k)=1$ by Proposition~\ref{prop:knoid_isotypic}, while $\nul_\ener^r(\Sigma_k)=0$ by \eqref{eqn:knoid_ener_nullity} and $\dim\DF^r(\Sigma_k)=0$ by Theorem~\ref{thm:k-noidDF}\,\ref{thm:k-noidDF-iii}. 
Substituting into \eqref{eqn:20260409} yields $1+\nul_\area^{\triv\boxtimes\Or}(\Sigma_k)\leq1$ and hence $\nul_\area^{\triv\boxtimes\Or}(\Sigma_k)=0$. 
\item 
For $r=\st_1\boxtimes\Or$, we have $\ind_\area^r(\Sigma_k)=0$ by \eqref{eqn:minus_equality_knoid} and $\ind_\ener^r(\Sigma_k)=0$ by \eqref{eqn:knoid_ener_index_bounds}, while $\nul_\ener^r(\Sigma_k)=2$ by \eqref{eqn:knoid_ener_nullity} and $\dim\DF^r(\Sigma_k)=0$ by Theorem~\ref{thm:k-noidDF}\,\ref{thm:k-noidDF-ii}. 
Substituting into \eqref{eqn:20260409} yields $\nul_\area^{\st_1\boxtimes\Or}(\Sigma_k)\leq2$. 
\end{itemize}
Summation yields $\nul_\area^-(\Sigma_k)\leq2$, which complements \eqref{eqn:odd_nullity_lower_bound}. 
Therefore $\nul_\area^-(\Sigma_k)=2$. 
Adding \eqref{eqn:even_nullity} proves the claim. 
\end{proof}
  
\begin{proof}[Proof of Theorem \ref{thm:index-fbm-k-noid}]
The statement is covered by Propositions~\ref{prop:knoid_isotypic} and \ref{prop:knoid_nullity}, where the $G$-equivariant index, i.\,e.~the index restricted to $G$-equivariant normal variations, is $\ind_\area^{\triv\boxtimes\triv}(\Sigma_k)$ in the notation of Definition~\ref{defn:Isotypic}.
\end{proof}

\subsection{The first family of the Lawson surfaces}
\label{sec:Lawson}

Lawson \cite[§\,6]{Lawson1970} proved for any $m,n\in\N$ the existence of a closed, embedded minimal surface $\xi_{m,n}\subset\Sp^3$ of genus $mn$. 
Setting 
\begin{align*}
\Sp^1&\vcentcolon=\{x\in\Sp^3\mid x_3=x_4=0\}, &
\Sp^1_\perp&\vcentcolon=\{x\in\Sp^3\mid x_1=x_2=0\},
\end{align*}
the surface $\xi_{m,n}$ has $m+1$ great spheres of symmetry containing $\Sp^1$, whose union intersects $\Sp^1_\perp$ orthogonally in $2m+2$ equidistant points,
and it has $n+1$ great spheres of symmetry containing $\Sp^1_\perp$, whose union intersects $\Sp^1$ orthogonally in $2n+2$ equidistant points.

The surface $\xi_{1,0}\subset\Sp^3$ is congruent to the equatorial sphere $\Sp^2\subset\Sp^3$ with Morse index equal to $1$, and $\xi_{1,1}\subset\Sp^3$ is congruent to the Clifford torus $\{x\in\Sp^3\mid x_1^2+x_2^2=x_3^2+x_4^2\}$ with Morse index equal to $5$ by \cite{Urbano1990}. 
In this section we prove Theorem~\ref{thm:index-Lawson} concerning the Lawson surfaces $\xi_{1,k-1}$ for $3\leq k\in\N$, which are not given by an explicit parametrisation.

Setting $m=1$ and $n=k-1$, the surface $\xi_{1,k-1}$ has two great spheres of symmetry containing $\Sp^1$ and $k$ great spheres of symmetry containing $\Sp^1_\perp$. 
The latter intersect at angles of $\pi/k$, so the reflections across them generate a dihedral group $\dih_k$ of order $2k$, while the reflections across the former generate a group $\Z_2\times\Z_2$ of order $4$. 
Since the two sets of spheres are mutually orthogonal, these reflections commute, so the symmetry subgroup of $\xi_{1,k-1}$ generated by reflections across great spheres is the direct product 
\[
G=\dih_k\times\Z_2\times\Z_2. 
\]

\begin{remark}
\label{rem:Lawson_symmetry}
The group $G$ has order $8k$, while the full symmetry group of $\xi_{1,k-1}$ has order $16k$ due to the additional orientation-preserving isometries generated by the odd reflections around the great circles contained in the surface, see \cite[Lemma~4.3]{KapouleasWiygul2020}. 
\end{remark}

\paragraph{Extrinsic action.}
In standard Cartesian coordinates $(x_1,x_2,x_3,x_4)$ of $\R^4$, the group $G$ acts on $\Sp^3\subset\R^4\cong\R^2\times\R^2$ in the orthogonal representation
\begin{align}\label{eqn:repR4}
\rho&= \bigl(\st_1 \boxtimes (\triv\boxtimes\triv)\bigr) 
\oplus \bigl(\triv \boxtimes (\Or  \boxtimes\triv)\bigr) 
\oplus \bigl(\triv \boxtimes (\triv\boxtimes\Or  )\bigr).
\end{align}
This means that the $\dih_k$ factor acts in the standard representation $\st_1$ on the $x_1x_2$-plane and trivially on the $x_3x_4$-plane, while the two $\Z_2$ factors act as the reflections across the hyperplanes $\{x_3=0\}$ and $\{x_4=0\}$ respectively.
 
Let $\tau_1,\tau_2$ denote the nontrivial elements generating the $\Z_2\times\Z_2$ factor of $G$, such that 
$\rho(\tau_1)$ fixes the great sphere 
$\Sp^2_\perp=\{x\in\Sp^3\mid x_3=0\}$ while 
$\rho(\tau_2)$ fixes the great sphere 
$\Sp^2=\{x\in\Sp^3\mid x_4=0\}$. 
By Corollary~\ref{cor:Riemann-Hurwitz}, each of these two great spheres divides $\xi_{1,k-1}$ into two isometric, connected surfaces of genus zero with $k$ boundary components, on which the group $(\dih_k\times\Z_2)$ acts in the prismatic representation \eqref{eqn:prismatic}. 
In particular, $\xi_{1,k-1}$ is contained in neither great sphere.

\paragraph{Parity notation for order-two symmetries.} 
Given $k\geq3$ and $G=\dih_k\times\Z_2\times\Z_2$, let $\Phi\colon M\to\Sp^3$ be a $G$-equivariant minimal immersion. 
As in the case of a single $\Z_2$ factor, every irreducible representation $r\in\IrRep{G}$ is of the form 
$r=\varrho\boxtimes\varsigma\boxtimes\varsigma'$, where $\varrho\in \IrRep{\dih_k}$ and $\varsigma,\varsigma'\in\{\triv,\Or\}$. 
To keep track of the isotypic components associated with the two $\Z_2$ factors in $G$, we extend the shorthand notation of Definition~\ref{defn:parity_single}.  
 
\begin{definition}[Parity index and nullity]
\label{defn:parity}
Recalling Definition~\ref{defn:Isotypic}, we define  
\begin{align*}
\ind^{++}_\bullet(\Phi)
&\vcentcolon=\sum_{\varrho\in \IrRep{\dih_k}} 
\ind^{\varrho\boxtimes\triv\boxtimes\triv}_\bullet(\Phi), & 
\ind^{+-}_\bullet(\Phi)
&\vcentcolon=\sum_{\varrho\in \IrRep{\dih_k}} 
\ind^{\varrho\boxtimes\triv\boxtimes\Or}_\bullet(\Phi), 
\\
\ind^{--}_\bullet(\Phi)
&\vcentcolon=\sum_{\varrho\in \IrRep{\dih_k}} 
\ind^{\varrho\boxtimes\Or\boxtimes\Or}_\bullet(\Phi), & 
\ind^{-+}_\bullet(\Phi)
&\vcentcolon=\sum_{\varrho\in \IrRep{\dih_k}} 
\ind^{\varrho\boxtimes\Or\boxtimes\triv}_\bullet(\Phi), 
\end{align*}
and similarly $\nul^{\pm\pm}_\bullet(\Phi)$, for each type  
$\mathord\bullet\in\{\ener,\area,\jacobi\}$. 
Furthermore, we define the corresponding parity components of the space of transverse-traceless tensors as 
\begin{align*}
\DF^{++}(M)
&\vcentcolon=\bigoplus_{\varrho\in \IrRep{\dih_k}} 
\DF^{\varrho\boxtimes\triv\boxtimes\triv}(M), & 
\DF^{+-}(M)
&\vcentcolon=\bigoplus_{\varrho\in \IrRep{\dih_k}} 
\DF^{\varrho\boxtimes\triv\boxtimes\Or}(M), 
\\
\DF^{--}(M)
&\vcentcolon=\bigoplus_{\varrho\in \IrRep{\dih_k}} 
\DF^{\varrho\boxtimes\Or\boxtimes\Or}(M), & 
\DF^{-+}(M)
&\vcentcolon=\bigoplus_{\varrho\in \IrRep{\dih_k}} 
\DF^{\varrho\boxtimes\Or\boxtimes\triv}(M).
\end{align*}
\end{definition}

\begin{remark}
The value $\ind^{\pm\pm}_\jacobi(\Phi)$ is the maximal dimension of a subspace of functions $f\in C^\infty(M)$ satisfying 
$f \circ \tau_1 = \pm f$ and 
$f \circ \tau_2 = \pm f$, 
on which the Jacobi quadratic form \eqref{eqn:jacobi_form_closed} is negative definite. 
This notation is consistent with the one used in \cite{CSWSpectral,CSWstackings}. 
Likewise, $\DF^{\pm\pm}(M)$ is the subspace of transverse-traceless tensors $h \in \DF(M)$ satisfying $\tau_1^* h = \pm h$ and $\tau_2^* h = \pm h$. 
\end{remark}
 
\begin{lemma}
\label{lem:so-Lemma-Lawson}
Let $\rho\colon G=\dih_k\times\Z_2\times\Z_2\to\Ogroup(4)$ be the representation \eqref{eqn:repR4}. 
Then, 
\[
\mathfrak{so}(\cA_\rho) 
=(\Or\boxtimes\triv\boxtimes\triv)
\oplus\bigl(\st_1\boxtimes(\Or\boxtimes\triv\oplus\triv\boxtimes\Or)\bigr)
\oplus(\triv\boxtimes\Or\boxtimes\Or).
\]
\end{lemma}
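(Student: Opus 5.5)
The plan mirrors the proof of Lemma~\ref{lem:so-Lemma}. I identify $\mathfrak{so}(4)$ with the exterior square $\Lambda^2\R^4$ via $B\mapsto\brt{B\cdot,\cdot}$. Under this identification, conjugation by $\rho(\gamma)$ becomes $u\wedge v\mapsto\rho(\gamma)u\wedge\rho(\gamma)v$. Writing $e_1,\ldots,e_4$ for the standard basis, the six vectors $e_i\wedge e_j$ with $i<j$ span a $G$-invariant decomposition into the four blocks
\[
\mathrm{span}\{e_1\wedge e_2\},\quad
\mathrm{span}\{e_1\wedge e_3,\,e_2\wedge e_3\},\quad
\mathrm{span}\{e_1\wedge e_4,\,e_2\wedge e_4\},\quad
\mathrm{span}\{e_3\wedge e_4\}.
\]
Each block is invariant because $\rho$ from \eqref{eqn:repR4} preserves the plane $\R^2\times\{0\}$ and each of the coordinate lines $\R e_3$ and $\R e_4$.

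I will then determine the representation class of each block in turn.

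\emph{The block spanned by $e_1\wedge e_2$.} Both $\tau_1$ and $\tau_2$ fix $e_1$ and $e_2$. Any $\gamma\in\dih_k$ acts by $\det\st_1(\gamma)=\Or(\gamma)$. This block is therefore $\Or\boxtimes\triv\boxtimes\triv$.

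\emph{The block spanned by $e_1\wedge e_3$ and $e_2\wedge e_3$.} The map $v\wedge e_3\mapsto v$ is a $\dih_k$-equivariant isomorphism onto the $x_1x_2$-plane, because $\dih_k$ fixes $e_3$. The element $\tau_1$ acts by $-1$ since it sends $e_3\mapsto-e_3$ and fixes $e_1,e_2$, while $\tau_2$ acts trivially. This block is therefore $\st_1\boxtimes\Or\boxtimes\triv$.

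\emph{The block spanned by $e_1\wedge e_4$ and $e_2\wedge e_4$.} By the same argument with the roles of the two reflections exchanged, this block is $\st_1\boxtimes\triv\boxtimes\Or$.

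\emph{The block spanned by $e_3\wedge e_4$.} The group $\dih_k$ acts trivially on this vector. Each $\tau_i$ flips exactly one of $e_3,e_4$, hence acts by $-1$. This block is therefore $\triv\boxtimes\Or\boxtimes\Or$.

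Summing the four blocks and using the distributivity $\st_1\boxtimes(\Or\boxtimes\triv\oplus\triv\boxtimes\Or)=(\st_1\boxtimes\Or\boxtimes\triv)\oplus(\st_1\boxtimes\triv\boxtimes\Or)$ gives the claimed decomposition. As a check, the degrees add up to $1+4+1=6=\dim\mathfrak{so}(4)$.

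There is no real obstacle here. The only care needed is to confirm that the generators act on wedge products exactly as computed above, and to note that $\st_1$ remains irreducible for $k\geq3$, so that each block is a single irreducible summand.
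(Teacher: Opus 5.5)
Your proposal is correct and follows the paper's proof essentially step for step: both identify $\mathfrak{so}(4)$ equivariantly with $\Lambda^2\R^4$ and read off the four blocks spanned by $e_1\wedge e_2$, $\{e_1\wedge e_3,e_2\wedge e_3\}$, $\{e_1\wedge e_4,e_2\wedge e_4\}$ and $e_3\wedge e_4$. Your explicit remark that $\st_1$ is irreducible for $k\geq3$, so each block is a single irreducible summand, is implicit in the paper's argument.
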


\begin{proof}
By Definition~\ref{defn:so_A_rho}, the group $G$ acts on $\mathfrak{so}(4)$ by conjugation $B\mapsto\rho(\gamma)B\rho(\gamma)^{-1}$. 
Using the Euclidean metric to identify a skew-symmetric endomorphism $B$ of $\R^4$ with the alternating bilinear form $(u,v)\mapsto\brt{Bu,v}$, we obtain an isomorphism between $\mathfrak{so}(4)$ and the exterior square $\Lambda^2\R^4$ spanned by the bivectors $e_i\wedge e_j$ for $i<j$, where $e_1,e_2,e_3,e_4$ denotes the standard basis of $\R^4$.
Letting $G$ act on $\Lambda^2\R^4$ by $\gamma(u\wedge v)\vcentcolon=\rho(\gamma)u\wedge\rho(\gamma)v$, this 
isomorphism becomes $G$-equivariant since $\rho$ is orthogonal, and we obtain $\mathfrak{so}(\cA_\rho)=\Lambda^2\cA_\rho$. 
Here, $\Lambda^2\cA_\rho$ denotes the isomorphism class of the $G$-representation on $\Lambda^2\R^4$ defined above. 
We decompose it as follows.
\begin{itemize}
\item 
The action of both $\Z_2$-factors preserves $e_1\wedge e_2$, 
while any $\gamma\in\dih_k$ acts on $e_1\wedge e_2$ by multiplication with $\det\st_1(\gamma)=\Or(\gamma)$. 
The representation on the span of $e_1\wedge e_2$ is therefore $\Or\boxtimes\triv\boxtimes\triv$.
\item 
On the span of $e_1\wedge e_3$ and $e_2\wedge e_3$, the map $v\wedge e_3\mapsto v$ is a $\dih_k$-equivariant isomorphism which yields the representation $\st_1\boxtimes\Or\boxtimes \triv$.
\item 
Similarly, on the span of $e_1\wedge e_4$ and $e_2\wedge e_4$ one has a $\dih_k$-equivariant isomorphism $v\wedge e_4\mapsto v$, which yields the representation $\st_1\boxtimes\triv\boxtimes \Or$.
\item 
The element $e_3\wedge e_4$ is preserved by $\dih_k$ while each $\Z_2$-factor reverses it and we obtain the representation $\triv\boxtimes\Or\boxtimes\Or$. \qedhere
\end{itemize}
\end{proof}
 
The following statement is analogous to Lemma~\ref{lem:spec_ener_knoid}. 

\begin{lemma}
\label{lem:spec1}
Given $3\leq k\in\N$, let $M_k\subset\Sp^3$ be any closed, embedded minimal surface of genus $k-1$ which is $G=(\dih_k \times \Z_2 \times \Z_2)$-equivariant in the representation \eqref{eqn:repR4}.  
Then 
\begin{align*}
\Ind_\area(M_k)&=\Ind_\jacobi(M_k), &
\Nul_\area(M_k)&=\Nul_\jacobi(M_k), &
\ind_\spec(M_k)&=1.
\end{align*}
\end{lemma}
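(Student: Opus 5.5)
The proof will follow that of Lemma~\ref{lem:spec_ener_knoid}, in two parts. First I show that the relative orientation representation $\epsilon$ is trivial, so that Proposition~\ref{prop:indJ_indA} gives the first two identities. Then I show $\ind_\spec(M_k)=1$ using the first eigenvalue bound $\lambda_1(M_k)=2$.

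\emph{Triviality of $\epsilon$.} Since $M_k\subset\Sp^3$ is closed and embedded, it separates $\Sp^3$ and is two-sided, with a global unit normal $\nu$. It suffices to check $\epsilon$ on the generators $\gamma_1,\gamma_2,\tau_1,\tau_2$, all of which act by reflections across great spheres.

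For $\tau_1$, I use the fact stated above via Corollary~\ref{cor:Riemann-Hurwitz}: $\Sp^2_\perp$ divides $M_k$ into two connected pieces exchanged by $\rho(\tau_1)$. Hence $M_k$ meets $\Fix(\rho(\tau_1))$ along curves. Along such a curve the normal $\nu$ is tangent to the great sphere, so it is preserved, and thus $\epsilon(\tau_1)=1$ by continuity. The same argument gives $\epsilon(\tau_2)=1$.

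For a reflection $\gamma\in\dih_k$, the fixed sphere contains $\Sp^1_\perp$. I would argue as in the $k$-noid case that $M_k$ meets $\Fix(\rho(\gamma))$ transversally along a nonempty curve. For this I use the prismatic structure on each half $M_k\cap\{x_4\ge0\}$, which is a free boundary $k$-noid in a half-sphere (Remark~\ref{rem:knoid-halfsphere}), together with Corollary~\ref{cor:half_knoid}\,\ref{cor:half_knoid-ii} adapted via Remark~\ref{rem:half-sphere}. The key point is that these fixed point sets are curves in $M_k$ rather than points where $M_k$ is tangent to the sphere. On such a curve $\nu$ is again tangent to the fixed sphere, which gives $\epsilon(\gamma)=1$.

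\emph{Spectral index.} By Remark~\ref{rem:spectral_nullity}, $Q_\spec(u,u)=\int(\abs{du}^2-2u^2)$, so $\ind_\spec(M_k)$ is the number of Laplace eigenvalues below $2$. Constants show $\ind_\spec\ge1$. The upper bound requires $\lambda_1(M_k)=2$, and this is the main obstacle. Two routes are available.

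\textbf{Symmetric Yau conjecture route.} I would invoke the Choe--Soret type result, namely that $\lambda_1=2$ for embedded minimal surfaces in $\Sp^3$ whose reflection symmetries cut them into simply connected or suitably nice fundamental domains. To apply it, I would show that the $8k$ reflections in $G$ tile $M_k$ by fundamental domains that are discs (an $8k$-fold decomposition), using the Lemma~\ref{lem:fundamental_domain_knoid} analogue.

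\textbf{Reduction to the $k$-noid route.} Alternatively, I would reduce the upper bound to Steklov-type arguments on the halves $M_k\cap\{x_4\ge0\}$. A first eigenfunction $u$ orthogonal to constants would be split into $\tau_2$-even and $\tau_2$-odd parts. Each part would then be handled by a McGrath-type nodal domain argument applied to the prismatic half-sphere $k$-noid.

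I would try the first route. The point to be careful about is exactly the verification that each reflection-fundamental domain has the required convexity and boundary structure.
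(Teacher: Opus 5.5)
Your proposal is correct and follows essentially the paper's proof. It shows $\epsilon$ is trivial because $M_k$ meets the fixed great spheres of the generators orthogonally, applies Proposition~\ref{prop:indJ_indA}, and obtains $\lambda_1(M_k)=2$ from the Choe--Soret theorem for the tetrahedral reflection group $G$ of order $8k$. That is your first route; the second is not needed.

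The one difference concerns the reflections in $\dih_k$. Instead of the half-sphere $k$-noid arcs, the paper uses Lemma~\ref{lem:Riemann-Hurwitz}: $M_k$ meets $\Sp^1_\perp$ in four points, $\nu$ is tangent to $\Sp^1_\perp$ there since $k\geq3$, and $\Sp^1_\perp$ is fixed pointwise by every reflection in $\dih_k$.
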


\begin{proof}  
Since $M_k\subset\Sp^3$ is closed and embedded, it is necessarily two-sided and thus admits a global unit normal field $\nu$. 
By Lemma~\ref{lem:Riemann-Hurwitz}, the intersection $M_k\cap\Sp^1_\perp$ contains four points, and since $k\geq3$ the unit normal $\nu$ is tangent to $\Sp^1_\perp$ at each of them. 
Consequently, every reflection in $\dih_k$ preserves $\nu$. 
To see that $\tau_1,\tau_2\in\Z_2\times\Z_2$ also preserve $\nu$, we consider the great sphere $\Sp^2_\perp$ fixed by the action of $\tau_1$ and argue that $M_k$ being $\tau_1$-invariant intersects $\Sp^2_\perp$ orthogonally (since $M_k\not\subset\Sp^2_\perp$ by Corollary~\ref{cor:Riemann-Hurwitz}). 
Therefore, $\nu$ is tangent to $\Sp^2_\perp$ along $M_k\cap\Sp^2_\perp$ and thus preserved by $\tau_1$. 
The argument for $\tau_2$ is similar, with $\Sp^2$ in place of $\Sp^2_\perp$. 
Since $G$ is generated by $\dih_k$ together with $\tau_1,\tau_2$, we conclude that $\nu$ is preserved by every generator of $G$, so the relative orientation representation (see Definition~\ref{defn:relative_orientation_representation}) is trivial. 
Proposition~\ref{prop:indJ_indA} then yields the equivariant isomorphisms 
\begin{align*}
\Ind_\area(M_k)&=\Ind_\jacobi(M_k), &
\Nul_\area(M_k)&=\Nul_\jacobi(M_k).
\end{align*}
Since $M_k\subset\Sp^3$ is minimal and $\dim M_k=2$, the associated spectral quadratic form \eqref{eqn:spectral-index-form} simplifies to 
\begin{align*} 
Q_{\spec}(u,u)&=\int_{M_k}\bigl(\abs{du}_g^2-2u^2\bigr)
\end{align*}
as in Remark~\ref{rem:spectral_nullity}. 
Because $G$ is a reflection group of order $8k$ tessellating $\Sp^3$ into tetrahedra and $M_k$ has genus $k-1$, the first nonzero eigenvalue of the Laplace--Beltrami operator on $M_k$ equals $2$ by \cite[Theorem~6.2]{ChoeSoret2009}.  
Hence $Q_\spec(u,u)\geq0$ for every $u$ orthogonal to the constant functions, while $Q_\spec(1,1)<0$. 
Thus, $\ind_\spec(M_k)=1$. 
\end{proof}

\begin{lemma}[{\cite[Theorem 5.4]{KusnerMcGrath2020}}]
\label{lem:spectral_nullity}
Given $3\leq k\in\N$, let $M_k\subset\Sp^3$ be any closed, embedded minimal surface of genus $k-1$ which is $G=(\dih_k\times\Z_2\times\Z_2)$-equivariant in the representation \eqref{eqn:repR4}.  
Then $\nul_\spec(M_k)=4$.     
\end{lemma}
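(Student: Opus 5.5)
The lower bound $\nul_\spec(M_k)\geq4$ is immediate. By Remark~\ref{rem:spectral_nullity}, the coordinate functions $x_1,\dots,x_4$ restricted to $M_k$ lie in the kernel of $Q_\spec$. They are linearly independent because $M_k$ is linearly full: otherwise $M_k$ would lie in a great sphere, which Corollary~\ref{cor:Riemann-Hurwitz} excludes. For the upper bound I would decompose the eigenspace $E$ of $-\Delta_g$ for the eigenvalue $2$ into parity components $E^{\pm\pm}$ with respect to $\tau_1,\tau_2$, as in Definition~\ref{defn:parity}. By Lemma~\ref{lem:spec1} and \cite[Theorem~6.2]{ChoeSoret2009}, $2=\lambda_1(M_k)$ is the first nonzero eigenvalue, and $0=\lambda_0$ is simple. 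By Courant's theorem, together with \cite[Proposition~3.1\,(i)]{CSWSpectral} as used in Proposition~\ref{prop:nodal_dihedral}, every $u\in E$ therefore has exactly two nodal domains. The plan is to show that $E^{-+}=\R x_3$, $E^{+-}=\R x_4$, $E^{--}=0$, and $E^{++}=\operatorname{span}\{x_1,x_2\}$.

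\emph{Odd parts.} A $\tau_1$-odd $u\in E$ vanishes on $M_k\cap\Sp^2_\perp$. That curve splits $M_k$ into the two halves $M_k^\pm$ of Corollary~\ref{cor:Riemann-Hurwitz}, and since $u$ has exactly two nodal domains, these must be $M_k^+$ and $M_k^-$. Hence $u|_{M_k^+}$ is a first Dirichlet eigenfunction on $M_k^+$ with eigenvalue $2$. The function $x_3$ is positive on the interior of $M_k^+$, solves the same problem, and the first Dirichlet eigenvalue is simple. So $u\in\R x_3$, which gives $E^{-\pm}=\R x_3$. Since $x_3$ is $\tau_2$-even, this also gives $E^{--}=0$. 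The same argument with $\Sp^2$ in place of $\Sp^2_\perp$ yields $E^{+-}=\R x_4$.

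\emph{Even part.} The space $E^{++}$ carries a representation of $\dih_k$. I would adapt the argument of Proposition~\ref{prop:nodal_dihedral}, using $\lambda_0=0$ simple and the two-nodal-domain property, to show that only $\triv$ and $\st_1$ can occur. Concretely, an $\Or$- or $\m_i$-type function vanishes on at least $k\geq3$ of the great spheres containing $\Sp^1_\perp$, so it has too many nodal domains. For a $\st_j$-type function with $j\geq2$, the rotation and sign argument of \eqref{eqn:20260814} applies around the $k$ intersection points with $\Sp^1$, which are $\dih_k$-fixed; Lemma~\ref{lem:Riemann-Hurwitz} provides these fixed points. For multiplicity one of $\st_1$, I would take the $\gamma_1$-odd member $u$ of a copy of $\st_1$. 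It vanishes on $\Fix(\gamma_1)$ and has exactly two nodal domains, so it is, up to sign, the first Dirichlet eigenfunction on the half of $M_k$ cut out by that great sphere. That eigenfunction is simple, so two independent copies of $\st_1$ would produce a nonzero combination vanishing identically, a contradiction. Thus the $\st_1$-component is exactly $\operatorname{span}\{x_1,x_2\}$.

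\emph{Main obstacle.} The step I expect to be hardest is excluding the $\triv$-component, that is, a nonconstant fully $G$-invariant eigenfunction $u$ with eigenvalue $2$. Restricted to a fundamental domain $F$ of $G$, which is a disc bounded by arcs on the tetrahedral walls, such a $u$ is a Neumann eigenfunction with eigenvalue $2$. Its nodal set is $G$-invariant, so the two nodal domains are each $G$-invariant. I would first try to show that $u|_F$ has exactly one nodal line crossing $F$, meeting two distinct wall types. Then I would use the normal components $\brt{\nu,e}$ or suitable coordinate combinations, in the spirit of the Choe--Soret tessellation argument, as comparison functions to show that the first nonzero Neumann eigenvalue of $F$ exceeds $2$. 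This is exactly the input \cite[Theorem~5.4]{KusnerMcGrath2020} supplies, and if the direct argument stalls I would fall back on citing it for this step.
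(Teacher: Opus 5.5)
Your reduction is essentially correct, but it stops exactly where the content of the lemma lies. The two-nodal-domain property, the identification of the $\tau_1$-odd and $\tau_2$-odd parts of $E$ with $\R x_3$ and $\R x_4$, and your $\dih_k$-analysis of $E^{++}$ together only show $E=\operatorname{span}\{x_1,x_2,x_3,x_4\}\oplus E^G$. Here $E^G$ is the space of $G$-invariant eigenfunctions with eigenvalue $2$, and proving $E^G=0$ is the whole difficulty.

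Your plan for that step is not yet an argument. A comparison function inserted into a Rayleigh quotient only bounds eigenvalues from above. What you need is a lower bound for the first nonzero Neumann eigenvalue of $F$, and that requires first locating a nodal domain of $u|_F$ relative to the mirrors. Your claim that the nodal line meets two distinct wall types is neither justified nor what is needed. The normal components $\brt{\nu,e}$ are unsuitable in any case, since they satisfy $\Delta\brt{\nu,e}=-\abs{\II}^2\brt{\nu,e}$ rather than $\Delta f=-2f$.

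Falling back on \cite[Theorem~5.4]{KusnerMcGrath2020} is what the paper does. But that theorem gives the full eigenspace at once, which makes your reduction superfluous. You would also still have to verify its Assumption~5.3, which the paper does via Lemma~\ref{lem:fundamental_domain_knoid}, Corollary~\ref{cor:half_knoid} and Remark~\ref{rem:half-sphere}.

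The step can be closed with your own tools. Applied to $M_k\cap\{x_4\geq0\}$, Corollary~\ref{cor:half_knoid} and Remark~\ref{rem:half-sphere} show that $F\cong M_k/G$ is a quadrilateral whose edges lie on the mirrors of $\gamma_1,\gamma_2,\tau_1,\tau_2$ in this cyclic order. Take $0\neq u\in E^G$ and set $F^\pm\vcentcolon=\{\pm u>0\}\cap F$.
\begin{itemize}
\item Both $F^\pm$ are connected, since each $\{\pm u>0\}$ is a single $G$-invariant nodal domain.
\item Both meet $\partial F$; otherwise $\{\pm u>0\}$ would split into $\abs{G}$ disjoint open pieces on $M_k$.
\item Hence their traces on $\partial F$ do not interlace, and a short case check shows that $u$ has a fixed sign on an entire $\tau$-edge or on both $\gamma$-edges.
\end{itemize}
Say $u\leq0$ there. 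Then $w\vcentcolon=\max(u,0)$ has Rayleigh quotient $2$ and vanishes on those edges.

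In the $\tau_1$-case, consider the mixed problem with Dirichlet data on that edge and Neumann data on the others. Its eigenfunctions unfold to $\tau_1$-odd functions on $M_k$, so its ground energy is at least $2$. Hence $w$ is a minimiser, and it unfolds to an element of $\R x_3$ that vanishes on an open set, which is a contradiction.

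In the $\gamma$-case, consider the mixed problem with Dirichlet data on both $\gamma$-edges. Its ground state unfolds to an $\Or\boxtimes\triv\boxtimes\triv$ eigenfunction vanishing on all $k$ mirrors of $\dih_k$. That eigenfunction has at least $2k$ nodal domains, so the ground energy exceeds $2$, contradicting the Rayleigh quotient of $w$.

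Two minor slips. First, linear fullness holds because a closed minimal surface inside a great sphere is that sphere; Corollary~\ref{cor:Riemann-Hurwitz} only excludes $\Sp^2$ and $\Sp^2_\perp$. Second, the $\dih_k$-fixed points of $M_k$ are the four points of $M_k\cap\Sp^1_\perp$, not points on $\Sp^1$. Your $\st_j$-step is unnecessary anyway: your $\tau_1$-argument applies verbatim to $\gamma_1$ and shows that the whole $\gamma_1$-odd part of $E$ is $\R x_2$.
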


\begin{proof}
The group $G$ is finite and acts on $\R^4$ by reflections in hyperplanes. 
By Remark~\ref{rem:half-sphere} stated in the appendix, \cite[Assumption 5.3]{KusnerMcGrath2020} holds. 
Therefore, \cite[Theorem 5.4]{KusnerMcGrath2020} implies that the eigenspace of the Laplace--Beltrami operator on $M_k$ corresponding to the eigenvalue $2$ is spanned by the ambient coordinate functions restricted to $M_k$. 
Since $M_k$ is linearly full by Lemma~\ref{lem:lawson_quadratic_independence} and Remark~\ref{rem:quadratic_linear}, 
we then obtain $\nul_\spec(M_k)=4$ by Remark~\ref{rem:spectral_nullity}.
\end{proof}

In the closed case, Proposition~\ref{prop:Arho_lb} allows us to take the following shortcut, determining $\Ind_\ener(M_k)$ directly from $\ind_\spec(M_k)=1$. 
As explained in Remark~\ref{rem:fb_energy_unknown}, this route is not available for free boundary minimal surfaces. 
In the proof of Proposition~\ref{prop:knoid_isotypic}, we therefore had to determine the area index first and read off the energy index from the resulting equality in Theorem~\ref{thm:equivariant_area_bounds}.
 
\begin{proposition}[Energy index and nullity]
\label{prop:lawson_energy}
Given $3\leq k\in\N$, let $M_k\subset\Sp^3$ be any closed, embedded minimal surface of genus $k-1$ which is $G=(\dih_k\times\Z_2\times\Z_2)$-equivariant in the representation $\rho$ given in \eqref{eqn:repR4}. 
Then 
\begin{align*}
\Ind_\ener(M_k)&=\cA_\rho, & 
\Nul_\ener(M_k)&=\mathfrak{so}(\cA_\rho).
\end{align*}
In particular, in the notation of Definition~\ref{defn:parity}, 
\begin{align*}
\ind_\ener^{++}(M_k)&=2, &
\ind_\ener^{-+}(M_k)&=1=\ind_\ener^{+-}(M_k), &
\ind_\ener^{--}(M_k)&=0, 
\\
\nul_\ener^{++}(M_k)&=1, &
\nul_\ener^{-+}(M_k)&=2=\nul_\ener^{+-}(M_k), &
\nul_\ener^{--}(M_k)&=1.
\end{align*}
\end{proposition}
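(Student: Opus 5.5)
The statement will follow by assembling three earlier results: Proposition~\ref{prop:Arho_lb} for the energy index, Proposition~\ref{prop:rigidity} for the energy nullity, and Lemma~\ref{lem:so-Lemma-Lawson} for the explicit decompositions.

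\emph{Energy index.} The inclusion $\Phi\colon M_k\to\Sp^3$ is a $G$-equivariant harmonic map as in Definition~\ref{defn:closed_case}. Since $g$ is the induced metric, $\Phi$ is an isometric, hence conformal, immersion. Its image is not a totally geodesic subsphere, because $M_k$ has genus $k-1\geq2$, whereas a great sphere has genus zero. Lemma~\ref{lem:spec1} gives $\ind_\spec(M_k)=1$, so the second part of Proposition~\ref{prop:Arho_lb} yields $\Ind_\ener(M_k)=\cA_\rho$ directly. In particular, $\ind_\ener(M_k)=4=n+1$ with $n=3$.

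\emph{Energy nullity.} To apply Proposition~\ref{prop:rigidity} with $n=3$ we need three inputs:
\begin{itemize}
\item $\ind_\spec(M_k)=1$, from Lemma~\ref{lem:spec1};
\item $\nul_\spec(M_k)=4$, from Lemma~\ref{lem:spectral_nullity};
\item $\ind_\ener(M_k)=4$, from the previous step.
\end{itemize}
The remaining hypothesis is quadratic independence of the coordinate functions on $M_k$. This is provided by Lemma~\ref{lem:lawson_quadratic_independence}, which is cited in the proof of Lemma~\ref{lem:spectral_nullity} and stated in the appendix. If that lemma were not available, I would argue directly. Suppose $\brt{Bx,x}=0$ on $M_k$ for a symmetric matrix $B$. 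Averaging over $G$ reduces each isotypic piece of $B$ to a $G$-invariant quadric condition. One would then use that $M_k$ meets the great spheres $\Sp^2$ and $\Sp^2_\perp$ in curves, and contains the four points on $\Sp^1_\perp$ from Lemma~\ref{lem:Riemann-Hurwitz}, to force $B=0$. This is the only step where something nontrivial is being invoked, and I expect it to be the main obstacle if it has to be done by hand. With all hypotheses in place, Proposition~\ref{prop:rigidity} gives $\Nul_\ener(M_k)=\mathfrak{so}(\cA_\rho)$.

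\emph{Parity counts.} These come from reading off degrees in $\cA_\rho$ from \eqref{eqn:repR4} and in the decomposition of $\mathfrak{so}(\cA_\rho)$ from Lemma~\ref{lem:so-Lemma-Lawson}.
\begin{itemize}
\item Index: the summand $\st_1\boxtimes\triv\boxtimes\triv$ gives $\ind^{++}_\ener=2$. The summand $\triv\boxtimes\Or\boxtimes\triv$ gives $\ind^{-+}_\ener=1$, and $\triv\boxtimes\triv\boxtimes\Or$ gives $\ind^{+-}_\ener=1$. Nothing lies in the $--$ component, so $\ind^{--}_\ener=0$.
\item Nullity: $\Or\boxtimes\triv\boxtimes\triv$ gives $\nul^{++}_\ener=1$. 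The two copies of $\st_1$ give $\nul^{-+}_\ener=2$ and $\nul^{+-}_\ener=2$. Finally, $\triv\boxtimes\Or\boxtimes\Or$ gives $\nul^{--}_\ener=1$.
\end{itemize}
These values follow from Remark~\ref{rem:classes_vs_spaces} and Definition~\ref{defn:parity}.
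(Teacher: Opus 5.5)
Your proposal is correct and matches the paper's proof essentially step for step. Lemma~\ref{lem:spec1} with Proposition~\ref{prop:Arho_lb} gives $\Ind_\ener(M_k)=\cA_\rho$, and Lemmas~\ref{lem:spectral_nullity} and~\ref{lem:lawson_quadratic_independence} then verify the hypotheses of Proposition~\ref{prop:rigidity} with $n=3$; the parity counts are read off from \eqref{eqn:repR4} and Lemma~\ref{lem:so-Lemma-Lawson}. You also state explicitly the hypothesis $\ind_\ener(M_k)=4$, which the paper leaves implicit, and your fallback sketch for quadratic independence is unnecessary since the appendix lemma is available.
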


\begin{proof}
By Lemma~\ref{lem:spec1}, $\ind_\spec(M_k)=1$ and  Proposition~\ref{prop:Arho_lb} yields $\Ind_\ener(M_k)=\cA_\rho$ as claimed. 
Since $\nul_\spec(M_k)=4$ by Lemma~\ref{lem:spectral_nullity} and the components of the inclusion $M_k\subset\Sp^3$ are quadratically independent by Lemma~\ref{lem:lawson_quadratic_independence}, Proposition~\ref{prop:rigidity} applies with $n=3$ and yields 
\[
\Nul_\ener(M_k)=\mathfrak{so}(\cA_\rho).
\]
We recall that irreducible representations $r\in\IrRep{G}$ are of the form 
$r=\varrho\boxtimes\varsigma\boxtimes\varsigma'$, where $\varrho\in \IrRep{\dih_k}$ and $\varsigma,\varsigma'\in\{\triv,\Or\}$ have degree $1$. 
In particular, $r$ has the same degree as $\varrho$, while its parity is determined by the pair $(\varsigma,\varsigma')$. 
Recalling \eqref{eqn:repR4}, the three summands of 
\[
\rho= \bigl(\st_1\boxtimes(\triv\boxtimes\triv)\bigr) 
\oplus\bigl(\triv\boxtimes(\Or  \boxtimes\triv)\bigr) 
\oplus\bigl(\triv\boxtimes(\triv\boxtimes\Or  )\bigr)
\]
therefore have parities $++$, $-+$, $+-$ and degrees $2$, $1$, $1$ respectively, and no summand has parity $--$, which yields the stated energy indices. 
Likewise, the four summands of $\mathfrak{so}(\cA_\rho)$ in Lemma~\ref{lem:so-Lemma-Lawson} have parities $++$, $-+$, $+-$, $--$ and degrees $1$, $2$, $2$, $1$ respectively, which yields the stated energy nullities.
\end{proof} 
 
\begin{proposition}
\label{prop:ind++}
Given $3\leq k\in\N$, let $M_k\subset\Sp^3$ be any closed, embedded minimal surface of genus $k-1$ which is $G=(\dih_k\times\Z_2\times\Z_2)$-equivariant in the representation $\rho$ from \eqref{eqn:repR4}.  
Then, in the notation of Definition~\ref{defn:parity},
\begin{align*} 
\ind_\area^{++}(M_k)&=2k-1, & 
\nul_\area^{++}(M_k)&=1.
\end{align*}
\end{proposition}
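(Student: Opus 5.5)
The plan is to combine an upper bound from Theorem~\ref{thm:equivariant_area_bounds}, restricted to the $++$ parity sector, with a matching lower bound from nodal domain counting. The relevant dimension $\dim\DF^{++}(M_k)$ will come from the $k$-noid computation on a half of $M_k$.

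Functions and tensors that are even under both $\tau_1$ and $\tau_2$ are determined by their restriction to the quarter $M_k\cap\{x_3\geq0,x_4\geq0\}$. Equivalently, $++$ objects on $M_k$ are exactly $\tau_2$-even objects on the half $\Sigma\vcentcolon=M_k\cap\{x_3\geq0\}$, with natural (Neumann) conditions along $M_k\cap\Sp^2_\perp$. By Corollary~\ref{cor:Riemann-Hurwitz}, $\Sigma$ is a free boundary minimal $k$-noid in the half-sphere $\{x_3\geq0\}$ with prismatic symmetry $\dih_k\times\langle\tau_2\rangle$. I would argue that a $\tau_1$-even transverse-traceless tensor on $M_k$ restricts to an element of $\DF(\Sigma)$: the $\tau_1$-symmetry forces $h(\eta,\cdot)|_{T\partial\Sigma}=0$. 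Conversely, every element of $\DF(\Sigma)$ extends by reflection to a $\tau_1$-even element of $\DF(M_k)$. Hence $\DF^{+\pm}(M_k)\cong\DF^{\pm}(\Sigma)$ equivariantly. Remark~\ref{rem:knoid-halfsphere} then lets me apply Theorem~\ref{thm:k-noidDF} (with $\tau_2$ in the role of $\tau$), giving $\dim\DF^{++}(M_k)=2k-3$. As a sanity check, I would also verify this directly via the Eichler trace formula on $M_k$ itself, which is closed of genus $k-1$ so that $\dim\DF(M_k)=3k-6$: compute $\chi_{\mathrm{tt}}(\tau_1),\chi_{\mathrm{tt}}(\tau_2),\chi_{\mathrm{tt}}(\tau_1\tau_2)$ from the fixed-point data in Lemma~\ref{lem:Riemann-Hurwitz} and average.

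With this in hand, Theorem~\ref{thm:equivariant_area_bounds} summed over $r=\varrho\boxtimes\triv\boxtimes\triv$, together with Proposition~\ref{prop:lawson_energy}, gives
\[
\ind^{++}_\area(M_k)\leq\ind^{++}_\ener(M_k)+\dim\DF^{++}(M_k)=2+(2k-3)=2k-1,
\]
and
\[
\ind^{++}_\area+\nul^{++}_\area\leq 2+1+(2k-3)=2k.
\]
For the lower bound I would use the Jacobi field $u$ induced by rotation in the $x_1x_2$-plane. It lies in the $++$ sector, since it is invariant under $\tau_1,\tau_2$, and by Lemma~\ref{lem:spec1} the area and Jacobi classes agree. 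It is odd under all $k$ reflections of $\dih_k$, so it vanishes on their fixed point sets and has at least $2k$ nodal domains in the $++$ sector. Restricting to functions that are even under $\tau_1,\tau_2$ (Neumann conditions on the quarter), the nodal count (\cite[Proposition~4.2]{CSWSpectral} or \cite[Proposition~3.1]{CSWSpectral}) yields at least $2k-1$ negative eigenvalues. This gives $\ind^{++}_\area(M_k)\geq2k-1$, and hence equality. The same field $u$ shows $\nul^{++}_\area\geq1$; the latter is nonzero because $M_k$ is not rotationally symmetric, being of genus $k-1\geq2$ and linearly full. The sum inequality then forces $\nul^{++}_\area(M_k)=1$.

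I expect the main obstacle to be the identification $\DF^{++}(M_k)\cong\DF^{+}(\Sigma)$. The difficulty is checking carefully that the boundary condition in $\DF$ on the cut $M_k\cap\Sp^2_\perp$ corresponds exactly to $\tau_1$-evenness: reflected smooth tensors must satisfy both trace-free divergence-free conditions and the mixed boundary condition. If that turns out to be delicate, I would instead fall back on the direct Eichler character computation on the closed surface $M_k$.
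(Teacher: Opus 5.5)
Your proposal is correct and follows the paper's proof essentially step for step. You use the same upper bound $\ind_\area^{++}\leq\ind_\ener^{++}+\dim\DF^{++}=2+(2k-3)$, with $\DF^{++}$ identified via the half-surface $k$-noid (you cut along $\Sp^2_\perp$ instead of $\Sp^2$, which is symmetric), the same nodal-domain lower bound from the rotation Jacobi field, and the same squeeze for the nullity.

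One slip, in your sanity check only: for the closed genus-$(k-1)$ surface $M_k$ one has $\dim\DF(M_k)=6k-12$ (real parts of all holomorphic quadratic differentials), not $3k-6$. Also, Eichler's formula does not apply directly to the anti-holomorphic $\tau_1,\tau_2$, so that fallback would need reworking; your main argument does not depend on it.
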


\begin{proof}   
By Corollary~\ref{cor:Riemann-Hurwitz}, the great sphere $\Sp^2$ fixed pointwise by $\rho(\tau_2)$ divides $M_k$ into two congruent $k$-noids. 
The $\tau_2$-even transverse-traceless tensors on $M_k$ are naturally identified with the transverse-traceless tensors on the $k$-noid $\Sigma_k\vcentcolon=M_k\cap\{x_4\geq0\}$, which is a free boundary minimal $k$-noid in the half-sphere $\{x\in\Sp^3\mid x_4\geq0\}$ with $\dih_k\times\Z_2$ acting as in Remark~\ref{rem:knoid-halfsphere}. 
Hence Theorem~\ref{thm:k-noidDF}\,\ref{thm:k-noidDF-i} and Remark~\ref{rem:knoid-halfsphere} yield
\begin{align}
\label{eqn:DF++}
\dim \DF^{++}(M_k) = \dim \DF^{+}(\Sigma_k) = 2k-3.
\end{align}
Together with $\ind_\ener^{++}(M_k)=2$ from Proposition~\ref{prop:lawson_energy}, Theorem~\ref{thm:equivariant_area_bounds} implies 
\begin{align}
\label{eqn:++upperbound}
\ind_\area^{++}(M_k) \leq \ind_\ener^{++}(M_k) + \dim \DF^{++}(M_k) = 2+(2k-3)=2k-1.
\end{align}
For the matching lower bound, we consider the Jacobi field $u$ on $M_k$ induced by the ambient rotations of $\Sp^3$ in the $x_1x_2$-plane, which fix $\Sp^1_\perp$ pointwise and commute with $\rho(\tau_1)$ and $\rho(\tau_2)$. 
Hence $u$ is even with respect to $\tau_1$ and $\tau_2$, and odd with respect to the reflections in $\dih_k$. 
In particular, $u$ vanishes on the fixed set of the reflections in $\dih_k$ and thus has at least $2k$ nodal domains in $M_k$. 
(We have $u\not\equiv0$ by Remark~\ref{rmk:dim_vconf}, which implies that no nonzero Killing field is everywhere tangent to $M_k$ because $M_k$ has negative Euler characteristic.) 
By \cite[Corollary~3.2\,(i)]{CSWSpectral} applied with $n=2k$, we obtain $\ind_\jacobi^{++}(M_k)\geq2k-1$. 
Since $\ind_\jacobi^{++}(M_k)=\ind_\area^{++}(M_k)$ by Lemma~\ref{lem:spec1}, we conclude 
\begin{align}
\label{eqn:++equality}
\ind_\area^{++}(M_k)=2k-1.
\end{align}
It remains to determine $\nul_\area^{++}(M_k)$. 
The Jacobi field $u$ from above is an eigenfunction of $Q_\jacobi$ with eigenvalue $0$, so $\nul_\area^{++}(M_k)=\nul_\jacobi^{++}(M_k)\geq1$ by Lemma~\ref{lem:spec1}. 
Conversely, Theorem~\ref{thm:equivariant_area_bounds} implies 
\begin{align*} 
\ind_\area^{++}(M_k) + \nul_\area^{++}(M_k) \leq 
\ind_\ener^{++}(M_k) + \nul_\ener^{++}(M_k) + \dim \DF^{++}(M_k), 
\end{align*}
and inserting \eqref{eqn:DF++}, \eqref{eqn:++equality} as well as $\ind_\ener^{++}(M_k)=2$ and $\nul_\ener^{++}(M_k)=1$ from Proposition~\ref{prop:lawson_energy} yields $\nul_\area^{++}(M_k)\leq1$. 
Therefore, $\nul_\area^{++}(M_k)=1$ as claimed. 
\end{proof}

\begin{proposition}
\label{prop:ind+-}
Given $3\leq k\in\N$, let $M_k\subset\Sp^3$ be any closed, embedded minimal surface of genus $k-1$ which is $G=(\dih_k\times\Z_2\times\Z_2)$-equivariant in the representation $\rho$ from \eqref{eqn:repR4}.  
Then, in the notation of Definition~\ref{defn:parity},
\begin{align*} 
\ind_\area^{-+}(M_k)=\ind_\area^{+-}(M_k)&=1, & 
\nul_\area^{-+}(M_k)=\nul_\area^{+-}(M_k)&=2. 
\end{align*}
\end{proposition}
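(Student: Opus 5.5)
We treat the parity $-+$; the parity $+-$ follows by the same argument with the roles of $\tau_1,\tau_2$ (and of the great spheres $\Sp^2_\perp,\Sp^2$ and the coordinates $x_3,x_4$) exchanged. We do not assume any symmetry swapping $x_3$ and $x_4$, so the two cases are run separately but identically. The approach is to mirror the treatment of $\ind^-_\area(\Sigma_k)$ in Proposition~\ref{prop:knoid_isotypic} and Proposition~\ref{prop:knoid_nullity}, now on the $k$-noid $\Sigma_k\vcentcolon= M_k\cap\{x_4\geq0\}$ from the proof of Proposition~\ref{prop:ind++}.

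\emph{Step 1 (tensor count).} We identify the $\tau_2$-even tensors in $\DF(M_k)$ with $\DF(\Sigma_k)$, exactly as for \eqref{eqn:DF++}. Within these, the $\tau_1$-odd part corresponds to $\DF^-(\Sigma_k)$ for the prismatic action. Theorem~\ref{thm:k-noidDF}\,\ref{thm:k-noidDF-ii}--\ref{thm:k-noidDF-iii} and Remark~\ref{rem:knoid-halfsphere} then give
\[
\dim\DF^{\triv\boxtimes\Or\boxtimes\triv}(M_k)=0=\dim\DF^{\st_1\boxtimes\Or\boxtimes\triv}(M_k).
\]
We expect this identification to be the main obstacle. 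One must check that the reflection $\tau_2$ acts compatibly with the boundary condition $h(\eta,\cdot)|_{T\partial\Sigma_k}=0$, so that $\tau_2$-even transverse-traceless tensors restrict to, and extend uniquely from, $\DF(\Sigma_k)$ isotypic component by isotypic component.

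\emph{Step 2 (upper bounds from Theorem~\ref{thm:equivariant_area_bounds}).} By Proposition~\ref{prop:lawson_energy} and Lemma~\ref{lem:so-Lemma-Lawson},
\[
\ind_\ener^{\triv\boxtimes\Or\boxtimes\triv}=1,\quad
\nul_\ener^{\triv\boxtimes\Or\boxtimes\triv}=0,\quad
\ind_\ener^{\st_1\boxtimes\Or\boxtimes\triv}=0,\quad
\nul_\ener^{\st_1\boxtimes\Or\boxtimes\triv}=2.
\]
Combining these with Step 1 in the isotypic inequalities of Theorem~\ref{thm:equivariant_area_bounds}, and using Lemma~\ref{lem:spec1} to pass to $Q_\jacobi$, we obtain:
\begin{itemize}
\item $\ind_\jacobi^{\triv\boxtimes\Or\boxtimes\triv}\leq1$;
\item $\ind_\jacobi^{\st_1\boxtimes\Or\boxtimes\triv}=0$;
\item $\ind_\jacobi^{\triv\boxtimes\Or\boxtimes\triv}+\nul_\jacobi^{\triv\boxtimes\Or\boxtimes\triv}\leq1$;
\item $\nul_\jacobi^{\st_1\boxtimes\Or\boxtimes\triv}\leq2$.
\end{itemize}

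\emph{Step 3 (the remaining components via Proposition~\ref{prop:nodal_dihedral}).} A $(-+)$-function is the same as an eigenfunction of $Q_\jacobi$ on the piece $\Omega\vcentcolon= M_k\cap\{x_3\geq0,\,x_4\geq0\}$. It satisfies a Dirichlet condition on $\Omega\cap\{x_3=0\}$ and the natural (Neumann-type) condition on $\Omega\cap\{x_4=0\}$. Here $Q_\jacobi$ is a special case of \eqref{eqn:spectral-index-form} with the $\dih_k$-invariant potentials $q=\abs{\II}^2+2$ and $\sigma=0$.

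We need $\Omega$ to be connected and divided by the $k$ reflections into $2k$ fundamental domains around a common fixed point on $\Sp^1_\perp$. We take this from Corollary~\ref{cor:Riemann-Hurwitz} and Lemma~\ref{lem:Riemann-Hurwitz}, in the same way Lemma~\ref{lem:fundamental_domain_knoid} and Corollary~\ref{cor:half_knoid} served for $\Sigma_k^+$.

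Proposition~\ref{prop:nodal_dihedral} then gives $\mathcal E_{\lambda_1}=\triv$ and $\mathcal E_{\lambda_2}=a\triv\oplus b\st_1$. We argue as follows.
\begin{itemize}
\item If $\lambda_2<0$, Step 2 would be contradicted: the case $a\geq1$ gives a second $\triv$-type negative direction, and the case $b\geq1$ gives a negative $\st_1$-type direction. Hence $\ind^{-+}\leq1$.
\item The ambient rotations in the $x_1x_3$- and $x_2x_3$-planes commute appropriately with $\tau_1,\tau_2$. They yield two independent Jacobi fields of type $\st_1\boxtimes\Or\boxtimes\triv$; they are nonzero because $\VConf(M_k)=0$ by Remark~\ref{rmk:dim_vconf}.
\item These Jacobi fields are not of type $\triv$, so they are not first eigenfunctions. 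Hence $\lambda_1<0$, and so $\ind^{-+}=1$ and $\lambda_2=0$.
\item Then $\nul^{\varrho\boxtimes\Or\boxtimes\triv}=0$ for $\varrho\notin\{\triv,\st_1\}$, while Step 2 gives $\nul^{\triv\boxtimes\Or\boxtimes\triv}=0$ and $\nul^{\st_1\boxtimes\Or\boxtimes\triv}=2$.
\end{itemize}
Summing over the isotypic components yields $\nul_\area^{-+}(M_k)=2$.
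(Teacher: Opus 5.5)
Your proof is correct and follows the paper's argument almost step for step. It uses the same identification of $\tau_2$-even transverse-traceless tensors with $\DF(\Sigma_k)$ to get the two vanishing isotypic dimensions, and the same isotypic bounds from Theorem~\ref{thm:equivariant_area_bounds}. It also applies Proposition~\ref{prop:nodal_dihedral} in the same way to the mixed problem on $\Omega$, and does the same nullity bookkeeping.

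The one genuine difference is how you obtain $\lambda_1<0$, i.e.\ $\ind_\area^{\triv\boxtimes\Or\boxtimes\triv}(M_k)\geq1$.
\begin{itemize}
\item The paper uses the explicit test function $f=\brt{e_3,\nu}$, which is $\dih_k$-invariant, $\tau_1$-odd and $\tau_2$-even. Simons' identity gives $Q_\jacobi(f,f)=-2\int_{M_k}f^2<0$.
\item You observe that the rotation-induced Jacobi fields of type $\st_1\boxtimes\Or\boxtimes\triv$ restrict to nonzero eigenfunctions with eigenvalue $0$ on $\Omega$. They cannot lie in the simple, $\triv$-type first eigenspace, so $\lambda_1<0$.
\end{itemize}
Your route is more economical: it reuses the Jacobi fields that the nullity computation needs anyway, and avoids both Simons' lemma and the check $f\not\equiv0$. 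The paper's route keeps the index computation independent of the nullity analysis and exhibits a concrete negative direction.

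A minor point on citations: the sector structure of $\Omega$ required by Proposition~\ref{prop:nodal_dihedral} comes from Corollary~\ref{cor:half_knoid}, applied to the half-sphere $k$-noid via Remark~\ref{rem:half-sphere}. Corollary~\ref{cor:Riemann-Hurwitz} only supplies that $\Sigma_k$ is a $k$-noid.
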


\begin{proof}
By Theorem~\ref{thm:equivariant_area_bounds} we have for any $r\in\IrRep{G}$
\begin{align}
\label{eqn:iso_area_bound}
\ind_\area^r(M_k) \leq \ind_\ener^r(M_k) + \dim \DF^r(M_k),
\end{align}
which we apply to the isomorphism classes of the representations 
$\triv \boxtimes \Or \boxtimes \triv$ and 
$\st_1 \boxtimes \Or \boxtimes \triv$. 
Via Corollary~\ref{cor:Riemann-Hurwitz}, cutting $M_k$ along the great sphere $\Sp^2$ fixed pointwise by $\rho(\tau_2)$ identifies the $\tau_2$-even transverse-traceless tensors on $M_k$ with those on the resulting $(\dih_k\times\Z_2)$-invariant $k$-noid $\Sigma_k$, as in the proof of Proposition~\ref{prop:ind++}.  
Therefore, by Theorem~\ref{thm:k-noidDF}\,\ref{thm:k-noidDF-ii}--\ref{thm:k-noidDF-iii} and Remark~\ref{rem:knoid-halfsphere} 
\begin{align}
\label{eqn:DF_isotypic_bounds}
\dim \DF^{\triv \boxtimes \Or \boxtimes \triv}(M_k) &= 0, &
\dim \DF^{\st_1 \boxtimes \Or \boxtimes \triv}(M_k) &= 0.
\intertext{The only summand of $\rho$ with $-+$ parity is $\triv\boxtimes(\Or\boxtimes\triv)$. 
Hence, by Proposition~\ref{prop:lawson_energy}}
\label{eqn:ener_isotypic_bounds}
\ind_\ener^{\triv \boxtimes \Or \boxtimes \triv}(M_k) &= 1, & 
\ind_\ener^{\st_1 \boxtimes \Or \boxtimes \triv}(M_k) &= 0.
\intertext{Substituting \eqref{eqn:DF_isotypic_bounds} and \eqref{eqn:ener_isotypic_bounds} into \eqref{eqn:iso_area_bound} yields} 
\label{eqn:20260331}
\ind_\area^{\triv\boxtimes\Or\boxtimes\triv}(M_k)&\leq 1, &
\ind_\area^{\st_1\boxtimes\Or\boxtimes\triv}(M_k)&=0.
\end{align}
Any function $u\in C^\infty(M_k)$ satisfying $u\circ \tau_1=-u$ vanishes on the great sphere $\Sp^2_\perp$ fixed pointwise by $\rho(\tau_1)$. 
Consequently, evaluating $\ind_\area^{-+}(M_k)$ reduces to the mixed eigenvalue problem for the Jacobi form $Q_\jacobi$ on $\Omega\vcentcolon=M_k\cap\{x_3,x_4\geq0\}$ with Dirichlet conditions along $M_k\cap\Sp^2_\perp$ and Neumann conditions along $M_k\cap\Sp^2$. 
Let $\lambda_1<\lambda_2$ denote the first two distinct eigenvalues of this problem and $\mathcal{E}_{\lambda_1},\mathcal{E}_{\lambda_2}$ the isomorphism classes of the induced representations of $\dih_k$ on the corresponding eigenspaces. 
Since $Q_\jacobi$ is the quadratic form \eqref{eqn:spectral-index-form} with the $\dih_k$-invariant potentials $q=\abs{\II}^2+2$ and $\sigma=0$, and since the action of $\dih_k$ on $\Omega$ satisfies the remaining assumptions of Proposition~\ref{prop:nodal_dihedral} by Corollary~\ref{cor:half_knoid} and Remark~\ref{rem:half-sphere}, we obtain
\begin{align}\label{eqn:20260331b}
\mathcal{E}_{\lambda_1} &= \triv, & 
\mathcal{E}_{\lambda_2} &=a\triv \oplus b\st_1
\end{align}
for some $a,b\in\N\cup\{0\}$. 
Denoting the standard orthonormal basis of $\R^4$ by $e_1,e_2,e_3,e_4$, and the unit normal vector field on $M_k$ by $\nu$, the $\dih_k$-invariant function $f=\brt{e_3,\nu}$ satisfies 
\begin{align*}
f\circ\tau_1&=-f, &
f\circ\tau_2&=f, & 
Q_\jacobi(f,f)&=-2\int_{M_k}f^2<0  
\end{align*} 
by \cite[Lemma~5.1.4]{Simons1968}, since $f\not\equiv0$ as $M_k$ is not totally geodesic. 
Hence $\ind_\area^{\triv \boxtimes \Or \boxtimes \triv}(M_k)\geq1$ and $\lambda_1<0$, which combined with \eqref{eqn:20260331} yields  
\begin{align}
\label{eqn:20260331c}
\ind_\area^{\triv \boxtimes \Or \boxtimes \triv}(M_k)=1
\end{align}
and $\lambda_2\geq0$. 
By \eqref{eqn:20260331b}, the only negative eigenvalue $\lambda_1$ has a $\triv$-isotypic eigenspace. 
Therefore, $\ind_\area^{\varrho\boxtimes \Or \boxtimes \triv}(M_k)=0$ for every $\varrho\in\IrRep{\dih_k}\setminus\{\triv\}$ and $\ind_\area^{-+}(M_k)=1$ as claimed. 

It remains to compute $\nul_\area^{-+}(M_k)$. 
The skew-symmetric endomorphisms of $\R^4$ anticommuting with $\rho(\tau_1)$ and commuting with $\rho(\tau_2)$ span the two-dimensional subspace of $\mathfrak{so}(4)$ generated by the rotations in the $x_1x_3$-plane and in the $x_2x_3$-plane. 
The normal components of the corresponding Killing fields provide two linearly independent Jacobi fields on $M_k$. 
(Indeed, since $M_k$ has negative Euler characteristic for $k\geq3$, Remark~\ref{rmk:dim_vconf} implies that no nonzero Killing field is everywhere tangent to $M_k$.)
Consequently
\begin{align}
\label{eqn:-+nullity_lower}
\nul_\area^{-+}(M_k)\geq2.
\end{align}
In particular, zero is an eigenvalue of the mixed eigenvalue problem on $\Omega$. 
Since $\lambda_1$ is the only negative eigenvalue by \eqref{eqn:20260331c} and \eqref{eqn:20260331b}, we conclude $\lambda_2=0$. 
By \eqref{eqn:20260331b} we therefore have $\nul_\area^{\varrho\boxtimes\Or\boxtimes\triv}(M_k)=0$ for every $\varrho\in\IrRep{\dih_k}\setminus\{\triv,\st_1\}$, and it remains to evaluate 
\begin{align}
\label{eqn:-+nullity_bound}
\ind_\area^{r}(M_k) + \nul_\area^{r}(M_k) \leq 
\ind_\ener^{r}(M_k) + \nul_\ener^{r}(M_k) + \dim \DF^{r}(M_k)
\end{align}
from Theorem~\ref{thm:equivariant_area_bounds} in the remaining two cases. 
\begin{itemize}
\item 
For $r=\triv\boxtimes\Or\boxtimes\triv$, we have $\ind_\area^r(M_k)=1$ by \eqref{eqn:20260331c} and $\ind_\ener^r(M_k)=1$ by \eqref{eqn:ener_isotypic_bounds}, while $\dim\DF^r(M_k)=0$ by \eqref{eqn:DF_isotypic_bounds} and $\nul_\ener^r(M_k)=0$, since the $-+$ summand of $\mathfrak{so}(\cA_\rho)$ in Lemma~\ref{lem:so-Lemma-Lawson} is $\st_1\boxtimes\Or\boxtimes\triv$. 
Substituting into \eqref{eqn:-+nullity_bound} yields $\nul_\area^{\triv\boxtimes\Or\boxtimes\triv}(M_k)=0$. 
\item 
For $r=\st_1\boxtimes\Or\boxtimes\triv$, we have $\ind_\area^r(M_k)=0$ by \eqref{eqn:20260331} and $\ind_\ener^r(M_k)=0$ by \eqref{eqn:ener_isotypic_bounds}, while $\dim\DF^r(M_k)=0$ by \eqref{eqn:DF_isotypic_bounds} and $\nul_\ener^r(M_k)=2$ by Lemma~\ref{lem:so-Lemma-Lawson}. 
Substituting into \eqref{eqn:-+nullity_bound} yields $\nul_\area^{\st_1\boxtimes\Or\boxtimes\triv}(M_k)\leq2$. 
\end{itemize}
Summing over all $\varrho\in\IrRep{\dih_k}$, we obtain $\nul_\area^{-+}(M_k)\leq2$, which complements \eqref{eqn:-+nullity_lower}. 
Therefore, $\nul_\area^{-+}(M_k)=2$ as claimed. 
The same arguments apply in the case of $+-$ parity by interchanging the roles of $\tau_1$ and $\tau_2$, respectively of the coordinates $x_3$ and $x_4$. 
\end{proof}

Unlike the two preceding Propositions~\ref{prop:ind++} and~\ref{prop:ind+-}, the following statement does not rely on Theorem~\ref{thm:equivariant_area_bounds}, but instead on a comparison between Dirichlet and mixed boundary conditions.

\begin{proposition}
\label{prop:ind--}
Given $3\leq k\in\N$, let $M_k\subset\Sp^3$ be any closed, embedded minimal surface of genus $k-1$ which is $G=(\dih_k\times\Z_2\times\Z_2)$-equivariant in the representation $\rho$ from \eqref{eqn:repR4}.  
Then, in the notation of Definition~\ref{defn:parity},
\begin{align*} 
\ind_\area^{--}(M_k)&=0, & 
\nul_\area^{--}(M_k)&=1.
\end{align*}
\end{proposition}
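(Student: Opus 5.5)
Functions $f$ with $f\circ\tau_1=-f$ and $f\circ\tau_2=-f$ vanish on both $M_k\cap\Sp^2_\perp$ and $M_k\cap\Sp^2$. The plan is to study $Q_\jacobi$ on the quarter $\Omega\vcentcolon=M_k\cap\{x_3,x_4\geq0\}$ with Dirichlet conditions on all of $\partial\Omega$, and to compare it with the mixed problem used in Proposition~\ref{prop:ind+-} (Dirichlet on $M_k\cap\Sp^2_\perp$, Neumann on $M_k\cap\Sp^2$). By Lemma~\ref{lem:spec1}, $\ind_\area^{--}=\ind_\jacobi^{--}$ and $\nul_\area^{--}=\nul_\jacobi^{--}$. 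These numbers are the number of negative Dirichlet eigenvalues on $\Omega$ and the multiplicity of the Dirichlet eigenvalue $0$, respectively.

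\emph{Lower bound for the nullity.} The rotation in the $x_3x_4$-plane is generated by an element of $\mathfrak{so}(4)$ of type $\triv\boxtimes\Or\boxtimes\Or$ (Lemma~\ref{lem:so-Lemma-Lawson}). Its normal component $w$ is a Jacobi field with $w\circ\tau_1=-w=w\circ\tau_2$, and $w$ is $\dih_k$-invariant. Moreover $w\not\equiv0$, since by Remark~\ref{rmk:dim_vconf} no nonzero Killing field is tangent to $M_k$. Hence $\nul_\area^{--}(M_k)\geq1$, i.e.\ $0$ is a Dirichlet eigenvalue on $\Omega$.

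\emph{Upper bounds via domain monotonicity.} Let $\mu_1\leq\mu_2\leq\dots$ be the Dirichlet eigenvalues on $\Omega$ and $\lambda_1<\lambda_2$ the first two distinct eigenvalues of the mixed problem. The mixed form domain contains the Dirichlet one, so the min--max principle gives $\lambda_j^{\mathrm{mix}}\leq\mu_j$ for each count $j$. Strictness is the key point: I will show $\lambda_1^{\mathrm{mix}}<\mu_1$, using unique continuation. If equality held, a first Dirichlet eigenfunction would also minimise the mixed problem. It would then satisfy the mixed Euler--Lagrange equation with both zero Dirichlet data and zero Neumann data on $M_k\cap\Sp^2$, forcing it to vanish identically.

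\emph{Exploiting Proposition~\ref{prop:ind+-}.} Its proof showed that the mixed problem has exactly one negative eigenvalue, and it is simple ($\mathcal E_{\lambda_1}=\triv$). It also showed $\lambda_2=0$, with $\mathcal E_{0}$ of class $\st_1$ and multiplicity $\nul^{-+}_\area=2$; this holds because $\nul_\area^{\triv\boxtimes\Or\boxtimes\triv}=0$, $\nul_\area^{\st_1\boxtimes\Or\boxtimes\triv}=2$, and all other classes are excluded. The mixed eigenvalues counted with multiplicity are therefore $\lambda_1<0=\lambda_2^{\mathrm{mix}}=\lambda_3^{\mathrm{mix}}<\lambda_4^{\mathrm{mix}}$.

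By min--max, $\mu_2\geq\lambda^{\mathrm{mix}}_2=0$, so at most $\mu_1$ can be negative. To exclude $\mu_1<0$ I plan to argue as follows. The first Dirichlet eigenfunction $\varphi_1$ is simple and positive, hence $\dih_k$-invariant. If $\mu_1<0$, its min--max level is negative while being $L^2$-orthogonal to the $\triv$-class mixed ground state is not guaranteed, so instead I compare within the $\dih_k$-invariant ($\triv$-isotypic) subspaces. There the mixed eigenvalues are $\lambda_1<0$, followed by a next eigenvalue that is strictly positive, since $0$ carries only $\st_1$. The Dirichlet $\triv$-spectrum contains $0$ (eigenfunction $w$). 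If $\mu_1<0$ as well, the Dirichlet $\triv$-spectrum would have two nonpositive eigenvalues, whereas min--max gives that its second eigenvalue is at least the second mixed $\triv$-eigenvalue, which is positive. This contradiction shows $\mu_1\geq0$, hence $\ind^{--}=0$.

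For the nullity, $\mu_1=0$ is simple on the connected domain $\Omega$, and every other $\mu_j\geq\mu_2>\mu_1$. This gives $\nul^{--}=1$.

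The main obstacle is making the isotypic min--max comparison rigorous. It requires that $\Omega$ is connected, which follows from Corollary~\ref{cor:Riemann-Hurwitz} together with the $k$-noid structure, and that mixed and Dirichlet problems decompose compatibly by $\dih_k$-isotypic components. Given those, the count within the $\triv$-component goes through as described.
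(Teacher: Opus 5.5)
Your argument is correct, but it obtains the key separation differently from the paper.

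Both proofs reduce to the Dirichlet problem on the quarter $\Omega$ and compare it with the mixed problem of Proposition~\ref{prop:ind+-}. Both also get $\nul_\area^{--}(M_k)\geq1$ from the rotation in the $x_3x_4$-plane.

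The paper then proves the strict inequality $\lambda_2^{-+}<\lambda_2^{--}$ at the \emph{second} level by unique continuation. Otherwise, a combination of the first two Dirichlet eigenfunctions orthogonal to the first mixed eigenfunction would be a mixed eigenfunction with vanishing Cauchy data on $\partial\Omega\cap\Sp^2$. Together with $\lambda_2^{-+}\geq0$, this gives $\ind_\area^{--}(M_k)+\nul_\area^{--}(M_k)\leq1$. It uses only the count $\ind_\area^{-+}(M_k)=1$ and nothing about $w$ beyond $w\not\equiv0$.

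You instead work in the $\dih_k$-invariant subspace. There the mixed problem has exactly one nonpositive eigenvalue, since $\ind_\area^{\triv\boxtimes\Or\boxtimes\triv}(M_k)=1$ and $\nul_\area^{\triv\boxtimes\Or\boxtimes\triv}(M_k)=0$. The positive Dirichlet ground state and $w$ are both $\dih_k$-invariant, so Dirichlet monotonicity inside that subspace forces $\mu_1=0$. Simplicity of the first Dirichlet eigenvalue on the connected disc $\Omega$ then gives index $0$ and nullity $1$.

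Your route avoids unique continuation, but it needs finer input. The vanishing $\nul_\area^{\triv\boxtimes\Or\boxtimes\triv}(M_k)=0$ is established only inside the proof of Proposition~\ref{prop:ind+-}, via Theorem~\ref{thm:equivariant_area_bounds} and $\Nul_\ener(M_k)=\mathfrak{so}(\cA_\rho)$, not in its statement. The $\dih_k$-invariance of $w$ also requires the trivial relative orientation from Lemma~\ref{lem:spec1}.

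Two of your steps are never used: the strict inequality $\lambda_1^{\mathrm{mix}}<\mu_1$ and the bound $\mu_2\geq0$. The comparison within the $\dih_k$-invariant subspace already yields $\mu_1=0$, so you can drop them.
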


\begin{proof}
Any function $u\in C^\infty(M_k)$ satisfying $u \circ \tau_1 = -u$ and $u \circ \tau_2 = -u$ vanishes on both $M_k\cap\Sp^2_\perp$ and $M_k\cap\Sp^2$. 
Consequently, evaluating $\ind_\area^{--}(M_k)$ reduces to the Dirichlet eigenvalue problem on a connected component $\Omega$ of $M_k\setminus(\Sp^2_\perp\cup\Sp^2)$. 

By contrast, functions $u\in C^\infty(M_k)$ satisfying $u\circ\tau_1=-u$ and $u\circ\tau_2=u$ vanish on $\partial\Omega\cap\Sp^2_\perp$ while satisfying the Neumann boundary condition on $\partial\Omega\cap\Sp^2$. 
Let $\lambda_1^{--}<\lambda_2^{--}\leq\ldots$ denote the eigenvalues of the scalar Jacobi quadratic form \eqref{eqn:jacobi_form_closed} on $H^1_0(\Omega)$, and let $\lambda_1^{-+}<\lambda_2^{-+}\leq\ldots$ denote the eigenvalues of the same form on $\Omega$ with the mixed boundary conditions described above. 
By the variational characterisation of eigenvalues, $\lambda_j^{-+}\leq\lambda_j^{--}$ for every $j\in\N$. 

Let $w\neq0$ be a linear combination of the first two Dirichlet eigenfunctions which is $L^2$-orthogonal to the first mixed eigenfunction. 
Then $\lambda_2^{-+}\int_\Omega w^2\leq Q_\jacobi(w,w)\leq\lambda_2^{--}\int_\Omega w^2$. 
If $\lambda_2^{-+}=\lambda_2^{--}$ then $w$ is a mixed eigenfunction which vanishes along $\partial\Omega\cap\Sp^2$ together with its normal derivative, contradicting the unique continuation principle. 
Therefore, $\lambda_2^{-+}<\lambda_2^{--}$.
Since $\ind_\area^{-+}(M_k)=1$ by Proposition~\ref{prop:ind+-}, we have $\lambda_1^{-+}<0\leq\lambda_2^{-+}<\lambda_2^{--}$ and therefore 
\begin{align}\label{eqn:20260401}
\ind_\area^{--}(M_k) + \nul_\area^{--}(M_k)\leq 1.
\end{align}
For the matching lower bound, we consider the rotation $\rotation\in\mathfrak{so}(4)$ in the $x_3x_4$-plane, which anticommutes with both $\rho(\tau_1)$ (which reflects $x_3$) and $\rho(\tau_2)$ (which reflects $x_4$). 
Furthermore, it commutes with the action of $\dih_k$ on the $x_1x_2$-plane. 
Therefore, the one-dimensional subspace of $\mathfrak{so}(4)$ spanned by $\rotation$ is invariant under the conjugation action of $G$, and the induced representation of $G$ on this subspace is isomorphic to $\triv\boxtimes\Or\boxtimes\Or$ (cf.~Lemma~\ref{lem:so-Lemma-Lawson}). 
Since $M_k$ has negative Euler characteristic, Remark~\ref{rmk:dim_vconf} implies that no nonzero Killing field is everywhere tangent to $M_k$, so the normal component of the corresponding Killing field defines a nontrivial Jacobi field on $M_k$ with $--$ parity.
Hence, $\nul_\area^{--}(M_k)\geq1$. 
Combined with \eqref{eqn:20260401} this yields $\ind_\area^{--}(M_k)=0$ and $\nul_\area^{--}(M_k)=1$ as claimed. 
\end{proof}

\begin{proof}[Proof of Theorem \ref{thm:index-Lawson}]
Let $3\leq k\in\N$ and let $M_k\subset\Sp^3$ be a closed, embedded minimal surface of genus $k-1$ which is $(\dih_k\times\Z_2\times\Z_2)$-equivariant in the representation \eqref{eqn:repR4}.
By Propositions~\ref{prop:ind++}, \ref{prop:ind+-} and  \ref{prop:ind--},  
\begin{align*}
  \ind_\area(M_k) 
&=\ind_\area^{++}(M_k) 
 +\ind_\area^{+-}(M_k)
 +\ind_\area^{-+}(M_k)
 +\ind_\area^{--}(M_k)
= 2k+1, 
\\[1.5ex]
  \nul_\area(M_k) 
&=\nul_\area^{++}(M_k) 
 +\nul_\area^{+-}(M_k)
 +\nul_\area^{-+}(M_k)
 +\nul_\area^{--}(M_k)
 = 6. 
 \qedhere
\end{align*}
\end{proof}

\pagebreak[2]
\section{Free boundary minimal punctured tori in the unit ball}
\label{sec:g1b1}

In this section we prove Theorem~\ref{thm:index-fbm-torus} stating that any properly embedded free boundary minimal surface $M_1\subset \B^3$ 
of genus $1$ with connected boundary and antiprismatic symmetry group of order $8$ has Morse index equal to $5$ and nullity $3$.
In this setting, sharp lower bounds are already available, as the following lemma shows. 
The remainder of the section is therefore devoted to matching upper bounds, for which we rely on the theory developed in Sections~\ref{sec:equivariant-harmonic}--\ref{sec:dihedral}.

\begin{lemma}
\label{lem:index_lower_bound_genus}
Any properly embedded free boundary minimal surface $\Sigma\subset\B^3$ with nonzero genus satisfies  
$\ind_{\area}(\Sigma)\geq 5$ and 
$\nul_{\area}(\Sigma)\geq 3$.  
\end{lemma}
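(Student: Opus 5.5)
The plan is to treat nullity and index separately; the nullity bound is soft, and the index bound is where the real work lies.

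\emph{Nullity.} Since $\Sigma$ has genus $g\geq1$ and $b\geq1$ boundary components, its Euler characteristic is $\chi(\Sigma)=2-2g-b\leq-1<0$. Hence $\VConf(\Sigma,g)=\{0\}$ by Remark~\ref{rmk:dim_vconf}.

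For each $B\in\mathfrak{so}(3)$, the ambient Killing field $x\mapsto Bx$ preserves $\B^3$ and $\partial\B^3$. Its normal component $\brt{Bx,\nu}$ is therefore a Jacobi field for the form \eqref{eqn:jacobi_form_fb}. Suppose this normal component vanishes for some $B$. Then $Bx$ is tangent to $\Sigma$. Because $\Sigma$ meets $\partial\B^3$ orthogonally and $Bx$ is tangent to $\Sp^2$, this field is also tangent to $\partial\Sigma$. It is a Killing field of the induced metric, hence conformal, so it vanishes by the previous paragraph. Thus $B$ annihilates $\Sigma$, and since $\Sigma$ spans $\R^3$ this forces $B=0$. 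Consequently $B\mapsto\brt{Bx,\nu}$ is injective on the $3$-dimensional space $\mathfrak{so}(3)$, and $\nul_\area(\Sigma)\geq3$.

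\emph{Index.} I would first recall the argument behind Devyver's bound $\ind_\area\geq4$ \cite{Devyver2019} and then produce one further negative direction from the genus.

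The four test functions are the normal coordinates $\nu_1,\nu_2,\nu_3$ together with the support function $\brt{x,\nu}$. They satisfy explicit identities for $Q_\jacobi$, namely $Q_\jacobi(\nu_i,\nu_j)=-\int_{\partial\Sigma}\nu_i\nu_j$ up to a tangential correction, and analogous formulas for the support function. Following \cite{Devyver2019,Tran2020}, the plan is to show that $Q_\jacobi$ is negative definite on a $4$-dimensional space built from these functions. This should work for every surface other than the equatorial disc.

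For the fifth direction, I would use the Ambrozio--Carlotto--Sharp method \cite{AmbrozioCarlottoSharp2018IndexFBMS}. A surface of genus at least one carries nonzero harmonic $1$-forms $\omega$ with vanishing normal component along $\partial\Sigma$, and the Hodge-dual forms give another such family. The test sections are the coordinate components of $\omega^\sharp\times\nu$. Summing $Q_\jacobi$ over the coordinates yields a strictly negative quantity. I would combine these sections with the Devyver space and argue by contradiction: suppose $\ind_\area(\Sigma)=4$, so the negative space is exactly the $4$-dimensional span above. Then every harmonic test section, after projection away from that span, would be forced to satisfy $Q_\jacobi\geq0$. The aim is to show this is incompatible with the trace identity, using the rigidity case: equality would force the test sections to be Jacobi fields, and that should force $\Sigma$ to be rotationally symmetric, i.e.\ the critical catenoid, which has genus zero.

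The main obstacle is precisely this last step. The harmonic-form argument by itself only yields about $\tfrac13(2g+b-1)$ negative directions, not an additional one beyond four. The plan therefore needs a characterisation of the equality case $\ind_\area=4$, and I expect the literature to supply it: Devyver and Tran show that index $4$ forces the critical catenoid. If such a characterisation is available, the lemma follows by quoting it, since the critical catenoid has genus zero.
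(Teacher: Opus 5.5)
\textbf{Nullity.} Your nullity argument is correct and is the one the paper uses, by reference to the proof of Proposition~\ref{prop:knoid_nullity}. Negative Euler characteristic rules out nonzero conformal fields tangent to the boundary, so the three rotational Jacobi fields are linearly independent. You also check explicitly that a tangential rotation field $Bx$ is tangent to $\partial\Sigma$, since $T_x\Sigma\cap T_x\Sp^2=T_x\partial\Sigma$ at boundary points. The paper leaves this implicit, and it is exactly what is needed to invoke Remark~\ref{rmk:dim_vconf}.

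\textbf{Index.} This half is not proved by your proposal. As you observe yourself, the harmonic-form construction of \cite{AmbrozioCarlottoSharp2018IndexFBMS} cannot produce a fifth negative direction.

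Your recollection of the bound $\ind_\area\geq4$ is also off. The support function $u=\brt{x,\nu}$ satisfies $\Delta u+\abs{\II}^2u=0$ with $u=0$ on $\partial\Sigma$. Hence $Q_\jacobi(u,u)=0$, and $Q_\jacobi$ is not negative definite on any space containing $u$.

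So everything rests on your final citation, and that is stated in the wrong form. What is needed, and what the paper uses, is only the following:
\begin{itemize}
\item every free boundary minimal surface in $\B^3$ other than the flat disc has $\ind_\area\geq4$ \cite[Proposition~6.2]{Devyver2019};
\item a properly \emph{embedded} one with nonzero genus cannot have $\ind_\area=4$ \cite[Corollary~7.2]{Devyver2019}, \cite[Corollary~3.10]{Tran2020}.
\end{itemize}
Embeddedness is an essential hypothesis in the second step, and your proposal never uses it.

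The stronger claim that index $4$ forces the critical catenoid is not what these results are used for. To my knowledge, such a uniqueness statement is not available in that generality. Replace your conditional ``if such a characterisation is available'' with the precise statement above. The lemma then reduces to the paper's two-line citation proof, and you can drop both the recap of Devyver's argument and the detour through harmonic $1$-forms.
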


\begin{proof}
Since $\Sigma$ is not a flat disc, $\ind_\area(\Sigma)\geq 4$ by 
\cite[Proposition 6.2]{Devyver2019}. 
Moreover, since $\Sigma$ is embedded with nonzero genus, $\ind_\area(\Sigma)\neq 4$ by \cite[Corollary~7.2]{Devyver2019}  respectively \cite[Corollary~3.10]{Tran2020}. 
Since $\Sigma$ has negative Euler characteristic, the same argument as in the proof of Proposition~\ref{prop:knoid_nullity} shows that the ambient rotations induce three linearly independent Jacobi fields on $\Sigma$, which implies $\nul_\area(\Sigma)\geq3$. 
\end{proof}

The symmetry group of a regular $m$-gonal antiprism can be formalised as follows (cf.~\cite[Figure~2]{CSWnonuniqueness} and \cite[(2.4)]{CSWstackings}). 

\begin{definition}[{Antiprismatic symmetry}]
\label{defn:antiprismatic}
Given $2\leq m\in\N$, the \emph{antiprismatic} group of order $4m$ is $\dih_{2m}$ acting on $\R^3$ in the representation 
\begin{align} 
\label{eqn:antiprismatic}
\rho=\st_1\oplus\m_1\colon\dih_{2m}\to\Ogroup(3).
\end{align}
\end{definition}

Throughout this section, we specialise Definition~\ref{defn:antiprismatic} to the case $m=2$, where $\dih_4$ acts on $M_1\subset\B^3$ as the antiprismatic group of order $8$.

\paragraph{Extrinsic action.}
In standard Cartesian coordinates $(x_1, x_2, x_3)\in\R^3$, the generator $\gamma_1$ of $\dih_4$ acts as the ambient rotation by angle $\pi$ around the $x_1$-axis, while the generator $\gamma_2$ acts as the reflection across the vertical plane $\{x_1 = x_2\}$ in the sense that 
\begin{align}\label{eqn:extrinsicD4action}
\rho(\gamma_1)&=\begin{pmatrix}
1 & \hphantom{-}0 & \hphantom{-}0 \\
0 & -1 & \hphantom{-}0 \\
0 & \hphantom{-}0 & -1
\end{pmatrix}, &
\rho(\gamma_2)&=\begin{pmatrix}
0 & \hphantom{-}1 & \hphantom{-}0 \\
1 & \hphantom{-}0 & \hphantom{-}0 \\
0 & \hphantom{-}0 & \hphantom{-}1
\end{pmatrix}.
\end{align}
Indeed, \eqref{eqn:extrinsicD4action} restricts to the standard representation $\st_1$ of $\dih_4$ on the $x_1x_2$-plane as defined in \eqref{eqn:St-matrix}, while on the $x_3$-axis, $\gamma_1$ acts by $-1$ and $\gamma_2$ acts by $+1$, which is the definition of $\m_1$.

\paragraph{Intrinsic action.}
Because $M_1\subset\B^3$ is invariant under $\rho(\dih_4)$, the ambient isometries restrict to an intrinsic isometric action of $\dih_4$ on the abstract Riemannian surface $(M_1,g)$. 
By Lemma~\ref{lem:fundamental_domain_torus}\,\ref{lem:fundamental_domain_torus-i}, the relative orientation representation is $\epsilon=\m_1$ which means that $\epsilon(\gamma_1)=-1$ and $\epsilon(\gamma_2)=1$. 
Since $\det\rho(\gamma_1)=1$ and $\det\rho(\gamma_2)=-1$, both $\gamma_1$ and $\gamma_2$ act as orientation-reversing isometries on $(M_1,g)$, while $\gamma_1\gamma_2$ acts as an orientation-preserving isometry of order~$4$.
The resulting configuration of fixed point sets is visualised in Figure~\ref{fig:D4torus}.

\begin{figure}
\centering
\pgfmathsetmacro{\rpar}{0.18} 
\begin{tikzpicture}[line cap=round,line join=round,scale=5,baseline={(0,0)}]  
\fill[fill=black!10](-1/2,-1/2)rectangle(1/2,1/2)(0,0)circle(\rpar);
\fill[black!33](0:\rpar)arc(0:45:\rpar)--(1/2,1/2)|-cycle;
\draw[dotted,semithick]
(0:\rpar)--(1/2,0)
(45:\rpar)--(1/2,1/2)
(90:\rpar)--(0,1/2)
(135:\rpar)--(-1/2,1/2)
(180:\rpar)--(-1/2,0)
(225:\rpar)--(-1/2,-1/2)
(270:\rpar)--(0,-1/2)
(-45:\rpar)--(1/2,-1/2)
; 
\draw[very thick,FarbeA]
(45:\rpar)--(1/2,1/2)node[midway,sloped,above]{$\Fix(\gamma_1)$}
(225:\rpar)--(-1/2,-1/2)
;
\draw[very thick,FarbeB]
(0:\rpar)--(1/2,0)node[midway,below]{$\Fix(\gamma_2)$}
(180:\rpar)--(-1/2,0)
(-1/2,-1/2)--++(1,0)node[near end,above]{$\Fix(\gamma_2)$}
(-1/2, 1/2)--++(1,0)
;
\draw[very thick]
(-1/2,-1/2)--++(0,1)
( 1/2,-1/2)--++(0,1)
(0,0)circle(\rpar);
\begin{scope}[thick]
\draw[-latex](1/2,0.1)--++(0,0.01);
\draw[-latex](-1/2,0.1)--++(0,0.01);
\draw[-latex,FarbeB](0.1,-1/2)--++(0.01,0);  
\draw[-latex,FarbeB](0.1,1/2)--++(0.01,0);  
\end{scope} 
\draw plot[bullet](1/2,1/2)node[above right]{$0$};
\draw plot[bullet](1/2,0)node[right]{$p$};
\draw plot[bullet](0,1/2)node[above]{$-p$};
\end{tikzpicture}
\caption{Schematic picture of the intrinsic $\dih_4$-action on $M_1$, which is topologically a torus with a disc removed. 
$M_1$ is divided into $8$ congruent domains (cf.~Lemma~\ref{lem:fundamental_domain_torus}).}
\label{fig:D4torus}
\end{figure}
 
The following statement is the analogue of Theorem~\ref{thm:k-noidDF} for punctured tori. 

\begin{theorem}
\label{thm:DF_punctured_torus}
Let $M_1\subset\B^3$ be a properly embedded free boundary minimal surface of genus one with connected boundary which is $\dih_4$-equivariant in the antiprismatic representation \eqref{eqn:antiprismatic}. 
Then the induced representation of $\dih_4$ on the space $\DF(M_1)$ of transverse-traceless tensors decomposes into irreducible representations of $\dih_4$ as
\[
\triv \oplus \m_1 \oplus \m_2.
\]
In particular, the dimensions of the isotypic components are:
\begin{enumerate}[label={\normalfont(\roman*)}]
\item $\dim \DF^{\triv}(M_1) = \dim \DF^{\m_1}(M_1) = \dim \DF^{\m_2}(M_1) = 1$,
\item $\dim \DF^{\Or}(M_1) = \dim \DF^{\st_1}(M_1) = 0$.
\end{enumerate}
\end{theorem}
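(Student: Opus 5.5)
The plan is to follow the proof of Theorem~\ref{thm:k-noidDF}. We compute the character $\chi_{\mathrm{tt}}$ of the representation of $\dih_4$ on $\DF(M_1)$ via Lemma~\ref{lem:chi_DF} and the Eichler trace formula (Theorem~\ref{thm:Eichler}), applied on the Schottky double $\hat M_1$. Since $M_1$ has genus one and one boundary component, $\hat M_1$ is a closed surface of genus $2$. Lemma~\ref{lem:DF_quadratic} then gives $\chi_{\mathrm{tt}}(e)=\dim\DF(M_1)=3\cdot2-3=3$. As in Remark~\ref{rem:Eichler}, we apply the trace formula with $T=\gamma$ when $\gamma$ preserves the orientation of $M_1$, and with $T=\gamma\iota$ otherwise. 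By the discussion of the intrinsic action, the orientation-preserving elements are the rotations $e,\,r\vcentcolon=\gamma_1\gamma_2,\,r^2,\,r^3$, and the orientation-reversing ones are the four reflections.

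Since the decomposition $\triv\oplus\m_1\oplus\m_2$ is determined by its character, it suffices to check the following values on the five conjugacy classes of $\dih_4$:
\[
\chi_{\mathrm{tt}}(e)=3,\qquad \chi_{\mathrm{tt}}(r)=\chi_{\mathrm{tt}}(r^3)=-1,\qquad \chi_{\mathrm{tt}}(r^2)=3,\qquad \chi_{\mathrm{tt}}(\gamma_1)=\chi_{\mathrm{tt}}(\gamma_2)=1.
\]
The multiplicities then follow from \eqref{eqn:20260318}, which gives $m_\triv=m_{\m_1}=m_{\m_2}=1$ and $m_\Or=m_{\st_1}=0$. Statements (i) and (ii) are immediate from this decomposition.

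The fixed-point data should come from Lemma~\ref{lem:fundamental_domain_torus} and Figure~\ref{fig:D4torus}.
\begin{itemize}
\item The order-four rotation $r$ fixes only the point $0$ of $M_1$ together with its copy in $\hat M_1$. There it acts as $z\mapsto\pm iz$ on $M_1$ and with the conjugate multiplier on the copy. The trace formula contributes $2\operatorname{Re}\bigl(i^2/(1-i)\bigr)=-1$.
\item The half-turn $r^2$ fixes $0$, $p$ and $-p$ together with their copies, giving six points. This is consistent with $r^2$ being the hyperelliptic involution of the genus-two double. Each point contributes $(-1)^2/(1-(-1))=\tfrac12$, so the total is $3$.
\item For a reflection $\gamma$, the fixed points of $\gamma\iota$ are the points of $\Fix(\gamma)\cap\partial M_1$. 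Around each of them $\gamma\iota$ acts as $z\mapsto -z$. Reading off the figure, each of $\Fix(\gamma_1)$ and $\Fix(\gamma_2)$ meets $\partial M_1$ in exactly two points, so each contributes $2\cdot\tfrac12=1$.
\end{itemize}

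The main obstacle is justifying these fixed-point counts rigorously, rather than only from the schematic picture. This requires three inputs.
\begin{itemize}
\item $r^2$ has no fixed points on $\partial M_1$ and exactly three interior fixed points. The fixed points of $\rho(r^2)$ lie on the $x_3$-axis, which meets $\partial\B^3$ only at $\pm e_3$. One then combines Riemann--Hurwitz on the double, which forces $6$ fixed points for an involution with genus-zero quotient, with Lemma~\ref{lem:fundamental_domain_torus}.
\item Each reflection fixes an arc whose two endpoints lie on $\partial M_1$, and $\gamma\iota$ has no other fixed points.
\item The multipliers at $0$ for $r$ are $\pm i$, which holds because the tangent plane at $0$ is invariant under $\rho(r)$.
\end{itemize}
A built-in consistency check is that the resulting multiplicities must be nonnegative integers summing to $3$.
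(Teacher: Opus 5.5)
Your proposal is correct and follows the paper's proof essentially step by step: the Eichler trace formula on the genus-two Schottky double via Lemma~\ref{lem:chi_DF}, the same fixed-point data for $\gamma_1\gamma_2$ and $(\gamma_1\gamma_2)^2$ (the latter from Lemma~\ref{lem:fundamental_domain_torus}\,\ref{lem:fundamental_domain_torus-0}), and the same character table. The reflection count that you read off the figure needs no picture: each reflection reverses the orientation of $M_1$, hence of the connected boundary curve $\partial M_1$, so it has exactly two fixed points there (for $\gamma_1$ these are the endpoints $\pm e_1$ of $\xi_1\subset M_1$).
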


\begin{proof}
By Lemma~\ref{lem:chi_DF}, we may compute the character $\chi_{\mathrm{tt}}$ of the representation of $\dih_4$ on $\DF(M_1)$ using the Eichler trace formula stated in Theorem~\ref{thm:Eichler} on the Schottky double $\hat{M}_1$ of genus~$2$. 
As in the proof of Theorem~\ref{thm:k-noidDF}, we apply Theorem~\ref{thm:Eichler} with $T=\gamma$ if $\gamma\in\dih_4$ preserves the orientation of $M_1$ and with $T=\gamma\iota$ otherwise, where $\iota\colon \hat{M}_1\to\hat{M}_1$ is the involution \eqref{eqn:20260927}. 
As shown above, $\gamma_1$ and $\gamma_2$ reverse the orientation of $M_1$, so the four reflections in $\dih_4$ reverse the orientation while the rotations $(\gamma_1\gamma_2)^s$ preserve it. 
Since characters are constant on conjugacy classes, it suffices to consider $\gamma_1$, $\gamma_2$, $\gamma_1\gamma_2$, $(\gamma_1\gamma_2)^2$ and $e$. 

Let $\gamma=\gamma_1$. 
Since $\rho(\gamma_1)$ fixes exactly the $x_1$-axis, the fixed points of $\gamma_1\iota$ are the two endpoints of the arc $\xi_1\subset M_1$ from Lemma~\ref{lem:fundamental_domain_torus}\,\ref{lem:fundamental_domain_torus-i}. 
As in the proof of Theorem~\ref{thm:k-noidDF}, $\gamma_1\iota$ looks like $z\mapsto-z$ around them, so that 
\[
\chi_{\mathrm{tt}}(\gamma_1)=2\frac{(-1)^2}{1-(-1)}=1. 
\]
Let $\gamma=\gamma_2$. 
Since $\gamma_2$ reverses the orientation of $M_1$ and hence of the closed curve $\partial M_1$, it fixes exactly two points of $\partial M_1$, which are the fixed points of $\gamma_2\iota$. 
Around them, $\gamma_2\iota$ again looks like $z\mapsto-z$, so that 
\[
\chi_{\mathrm{tt}}(\gamma_2)=1. 
\]
The rotation $\gamma_1\gamma_2\in\dih_4$ has two fixed points: the origin, which lies in the interior of $M_1$ since $\xi_1\subset M_1$, and its copy on the double $\hat{M}_1$. 
(Indeed, $\rho(\gamma_1\gamma_2)$ fixes only the origin and exchanges $\pm p$.)
Since $\xi_1\cup\xi_2\subset M_1$, the tangent plane of $M_1$ at the origin is horizontal, so $\gamma_1\gamma_2$ has the form $z\mapsto e^{\pm i\pi/2}z$ on $M_1$ and $z\mapsto e^{\mp i\pi/2}z$ on the copy. 
Hence, as in the proof of Theorem~\ref{thm:k-noidDF}, 
\[
\chi_{\mathrm{tt}}(\gamma_1\gamma_2)=2\operatorname{Re}\biggl(\frac{e^{i\pi}}{1-e^{i\pi/2}}\biggr)=-1-2\cos\frac{\pi}{2}=-1. 
\]
The rotation $(\gamma_1\gamma_2)^2$ by the angle $\pi$ around the $x_3$-axis fixes the three interior points $\{-p,0,p\}=M_1\cap\xi_3$ from Lemma~\ref{lem:fundamental_domain_torus}\,\ref{lem:fundamental_domain_torus-0} and their copies on $\hat{M}_1$. 
At all six fixed points, $(\gamma_1\gamma_2)^2$ looks like $z\mapsto-z$, so that 
\[
\chi_{\mathrm{tt}}\bigl((\gamma_1\gamma_2)^2\bigr)=6\frac{(-1)^2}{1-(-1)}=3. 
\]
Finally, $\chi_{\mathrm{tt}}(e)=\dim\DF(M_1)=6-6+3=3$ by Lemma~\ref{lem:DF_quadratic}. 
In summary, $\chi_{\mathrm{tt}}$ is given in the last row of the character table \eqref{eqn:char_table_M1}, whose columns correspond to the conjugacy classes of $\dih_4$. 
\begin{align}\label{eqn:char_table_M1}
\begin{array}{l||x{4.5em}|x{4.5em}|x{4.5em}|x{4.5em}|x{4.5em}}
& e & (\gamma_1\gamma_2)^{\pm1} & (\gamma_1\gamma_2)^2 & \gamma_1,~\gamma_2\gamma_1\gamma_2 & \gamma_2,~\gamma_1\gamma_2\gamma_1\\
\hline\hline
\chi_{\triv} & 1 & \hphantom{-}1 & 1 & \hphantom{-}1 & \hphantom{-}1\\
\chi_{\m_1} & 1 & -1 & 1 & -1 & \hphantom{-}1\\
\chi_{\m_2} & 1 & -1 & 1 & \hphantom{-}1 & -1\\
\hline
\chi_{\triv\oplus\m_1\oplus\m_2} & 3 & -1 & 3 & \hphantom{-}1 & \hphantom{-}1\\
\hline
\chi_{\mathrm{tt}} & 3 & -1 & 3 & \hphantom{-}1 & \hphantom{-}1
\end{array}
\end{align}
Since the last two rows coincide, $\DF(M_1)\cong\triv\oplus\m_1\oplus\m_2$ by \eqref{eqn:20260318}.
\end{proof}

The following statement is the analogue of Lemma~\ref{lem:spec_ener_knoid} for punctured tori. 

\begin{lemma}
\label{lem:spec_ener_torus}
Let $M_1\subset\B^3$ be any properly embedded free boundary minimal surface of genus one with connected boundary which is $\dih_4$-equivariant in the representation \eqref{eqn:antiprismatic}. 
Then 
\begin{align*}
\Ind_\area(M_1)&=\Ind_\jacobi(M_1)\otimes\m_1, &
\Nul_\area(M_1)&=\Nul_\jacobi(M_1)\otimes\m_1, &
\ind_\spec(M_1)&=1.
\end{align*} 
In particular, $\ind_\jacobi(M_1)=\ind_\area(M_1)$ and $\nul_\jacobi(M_1)=\nul_\area(M_1)$.
\end{lemma}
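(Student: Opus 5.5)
The plan follows the proof of Lemma~\ref{lem:spec_ener_knoid}, with two claims to establish: first the relative orientation representation, then the spectral index.

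For the first claim, $M_1\subset\B^3$ is properly embedded, hence two-sided, so it carries a global unit normal $\nu$ and Definition~\ref{defn:relative_orientation_representation} applies. It was recorded above, via Lemma~\ref{lem:fundamental_domain_torus}\,\ref{lem:fundamental_domain_torus-i}, that $\epsilon=\m_1$. Concretely, this says two things.
\begin{itemize}
\item The half-turn $\rho(\gamma_1)$ about the $x_1$-axis fixes the arc $\xi_1\subset M_1$ pointwise. Along that arc, $d\rho(\gamma_1)$ acts by $-1$ on the orthogonal complement of the axis, which contains $\nu$, so $\nu$ is reversed.
\item The reflection $\rho(\gamma_2)$ across $\{x_1=x_2\}$ leaves the surface invariant. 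The surface meets this plane orthogonally (it is not contained in it), so $\nu$ is tangent to the plane there and is preserved.
\end{itemize}
Since $\epsilon$ is a homomorphism, these two generator values determine it. Proposition~\ref{prop:indJ_indA} then gives $\Ind_\jacobi=\Ind_\area\otimes\m_1$, and tensoring with $\m_1$ once more (using $\m_1\otimes\m_1=\triv$) yields the stated form. The nullity identity follows in the same way. Because $\m_1$ has degree one, tensoring preserves degrees, so $\ind_\jacobi(M_1)=\ind_\area(M_1)$ and $\nul_\jacobi(M_1)=\nul_\area(M_1)$.

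For $\ind_\spec(M_1)=1$, the argument is the one used for the $k$-noid. As in \eqref{eqn:20260826}, $Q_\spec(u,u)=\int_{M_1}\abs{du}^2-\int_{\partial M_1}u^2$, so $Q_\spec(1,1)<0$ and hence $\ind_\spec\geq1$. For the reverse inequality, it suffices to show $\sigma_1(M_1)=1$: then $Q_\spec\geq0$ on the codimension-one subspace $W$ of functions with zero boundary mean, and so $\ind_\spec\leq1$. I would obtain $\sigma_1=1$ from \cite[Theorem~4.2]{McGrath2018}, or alternatively from \cite{ChodoshGianocca2026}. McGrath's result requires the symmetry group to be generated by reflections whose fundamental domain satisfies a graphicality/convexity-type hypothesis.

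The hypotheses of McGrath's theorem are the main obstacle. The antiprismatic group $\dih_4$ contains half-turns, not only reflections, so \cite{McGrath2018} may not apply verbatim. I would first try to verify the fundamental-domain hypothesis using Lemma~\ref{lem:fundamental_domain_torus}, in the same way Lemma~\ref{lem:fundamental_domain_knoid} was used for the $k$-noids, and check that the rotational symmetries are admissible in McGrath's argument. If that fails, I would fall back on the general result \cite{ChodoshGianocca2026}, which I expect gives $\sigma_1=1$ for embedded free boundary minimal surfaces in $\B^3$ without symmetry assumptions.
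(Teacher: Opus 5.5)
Your handling of the relative orientation is correct. Since $\xi_1\subset M_1$, we have $\nu\perp e_1$ along $\xi_1$, so the half-turn gives $\epsilon(\gamma_1)=-1$. Orthogonal intersection with $\{x_1=x_2\}$, for instance at the origin, gives $\epsilon(\gamma_2)=1$. The paper simply reads $\epsilon=\m_1$ off Lemma~\ref{lem:fundamental_domain_torus}\,\ref{lem:fundamental_domain_torus-i}. Your degree count for $\ind_\jacobi=\ind_\area$ is also valid, and shorter than the paper's argument, which shows that $\varrho\mapsto\varrho\otimes\m_1$ permutes $\IrRep{\dih_4}$ using $\st_1\otimes\m_1\cong\st_1$.

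The gap is $\sigma_1(M_1)=1$. This is the only nontrivial input for $\ind_\spec(M_1)\leq1$, and your proposal does not secure it.

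\cite[Theorem~4.2]{McGrath2018} is the result used for the prismatic $k$-noids, whose symmetry group acts on $\R^3$ by reflections. As you suspected, it does not apply here because $\rho(\gamma_1)$ is a half-turn. ``Checking that the rotational symmetries are admissible in McGrath's argument'' is not a verification step; it is a genuine extension of that theorem.

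Your fallback to \cite{ChodoshGianocca2026} is a guess. The paper invokes that result only as an alternative for the genus-zero $k$-noids, never for $M_1$, so you cannot assume it covers genus one.

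The missing ingredient is \cite[Theorem~5.2]{KusnerMcGrath2020}. It treats a symmetry group $G$ together with a reflection subgroup $H$ under \cite[Assumption~5.1]{KusnerMcGrath2020}. Here $G=\dih_4$ (orbifold type $2\mathord\ast2$) and $H=\brt{\gamma_2,\gamma_1\gamma_2\gamma_1}$ (type $\mathord\ast22$). Assumption~5.1 is exactly what Lemma~\ref{lem:fundamental_domain_torus}\,\ref{lem:fundamental_domain_torus-ii}--\ref{lem:fundamental_domain_torus-iii} provides:
\begin{itemize}
\item $\pi_H(M_1)$ is simply connected and separated by $\pi_H(\xi_1)$;
\item $\pi_G(M_1)$ is a quadrilateral with edges in $\pi_G(\Fix(\gamma_1))$, $\pi_G(\partial M_1)$, $\pi_G(\Fix(\gamma_2))$, $\pi_G(\Fix(\gamma_2))$.
\end{itemize}
That theorem yields $\sigma_1(M_1)=1$, and indeed that the first Steklov eigenspace is spanned by the coordinate functions, which is reused in Proposition~\ref{prop:torus_nullity}. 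With this in hand, your codimension-one argument completes the proof.
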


\begin{proof}
Lemma~\ref{lem:fundamental_domain_torus}\,\ref{lem:fundamental_domain_torus-i} in the appendix states that 
the relative orientation representation (see Definition~\ref{defn:relative_orientation_representation}) of $M_1\subset\B^3$ is necessarily given by $\epsilon=\m_1$. 
Proposition~\ref{prop:indJ_indA} therefore yields the equivariant isomorphisms 
\begin{align*}
\Ind_\area(M_1)&=\Ind_\jacobi(M_1)\otimes\m_1, &
\Nul_\area(M_1)&=\Nul_\jacobi(M_1)\otimes\m_1.
\end{align*}
Recalling the definitions from Section~\ref{sec:dihedral} we obtain 
\begin{align*}
\triv\otimes\m_1&\cong\m_1, &
\m_1\otimes\m_1&\cong\triv, &
\m_2\otimes\m_1&\cong\Or, &
\Or\otimes\m_1&\cong\m_2.
\end{align*}
Since $\st_1\otimes\m_1$ is again an irreducible representation of $\dih_4$ of degree $2$, and $\st_1$ is the only such representation of $\dih_4$ up to isomorphism, we conclude $\st_1\otimes\m_1\cong\st_1$. 
(This relation does not generalise to dihedral groups of higher order.)
Hence, $\varrho\mapsto\varrho'\vcentcolon=\varrho\otimes\m_1$ is a bijection $\IrRep{\dih_4}\to\IrRep{\dih_4}$ and we obtain
\begin{align*}
\ind_\jacobi(M_1)
&=\sum_{\varrho\in\IrRep{\dih_4}}\ind_\jacobi^{\varrho}(M_1)
=\sum_{\varrho\in\IrRep{\dih_4}}\ind_\area^{\varrho\otimes\m_1}(M_1)
=\sum_{\varrho'\in\IrRep{\dih_4}}\ind_\area^{\varrho'}(M_1)
=\ind_\area(M_1)
\end{align*}
and likewise $\nul_\jacobi(M_1)=\nul_\area(M_1)$ as claimed.

It remains to prove $\ind_\spec(M_1)=1$. 
As in \eqref{eqn:20260826} we have $Q_\spec(1,1)<0$ and thus $\ind_\spec(M_1)\geq1$.  
Lemma~\ref{lem:fundamental_domain_torus}\,\ref{lem:fundamental_domain_torus-ii}--\ref{lem:fundamental_domain_torus-iii} stated in the appendix verifies \cite[Assumption~5.1]{KusnerMcGrath2020} for $M_1$. 
Thus, the first Steklov eigenvalue on $M_1$, defined as in \eqref{eqn:sigma1def}, satisfies $\sigma_1(M_1)=1$
by \cite[Theorem~5.2]{KusnerMcGrath2020}. 
Therefore, $Q_\spec(u,u)\geq0$ for all $u\in C^\infty(M_1)$ with $\int_{\partial M_1}u=0$. 
Hence, $Q_\spec$ is positive semi-definite on a subspace of codimension one, which implies the reverse inequality $\ind_\spec(M_1)\leq1$ and completes the proof.
\end{proof}

\begin{proposition}
\label{prop:torus_isotypic}
Let $M_1\subset\B^3$ be any properly embedded free boundary minimal surface of genus one with connected boundary which is $\dih_4$-equivariant in the representation \eqref{eqn:antiprismatic}. 
Then $\ind_\ener(M_1)=3$ and 
$\ind_\area(M_1)=5$. 
Moreover the isotypic indices are given by
\begin{align*}
\ind_\ener^{\triv}(M_1)&=0, &
\ind_\ener^{\Or}(M_1)&=0, &
\ind_\ener^{\m_1}(M_1)&=1, &
\ind_\ener^{\m_2}(M_1)&=0, &
\ind_\ener^{\st_1}(M_1)&=2,
\\
\ind_\area^{\triv}(M_1)&=1, &
\ind_\area^{\Or}(M_1)&=0, &
\ind_\area^{\m_1}(M_1)&=2, &
\ind_\area^{\m_2}(M_1)&=0, &
\ind_\area^{\st_1}(M_1)&=2.
\end{align*}
\end{proposition}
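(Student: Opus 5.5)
The plan is to combine the upper bounds from Corollary~\ref{cor:indE} and Theorem~\ref{thm:equivariant_area_bounds} with the lower bound $\ind_\area(M_1)\geq5$ from Lemma~\ref{lem:index_lower_bound_genus}. A nodal domain argument will show that exactly one of the resulting slots is empty.

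\textbf{Step 1 (upper bounds).} By Lemma~\ref{lem:spec_ener_torus} we have $\ind_\spec(M_1)=1$. Corollary~\ref{cor:indE} then gives $\Ind_\ener(M_1)\leq\cA_\rho=\st_1\oplus\m_1$. In particular $\ind_\ener^{\triv}=\ind_\ener^{\Or}=\ind_\ener^{\m_2}=0$, $\ind_\ener^{\m_1}\leq1$ and $\ind_\ener^{\st_1}\leq2$. Theorem~\ref{thm:equivariant_area_bounds}, combined with Theorem~\ref{thm:DF_punctured_torus}, yields
\[
\Ind_\area(M_1)\leq\st_1\oplus\m_1\oplus\triv\oplus\m_1\oplus\m_2 .
\]
Hence $\ind_\area^{\triv}\leq1$, $\ind_\area^{\m_1}\leq2$, $\ind_\area^{\m_2}\leq1$, $\ind_\area^{\Or}=0$ and $\ind_\area^{\st_1}\leq2$, so $\ind_\area(M_1)\leq6$.

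\textbf{Step 2 (ruling out the $\m_2$ slot; main obstacle).} I would show $\ind_\area^{\m_2}(M_1)=0$. By Lemma~\ref{lem:spec_ener_torus} and Proposition~\ref{prop:indJ_indA}, $\ind_\area^{\m_2}=\ind_\jacobi^{\m_2\otimes\m_1}=\ind_\jacobi^{\Or}$. Suppose there were a Jacobi eigenfunction $u$ of $Q_\jacobi$ in the $\Or$-isotypic component with eigenvalue $\lambda<0$. Such a $u$ is odd under every reflection of $\dih_4$, so it vanishes on $\Fix(\gamma)$ for all four reflections $\gamma$. By Lemma~\ref{lem:fundamental_domain_torus} (see Figure~\ref{fig:D4torus}) these fixed point sets cut $M_1$ into $8$ congruent domains, so $u$ has at least $8$ nodal domains. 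The Courant-type estimate \cite[Corollary~3.2\,(i)]{CSWSpectral} would then give at least $8$ eigenvalues of $Q_\jacobi$ that are $\leq\lambda<0$. This means $\ind_\jacobi(M_1)\geq8$, and since $\ind_\jacobi(M_1)=\ind_\area(M_1)$ by Lemma~\ref{lem:spec_ener_torus}, this contradicts $\ind_\area(M_1)\leq6$. The delicate point is checking that the fixed arcs really separate $M_1$ into $8$ pieces, including along the boundary circle. For this I rely on Lemma~\ref{lem:fundamental_domain_torus}.

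\textbf{Step 3 (forcing equalities).} After Step 2 we have $\ind_\area(M_1)\leq1+2+0+0+2=5$. Together with Lemma~\ref{lem:index_lower_bound_genus} this gives $\ind_\area(M_1)=5$. Hence every remaining isotypic bound is attained: $\ind_\area^{\triv}=1$, $\ind_\area^{\m_1}=2$, $\ind_\area^{\st_1}=2$, and $\ind_\area^{\Or}=\ind_\area^{\m_2}=0$.

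Reading the isotypic inequalities of Theorem~\ref{thm:equivariant_area_bounds} backwards then determines the energy indices. From $2=\ind_\area^{\m_1}\leq\ind_\ener^{\m_1}+1$ we get $\ind_\ener^{\m_1}\geq1$. From $2=\ind_\area^{\st_1}\leq\ind_\ener^{\st_1}+0$ we get $\ind_\ener^{\st_1}\geq2$. Combined with Step 1 this gives $\Ind_\ener(M_1)=\st_1\oplus\m_1$, the stated energy isotypic values, and $\ind_\ener(M_1)=3$.
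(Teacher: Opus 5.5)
Your proposal is correct and follows the paper's proof essentially step for step. The ingredients coincide: the bound $\Ind_\ener(M_1)\leq\st_1\oplus\m_1$ from Corollary~\ref{cor:indE}, the isotypic area bounds via $\DF(M_1)\cong\triv\oplus\m_1\oplus\m_2$, the nodal-domain exclusion of the $\m_2$-slot through $\ind_\area^{\m_2}=\ind_\jacobi^{\Or}$, and the lower bound $5$ forcing all remaining inequalities to be equalities. The only difference is that the paper runs the nodal argument for eigenvalues $\leq0$ (obtaining $\ind_\jacobi\geq7$), which also yields $\nul_\area^{\m_2}(M_1)=0$ for later use in Proposition~\ref{prop:torus_nullity}; your strictly negative version suffices for the statement at hand.
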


\begin{proof}
Since $\ind_\spec(M_1)=1$ by Lemma~\ref{lem:spec_ener_torus}, Corollary~\ref{cor:indE} yields
$\Ind_\ener(M_1)\leq\cA_\rho=\st_1\oplus\m_1$ and therefore
\begin{align}
\label{eqn:torus_ener_index_bounds}
\ind_\ener^{\m_1}(M_1)&\leq1, &
\ind_\ener^{\st_1}(M_1)&\leq2, &
\ind_\ener^{\varrho}(M_1)&=0 \text{ for any }\varrho\in\{\triv,\Or,\m_2\}.
\end{align}
By Theorem~\ref{thm:equivariant_area_bounds}, for every $\varrho\in\IrRep{\dih_4}$, the isotypic indices satisfy 
\begin{align*}
\ind_\area^{\varrho}(M_1)
&\leq
\ind_\ener^{\varrho}(M_1)
+\dim\DF^{\varrho}(M_1).   
\end{align*} 
Inserting \eqref{eqn:torus_ener_index_bounds} and 
recalling from Theorem~\ref{thm:DF_punctured_torus} that the representation class of $\DF(M_1)$ is $\triv\oplus\m_1\oplus\m_2$, we obtain
\begin{align}
\label{eqn:torus_area_index_bounds}
\ind_\area^{\triv}(M_1)&\leq1, &
\ind_\area^{\Or}(M_1)&=0, &
\ind_\area^{\m_1}(M_1)&\leq2, &
\ind_\area^{\m_2}(M_1)&\leq1, &
\ind_\area^{\st_1}(M_1)&\leq2,
\end{align}
and in particular $\ind_\area(M_1)\leq6$.

We claim $\ind_\jacobi^{\Or}(M_1)=\nul_\jacobi^{\Or}(M_1)=0$.
By Lemma~\ref{lem:fundamental_domain_torus}\,\ref{lem:fundamental_domain_torus-iii}, the union $X$ of the sets fixed by one of the reflections in $\dih_4$ divides $M_1$ into $8$ congruent domains (each mapped homeomorphically onto the quadrilateral $\pi_G(M_1)$).
A function $u\in C^\infty(M_1)$ satisfying $u=\Or(\gamma)\,(u\circ\gamma^{-1})=-u\circ\gamma^{-1}$ for every reflection $\gamma\in\dih_4$ vanishes on all of $X$.
Consequently, any such $u$ has at least $8$ nodal domains.
If $u$ were an eigenfunction of $Q_\jacobi$ with nonpositive eigenvalue, 
then \cite[Corollary~3.2\,(i)]{CSWSpectral} applied with $n=8$ would imply $\ind_\jacobi(M_1)\geq7$, contradicting $\ind_\jacobi(M_1)=\ind_\area(M_1)\leq6$.  
This proves the claim, so by Lemma~\ref{lem:spec_ener_torus} and $\Or\otimes\m_1\cong\m_2$,
\begin{align}
\label{eqn:torus_m2_vanishes}
\ind_\area^{\m_2}(M_1)&=0, & \nul_\area^{\m_2}(M_1)&=0.
\end{align}

Substituting \eqref{eqn:torus_m2_vanishes} into \eqref{eqn:torus_area_index_bounds} yields $\ind_\area(M_1)\leq5$, while $\ind_\area(M_1)\geq5$ by Lemma~\ref{lem:index_lower_bound_genus}.
Hence, $\ind_\area(M_1)=5$ and all the inequalities in \eqref{eqn:torus_ener_index_bounds}--\eqref{eqn:torus_area_index_bounds} (except for $\m_2$) must be equalities.
This proves the statement about the isotypic indices and, summing \eqref{eqn:torus_ener_index_bounds}, also $\ind_\ener(M_1)=3$.
\end{proof}

\begin{proposition}
\label{prop:torus_nullity}
Let $M_1\subset\B^3$ be any properly embedded free boundary minimal surface of genus one with connected boundary which is $\dih_4$-equivariant in the representation \eqref{eqn:antiprismatic}. 
Then 
\[
\nul_\ener(M_1)=3=\nul_\area(M_1).
\]
\end{proposition}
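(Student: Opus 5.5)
The plan is to compute the energy nullity first with Proposition~\ref{prop:rigidity}, and then bound the area nullity isotypic component by isotypic component using the second inequality of Theorem~\ref{thm:equivariant_area_bounds}. The lower bound $\nul_\area(M_1)\geq3$ is already given by Lemma~\ref{lem:index_lower_bound_genus}.

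\emph{Energy nullity.} I will check the hypotheses of Proposition~\ref{prop:rigidity} with $n=2$.
\begin{itemize}
\item $\ind_\spec(M_1)=1$ holds by Lemma~\ref{lem:spec_ener_torus}.
\item $\ind_\ener(M_1)=3$ holds by Proposition~\ref{prop:torus_isotypic}.
\item For $\nul_\spec(M_1)=3$ I use \cite[Theorem~5.2]{KusnerMcGrath2020}, whose assumptions are verified by Lemma~\ref{lem:fundamental_domain_torus}. It gives that the first Steklov eigenspace is spanned by the coordinate functions, exactly as in the proof of Proposition~\ref{prop:knoid_nullity}.
\item The components of the inclusion must be quadratically independent on $\partial M_1$.
\end{itemize}
I expect the last item to be the main obstacle. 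I would first look for an appendix statement analogous to Lemma~\ref{lem:knoid_quadratic_independence} for the punctured torus. Failing that, I would argue as follows. The space of symmetric matrices $B$ with $\brt{B\Phi,\Phi}=0$ on $\partial M_1$ is invariant under conjugation by $\rho(\dih_4)$, so it suffices to rule out nonzero $B$ in each isotypic component of the symmetric square of $\st_1\oplus\m_1$. In each such component $B$ has a very restricted form. That form should conflict with the known geometry of $\partial M_1$: it is a single closed curve on $\Sp^2$, invariant under the order-$4$ rotoreflection, and it meets the fixed sets of the reflections as described in Lemma~\ref{lem:fundamental_domain_torus}.

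With these hypotheses in place, Proposition~\ref{prop:rigidity} gives $\Nul_\ener(M_1)=\mathfrak{so}(\cA_\rho)$, hence $\nul_\ener(M_1)=3$. Arguing as in Lemma~\ref{lem:so-Lemma} via $\Lambda^2$, I compute
\[
\mathfrak{so}(\cA_\rho)=\Lambda^2(\st_1\oplus\m_1)=\det(\st_1)\oplus(\st_1\otimes\m_1)=\Or\oplus\st_1,
\]
using $\st_1\otimes\m_1\cong\st_1$ from the proof of Lemma~\ref{lem:spec_ener_torus}. So $\nul_\ener^{\Or}=1$, $\nul_\ener^{\st_1}=2$, and all other isotypic energy nullities vanish.

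\emph{Area nullity.} For each $\varrho\in\IrRep{\dih_4}$ I insert the isotypic area and energy indices from Proposition~\ref{prop:torus_isotypic}, the energy nullities above, and $\DF(M_1)\cong\triv\oplus\m_1\oplus\m_2$ from Theorem~\ref{thm:DF_punctured_torus} into
\[
\ind_\area^\varrho(M_1)+\nul_\area^\varrho(M_1)\leq\ind_\ener^\varrho(M_1)+\nul_\ener^\varrho(M_1)+\dim\DF^\varrho(M_1).
\]
This gives:
\begin{itemize}
\item for $\triv$: $1+\nul\leq0+0+1$, so $\nul_\area^{\triv}(M_1)=0$;
\item for $\Or$: $0+\nul\leq0+1+0$, so $\nul_\area^{\Or}(M_1)\leq1$;
\item for $\m_1$: $2+\nul\leq1+0+1$, so $\nul_\area^{\m_1}(M_1)=0$;
\item for $\st_1$: $2+\nul\leq2+2+0$, so $\nul_\area^{\st_1}(M_1)\leq2$;
\item for $\m_2$: $\nul_\area^{\m_2}(M_1)=0$ by \eqref{eqn:torus_m2_vanishes}.
\end{itemize}
Summing the components yields $\nul_\area(M_1)\leq3$. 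Combined with the lower bound from Lemma~\ref{lem:index_lower_bound_genus}, this gives $\nul_\area(M_1)=3=\nul_\ener(M_1)$.
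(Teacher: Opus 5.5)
Your proposal is correct and follows essentially the same route as the paper. Both arguments obtain $\nul_\ener(M_1)=3$ from Proposition~\ref{prop:rigidity}, where quadratic independence on $\partial M_1$ is exactly Lemma~\ref{lem:torus_quadratic_independence} in the appendix, and then insert the isotypic data columnwise into Theorem~\ref{thm:equivariant_area_bounds}, together with $\nul_\area^{\m_2}(M_1)=0$ and the lower bound from Lemma~\ref{lem:index_lower_bound_genus}. The only difference is that you compute $\Nul_\ener(M_1)=\Or\oplus\st_1$ explicitly, whereas the paper just reads off $\nul_\area^{\varrho}(M_1)\leq\nul_\ener^{\varrho}(M_1)$ for each column and sums, so it needs only the total $\nul_\ener(M_1)=3$.
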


\begin{proof}
By Lemma~\ref{lem:fundamental_domain_torus}\,\ref{lem:fundamental_domain_torus-ii}--\ref{lem:fundamental_domain_torus-iii} stated in the appendix, $M_1$ satisfies the hypotheses \cite[Assumption~5.1]{KusnerMcGrath2020} 
and \cite[Theorem~5.2]{KusnerMcGrath2020} applies. 
Therefore, the first Steklov eigenspace of $M_1$ coincides with the span of the ambient coordinate functions restricted to $M_1$ and $\sigma_1(M_1)=1$. 
Recalling Remark~\ref{rem:spectral_nullity}, this yields $\nul_\spec(M_1)=3$, since the coordinate functions span the Steklov eigenspace corresponding to the eigenvalue $1$. 
Together with $\ind_\spec(M_1)=1$ from Lemma~\ref{lem:spec_ener_torus} and $\ind_\ener(M_1)=3$ from Proposition~\ref{prop:torus_isotypic}, and since the components of the inclusion $\Phi\colon M_1\to\B^3$ 
are quadratically independent on $\partial M_1$  
by Lemma~\ref{lem:torus_quadratic_independence},
Proposition~\ref{prop:rigidity} applies with $n=2$ and yields 
\begin{align}
\label{eqn:torus_ener_nullity}
\nul_\ener(M_1)=\tfrac{1}{2}n(n+1)=3.
\end{align}
Theorem~\ref{thm:equivariant_area_bounds} implies, for every $\varrho\in\IrRep{\dih_4}$, 
\begin{align}\label{eqn:20260827a}
\ind_\area^{\varrho}(M_1)+\nul_\area^{\varrho}(M_1)
\leq
\ind_\ener^{\varrho}(M_1)+\nul_\ener^{\varrho}(M_1)+\dim\DF^{\varrho}(M_1).
\end{align}
Proposition~\ref{prop:torus_isotypic} and Theorem~\ref{thm:DF_punctured_torus} yield 
\begin{align}\label{eqn:20260827b}
\begin{array}{r|x{2em}x{2em}x{2em}x{2em}x{2em}}
\varrho\in\IrRep{\dih_4} & {\triv} & \Or & \m_1 & \m_2 & \st_1
\\\hline\hline
\ind_\area^{\varrho}(M_1) & 1 & 0 & 2 & 0 & 2
\\\hline
\ind_\ener^{\varrho}(M_1) & 0 & 0 & 1 & 0 & 2
\\
\dim\DF^{\varrho}(M_1) & 1 & 0 & 1 & 1 & 0
\end{array}
\end{align}
Inserting the values in table \eqref{eqn:20260827b} columnwise into \eqref{eqn:20260827a}
yields $\nul_\area^{\varrho}(M_1)\leq\nul_\ener^{\varrho}(M_1)$
for every $\varrho\in\{\triv, \Or, \m_1, \st_1\}$, 
while $\nul_\area^{\m_2}(M_1)=0$ by \eqref{eqn:torus_m2_vanishes}. 
Summing over all $\varrho\in\IrRep{\dih_4}$ and recalling \eqref{eqn:torus_ener_nullity} yields $\nul_\area(M_1)\leq3$.
The reverse inequality $\nul_\area(M_1)\geq3$ holds by Lemma~\ref{lem:index_lower_bound_genus}, which completes the proof. 
\end{proof}

\begin{proof}[Proof of Theorem~\ref{thm:index-fbm-torus}]
The statement is covered by Propositions~\ref{prop:torus_nullity} and \ref{prop:torus_isotypic}. 
\end{proof}

\appendix

\section{Decomposition of symmetric bilinear forms}
\label{sec:decomposition}

Throughout this section, let $(M,g)$ be a compact Riemannian surface, possibly with boundary, and let $\eta$ denote the outward unit normal along $\partial M$. 
We consider the spaces  
\begin{align*}
\mathcal{H}&\vcentcolon=L^2\bigl(S^2_0T^*M\bigr), &
\mathcal{V}&\vcentcolon=\{X\in H^1(TM)\mid g(X,\eta)=0 \text{ on } \partial M\}, 
\end{align*}
where $S^2_0T^*M$ denotes the bundle of trace-free symmetric bilinear forms, and the boundary condition is understood in the Sobolev
trace sense. 
We define the bounded operator
\begin{align*}
L\colon\mathcal{V}&\to\mathcal{H}, 
\\
X&\mapsto\mathcal{L}_Xg-(\divop_gX)g
\end{align*}
(extending the operator from Lemma~\ref{lem:L_operator}) 
and set 
$\mathcal{V}_0\vcentcolon=\mathcal{V}\cap\VConf^\perp$, where $\VConf=\ker L$ and the orthogonal complement is taken with respect to the $L^2$-inner product. 
We write 
\[
\delta\vcentcolon=-\divop_g
\]
for the divergence operator acting on symmetric bilinear forms.
The main purpose of this appendix is to prove the following theorem used for \eqref{eq:metric_decomp}. 

\begin{theorem}\label{thm:surface-tensor-decomposition}
Every smooth symmetric bilinear form $h$ on $M$ has an orthogonal decomposition
\[
h=\tfrac{1}{2}(\tr_g h)g+LX+h^{\mathrm{tt}}
\]
into pure trace, pure divergence and transverse traceless parts, where $X$ is smooth and tangent to $\partial M$, and
\begin{align*}
\tr_g h^{\mathrm{tt}}&=0, &
\delta h^{\mathrm{tt}}&=0, &
h^{\mathrm{tt}}(\eta,\cdot)|_{T\partial M}&=0.
\end{align*}
The three tensor summands are pairwise $L^2$-orthogonal and uniquely determined by $h$, while $X$ is unique up to an element of $\VConf(M,g)$.
\end{theorem}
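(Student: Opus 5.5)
The plan is to separate off the trace first and then handle the trace-free part by a variational (Hodge-type) argument for the operator $L$. Setting $h_0\vcentcolon= h-\tfrac12(\tr_gh)g$, the form $h_0$ is trace-free and pointwise orthogonal to $g$. Both $LX$ and $h^{\mathrm{tt}}$ are trace-free, so it remains to decompose $h_0\in\mathcal{H}$ as $h_0=LX+h^{\mathrm{tt}}$ with $h^{\mathrm{tt}}\perp L(\mathcal{V})$.

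The first step is to compute the adjoint of $L$ by integration by parts. For trace-free $k$ and $X\in\mathcal{V}$,
\[
\brt{LX,k}_{L^2}=2\int_M\brt{\nabla X,k}=2\int_M\brt{X,\delta k}+2\int_{\partial M}k(\eta,X),
\]
and $k(\eta,X)=k(\eta,X^{\top})$ because $X$ is tangent to $\partial M$. Hence $k\perp L(\mathcal{V})$ if and only if $\delta k=0$ weakly in $M$ and $k(\eta,\cdot)|_{T\partial M}=0$ on $\partial M$. This identifies the orthogonal complement of the range with the transverse-traceless space, including the boundary condition.

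The second step is to minimise the functional $F(X)=\nm{LX-h_0}_{L^2}^2$ over $\mathcal{V}_0$. The key ingredient is a Korn-type coercivity estimate
\[
\nm{X}_{H^1}\leq C\nm{LX}_{L^2}\qquad\text{for all }X\in\mathcal{V}_0,
\]
which shows that $L(\mathcal{V})=L(\mathcal{V}_0)$ is closed in $\mathcal{H}$. To prove it:
\begin{itemize}
\item In dimension two, $L$ is (up to a constant) the Cauchy--Riemann operator $X\mapsto\bar\partial X$ acting on vector fields.
\item The tangency condition is an elliptic (Lopatinskii--Shapiro) boundary condition for $\bar\partial$; one can check this in local boundary coordinates, or by passing to the Schottky double with its involution, as in Remark~\ref{rmk:dim_vconf}.
\item This gives a Gårding inequality $\nm{X}_{H^1}\leq C(\nm{LX}_{L^2}+\nm{X}_{L^2})$.
\item By Rellich compactness and the standard contradiction argument, using $\ker L=\VConf$ (Lemma~\ref{lem:L_operator}), which is finite-dimensional and removed in $\mathcal{V}_0$, the $L^2$ term can be dropped.
\end{itemize}
The minimiser then yields the orthogonal projection $LX$ of $h_0$ onto $L(\mathcal{V})$. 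Set $h^{\mathrm{tt}}\vcentcolon= h_0-LX$. By the first step, $h^{\mathrm{tt}}$ satisfies $\delta h^{\mathrm{tt}}=0$ and the boundary condition, in the weak sense.

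The third step is regularity. The Euler--Lagrange equation reads $L^*LX=L^*h_0$, which is the elliptic boundary problem
\[
\delta LX=\delta h_0 \text{ in } M,\qquad (LX)(\eta,\cdot)|_{T\partial M}=h_0(\eta,\cdot)|_{T\partial M},\qquad g(X,\eta)=0 \text{ on } \partial M,
\]
with smooth data. Elliptic regularity for this Neumann-type system for $\bar\partial^*\bar\partial$ gives that $X$ is smooth, hence so is $h^{\mathrm{tt}}$.

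Finally, uniqueness and orthogonality follow from the construction. Pure trace is pointwise orthogonal to trace-free tensors, and $h^{\mathrm{tt}}\perp L(\mathcal{V})$ by the first step. If two decompositions of $h$ exist, their difference forces $LX-LX'$ to lie in $L(\mathcal{V})\cap L(\mathcal{V})^{\perp}=\{0\}$, so $X-X'\in\ker L=\VConf$.

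I expect the main obstacle to be the coercive estimate, specifically verifying that the tangency boundary condition for $\bar\partial$ is elliptic so that the Gårding inequality holds up to the boundary. My first approach would be to flatten the boundary in isothermal coordinates, where $L$ becomes $\partial_{\bar z}$ acting on $X=X^1+iX^2$ with $\operatorname{Im}$-type condition $X^2=0$ on $\{y=0\}$. Reflecting $X^1$ evenly and $X^2$ oddly across the boundary then reduces the estimate to the interior case on a doubled domain.
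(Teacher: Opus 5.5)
Your proposal is correct and has the same overall architecture as the paper's proof. Both split off the trace, prove a Korn-type estimate on $\mathcal{V}_0$ from a G{\aa}rding inequality plus Rellich compactness and a contradiction argument, project onto the closed range $L\mathcal{V}$, identify $(L\mathcal{V})^\perp$ by integration by parts, prove regularity, and get uniqueness from $L\mathcal{V}\cap(L\mathcal{V})^\perp=\{0\}$. The two analytic ingredients, however, are obtained differently.

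\textbf{G{\aa}rding inequality.} The paper does not use ellipticity of the tangency condition. It integrates the Weitzenb\"ock formula, whose boundary term reduces to $\int_{\partial M}\II(X,X)$ exactly because $X$ is tangent to $\partial M$. It then splits $\nabla X^\flat$ pointwise into trace-free symmetric, trace and skew parts, giving $\nm{\nabla X}_{L^2}^2=\tfrac12\nm{LX}_{L^2}^2+\text{lower order}$, and absorbs the boundary term with a trace inequality. This argument is global and needs no charts. Your route treats $v=0$ as a Riemann--Hilbert condition for $\bar\partial$, made concrete by even/odd reflection in isothermal boundary charts. It is local, but it also shows directly that $\ker L$ on $H^1$ is finite-dimensional and consists of smooth fields. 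You should derive that fact this way rather than quote Lemma~\ref{lem:L_operator}, which is stated only for smooth fields.

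\textbf{Regularity.} The paper avoids the second-order mixed problem for $L^*L$. It shows that any $L^2$ element of $(L\mathcal{V})^\perp$ corresponds, in conformal coordinates, to a weak solution $(a,b)$ of the Cauchy--Riemann equations. Weyl's lemma and a weak Schwarz reflection ($a$ even, $b$ odd) then give smoothness up to $\partial M$ together with $b|_{\partial M}=0$. Hence $LX$ is smooth, and $X$ is recovered from the first-order system using only Dirichlet regularity for $\Delta v$.

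Your appeal to elliptic regularity for $\delta LX=\delta h_0$ is valid. In isothermal boundary coordinates the principal part is $-\Delta$, with a Dirichlet condition on the normal component and a Neumann condition on the tangential one, so the Lopatinskii--Shapiro condition holds. It is a heavier black box, though. It also only gives smoothness of the particular $h^{\mathrm{tt}}$ coming from smooth $h$, so the weak boundary condition in your first step acquires meaning only a posteriori. That suffices for the theorem, whereas the paper's argument characterises the whole $L^2$ complement.
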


This theorem is well-known for closed manifolds \cite{BergerEbin1969, York1974}. 
In the case of surfaces, the transverse traceless part can be identified with the space of real parts of holomorphic quadratic differentials, see e.\,g.~\cite{BuzanoRupflin2017}, and Lemma~\ref{lem:DF_quadratic} below makes this precise for surfaces with boundary.
We were not able to find a proof of Theorem~\ref{thm:surface-tensor-decomposition} 
for surfaces with boundary
in the literature, and we therefore include one for completeness.

We first establish the estimate used to construct the decomposition. It should be compared with Korn's inequality from mathematical elasticity and our proof closely follows that from~\cite{BauerPauly2016}.

\begin{lemma}
\label{lem:surface-L-coercivity}
There is a constant $C>0$ such that for any $X\in \mathcal V$ 
\begin{align}
\label{app:LM_prelim}
\nm{X}_{H^1}^2&\leq 
C\bigl(\nm{LX}_{L^2}^2+\nm{X}_{L^2}^2\bigr).
\intertext{In particular, the space $\VConf$ is finite-dimensional, and there is $c>0$ such that for any $X\in \mathcal V_0$}
\label{app:L-coercivity}
\nm{LX}_{L^2}^2&\geq c\nm{X}_{H^1}^2.
\end{align}
\end{lemma}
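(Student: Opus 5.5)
We prove the first inequality \eqref{app:LM_prelim}, a Korn-type estimate for the conformal Killing operator, by reducing it to an elliptic estimate for a second-order operator with a boundary condition that satisfies the Lopatinskii--Shapiro condition. We then derive \eqref{app:L-coercivity} by the standard compactness (Peetre) argument.

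\emph{Step 1 (interior/pointwise algebra).} In dimension two, $LX$ is the trace-free part of $\mathcal{L}_Xg$. In local isothermal coordinates, with $X$ identified with a complex function $\xi$, $LX$ is essentially $\partial_{\bar z}\xi$, the Cauchy--Riemann operator. Hence $L$ is an overdetermined-free elliptic first-order operator: its principal symbol $\sigma_L(\zeta)\colon T_pM\to S^2_0T^*_pM$, given by $v\mapsto \zeta\otimes v+v\otimes\zeta-\langle\zeta,v\rangle g$, is an isomorphism for $\zeta\neq0$, since both spaces are $2$-dimensional and the map is injective. Following \cite{BauerPauly2016}, we use the identity $L^*L=-\Delta+\text{(lower order)}$ on vector fields. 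More concretely, we integrate by parts $\nm{LX}_{L^2}^2$ to obtain a Weitzenböck/Bochner formula of the form
\[
\nm{LX}_{L^2}^2=2\nm{\nabla X}_{L^2}^2-\int_M K\abs{X}^2+\text{boundary term},
\]
where $K$ is the Gauss curvature. This reflects the two-dimensional identity $\abs{LX}^2=2\abs{\nabla X}^2-2\abs{dX^\flat}^2\!/\!\ldots$ combined with $\int(\divop X)^2-\int\abs{\nabla X}^2$ type relations. Rather than tracking the constants, we will aim for the formula
\[
\tfrac12\nm{LX}^2=\nm{\nabla X}^2-\int_M K\abs{X}^2+\int_{\partial M}(\ldots),
\]
derived by polarising the two identities for $\int\abs{\mathcal{L}_Xg}^2$ and $\int(\divop X)^2$.

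\emph{Step 2 (boundary term).} This is the main obstacle. For $X$ tangent to $\partial M$, we write $X=f\,T$ along $\partial M$ with $T$ the unit tangent. The boundary integrand arising from the integrations by parts should reduce to $\kappa f^2$, with $\kappa$ the geodesic curvature of $\partial M$, because the derivative terms $g(\nabla_TX,\eta)$ become $-g(X,\nabla_T\eta)$ using $g(X,\eta)=0$. So the boundary term is bounded by $C\nm{X}_{L^2(\partial M)}^2$. By the trace inequality and interpolation, this is in turn bounded by $\varepsilon\nm{X}_{H^1}^2+C_\varepsilon\nm{X}_{L^2}^2$, and we absorb the $\varepsilon$ term. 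It suffices to do this first for smooth $X$ tangent to the boundary and then argue by density in $\mathcal V$. The only technical point is checking that no tangential-derivative term survives; we verify this in Fermi coordinates near $\partial M$. Together with $\abs{K}\le C$, this yields \eqref{app:LM_prelim}.

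\emph{Step 3 (consequences).} From \eqref{app:LM_prelim} and Rellich compactness of $H^1\hookrightarrow L^2$, the unit ball of $\ker L$ in $H^1$ is compact, so $\VConf$ is finite-dimensional. For \eqref{app:L-coercivity}, suppose it fails. Then there are $X_j\in\mathcal V_0$ with $\nm{X_j}_{H^1}=1$ and $\nm{LX_j}_{L^2}\to0$. Passing to a subsequence, $X_j\to X$ in $L^2$, and \eqref{app:LM_prelim} applied to differences shows that $(X_j)$ is Cauchy in $H^1$. Hence $X_j\to X$ in $H^1$, and the limit satisfies $X\in\mathcal V_0$ (the boundary condition and orthogonality are closed conditions), $LX=0$ and $\nm{X}_{H^1}=1$. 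Thus $X\in\VConf\cap\VConf^\perp=\{0\}$, a contradiction.
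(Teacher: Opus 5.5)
Your proposal is correct and follows essentially the same route as the paper: the integrated Bochner-type identity $\|\nabla X\|_{L^2}^2=\tfrac12\|LX\|_{L^2}^2+\int_M K|X|^2+\int_{\partial M}\II(X,X)$ (where, exactly as you argue, tangency of $X$ removes all derivative terms on $\partial M$), absorption via the trace inequality and density, and then Rellich compactness for finite-dimensionality of $\VConf$ together with a contradiction argument on $\mathcal V_0$. The differences are cosmetic: the paper gets the identity from the Weitzenb\"ock formula for $X^\flat$ plus the pointwise splitting of $\nabla X^\flat$ rather than by commuting covariant derivatives, and it closes the contradiction with weak $H^1$ limits instead of your equally valid Cauchy argument. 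Your opening Lopatinskii--Shapiro framing is never actually used and can be dropped.
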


\begin{proof}
Let $X\in\mathcal{V}$ be smooth and set $\alpha\vcentcolon=X^\flat$. 
With the convention $\II(\xi,\zeta)=g(\nabla_\xi\eta,\zeta)$ for the second fundamental form of $\partial M$, the integrated Weitzenböck formula reads 
\begin{align*}
\nm{d\alpha}_{L^2}^2+\nm{\delta\alpha}_{L^2}^2
&=\nm{\nabla X}_{L^2}^2
+\int_M K_g\abs{X}^2_g
+\int_{\partial M}\II(X,X), 
\end{align*}
where $K_g$ denotes the Gauß curvature of $(M,g)$. 
Note that the boundary term takes this form because $X$ is tangent to $\partial M$. 
The orthogonal splitting of $\nabla\alpha$ into its trace-free symmetric,
pure trace, and skew-symmetric parts gives
\[
\abs{\nabla X}_g^2
=\tfrac{1}{4}\abs{LX}_g^2+\tfrac{1}{2}\abs{\delta\alpha}_g^2+\tfrac{1}{2}\abs{d\alpha}_g^2.
\]
Here the norm on $2$-forms is the standard exterior-algebra norm, given in an orthonormal frame by $\abs{\omega}_g^2=\sum_{i<j}\abs{\omega_{ij}}^2$. 
Thus the skew-symmetric part of $\nabla\alpha$, whose components are $\tfrac{1}{2}(d\alpha)_{ij}$, has squared tensor norm $\tfrac{1}{2}\abs{d\alpha}_g^2$. 
Combining the preceding identities yields
\[
\nm{\nabla X}_{L^2}^2
=\frac{1}{2}\nm{LX}_{L^2}^2+\int_M K_g\abs{X}_g^2+\int_{\partial M}\II(X,X).
\]
The curvature terms are bounded by a constant times $\nm{X}_{L^2(M)}^2+\nm{X}_{L^2(\partial M)}^2$, and the trace inequality 
\[
\nm{X}_{L^2(\partial M)}^2\leq\varepsilon\nm{\nabla X}_{L^2(M)}^2+C_\varepsilon\nm{X}_{L^2(M)}^2
\]
therefore proves~\eqref{app:LM_prelim} by absorption, first for smooth $X$ and then for all $X\in\mathcal{V}$ by density. 

On $\VConf$, estimate~\eqref{app:LM_prelim} makes the $H^1$- and
$L^2$-norms equivalent. 
Since the embedding $H^1\hookrightarrow L^2$ is compact, the $H^1$-unit ball in $\VConf$ is compact, which implies that $\VConf$ is finite-dimensional.

To prove~\eqref{app:L-coercivity} we argue by contradiction. 
Suppose there exists a sequence $X_j\in\mathcal V_0$
with $\nm{X_j}_{H^1}=1$ and $LX_j\to 0$ in $L^2$.
After passing to a subsequence, $X_j\rightharpoonup X$ weakly in $H^1$ and $X_j\to X$ in $L^2$. 
Since the operator $L$ is bounded, $LX_j\rightharpoonup LX$, and thus $LX=0$. 
Since $\mathcal{V}_0$ is weakly closed, we have $X\in\mathcal{V}_0$, while $LX=0$ implies $X\in\VConf$. 
Hence, $X\in\mathcal{V}_0\cap\VConf=\{0\}$, so $X_j\to0$ in $L^2$ and \eqref{app:LM_prelim} implies $X_j\to0$ in $H^1$, contradicting $\nm{X_j}_{H^1}=1$. 
\end{proof}

Lemma~\ref{lem:surface-L-coercivity} allows us to construct the $L^2$-version of the decomposition in Theorem~\ref{thm:surface-tensor-decomposition}. 
Let $h_0\in\mathcal H$. 
On $\mathcal V_0$, the bilinear form
\[
a(X,Y)\vcentcolon=\int_M\brt{LX,LY}_g
\]
is bounded and coercive by Lemma~\ref{lem:surface-L-coercivity}.
The Lax--Milgram theorem therefore yields a unique $X\in\mathcal V_0$ such that for any $Y\in\mathcal V_0$  
\begin{align}
\label{app:Lax-Milgram}
\int_M\brt{LX,LY}_g&=\int_M\brt{h_0,LY}_g.
\end{align}
Since $LY=0$ for $Y\in\VConf$, identity \eqref{app:Lax-Milgram} holds trivially on $\VConf$. 
As $\VConf$ is finite-dimensional by Lemma~\ref{lem:surface-L-coercivity}, we have $\mathcal{V}=\mathcal{V}_0\oplus\VConf$, and \eqref{app:Lax-Milgram} therefore holds for every $Y\in\mathcal{V}$. 
Consequently, $h^{\mathrm{tt}}\vcentcolon=h_0-LX\in(L\mathcal{V})^\perp$.
Applying this construction to $h_0=h-\frac12(\tr_g h)g$ gives the decomposition
\begin{align}
\label{dec:L2}
h&=\tfrac{1}{2}(\tr_g h)g + LX + h^{\mathrm{tt}}.
\end{align}
It remains to show that $h^{\mathrm{tt}}$ is indeed transverse traceless and to establish the regularity of the terms in the decomposition~\eqref{dec:L2}.

\begin{lemma}
\label{lem:surface-TT-regularity}
The orthogonal complement $(L\mathcal{V})^\perp\subset\mathcal{H}$ consists of the smooth trace-free symmetric bilinear forms $k$ satisfying 
\begin{align*}
\delta k&=0, & k(\eta,\cdot)\vert_{T\partial M}&=0.
\end{align*}
\end{lemma}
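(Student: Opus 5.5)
The plan is to characterise $(L\mathcal{V})^\perp$ in two steps: first derive the weak equations that orthogonality to $L\mathcal{V}$ imposes on $k$, then upgrade from $L^2$ to smoothness by elliptic regularity. Both directions of the characterisation rest on one integration by parts. For smooth $k\in\mathcal{H}$ (trace-free and symmetric) and smooth $X\in\mathcal{V}$, trace-freeness gives $\brt{k,(\divop_gX)g}_g=0$. Hence
\[
\int_M\brt{k,LX}_g=\int_M\brt{k,\mathcal{L}_Xg}_g=2\int_M k(\nabla X)=2\int_M\brt{\delta k,X^\flat}_g+2\int_{\partial M}k(\eta,X),
\]
where we use $\mathcal{L}_Xg=2\operatorname{Sym}\nabla X^\flat$ and the divergence theorem.

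\emph{Sufficiency.} Suppose $k$ is smooth, trace-free, $\delta k=0$ and $k(\eta,\cdot)|_{T\partial M}=0$. Every $X\in\mathcal{V}$ is tangent to $\partial M$, so the boundary term vanishes. By density of smooth tangent fields in $\mathcal{V}$ and boundedness of $L$, we conclude $k\perp L\mathcal{V}$.

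\emph{Necessity, weak form.} Let $k\in\mathcal{H}$ with $k\perp L\mathcal{V}$. Testing against $X\in C^\infty_c(TM)$ shows $\delta k=0$ in the sense of distributions. Testing against smooth $X$ tangent to $\partial M$ then gives the weak boundary condition $k(\eta,X)=0$ on $\partial M$. This makes sense once $k$ is regular up to the boundary, and in general it is encoded by the variational identity itself.

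\emph{Regularity: the main obstacle.} The key observation is that on a surface, $\delta$ restricted to trace-free symmetric tensors is a first-order elliptic operator. Indeed, in a local isothermal coordinate $z$, write $k=\operatorname{Re}(\phi\,dz^2)$. Then $\delta k=0$ is exactly $\partial_{\bar z}\phi=0$, so $\phi\,dz^2$ is a holomorphic quadratic differential. Interior smoothness is therefore immediate by Weyl's lemma.

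At the boundary I would use local isothermal coordinates that straighten $\partial M$ to the real axis, with $M$ locally corresponding to the upper half plane. Take $\eta=-\partial_y$ up to a conformal factor and $X=\partial_x$. Then the condition $k(\eta,\partial_x)=0$ says $\operatorname{Im}\phi=0$ on the real axis, which is the weak form. I would make this rigorous by Schwarz reflection: define $\tilde\phi(z)=\overline{\phi(\bar z)}$ in the lower half plane. The weak identity, tested against arbitrary compactly supported fields tangent to the real axis, shows that the extended $\tilde\phi$ is weakly holomorphic across the axis. Hence it is smooth, and in fact holomorphic, up to $\partial M$.

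The delicate points here are twofold. First, one must verify that the distributional identity survives the reflection. To do this, pair test functions in the reflected domain with their symmetrisations; the tangential test fields generate exactly the needed class. Second, one must check that the conformal factor does not interfere, which it does not because the conditions are conformally invariant. Alternatively, as a fallback, I would appeal to Lopatinskii–Shapiro/Agmon–Douglis–Nirenberg regularity for the first-order elliptic boundary value problem $(\delta,\ k(\eta,\cdot)|_{T\partial M})$.

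Once smoothness is established, the weak identities hold classically, which gives $\delta k=0$ and $k(\eta,\cdot)|_{T\partial M}=0$ and completes the characterisation.
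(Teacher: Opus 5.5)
Your proposal is correct and follows essentially the same route as the paper. It derives weak Cauchy--Riemann equations for the coefficients of $k$ in isothermal coordinates, using that the pairing $\int_M\langle k,LY\rangle_g$ is conformally invariant, and applies Weyl's lemma in the interior. At the boundary it uses Schwarz reflection (real part extended evenly, imaginary part oddly) tested against the tangential fields $\bigl(\zeta(x,y)+\zeta(x,-y),0\bigr)$ and $\bigl(0,\zeta(x,y)-\zeta(x,-y)\bigr)$, and Green's formula gives the converse inclusion. The only minor difference is that you recover $k(\eta,\cdot)|_{T\partial M}=0$ from the integration-by-parts identity once regularity is known, whereas the paper reads it off directly from the oddness of the reflected coefficient $b$ along the real axis.
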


\begin{proof}
Let $k\in(L\mathcal V)^\perp$. 
In local conformal coordinates with $g=e^{2\omega}(dx^2+dy^2)$ we write 
\begin{align}
\label{eqn:k_in_local_coordinates}
k=a\,(dx\otimes dx-dy\otimes dy)+b\,(dx\otimes dy+dy\otimes dx),
\end{align}
where $a,b\in L^2_{\mathrm{loc}}$. 
For $Y=\varphi\partial_x+\psi\partial_y$ supported in the chart, a
direct computation gives
\begin{align}
\label{app:surface-local-pairing}
0=\frac{1}{2}\int_M\brt{k,LY}_g
=\int\bigl(a(\varphi_x-\psi_y)+b(\varphi_y+\psi_x)\bigr)\,dx\,dy.
\end{align}
Here we used the conformal covariance $L_{e^{2\omega}g}=e^{2\omega}L_g$ of the operator $L$.
Integration by parts in \eqref{app:surface-local-pairing} yields
\begin{align}
\label{app:surface-CR}
a_x+b_y&=0, & 
b_x-a_y&=0
\end{align}
in the weak sense, so $\Delta a=\Delta b=0$ and Weyl's lemma yields interior smoothness. 
Equivalently, $a-ib$ is holomorphic.

To establish regularity up to the boundary, we apply the Schwarz reflection principle in its weak form. 
At a boundary point we consider a conformal boundary chart given by a half-disc $D^+=\{(x,y)\mid x^2+y^2<1,~y\geq0\}$ with boundary arc $I=\{y=0\}$ and $g=e^{2\omega}(dx^2+dy^2)$, see e.\,g.~\cite{YangZhou2021}. 
In a boundary chart, identity~\eqref{app:surface-local-pairing} holds
for all smooth $\varphi,\psi$ supported away from the semicircular
part of $\partial D^+$ as long as $\psi|_I=0$.
Extending $a$ evenly and $b$ oddly across $I$, we obtain functions $\tilde a,\tilde b\in L^2(D)$ on the full disc $D$. 
Given $\zeta\in C_c^\infty(D)$, we apply \eqref{app:surface-local-pairing} on $D^+$ to the pairs 
\begin{align*}
(\varphi,\psi)&=\bigl(\zeta(x,y)+\zeta(x,-y),0\bigr), &
(\varphi,\psi)&=\bigl(0,\zeta(x,y)-\zeta(x,-y)\bigr), 
\end{align*}
which satisfy $\psi\vert_I=0$. 
Changing variables on the reflected half-disc gives
\begin{align*}
\int_D\bigl(\tilde a\,\zeta_x+\tilde b\,\zeta_y\bigr)\,dx\,dy&=0, &
\int_D\bigl(\tilde b\,\zeta_x-\tilde a\,\zeta_y\bigr)\,dx\,dy&=0. 
\end{align*}
Thus, $\tilde a,\tilde b$ satisfy~\eqref{app:surface-CR} on $D$
in a weak sense and are smooth by Weyl's lemma.
In particular, $k$ is smooth up to $I$, and oddness of $\tilde b$ gives $b\vert_I=0$. 
In these coordinates,
\[
\delta k=-e^{-2\omega}
\bigl((a_x+b_y)\,dx+(b_x-a_y)\,dy\bigr)= 0,
\]
by \eqref{app:surface-CR}, 
and $k(\eta,\partial_x)$ is a nonzero smooth multiple of $b$ along $I$.
Since $b\vert_I=0$, both asserted conditions follow, without assuming a classical boundary trace for $k$ in advance. 

Conversely, if $k$ is smooth, trace-free and satisfies these conditions, Green's formula yields
\[
\frac{1}{2}\int_M\brt{k,LY}_g
=\int_M(\delta k)(Y)+\int_{\partial M}k(\eta,Y)
=0
\]
for any $Y\in \mathcal V$, since $\delta k=0$ and $Y$ is tangent to $\partial M$. 
\end{proof}

\begin{lemma}
\label{lem:surface-L-regularity}
If $X\in\mathcal{V}$ and $LX$ is smooth, then $X$ is smooth up to the boundary. 
In particular, every element of $\VConf$ is smooth, so $\VConf$ coincides with the space of conformal vector fields tangent to $\partial M$ from Lemma~\ref{lem:L_operator}. 
\end{lemma}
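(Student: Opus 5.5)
The statement to prove is Lemma~\ref{lem:surface-L-regularity}: if $X\in\mathcal{V}$ and $LX$ is smooth, then $X$ is smooth up to the boundary. The approach is to treat $LX=f$ as an elliptic first-order system, namely the inhomogeneous Cauchy--Riemann system. This system is elliptic, and the tangency condition $g(X,\eta)=0$ is a coercive boundary condition for it. I will reduce to local conformal charts and bootstrap, mirroring the proof of Lemma~\ref{lem:surface-TT-regularity}.

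\textbf{Interior regularity.} In a conformal chart with $g=e^{2\omega}(dx^2+dy^2)$, write $X=\varphi\partial_x+\psi\partial_y$. By the conformal covariance $L_{e^{2\omega}g}=e^{2\omega}L_g$, the equation $LX=f$ becomes
\[
\varphi_x-\psi_y=A, \qquad \varphi_y+\psi_x=B,
\]
where $A,B$ are smooth functions determined by $f$ and $\omega$ (the factors $e^{2\omega}$ are smooth). In complex notation, with $F=\varphi+i\psi$ and $z=x+iy$, this reads $\partial_{\bar z}F=\tfrac12(A+iB)$, a smooth right-hand side holding in the weak sense for $F\in H^1$. Choose a smooth particular solution $F_0$, for instance by applying the Cauchy transform to a cutoff of the right-hand side. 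Then $F-F_0$ is weakly holomorphic, hence smooth by Weyl's lemma. Therefore $X$ is smooth in the interior.

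\textbf{Boundary regularity.} Take a conformal half-disc chart $D^+$ with boundary arc $I=\{y=0\}$, on which $\eta$ is proportional to $-\partial_y$. The tangency condition then gives $\psi|_I=0$ in the trace sense. Extend $A$ and $B$ smoothly across $I$, and subtract a smooth particular solution $F_0$ of the extended equation on the full disc $D$. The difference $G=F-F_0$ is then holomorphic in $D^+$, but its imaginary part has a smooth (not necessarily zero) trace on $I$.

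The main obstacle is to remove this boundary value. I would first correct $F_0$ by adding $i\,h$, where $h$ is a smooth harmonic function on $D$ with $h|_I=-\operatorname{Im}F_0|_I$. Such an $h$ can be produced by solving a Dirichlet problem on a smooth domain that contains $D^+$ and has $I$ in its boundary, taking boundary data that is smooth near $I$; regularity up to $I$ then follows from standard elliptic theory. Then $ih$ has a harmonic conjugate, so $F_0+ih+(\text{conjugate correction})$ can be arranged to be a holomorphic perturbation of $F_0$. Simpler still, one may add the holomorphic function $H=-\tilde h+ih$, where $\tilde h$ is the harmonic conjugate of $h$. After this correction, $\operatorname{Im}G|_I=0$ weakly.

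Now reflect $G$ oddly in its imaginary part and evenly in its real part, that is, $\tilde G(z)=\overline{G(\bar z)}$ for $y<0$. Verify the weak Cauchy--Riemann equations across $I$ by exactly the test-function pairing used in Lemma~\ref{lem:surface-TT-regularity}; the condition $\operatorname{Im}G|_I=0$ is what kills the boundary term. Weyl's lemma then shows that $\tilde G$ is smooth on $D$, so $X$ is smooth up to $I$.

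The second claim follows at once. Any $Y\in\VConf$ satisfies $LY=0$, which is smooth, so $Y$ is smooth up to the boundary and tangent to $\partial M$. Hence $Y$ is a conformal vector field in the sense of Lemma~\ref{lem:L_operator}, and the converse inclusion is trivial.
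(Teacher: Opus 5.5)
Your proposal is correct, but it takes a longer route than the paper. The paper also writes $X=u\partial_x+v\partial_y$ in a conformal chart and obtains $u_x-v_y=A$, $u_y+v_x=B$. It then passes at once to the decoupled scalar equations $\Delta u=A_x+B_y$ and $\Delta v=B_x-A_y$, which give interior smoothness. In a boundary chart the tangency condition is $v|_I=0$, so $v$ is an $H^1$ weak solution of a Dirichlet problem with smooth right-hand side. Local boundary regularity for the Dirichlet problem makes $v$ smooth up to $I$. Then $u$ is smooth up to $I$ because its gradient is $(A+v_y,\,B-v_x)$.

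You instead keep the first-order system $\partial_{\bar z}F=\tfrac12(A+iB)$ throughout. You remove the inhomogeneity with a smooth particular solution and repair the boundary values of $\operatorname{Im}$ with the holomorphic correction $H=-\tilde h+ih$. Then you run the weak Schwarz reflection of Lemma~\ref{lem:surface-TT-regularity}, which works as you describe once $\operatorname{Im}G|_I=0$.

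What your route buys is that elliptic boundary theory enters only in its classical form, namely smooth data on a smooth domain to produce $h$. Everything else reduces to Weyl's lemma and reflection, in parallel with Lemma~\ref{lem:surface-TT-regularity}. The cost is extra bookkeeping. You need to extend $A,B$ smoothly across $I$, take a Cauchy transform, and build an auxiliary simply connected domain carrying $h$ and its harmonic conjugate. Note that $h$ lives on that domain, not on all of $D$, so you must shrink the half-disc. Also drop your intermediate suggestion of adding $ih$ alone: it is not holomorphic and would change $\partial_{\bar z}F_0$.

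Since you already invoke Dirichlet regularity to construct $h$, you could apply it directly to $\psi$. Indeed $\psi$ satisfies $\Delta\psi=B_x-A_y$ weakly with $\psi|_I=0$, and that is exactly the paper's shortcut.
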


\begin{proof}
In a conformal chart we write $X=u\partial_x+v\partial_y$. 
By conformal covariance of $L$, 
\[
LX=e^{2\omega}\bigl((u_x-v_y)(dx\otimes dx-dy\otimes dy)+(u_y+v_x)(dx\otimes dy+dy\otimes dx)\bigr),  
\]
so that $u_x-v_y=A$ and $u_y+v_x=B$ for smooth functions $A,B$, and consequently 
\begin{align*}
u_{xx}+u_{yy}&=A_x+B_y, & v_{xx}+v_{yy}&=B_x-A_y.
\end{align*}
Interior regularity gives smoothness of $u,v$ in interior charts.
In a boundary chart, tangency of $X$ means $v|_I=0$ in the Sobolev trace sense. 
Local regularity for the Dirichlet boundary problem~\cite[§\,8.3]{GilbargTrudinger1983} gives smoothness of $v$ up to $I$. 
Finally, $u_x=A+v_y$ and $u_y=B-v_x$ give the regularity of $u$ up to the boundary. 
\end{proof}

\begin{proof}[Proof of Theorem~\ref{thm:surface-tensor-decomposition}]
Let $h$ be a smooth symmetric bilinear form decomposed as in \eqref{dec:L2}. 
By Lemma~\ref{lem:surface-TT-regularity}, the tensor $h^{\mathrm{tt}}$ is smooth and transverse traceless, so $LX=h-\tfrac{1}{2}(\tr_gh)g-h^{\mathrm{tt}}$ is smooth and $X$ is smooth up to the boundary by Lemma~\ref{lem:surface-L-regularity}.   
Given two such decompositions, their difference yields $LX\in(L\mathcal{V})^\perp\cap L\mathcal{V}=\{0\}$, so $h^{\mathrm{tt}}$ is unique and $X$ is unique up to $\ker L=\VConf(M,g)$. 
\end{proof}

We conclude this appendix with a characterisation of the space 
\[
\DF(M)=\{ h \in \mathcal{S}(M) \mid \tr_g h = 0,~\divop_g h = 0,~h(\eta, \cdot)\vert_{T\partial M} = 0 \}
\]
of transverse traceless symmetric bilinear forms in terms of holomorphic quadratic differentials. 
We assume that $(M,g)$ is a compact, connected and oriented Riemannian surface with $\beta\geq1$ boundary components.
We choose orientation-preserving local complex coordinates $z=x+iy$ on $M$ which are conformal in the sense that $g=e^{2\omega}(dx^2+dy^2)$ for some function $\omega$. 
Near boundary points we choose such coordinates in an upper half-disc such that they are real valued on $\partial M$. 

\paragraph{Schottky double.}
Let $\hat{M}$ denote the \emph{Schottky double} of $M$, i.\,e.~the closed Riemann surface obtained by gluing $M$ and a copy of $M$ with reversed orientation along their boundaries \cite[§\,II.4.5]{Farkas1992}. 
Local coordinates on $\hat{M}$ are given by orientation-preserving conformal complex coordinates $z$ on $M$, by their complex conjugates $\overline{z}$ on the copy, and near $\partial M$ by combining the boundary coordinates with their conjugates, which is consistent by the Schwarz reflection principle. 
The map 
\begin{align}
\label{eqn:20260927}
\begin{aligned}
\iota\colon\hat{M}&\to\hat{M}\\
z&\mapsto\overline{z}
\end{aligned}
\end{align}
exchanging the two copies of $M$ is an anti-holomorphic involution fixing $\partial M$ pointwise. 
Since the Euler characteristic of $\hat{M}$ is twice the Euler characteristic of $M$, we have 
\[
\operatorname{genus}(\hat{M})=2\operatorname{genus}(M)+\beta-1.
\] 

\paragraph{Quadratic differentials.}
A \emph{holomorphic quadratic differential} $\psi$ on $\hat{M}$ is a holomorphic section of the symmetric square of the holomorphic cotangent bundle. 
In each local coordinate $z$ it is given by $\psi=f(z)\,dz^2$ for some holomorphic function $f$.  
The holomorphic quadratic differentials form a complex vector space $\mathcal{Q}(\hat{M})$ of complex dimension $3\operatorname{genus}(\hat{M})-3$ if $\operatorname{genus}(\hat{M})\geq2$, see  \cite[Proposition~III.5.2]{Farkas1992}.
Given $\psi\in\mathcal{Q}(\hat{M})$, the complex conjugate $\overline{\iota^*\psi}$ of its pullback by the map \eqref{eqn:20260927} is again a holomorphic quadratic differential, given by $\overline{f(\overline{z})}\,dz^2$ in local coordinates in which $\iota(z)=\overline{z}$. 
The map $\psi\mapsto\overline{\iota^*\psi}$ is a complex anti-linear involution, and we denote its fixed point set by 
\begin{align}
\label{eqn:Q_real}
\mathcal{Q}(\hat{M})^\iota\vcentcolon=\{\psi\in\mathcal{Q}(\hat{M})\mid\overline{\iota^*\psi}=\psi\}.
\end{align}
Then $\mathcal{Q}(\hat{M})=\mathcal{Q}(\hat{M})^\iota\oplus i\,\mathcal{Q}(\hat{M})^\iota$, because $\psi=\frac{1}{2}\bigl(\psi+\overline{\iota^*\psi}\bigr)+i\,\frac{1}{2i}\bigl(\psi-\overline{\iota^*\psi}\bigr)$ for every $\psi\in\mathcal{Q}(\hat{M})$, where 
$\frac{1}{2}\bigl(\psi+\overline{\iota^*\psi}\bigr),\,\frac{1}{2i}\bigl(\psi-\overline{\iota^*\psi}\bigr)\in\mathcal{Q}(\hat{M})^\iota$. 
In particular, 
\(
\dim_\R\mathcal{Q}(\hat{M})^\iota=\dim_\C\mathcal{Q}(\hat{M}).
\)

\begin{lemma}
\label{lem:DF_quadratic}
Given $h\in\DF(M)$, extended complex-bilinearly, the quadratic differential   
\[
\psi_h\vcentcolon=2h(\partial_z,\partial_z)\,dz^2
\]
on $M$ extends to a unique element of $\mathcal{Q}(\hat{M})^\iota$ and the map $h\mapsto\psi_h$ is a real-linear isomorphism 
\[
\DF(M)\to\mathcal{Q}(\hat{M})^\iota
\]
with inverse $\psi\mapsto\operatorname{Re}(\psi\vert_M)$. 
If $2\operatorname{genus}(M)+\beta\geq3$ then 
$\dim\DF(M)=6\operatorname{genus}(M)-6+3\beta$. 
\end{lemma}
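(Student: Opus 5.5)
The plan is to work in local conformal coordinates, show that $\psi_h$ is holomorphic on $M$ and real on $\partial M$, and then use Schwarz reflection to extend it to the double.

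\emph{Local computation.} In a chart where $g=e^{2\omega}(dx^2+dy^2)$, write $h$ as in \eqref{eqn:k_in_local_coordinates}, that is, $h=a(dx^2-dy^2)+b(dx\,dy+dy\,dx)$, which is possible since $h$ is trace-free. With $\partial_z=\frac12(\partial_x-i\partial_y)$, complex bilinearity gives $h(\partial_z,\partial_z)=\frac14(a-2ib-a)$. More carefully,
\[
h(\partial_z,\partial_z)=\tfrac14\bigl(h_{xx}-h_{yy}-2ih_{xy}\bigr)=\tfrac12(a-ib),
\]
so $\psi_h=(a-ib)\,dz^2$.

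\emph{Holomorphy in the interior.} The divergence formula in the proof of Lemma~\ref{lem:surface-TT-regularity} shows that $\delta h=0$ is equivalent to the Cauchy--Riemann system \eqref{app:surface-CR}. Hence $a-ib$ is holomorphic and $\psi_h$ is a holomorphic quadratic differential on the interior of $M$. It is well defined independently of the chart, because $h(\partial_z,\partial_z)\,dz^2$ is the $(2,0)$-part of $h$.

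\emph{Boundary behaviour and extension.} In a boundary chart on the half-disc with $I=\{y=0\}\subset\partial M$, the condition $h(\eta,\cdot)|_{T\partial M}=0$ means that $h(\partial_y,\partial_x)=b=0$ on $I$. So $f=a-ib$ is real on $I$, and $\psi_h$ is smooth up to $I$ by Lemma~\ref{lem:surface-TT-regularity}. On the copy of $M$ in $\hat M$ we define the extension by $\overline{\iota^*\psi_h}$, that is, by $\overline{f(\bar z)}\,dz^2$. Schwarz reflection shows that this glues to a holomorphic quadratic differential on $\hat M$, and it lies in $\mathcal{Q}(\hat M)^\iota$ by construction. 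The extension is unique by the identity theorem.

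\emph{Isomorphism.} Since $\operatorname{Re}(f\,dz^2)=a(dx^2-dy^2)+b(dx\,dy+dy\,dx)$, the map $\psi\mapsto\operatorname{Re}(\psi|_M)$ recovers $h$, so $h\mapsto\psi_h$ is injective. For surjectivity, start from $\psi\in\mathcal{Q}(\hat M)^\iota$ and write $\psi=f\,dz^2$ locally. The reality condition forces $f(\bar z)=\overline{f(z)}$ in boundary charts, so $f$ is real on $I$. Therefore $\operatorname{Re}(\psi|_M)$ is trace-free, divergence-free by Cauchy--Riemann, and satisfies $b=0$ on $\partial M$; that is, it lies in $\DF(M)$, and its image under $h\mapsto\psi_h$ is $\psi$. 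Real-linearity of both maps is clear.

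\emph{Dimension count.} We have $\dim\DF(M)=\dim_\R\mathcal Q(\hat M)^\iota=\dim_\C\mathcal Q(\hat M)=3\hat\gamma-3$, where $\hat\gamma=2\operatorname{genus}(M)+\beta-1\ge2$ under the stated hypothesis. This gives $3(2\operatorname{genus}(M)+\beta-1)-3=6\operatorname{genus}(M)-6+3\beta$.

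The main point requiring care is the boundary gluing. One must check that the coordinates on $\hat M$ described above, namely $z$ on $M$ and $\bar z$ on the copy, make the reflected differential holomorphic and chart-compatible. Under a change of real-boundary coordinates $w(z)$, $w$ is real on $I$, so $w'$ is real there and the reality of $f$ is preserved. The reflection step itself then reduces to the classical Schwarz reflection principle for $f$, which is continuous up to $I$ and real on $I$.
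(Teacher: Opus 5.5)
Your proposal is correct and follows the paper's proof essentially step by step. Both arguments use the local identity $\psi_h=(a-ib)\,dz^2$ with $h=\operatorname{Re}\psi_h$, the equivalence of $\divop_g h=0$ with the Cauchy--Riemann equations, the boundary condition read as reality of $a-ib$ on the real axis, Schwarz reflection to extend by $\overline{\iota^*\psi_h}$ (with uniqueness from the identity theorem), and the dimension count via $\operatorname{genus}(\hat M)=2\operatorname{genus}(M)+\beta-1$. You spell out surjectivity more explicitly than the paper, which packages it into its ``if and only if'' chain. Your appeal to Lemma~\ref{lem:surface-TT-regularity} for smoothness up to the boundary is harmless but unnecessary, since elements of $\DF(M)$ are smooth by definition.
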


\begin{proof}
In the given local coordinates $z=x+iy$ any trace-free $h\in\mathcal{S}(M)$ takes the form 
\[
h=a\,(dx\otimes dx-dy\otimes dy)+b\,(dx\otimes dy+dy\otimes dx),
\] 
as in \eqref{eqn:k_in_local_coordinates}, 
and a direct computation using $\partial_z=\frac{1}{2}(\partial_x-i\partial_y)$ and $dz=dx+i\,dy$ yields
\begin{align*}
2h(\partial_z,\partial_z)&=a-ib, &
h&=\operatorname{Re}\bigl((a-ib)\,dz^2\bigr).
\end{align*}
Recalling $g=e^{2\omega}(dx^2+dy^2)$ we also compute 
\[
\divop_g h=e^{-2\omega}\bigl((a_x+b_y)\,dx+(b_x-a_y)\,dy\bigr),
\]
which vanishes if and only if $a-ib$ satisfies the Cauchy--Riemann equations \eqref{app:surface-CR}, i.\,e.~if and only if $\psi_h$ is holomorphic in the interior of $M$. 
In boundary coordinates, the outward unit normal is $\eta=-e^{-\omega}\partial_y$ and $T\partial M$ is spanned by $\partial_x$, so $h(\eta,\cdot)\vert_{T\partial M}=0$ if and only if $b=0$ on the real axis, i.\,e.~if and only if $a-ib$ is real there. 
By the Schwarz reflection principle, the latter holds if and only if $\psi_h$ extends holomorphically to $\hat{M}$ by $\overline{\iota^*\psi_h}$ on the copy of $M$, which proves the first two claims. 
Since $\operatorname{genus}(\hat{M})=2\operatorname{genus}(M)+\beta-1\geq2$ by assumption, the dimension formula follows from
\[
\dim\DF(M)=\dim_\R\mathcal{Q}(\hat{M})^\iota=\dim_\C\mathcal{Q}(\hat{M})=3\operatorname{genus}(\hat{M})-3.
\qedhere
\]
\end{proof}

For the following lemma we recall the induced representation on $\DF(M)$ from Section~\ref{sub:equivariant-area} and the notion of characters from \eqref{eqn:character}. 

\begin{lemma}\label{lem:chi_DF}
Let $G$ be a finite group acting on $(M,g)$ by isometries. 
Then each $\gamma\in G$ extends to the Schottky double $\hat{M}$ such that 
\begin{align}\label{eqn:action_Q}
\gamma\cdot\psi\vcentcolon=
\begin{cases}
(\gamma^{-1})^*\psi & \text{if $\gamma$ preserves the orientation of $M$,}\\
(\iota\gamma^{-1})^*\psi & \text{if $\gamma$ reverses the orientation of $M$}
\end{cases}
\end{align}
defines a representation of $G$ on $\mathcal{Q}(\hat{M})$ by $\C$-linear maps. 
Its character coincides with the character $\chi_{\mathrm{tt}}$ of the representation $h\mapsto(\gamma^{-1})^*h$ of $G$ on $\DF(M)$. 
\end{lemma}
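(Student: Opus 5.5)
The plan has two parts: first extend each $\gamma\in G$ to the Schottky double $\hat{M}$ and check that \eqref{eqn:action_Q} is a $\C$-linear representation on $\mathcal{Q}(\hat{M})$; then compare characters using the real isomorphism $h\mapsto\psi_h$ of Lemma~\ref{lem:DF_quadratic} together with the splitting $\mathcal{Q}(\hat{M})=\mathcal{Q}(\hat{M})^\iota\oplus i\,\mathcal{Q}(\hat{M})^\iota$.

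\emph{Extension.} Each isometry $\gamma$ is conformal on $M$, hence holomorphic or antiholomorphic in the local coordinates $z$, according to whether it preserves or reverses orientation. On $\hat{M}=M\cup\overline{M}$ I define $\hat\gamma$ as follows.
\begin{itemize}
\item If $\gamma$ preserves orientation, $\hat\gamma$ acts as $\gamma$ on each copy; it is holomorphic on both.
\item If $\gamma$ reverses orientation, $\hat\gamma$ maps $x\in M$ to the copy of $\gamma(x)$ in $\overline{M}$, and conversely.
\end{itemize}
In both cases $\hat\gamma$ is holomorphic on $\hat M$: it agrees along $\partial M$, which $\iota$ fixes, so Schwarz reflection applies. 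Then $\hat\gamma$ commutes with $\iota$.

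With this extension I would define the action as $\gamma\cdot\psi=(\hat\gamma^{-1})^*\psi$. In the reversing case, $\hat\gamma^{-1}$ equals $\iota\gamma^{-1}$ read via $\iota$, which matches \eqref{eqn:action_Q}. Pullback by holomorphic maps is $\C$-linear and contravariant, so $\gamma\mapsto(\hat\gamma^{-1})^*$ is a homomorphism once $\widehat{\gamma_1\gamma_2}=\hat\gamma_1\hat\gamma_2$. This holds because both sides are holomorphic and agree on $M$, or on $M$ up to $\iota$. Since $\hat\gamma$ commutes with $\iota$, this action also commutes with the antilinear involution $\psi\mapsto\overline{\iota^*\psi}$, and hence preserves $\mathcal{Q}(\hat{M})^\iota$.

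\emph{Characters.} The key step is that $h\mapsto\psi_h$ intertwines $(\gamma^{-1})^*$ on $\DF(M)$ with $\gamma\cdot$ on $\mathcal{Q}(\hat{M})^\iota$. For orientation-preserving $\gamma$ this holds because $\psi_h=2h^{(2,0)}$ and holomorphic pullback preserves type.

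For orientation-reversing $\gamma$, the map $\gamma^{-1}$ is antiholomorphic, so $(\gamma^{-1})^*$ carries the $(2,0)$-part of $h$ to the $(0,2)$-part of $(\gamma^{-1})^*h$. This yields $\psi_{(\gamma^{-1})^*h}=\overline{(\gamma^{-1})^*\psi_h}$ on $M$. By the reflection property $\psi_h=\overline{\iota^*\psi_h}$, this equals $(\iota\gamma^{-1})^*\psi_h$ restricted to $M$. Uniqueness of the holomorphic extension then gives equality on $\hat M$. Equivalently, one can check $h=\operatorname{Re}(\psi|_M)$ and note that real parts are preserved by both actions.

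Given the intertwining, $\mathcal{Q}(\hat{M})$ is the complexification of the real representation $\mathcal{Q}(\hat{M})^\iota\cong\DF(M)$, because the action is $\C$-linear and preserves the real form. The complex trace of $\gamma$ on $\mathcal{Q}(\hat{M})$ therefore equals the real trace on $\mathcal{Q}(\hat{M})^\iota$, which is $\chi_{\mathrm{tt}}(\gamma)$.

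I expect the main obstacle to be the orientation-reversing case: making the conjugation bookkeeping precise, checking that $\iota\gamma^{-1}$ is genuinely holomorphic on $\hat M$, and verifying the homomorphism property for products of two reversing elements. The last point uses $\iota^2=1$ and the fact that $\iota$ commutes with the extended maps.
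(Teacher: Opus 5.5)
Your proposal is correct and follows essentially the same route as the paper. Both arguments extend $\gamma$ to $\hat M$ by Schwarz reflection, check that the action commutes with $\psi\mapsto\overline{\iota^*\psi}$, derive $\psi_{(\gamma^{-1})^*h}=\gamma\cdot\psi_h$ on $M$ from $\iota^*\psi_h=\overline{\psi_h}$ and uniqueness of the extension, and conclude by viewing $\mathcal{Q}(\hat M)$ as the complexification of $\mathcal{Q}(\hat M)^\iota$. The only difference is bookkeeping: for orientation-reversing $\gamma$ your holomorphic $\hat\gamma$ is the paper's anti-holomorphic extension (given by $\iota\gamma\iota$ on the copy) composed with $\iota$, so the homomorphism property follows directly from contravariance of pullback.
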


\begin{proof}
Each $\gamma\in G$ is a conformal diffeomorphism of $M$ which either preserves or reverses the orientation. 
Since the map $\iota$ from \eqref{eqn:20260927} fixes $\partial M$ pointwise, $\gamma$ extends continuously to the Schottky double $\hat{M}$ by setting it to be equal to $\iota\gamma\iota$ on the copy. 
By the Schwarz reflection principle, the extension, again denoted by $\gamma$, is holomorphic respectively anti-holomorphic and commutes with $\iota$. 

Since $\gamma^{-1}$ respectively $\iota\gamma^{-1}$ is holomorphic and commutes with $\iota$, and since $\iota^2=\mathrm{id}$, \eqref{eqn:action_Q} defines a representation of $G$ on $\mathcal{Q}(\hat{M})$ by $\C$-linear maps which preserve $\mathcal{Q}(\hat{M})^\iota$ defined in \eqref{eqn:Q_real}. 
Moreover, in the notation of Lemma~\ref{lem:DF_quadratic}, 
\[
\psi_{(\gamma^{-1})^*h}=\gamma\cdot\psi_h
\]
for all $h\in\DF(M)$, because both sides belong to $\mathcal{Q}(\hat{M})^\iota$ and their real parts on $M$ coincide. 
Indeed, if $\gamma$ reverses the orientation, then $\iota^*\psi_h=\overline{\psi_h}$ implies 
\[
(\iota\gamma^{-1})^*\psi_h=(\gamma^{-1})^*\iota^*\psi_h=(\gamma^{-1})^*\overline{\psi_h}
\]
on $M$, and its real part is $(\gamma^{-1})^*h$. 
Hence, the isomorphism $h\mapsto\psi_h$ from Lemma~\ref{lem:DF_quadratic} identifies the representation of $G$ on $\DF(M)$ with the restriction of \eqref{eqn:action_Q} to $\mathcal{Q}(\hat{M})^\iota$. 
Since $\mathcal{Q}(\hat{M})=\mathcal{Q}(\hat{M})^\iota\oplus i\,\mathcal{Q}(\hat{M})^\iota$, any real basis of $\mathcal{Q}(\hat{M})^\iota$ is also a complex basis of $\mathcal{Q}(\hat{M})$, with respect to which both representations are given by the same real matrices and hence have the same traces. 
\end{proof}
The character in Lemma~\ref{lem:chi_DF} can be computed with the following classical result. 
\begin{theorem}[Eichler trace formula {\cite[Theorem~V.2.9]{Farkas1992}}]
\label{thm:Eichler}
Let $T\neq\mathrm{id}$ be a holomorphic automorphism of finite order of a closed Riemann surface of genus at least $2$. 
Then $T$ has finitely many fixed points, and the trace of the complex-linear map $\psi\mapsto(T^{-1})^*\psi$ on the space of holomorphic quadratic differentials is given by 
\[
\sum_{p\in\Fix(T)}\frac{\zeta_p^2}{1-\zeta_p}, 
\]
where $T^{-1}\colon z\mapsto\zeta_p z$ in a suitable local holomorphic coordinate centred at $p$, so that $\zeta_p\in\C\setminus\{1\}$ is a root of unity and equals the derivative of $T^{-1}$ at $p$ in any local coordinate centred at $p$. 
\end{theorem}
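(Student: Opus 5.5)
The natural route is the holomorphic Lefschetz fixed point theorem of Atiyah--Bott, applied to the line bundle $K^2$ (the square of the canonical bundle) on the closed Riemann surface $X$ of genus $\geq2$. I would carry it out in four steps.

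\emph{Step 1: fixed points and local form.} Since $T\neq\mathrm{id}$ is holomorphic, its fixed points are isolated by the identity theorem, hence finite by compactness of $X$. Because $T$ has finite order $m$, at each $p\in\Fix(T)$ one can linearise $T^{-1}$. Starting from any chart $w$ centred at $p$, the averaged coordinate
\[
z\vcentcolon=\frac1m\sum_{j=0}^{m-1}\zeta_p^{-j}\,w\circ T^{-j}
\]
has nonzero derivative at $p$ and satisfies $z\circ T^{-1}=\zeta_p z$. Here $\zeta_p=(T^{-1})'(p)$ is an $m$-th root of unity. It is different from $1$: otherwise $T^{-1}$ would be the identity near $p$, hence everywhere. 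In particular every fixed point is transversal, which is exactly the hypothesis of Atiyah--Bott.

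\emph{Step 2: vanishing of $H^1$.} I would show $H^1(X,K^2)=0$. By Serre duality, $H^1(X,K^2)\cong H^0(X,K\otimes K^{-2})^*=H^0(X,K^{-1})^*$. The latter vanishes because $\deg K^{-1}=2-2g<0$. Consequently the holomorphic Lefschetz number of $T$ acting on $K^2$ reduces to the trace on $H^0(X,K^2)=\mathcal{Q}(X)$. This is the only place where the genus assumption enters; it also gives $\dim\mathcal{Q}(X)=3g-3$ by Riemann--Roch, consistent with the count used earlier.

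\emph{Step 3: apply the formula.} The holomorphic Lefschetz formula expresses the trace of $\psi\mapsto(T^{-1})^*\psi$ as
\[
\sum_{p\in\Fix(T)}\frac{\operatorname{tr}\bigl(\text{action on the fibre } K^2_p\bigr)}{\det\bigl(1-dT^{-1}_p\bigr)}
\]
(with the convention matching the direction of the pullback). In the linearising coordinate, $(T^{-1})^*dz^2=(d(\zeta_pz))^2=\zeta_p^2\,dz^2$, so the fibre action is multiplication by $\zeta_p^2$. The complex Jacobian of $T^{-1}$ on $T_pX$ is $\zeta_p$, so the denominator is $1-\zeta_p$. Summing over $p\in\Fix(T)$ gives the claimed formula.

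\emph{Main obstacle and fallback.} The main obstacle is bookkeeping rather than substance: I must make sure the pullback by $T^{-1}$ and the Atiyah--Bott convention line up, so that one gets $\zeta_p^2/(1-\zeta_p)$ rather than $\overline{\zeta_p}^{\,2}/(1-\overline{\zeta_p})$ or the version with $T$ in place of $T^{-1}$. I would fix this by testing the formula on a simple case.

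If one prefers to avoid citing Atiyah--Bott, I would follow Eichler's original argument as in Farkas--Kra. Write the trace as the integral over $X$ of the Bergman-type reproducing kernel for $\mathcal{Q}(X)$, evaluated along the graph of $T$. Then express this kernel via a meromorphic kernel (for instance built from the Weierstrass $\wp$-type kernel for quadratic differentials) and apply Stokes' theorem. The integral then localises to small circles around the fixed points, where the local linear form from Step 1 evaluates each residue to $\zeta_p^2/(1-\zeta_p)$. Constructing that meromorphic kernel would be the hard step in this alternative, so the Lefschetz route is the one I would pursue first.
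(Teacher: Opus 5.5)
Your proof is correct, but it does not follow the paper's route. The paper does not prove this statement; it quotes it from Farkas--Kra, where it is proved within classical compact Riemann surface theory rather than by index theory. Your reduction to the Atiyah--Bott holomorphic Lefschetz theorem works as written:
\begin{itemize}
\item Step~1 gives finitely many fixed points, and they are transversal, since the real determinant of $1-d(T^{-1})_p$ is $\abs{1-\zeta_p}^2\neq0$.
\item Step~2 gives $H^1(X,K^{2})=0$ for genus at least $2$, so the Lefschetz number is exactly the trace on $\mathcal{Q}(X)$.
\end{itemize}

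The convention issue you flag resolves in your favour. With the natural pullback lift $\phi$, the local term for a map $f$ is $\tr(\phi_p)/\det_{\C}(1-df_p)$, where $df_p$ is the holomorphic differential of the same map $f$ whose pullback is being traced. The check you proposed settles this: take $\mathbb{P}^1$ with $E=K$ and $f(z)=\lambda z$. There one gets $\lambda/(1-\lambda)+\lambda^{-1}/(1-\lambda^{-1})=-1=-\tr\bigl(f^*|_{H^1(\mathbb{P}^1,K)}\bigr)$, whereas the inverted convention gives a different value. Applying the formula to $f=T^{-1}$ then yields $\zeta_p^2/(1-\zeta_p)$, with no conjugation or inversion, exactly as in the statement.

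As for the trade-off, your route is a few lines long and immediately gives the analogous formula for $K^{q}$. For $q=1$, the one-dimensional space $H^1(X,K)$ carries the trivial action and reproduces the extra summand $1$ of the classical formula for abelian differentials, which is another consistency check. However, your route rests on a much deeper general theorem than the textbook argument, which uses only classical tools. The reproducing-kernel fallback is not needed.
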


\begin{remark}
\label{rem:Eichler}
In the setting of Lemma~\ref{lem:chi_DF} with $2\operatorname{genus}(M)+\beta\geq3$, Theorem~\ref{thm:Eichler} applies to $\hat{M}$, and the representation \eqref{eqn:action_Q} of $\gamma\neq e$ is $\psi\mapsto(T^{-1})^*\psi$ for $T=\gamma$ if $\gamma$ preserves the orientation, and for $T=\gamma\iota$ if $\gamma$ reverses the orientation.    
\end{remark}

\section{Structure of surfaces with dihedral symmetry}

The following statement formulates \cite[Lemma 2.9]{BuzanoNguyenSchulz2021} in the spherical case, where claim and proof are analogous. 
We emphasise that it does not assume the surface in question to be minimal. 

\begin{lemma}
\label{lem:Riemann-Hurwitz}
Given $3\leq k\in\N$, let $M_k \subset \Sp^3$ be any closed, connected, embedded, $\Z_k$-equivariant surface of genus $\alpha\in\{1,\ldots,k-1\}$, where $\Z_k$ acts as the cyclic subgroup of $\dih_k$ in standard representation. 
If $M_k$ is disjoint from the great circle $\Sp^1_\perp\subset\Sp^3$ fixed by the action of $\Z_k$, then $\alpha=1$. 
If $M_k$ intersects $\Sp^1_\perp$, then the number of intersections is $4$ and $M_k$ has genus $\alpha=k-1$.  
\end{lemma}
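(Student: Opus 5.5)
The plan is to apply the Riemann--Hurwitz formula to the quotient map $\pi\colon M_k\to M_k/\Z_k$, following \cite[Lemma~2.9]{BuzanoNguyenSchulz2021}. The generator of $\Z_k$ acts on $\Sp^3$ as the rotation by $2\pi/k$ in the $x_1x_2$-plane. Its only fixed points are those of $\Sp^1_\perp$, and the same holds for every nontrivial power, since $k\geq3$ and no power other than the identity fixes a point off $\Sp^1_\perp$. Hence $\Z_k$ acts freely on $M_k\setminus\Sp^1_\perp$. Since $M_k$ is embedded and the rotation preserves $M_k$, at each intersection point $p\in M_k\cap\Sp^1_\perp$ the tangent plane $T_pM_k$ is invariant under the rotation of $T_p\Sp^3$. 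That tangent space splits as $T_p\Sp^1_\perp\oplus\R^2_{x_1x_2}$, and since $k\geq3$ the only invariant $2$-planes are the $x_1x_2$-plane itself. So $M_k$ meets $\Sp^1_\perp$ orthogonally and transversally. Each such point is a fixed point of the full group $\Z_k$ with branching order $k$. In particular the intersection is finite, say with $m$ points.

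\textbf{Riemann--Hurwitz.} Orientability of $M_k$ follows from embeddedness of a closed surface in $\Sp^3$. The rotations act orientation-preservingly on $M_k$: they preserve the ambient orientation, and they preserve the two components of $\Sp^3\setminus M_k$, since near $\Sp^1_\perp$ they fix points and act as rotations; if $M_k\cap\Sp^1_\perp=\emptyset$ one argues via connectedness of $\Sp^1_\perp$ lying in one side. Hence the quotient $M_k/\Z_k$ is a closed orientable surface of some genus $\beta\geq0$, and Riemann--Hurwitz gives
\[
2-2\alpha=k(2-2\beta)-m(k-1).
\]

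\textbf{Case analysis.} If $m=0$, then $1-\alpha=k(1-\beta)$. Since $\alpha\geq1$, we get $\beta\geq1$, and $\alpha-1=k(\beta-1)$ together with $\alpha\leq k-1$ forces $\beta=1$ and $\alpha=1$.

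If $m>0$, then $m$ is even. Indeed, $\Sp^1_\perp$ is a closed curve meeting the separating embedded surface $M_k$ transversally, so it crosses an even number of times. Rearranging,
\[
2\alpha-2=(m-2)(k-1)+2k\beta-2.
\]
If $m\geq4$ then $2\alpha-2\geq2(k-1)-2+2k\beta$, so $\alpha\geq k-1+k\beta$. Combined with $\alpha\leq k-1$, this forces $\beta=0$, $m=4$ and $\alpha=k-1$. If $m=2$ then $2\alpha-2=2k\beta-2$, so $\alpha=k\beta$. This is impossible for $1\leq\alpha\leq k-1$, because $\beta=0$ gives $\alpha=0$ and $\beta\geq1$ gives $\alpha\geq k$.

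\textbf{Main obstacle.} I expect the delicate step to be justifying that every fixed point has full stabiliser with a cone point of order exactly $k$, which requires transversality from the invariance of the tangent plane. The parity of $m$ also needs care; I would derive it from mod-$2$ intersection theory in $\Sp^3$.
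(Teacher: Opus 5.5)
Your proposal is correct and follows the argument the paper intends: the paper gives no separate proof and defers to \cite[Lemma~2.9]{BuzanoNguyenSchulz2021}, which is the same Riemann--Hurwitz count $2-2\alpha=k(2-2\beta)-m(k-1)$ for the orientable quotient $M_k/\Z_k$, combined with the evenness of $m$ and the bound $1\leq\alpha\leq k-1$. You flagged the right points of care: $k\geq3$ is what forces $T_pM_k$ to be the $x_1x_2$-plane at the branch points, and the parity of $m$ is genuinely needed, since otherwise $m=1$ (for $k=3$) or $m=3$ (for odd $k$) would be arithmetically compatible with a genus in $\{1,\ldots,k-1\}$.
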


\begin{corollary}
\label{cor:Riemann-Hurwitz}
Given $3\leq k\in\N$, let $M_k \subset \Sp^3$ be any closed, embedded, $G$-equivariant minimal surface of genus $k-1$, where $G = \dih_k \times \Z_2 \times \Z_2$ acts via the representation \eqref{eqn:repR4}. 
Then each of the two great spheres $\Sp^2=\{x\in\Sp^3\mid x_4=0\}$ and $\Sp^2_\perp=\{x\in\Sp^3\mid x_3=0\}$ divides $M_k$ into two isometric, $\dih_k \times \Z_2$-equivariant, connected surfaces of genus zero with $k$ boundary components, i.\,e.~$k$-noids. 
\end{corollary}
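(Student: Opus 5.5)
The plan is to treat the two great spheres separately; the arguments are identical after exchanging the roles of $\tau_1,\tau_2$ and of $x_3,x_4$, so I describe only $\Sp^2=\{x_4=0\}$, fixed pointwise by $\rho(\tau_2)$.

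\emph{Step 1: transversal intersection.} First, $M_k\not\subset\Sp^2$: otherwise $M_k$ would be a great sphere, of genus $0\neq k-1$. Now suppose $M_k$ were tangent to $\Sp^2$ at some $x\in M_k\cap\Sp^2$. Then $M_k$ and $\rho(\tau_2)M_k=M_k$ are two minimal surfaces tangent at $x$ and related by the reflection across $\Sp^2$. Near $x$, both are graphs over $T_xM_k=T_x\Sp^2$, one of $u$ and the other of $-u$. Since they coincide, $u\equiv0$ locally, and unique continuation would give $M_k\subset\Sp^2$. Hence $M_k$ meets $\Sp^2$ orthogonally, and $M_k\cap\Sp^2$ is a finite union of disjoint smooth closed curves. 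Let $M^+\vcentcolon=M_k\cap\{x_4\geq0\}$, so that $M_k=M^+\cup\rho(\tau_2)M^+$ glued along these curves. Since the common boundary curves have Euler characteristic $0$, we get
\[
\chi(M^+)=\tfrac12\chi(M_k)=2-k.
\]
The halves $M^+$ and $\rho(\tau_2)M^+$ are isometric via $\tau_2$. Both are invariant under $\dih_k\times\langle\tau_1\rangle$, which acts on $M^+$ in the prismatic representation \eqref{eqn:prismatic} after relabelling coordinates.

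\emph{Step 2: position of the axis points.} By Lemma~\ref{lem:Riemann-Hurwitz}, $M_k\cap\Sp^1_\perp$ consists of exactly $4$ points. These are the fixed points of the $\Z_k$-action on $M_k$. At such a point the tangent plane is $\Z_k$-invariant with $k\geq3$, so it is the plane spanned by $e_1,e_2$. If such a point lay on $\Sp^2$ (respectively on $\Sp^2_\perp$), this plane would be tangent to that sphere, contradicting Step~1 (respectively its analogue for $\tau_1$). Hence the four points are of the form $(0,0,\pm a,\pm b)$ with $ab\neq0$: a single orbit under $\langle\tau_1,\tau_2\rangle$, one point in each quadrant of $\Sp^1_\perp$. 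In particular, $M^+$ contains exactly two of them, and these are the only points of $M^+$ fixed by nontrivial elements of $\Z_k$.

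\emph{Step 3: Riemann--Hurwitz for the quotient.} Let $Q\vcentcolon=M^+/\Z_k$. The covering $M^+\to Q$ is branched with full ramification over the two fixed points, so
\[
\chi(M^+)=k\,\chi(Q)-2(k-1).
\]
Inserting $\chi(M^+)=2-k$ gives $\chi(Q)=1$. I expect the main obstacle to be turning this into connectedness of $M^+$ and the boundary count $k$. For connectedness, I would argue as follows. The components of $M^+$ are permuted by $\Z_k$, and each component of $Q$ has a nonempty boundary, so each has Euler characteristic at most $1$. Hence $\chi(Q)=1$ forces $Q$ to be a single disc. Consequently $\Z_k$ permutes the components of $M^+$ transitively. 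Since a component containing one of the two $\Z_k$-fixed points is itself $\Z_k$-invariant, $M^+$ must be connected.

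For the boundary count, $\partial Q$ is a single circle, so the boundary curves of $M^+$ form one $\Z_k$-orbit. This orbit either consists of a single invariant curve or has size $k$. I would exclude the first case using $\tau_1$. A $\Z_k$-invariant curve in $\Sp^2$ avoiding the poles $(0,0,\pm1,0)$ must encircle one of them, so it lies in $\{x_3>0\}$ or in $\{x_3<0\}$. Its image under $\tau_1$ is then a distinct boundary curve of $M^+$, contradicting that the boundary is a single orbit. Hence $M^+$ has $k$ boundary components, and $2-k=2-2g-k$ yields genus $g=0$. Thus $M^+$ is a $k$-noid. Repeating the argument with $\tau_1$ and $\Sp^2_\perp$ completes the proof.
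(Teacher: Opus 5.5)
Steps~1--2 and the count $\chi(Q)=1$ are correct. The two topological conclusions you draw from that count in Step~3 do not follow, however.

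\emph{Connectedness.} The inference ``each component of $Q$ has $\chi\leq1$, hence $\chi(Q)=1$ forces $Q$ to be a single disc'' is false. Components with boundary can have $\chi\leq0$, so a disc plus any number of annuli, or two discs plus a component with $\chi=-1$, also give $\chi(Q)=1$. Counting alone cannot exclude extra components of $M^+$ (for example annuli carrying no $\Z_k$-fixed point), so connectedness of $M^+$ is not established. The paper gets it from the Frankel property. In your setting it also follows elementarily from connectedness of $M_k$. For any component $C$ of $M^+$, transversality along $\partial C\subset\Sp^2$ shows that $C\cup\rho(\tau_2)C$ is open and closed in $M_k$. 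Hence it equals $M_k$, so $M^+=C$, and $Q$ is then a disc.

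\emph{Excluding a single boundary curve.} A curve in $\Sp^2$ avoiding $(0,0,\pm1,0)$ and invariant under a nontrivial rotation must separate these poles. It need not lie in $\{x_3>0\}$ or $\{x_3<0\}$, though. It may cross $\Sp^1=\{x_3=x_4=0\}$ and may even be $\rho(\tau_1)$-invariant, as $\Sp^1$ itself is. So the claim that $\rho(\tau_1)\gamma$ is a distinct boundary curve is unjustified. This case genuinely needs excluding: for odd $k$, a connected $M^+$ of genus $(k-1)/2$ with one boundary curve is consistent with all your counts.

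Your dichotomy ``one invariant curve or an orbit of size $k$'' also needs an argument when $k$ is composite. Curves with nontrivial stabiliser separate the poles by Brouwer, hence are nested. A cyclic group acting transitively by order-preserving bijections on a finite nested family forces that family to be a single curve.

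The paper excludes the bad case by capping off $\partial M^+$ equivariantly and applying Lemma~\ref{lem:Riemann-Hurwitz} again, which forces genus $k-1\neq(k-1)/2$. You can also complete your own argument. Since $\rho(\tau_1)\gamma=\gamma$ and $\rho(\tau_1)$ swaps the poles, it swaps the two discs bounded by $\gamma$, so it restricts to an orientation-preserving involution of $\gamma$.
\begin{itemize}
\item If this involution is fixed-point free, $\gamma$ misses $\Sp^1$ and lies in one hemisphere, which contradicts $\rho(\tau_1)\gamma=\gamma$.
\item If it has a fixed point, it is the identity, so $\gamma=\Sp^1$. Then $M_k$, being orthogonal to $\Sp^2$ along $\Sp^1$, would be tangent to $\Sp^2_\perp$ there, contradicting Step~1 for $\tau_1$.
\end{itemize}
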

 
\begin{proof}
The Frankel \cite{Frankel1966} property of closed minimal surfaces in $\Sp^3$ implies that $M_k\setminus\Sp^2$ has exactly two connected components, denoted $M_k^\pm$. 
The symmetry of $M_k$ implies that the two surfaces $M_k^+$ and $M_k^-$ are congruent; each of them has genus $\alpha\geq0$ and $\beta\geq1$ boundary components. 
The Euler characteristics of $M_k$ and $M_k^+$ satisfy 
\begin{align}\label{eqn:Eulerchar}
2-2(k-1)=\chi(M_k)=2\chi(M_k^+)=4-4\alpha-2\beta
\end{align}
which simplifies to $k=2\alpha+\beta$. 
Since all variables are nonnegative integers, 
$\beta\in\{1,\ldots,k\}$ and $0\leq\alpha\leq(k-1)/2$. 
We claim that necessarily $\alpha=0$ and $\beta=k$. 

As before, let $\Z_k$ denote the cyclic subgroup of $\dih_k$. 
By Lemma~\ref{lem:Riemann-Hurwitz}, $M_k$ intersects the great circle $\Sp^1_\perp\subset\Sp^3$ fixed by the action of $\Z_k$ in exactly $4$ points. 
In particular, at most two of the intersections could be on $\Sp^2$. 
Therefore, $M_k^+$ also intersects $\Sp^1_\perp$ at least once.  
Moreover, $M_k^+$ inherits the $\Z_k$-equivariance from $M_k$.  
We may then $\Z_k$-equivariantly close up each of the boundary components $M_k^+$ by a topological disc to obtain a closed, embedded, $\Z_k$-equivariant surface $\tilde M_k^+\subset\Sp^3$. 
(We refer e.\,g.~to the proof of \cite[Corollary~2.10]{BuzanoNguyenSchulz2021} for more details on this construction.)
By definition, this procedure preserves the genus $\alpha$. 
If $\alpha\geq1$ then Lemma \ref{lem:Riemann-Hurwitz} applied to $\tilde M_k^+$ yields $\alpha=k-1$, which contradicts \eqref{eqn:Eulerchar} for $\beta\geq1$ and $k\geq3$. 
Consequently, $\alpha=0$ and \eqref{eqn:Eulerchar} implies $\beta=k$ as claimed. 
The proof is analogous in the case where $\Sp^2$ is replaced by $\Sp^2_\perp$. 
\end{proof}

\subsection{Fundamental domains}
 
\begin{definition}[Fundamental domain]
\label{defn:fundamental_domain}
Given a finite group $G$ acting on a set $\Omega$, a closed connected subset $D\subset\Omega$ is called a \emph{fundamental domain} for $\Omega$ if $G(x)\cap D$ contains exactly one point for every $x\in\Omega$, where $G(x)$ denotes the orbit of $x$ under $G$.
\end{definition}

The following lemma is a variant of \cite[Lemma 7.3]{GirouardLagace2021} stated for a different group action. 

\begin{lemma}
\label{lem:fundamental_domain_knoid}
Given $3\leq k\in\N$, let $\Sigma_k\subset\B^3$ be any properly embedded, free boundary minimal surface of genus zero with $k$ boundary components which is $G=(\dih_k \times \Z_2)$-equivariant in the representation \eqref{eqn:prismatic}. 
Then $\Sigma_k$ has a simply connected fundamental domain $D$ with piecewise smooth boundary $\partial D$ consisting of four edges and four corners:  
Three corners have interior angle $\frac{\pi}{2}$, while the fourth is on the $x_3$-axis with interior angle $\frac{\pi}{k}$. 
\end{lemma}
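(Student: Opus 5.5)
The plan is to cut $\Sigma_k$ along the fixed point sets of the reflections in $G$ and identify one piece as a topological disc with the stated corners. Concretely, I will first analyse the horizontal reflection $\tau$. The intersection $\Sigma_k\cap\{x_3=0\}$ is a union of smooth curves met orthogonally, because $\Sigma_k$ is $\tau$-invariant and not contained in the plane. A Frankel-type or maximum principle argument for free boundary minimal surfaces, which are two-sided and separating in $\B^3$, should show that $\Sigma_k^+=\Sigma_k\cap\{x_3\ge0\}$ is connected.

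An Euler characteristic count as in \eqref{eqn:Eulerchar} then gives $2-k=\chi(\Sigma_k)=2\chi(\Sigma_k^+)-\chi(\text{cut curves})$. Since the cut curves are closed curves or arcs, $\chi$ counts the arcs. I will use $\Z_k$-equivariance together with a Riemann--Hurwitz count of the fixed points of the rotation $\gamma_1\gamma_2$ on the $x_3$-axis, in the spirit of Lemma~\ref{lem:Riemann-Hurwitz} and \cite[Lemma~2.9]{BuzanoNguyenSchulz2021}. This should force $\Sigma_k^+$ to be a disc meeting the $x_3$-axis in exactly one interior point $q$, with horizontal tangent plane there. It should also force $\Sigma_k\cap\{x_3=0\}$ to consist of $k$ arcs, each joining two boundary circles, with the $k$ boundary circles each cut in half.

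Next, I will restrict the $\dih_k$-action to the disc $\Sigma_k^+$. Each vertical reflection plane meets $\Sigma_k^+$ orthogonally, since $\Sigma_k^+$ is not contained in it. By the disc topology and the single fixed point $q$ of the rotations, each such plane meets $\Sigma_k^+$ in exactly two arcs emanating from $q$. This is a standard argument: the fixed set of an orientation-reversing isometry of a disc is a single properly embedded arc, so it passes through $q$ and consists of two rays from $q$. The $k$ reflection lines therefore divide $\Sigma_k^+$ into $2k$ sectors cyclically ordered around $q$, with angle $\pi/k$ at $q$ because the tangent plane there is horizontal.

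A sector $D$ is bounded by two reflection rays from $q$, a piece of $\partial\Sigma_k$ on $\Sp^2$, and a piece of the cut curve $\Sigma_k\cap\{x_3=0\}$. The alternation of the two half-plane types described in the proof of Theorem~\ref{thm:k-noidDF} shows that each sector sees exactly one ray ending on $\partial\Sigma_k$ and one ray ending on $\{x_3=0\}$, so there is exactly one corner between the boundary arc and the horizontal arc. I then need to check the angles. Along the free boundary, orthogonality to $\Sp^2$ and the orthogonal intersection of $\Sigma_k$ with the planes $\{x_3=0\}$ and the vertical planes give three right angles. The fourth corner is at $q$ with angle $\pi/k$. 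That $D$ is a fundamental domain follows because $G$ is generated by reflections across its sides.

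The main obstacle is the topological step: establishing connectedness of $\Sigma_k^+$ and the exact intersection pattern with the axes. I would first try to adapt the proof of Corollary~\ref{cor:Riemann-Hurwitz} and its closing-up trick to the ball. The difficulty is ruling out extra components or closed curves in the cut set. There I expect to need the maximum principle against vertical planes, or against the plane $\{x_3=0\}$, in the free boundary setting.
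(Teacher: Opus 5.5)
Your two-stage route (first $\tau$ and $\Z_k$, then the vertical reflections) can be made to work. As written, though, it has a gap exactly where the content of the lemma lies. The claims that $\Sigma_k^+$ is a disc meeting the $x_3$-axis only at $q$, and that $\Sigma_k\cap\{x_3=0\}$ consists of exactly $k$ arcs and no closed curves, are only asserted (``should force''), and everything afterwards rests on them.

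The obstacle you anticipate is also misplaced. Connectedness of $\Sigma_k^+$ needs no Frankel or maximum principle argument. The map $\Sigma_k^+\to\Sigma_k/\langle\tau\rangle$ is a continuous bijection from a compact space onto a Hausdorff space, hence a homeomorphism, and $\Sigma_k$ is connected. The paper gets connectedness of $D=\Sigma_k\cap W$ in the same spirit from \cite[Lemma~7.2]{GirouardLagace2021}.

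What is actually missing is the count, and it closes as follows.
\begin{itemize}
\item The origin and the poles are not on $\Sigma_k$. At the origin the tangent plane would be horizontal, and $\tau$-invariance would force $\Sigma_k\subset\{x_3=0\}$. At a pole the tangent plane would contain $e_3$ and hence a $\Z_k$-invariant horizontal line, which is impossible for $k\geq3$. So the axis points are interior and come in $\tau$-pairs, say $2f$ of them.
\item A free action is impossible since $k\nmid 2-k$. At an axis point the rotations act on the horizontal tangent plane as rotations, so they preserve orientation. Riemann--Hurwitz then gives $2-k=k\chi(\Sigma_k/\Z_k)-2(k-1)f\leq k-2(k-1)f$, hence $f=1$.
\item Let $a$ be the number of cut arcs. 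Combining $2-k=2\chi(\Sigma_k^+)-a$ with $\chi(\Sigma_k^+)=k\chi(\Sigma_k^+/\Z_k)-(k-1)$ gives $a=k\bigl(2\chi(\Sigma_k^+/\Z_k)-1\bigr)$. Hence $a=k$ and $\chi(\Sigma_k^+)=1$.
\item Closed cut curves are excluded because a disc has only one boundary circle.
\end{itemize}

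Separately, taking the alternation of ray types from the proof of Theorem~\ref{thm:k-noidDF} is circular. That proof uses Corollary~\ref{cor:half_knoid}\,\ref{cor:half_knoid-ii}, which the paper derives from the very lemma you are proving. You can get the alternation intrinsically. At $V\in\partial\Sigma_k\cap\{x_3=0\}$ the tangent plane is spanned by $V$ and $e_3$, so a vertical reflection fixing $V$ would act trivially on $T_V\Sigma_k$ and hence on $\Sigma_k$; rotations fix only the axis. Thus $\dih_k$ acts simply transitively on the $2k$ corners of the disc $\Sigma_k^+$. Each of your $2k$ sectors then contains exactly one corner, so its two rays end on different types of boundary arcs.

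By comparison, the paper's proof never analyses $\Sigma_k^+$. It takes $D=\Sigma_k\cap W$ for the tetrahedral chamber $W$ of the full reflection group and observes that the corners of $D$ lie on edges of $W$ with dihedral angle $\frac{\pi}{2}$ or $\frac{\pi}{k}$. A single Gauss--Bonnet identity $4k\chi(D)=kj+2\ell(k-1)+2-k$, together with $j\geq2$, then forces $\chi(D)=1$ and $(j,\ell)=(3,1)$. This avoids locating axis points, classifying cut curves and checking orientations. Your route, once completed, costs more bookkeeping but delivers Corollary~\ref{cor:half_knoid} directly along the way.
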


\begin{proof}
We can realise a fundamental domain $W$ for the action of $G$ on $\B^3$ as a tetrahedral wedge bounded by $\partial\B^3$, by two adjacent planes of reflection symmetry $\Pi_1,\Pi_2$ of $\dih_k$ containing the $x_3$-axis, and by the plane $\Pi_3\vcentcolon=\{x_3=0\}$ fixed by the generator $\tau\in\Z_2$.  
Since $\Sigma_k$ is necessarily connected by \cite[Lemma~2.4]{FraserLi2014}, the set $D=\Sigma_k\cap W$ is connected by \cite[Lemma~7.2]{GirouardLagace2021}. 
By reflection symmetry and the free boundary condition, $D$ meets $\partial W$ orthogonally, so $\partial D$ is piecewise smooth with corners where it meets the edges of $W$, and the exterior angles along $\partial D$ are given by the angles between the corresponding faces of $\partial W$. 

Let $j$ and $\ell$ denote the numbers of exterior angles along $\partial D$ with values $\frac{\pi}{2}$ and $\pi-\frac{\pi}{k}$ respectively.
Since $W$ is a fundamental domain and $\partial\Sigma_k\neq\emptyset$, we have $\partial D\cap\partial\B^3\neq\emptyset$. 
Moreover, no connected component of $\partial\Sigma_k$ is fully contained in $W$, since its $G$-orbit would otherwise consist of $4k$ distinct boundary components of $\Sigma_k$. 
Hence $\partial D\cap\partial\B^3$ is a nonempty union of arcs whose endpoints are right-angled corners of $\partial D$, which implies $j\geq2$. 

Since $\Sigma_k$ has genus zero with $k$ boundary components, its Euler characteristic is $\chi(\Sigma_k)=2-k$. 
Denoting the Gauß curvature by $K$ and the geodesic curvature of the boundary by $\kappa$, the Gauß--Bonnet theorem yields 
\begin{align*}
2\pi(2-k)
=\int_{\Sigma_k} K+\int_{\partial\Sigma_k}\kappa
=\abs{G}\biggl(\int_{D}K+\int_{\partial D}\kappa\biggr)
=4k\Bigl(2\pi\chi(D)-j\tfrac{\pi}{2}-\ell\bigl(\pi-\tfrac{\pi}{k}\bigr)\Bigr), 
\end{align*}
which simplifies to 
\begin{align}
\label{eqn:diophantine-knoid}
4k\,\chi(D)&=kj+2\ell(k-1)+2-k. 
\end{align}
Any connected surface with boundary satisfies $\chi(D)\leq1$. 
Since $j\geq2$, the right hand side of \eqref{eqn:diophantine-knoid} is at least $k+2>0$. 
Hence $\chi(D)=1$ and equation \eqref{eqn:diophantine-knoid} simplifies to
\begin{align}
\label{eqn:diophantine-knoid-2}
kj+2\ell(k-1)&=5k-2. 
\end{align}
If $\ell=0$, then $k$ divides $2$, which is impossible for $k\geq3$. 
If $\ell=1$, then $kj=3k$ and thus $j=3$. 
If $\ell=2$, then $kj=k+2$, so again $k$ divides $2$, which is impossible. 
If $\ell=3$, then $kj=4-k$, contradicting $j\geq2$ and $k\geq3$. 
If $\ell\geq4$, the left hand side of \eqref{eqn:diophantine-knoid-2} is at least $2k+8(k-1)=10k-8>5k-2$. 
Hence $(j,\ell)=(3,1)$ is the only possibility, and $\partial D$ has $j+\ell=4$ corners as claimed. 
\end{proof}
 
\begin{corollary}
\label{cor:half_knoid}
In the setting of Lemma~\ref{lem:fundamental_domain_knoid}, let $H_1,\ldots,H_{2k}\subset\B^3$ denote the half-discs bounded by the $x_3$-axis which are contained in the planes of reflection of $\dih_k$, in cyclic order and suitably labelled.
Then the following holds. 
\begin{enumerate}[label={\normalfont(\roman*)}]
\item\label{cor:half_knoid-i} 
$\Sigma_k$ intersects the $x_3$-axis orthogonally in exactly two points $q$ and $\tau(q)$, and $\Sigma_k^+\vcentcolon=\Sigma_k\cap\{x_3\geq0\}$ is a disc whose boundary consists of $2k$ right-angled corners and $2k$ edges, alternating between $k$ arcs contained in $\Sigma_k\cap\{x_3=0\}$ and $k$ arcs contained in $\partial\Sigma_k$. 
\item\label{cor:half_knoid-ii} 
If $j$ is odd, then $\Sigma_k\cap H_j\cap\{x_3\geq0\}$ is an arc in $\{x_3>0\}$ from $q$ to a point in $\partial\Sigma_k$, whose other points lie in the interior of $\Sigma_k$. 
If $j$ is even, then $\Sigma_k\cap H_j\cap\{x_3\geq0\}$ is an arc in the interior of $\Sigma_k$ from $q$ to a point in $\{x_3=0\}$, whose other points lie in $\{x_3>0\}$.
\item\label{cor:half_knoid-iii} 
$\partial\Sigma_k\cap\{x_3=0\}$ consists of exactly $2k$ points. 
\end{enumerate}
\end{corollary}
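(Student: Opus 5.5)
Corollary~\ref{cor:half_knoid} follows from the fundamental domain $D$ of Lemma~\ref{lem:fundamental_domain_knoid}, read together with its placement in the wedge $W$. Recall that $W$ is bounded by the two half-planes $\Pi_1,\Pi_2$ through the $x_3$-axis (meeting at angle $\pi/k$), the plane $\Pi_3=\{x_3=0\}$, and $\partial\B^3$. The disc $D$ has exactly one corner of angle $\pi/k$, and this corner is the only corner on an edge of $W$ where $\Pi_1$ and $\Pi_2$ meet, namely the $x_3$-axis. Call this point $q$.

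The other three corners are right-angled. Each lies on one of the edges $\Pi_1\cap\Pi_3$, $\Pi_2\cap\Pi_3$, $\Pi_i\cap\partial\B^3$, $\Pi_3\cap\partial\B^3$. The four edges of $\partial D$ lie successively on faces of $W$, and consecutive edges lie on different faces meeting along the edge of $W$ containing their common corner. Starting at $q$, the two adjacent edges lie in $\Pi_1$ and $\Pi_2$. From the argument in Lemma~\ref{lem:fundamental_domain_knoid}, $\partial D$ meets $\partial\B^3$ in at least one arc with two right-angled endpoints. Hence the cyclic sequence of faces must be $\Pi_1,\Pi_2,\ast,\partial\B^3$ up to symmetry, where the remaining slot is forced. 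The four faces in cyclic order are then $\Pi_a$, $\partial\B^3$, $\Pi_3$, $\Pi_b$ with $\{a,b\}=\{1,2\}$.

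I must rule out a cycle in which $\Pi_3$ does not appear. Such a cycle would have a face repeated, and two consecutive edges cannot lie on the same smooth face at a corner, so this is excluded. I also must rule out the configuration $\Pi_1,\partial\B^3,\Pi_2$ with the fourth face $\Pi_3$ placed elsewhere; with four edges only the cyclic order above remains. Since $q$ lies on the $x_3$-axis inside $W$ with $x_3$ of a fixed sign, say $q\in\{x_3>0\}$ after possibly replacing $q$ by $\tau(q)$, the other orbit point is $\tau(q)$. The axis is the fixed line of the rotations in $\dih_k$, so $\Sigma_k$ meets it in the $\dih_k\times\Z_2$-orbit of $q$, which is $\{q,\tau(q)\}$. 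Orthogonality holds because the tangent plane at $q$ is invariant under a rotation of order $k\geq3$.

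Reflecting $D$ across $\Pi_1,\Pi_2$ produces $2k$ copies around $q$, whose union is $\Sigma_k^+$. The union is a disc because the copies are glued cyclically along the arcs in the half-discs $H_j$, all emanating from $q$. The boundary of $\Sigma_k^+$ then alternates between the $\partial\B^3$-edges and the $\Pi_3$-edges of the copies. Adjacent copies across $H_j$ share the edges on the face $\Pi_a$ or $\Pi_b$, so these merge into $k$ arcs of each type with $2k$ right-angled corners. This proves~\ref{cor:half_knoid-i}.

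For~\ref{cor:half_knoid-ii}, the edge of $D$ in $\Pi_a$ runs from $q$ to the corner on $\Pi_a\cap\partial\B^3$. The edge in $\Pi_b$ runs from $q$ to the corner on $\Pi_b\cap\Pi_3$. Labelling the half-planes so that the odd ones correspond to $\Pi_a$ gives the claim, with interiors of the arcs in the interior of $\Sigma_k$ and in $\{x_3>0\}$.

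For~\ref{cor:half_knoid-iii}, the corner on $\Pi_3\cap\partial\B^3$ has $4k/2=2k$ distinct images, since its stabiliser is generated by $\tau$. These images are exactly $\partial\Sigma_k\cap\{x_3=0\}$.

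The main obstacle is the combinatorial step excluding other face cycles. In particular, I must exclude the possibility that the $\pi/k$ corner is a point of the axis not in the interior of $\Sigma_k$, and I expect to handle this with the orthogonal boundary contact and embeddedness.
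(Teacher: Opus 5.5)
Your proposal is correct and follows the paper's proof. Both fix the cyclic order of the faces of $W$ that contain the four edges of $D$ (the paper's $\Pi_1,\Pi_2,\Pi_3,\partial\B^3$ is your $\Pi_a,\partial\B^3,\Pi_3,\Pi_b$ read backwards) and then obtain (i)--(iii) by reflecting $D$ across the faces of $W$; your extra details, orthogonality at $q$ and the stabiliser count in (iii), spell out what the paper leaves implicit.

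Two small repairs are needed. First, the cycles $\Pi_1,\Pi_2,\Pi_1,\partial\B^3$ and $\Pi_1,\Pi_2,\partial\B^3,\Pi_2$ are not excluded by your ``repeated face'' remark, since no two consecutive faces coincide there; they are excluded because a $\Pi_2$--$\Pi_1$ transition would be a second corner of angle $\pi/k$ on the axis. Second, the obstacle you flag closes quickly. At the north pole the free boundary condition would force $T\Sigma_k\ni e_3$ to be a vertical plane invariant under a rotation of order $k\geq3$, which is impossible. At the origin the tangent plane would have to be horizontal, so $\tau$ would act trivially on the connected surface $\Sigma_k$.
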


\begin{proof}
Let $W$, $\Pi_1$, $\Pi_2$ and $\Pi_3=\{x_3=0\}$ be as in the proof of Lemma~\ref{lem:fundamental_domain_knoid}. 
Swapping the labels of $\Pi_1$ and $\Pi_2$ if necessary, the edges of the fundamental domain $D$ of $\Sigma_k$ are contained in $\Pi_1$, $\Pi_2$, $\Pi_3$, $\partial\B^3$ in cyclic order.
The images of $W\subset\B^3$ under $\dih_k$ are the $2k$ wedges between consecutive half-discs $H_j$, and we label them such that $H_j\cap\{x_3\geq0\}$ is an image of the face $\Pi_1\cap W$ of $W$ for odd $j$ and of the face $\Pi_2\cap W$ for even $j$. 
Since every $G$-orbit meets $W$, all claims follow by reflecting $D$ across the faces of $W$. 
\end{proof}

\begin{remark}\label{rem:half-sphere}
The statements and proofs of Lemma~\ref{lem:fundamental_domain_knoid} and 
Corollary~\ref{cor:half_knoid} extend verbatim to free boundary minimal surfaces $\Sigma_k$ in
spherical caps (see Definition~\ref{defn:spherical_cap_case}) including 
the half-sphere 
$\{x\in\Sp^3\mid x_4\geq0\}$ in place of $\B^3$.
\end{remark}

The following statement verifies \cite[Assumption~5.1]{KusnerMcGrath2020} with $G=2\mathord\ast2$ and $H=\mathord\ast22$. 

\begin{lemma} 
\label{lem:fundamental_domain_torus}
Let $\Sigma\subset\B^3$ be any compact, connected, properly embedded surface of genus $1$ with connected boundary which is $\dih_4$-equivariant in the antiprismatic representation \eqref{eqn:antiprismatic}.
Let $\pi_G$ and $\pi_H$ denote the projections onto the quotients of $\Sigma$ over $G=\dih_4$ respectively over the subgroup $H\vcentcolon=\brt{\gamma_2,\gamma_1\gamma_2\gamma_1}$. 
For $i\in\{1,2,3\}$ let $\xi_i$ denote the restriction of the $x_i$-axis to $\B^3$.  
Then the following properties hold. 
\begin{enumerate}[label={\normalfont(\roman*)}]
\item\label{lem:fundamental_domain_torus-0}
$\Sigma\cap\xi_3=\{-p,0,p\}$ consists of three interior points for some $p=(0,0,p_3)$ with $p_3\in\interval{0,1}$. 
\item\label{lem:fundamental_domain_torus-i}
$\xi_1\cup\xi_2\subset\Sigma$ and the relative orientation representation (see Definition~\ref{defn:relative_orientation_representation}) is $\epsilon=\m_1$.
\item\label{lem:fundamental_domain_torus-ii} 
$\pi_H(\Sigma)$ is simply connected and separated by $\pi_H(\xi_1)$. 
\item\label{lem:fundamental_domain_torus-iii} 
$\pi_G(\Sigma)$ is a quadrilateral whose edges are, in cyclic order, contained in $\pi_G(\Fix(\gamma_1))$, $\pi_G(\partial\Sigma)$, $\pi_G(\Fix(\gamma_2))$ and again $\pi_G(\Fix(\gamma_2))=\pi_G(\Fix(\gamma_1\gamma_2\gamma_1))$. 
\end{enumerate} 
\end{lemma}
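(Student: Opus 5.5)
The plan is to replace the Gauss--Bonnet argument from the proof of Lemma~\ref{lem:fundamental_domain_knoid} by purely topological counts, since minimality is not assumed here. The tools are the Riemann--Hurwitz formula for cyclic subgroups of $\dih_4$ acting on $\Sigma$, and the orbifold Euler characteristic for the reflection subgroups. Throughout, $\chi(\Sigma)=-1$. From \eqref{eqn:extrinsicD4action} the ambient action of $\dih_4$ is as follows:
\begin{itemize}
\item $\rho(\gamma_1)$ and its conjugate $\rho(\gamma_2\gamma_1\gamma_2)$ are half-turns about $\xi_1$ and $\xi_2$;
\item $\rho(\gamma_2)$ and $\rho(\gamma_1\gamma_2\gamma_1)$ are reflections across the planes $\{x_1=\pm x_2\}$;
\item $R\vcentcolon=\rho((\gamma_1\gamma_2)^2)$ is the half-turn about $\xi_3$;
\item $\rho(\gamma_1\gamma_2)$ is a rotoreflection of order $4$ whose only fixed point is the origin.
\end{itemize}

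\emph{Step 1: claim \ref{lem:fundamental_domain_torus-0}.} First I show that $R$ has no fixed point on $\partial\Sigma$. At such a point the involution $R|_\Sigma$ would have to fix a boundary arc. That arc would lie in $\Fix(R)=\xi_3$, which meets $\partial\B^3$ only at the poles, so this is impossible.

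Next I show that $R|_\Sigma$ is orientation-preserving with isolated interior fixed points. If $\Sigma\cap\xi_3=\emptyset$, then Riemann--Hurwitz gives $-1=2\chi(\Sigma/R)$, which is absurd. Hence $\Sigma$ meets $\xi_3$. Since the fixed set of $R$ on $\Sigma$ cannot contain an arc of $\xi_3$ on a properly embedded surface with this symmetry, $R$ acts near these points as $z\mapsto-z$.

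Riemann--Hurwitz now reads $-1=2\chi(\Sigma/R)-f$, where $f=\#(\Sigma\cap\xi_3)$. Hence $f$ is odd and $f\leq 2\cdot1+1=3$.

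The set $\Sigma\cap\xi_3$ is invariant under $x_3\mapsto-x_3$, which is induced by $\rho(\gamma_1\gamma_2)$. Since $f$ is odd, this forces $0\in\Sigma$. To exclude $f=1$, I apply Riemann--Hurwitz to the cyclic group $\langle\gamma_1\gamma_2\rangle$ of order $4$, or to its orientation-preserving part. Its only possible fixed point is $0$, and there the local rotation angle is $\pm\pi/2$. In the case $f=1$, the quotient $\Sigma/R$ would be an annulus or a Möbius band, and the resulting count should not be integral. I expect this parity/integrality bookkeeping to be the fiddliest part of the argument.

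\emph{Step 2: claim \ref{lem:fundamental_domain_torus-i}.} The plane $T_0\Sigma$ is invariant under $\rho(\gamma_1\gamma_2)$. The only invariant plane of a rotoreflection by $\pi/2$ is the horizontal one, so $T_0\Sigma=\{x_3=0\}$.

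Consequently $\rho(\gamma_1)$ acts on $T_0\Sigma$ as the reflection fixing $e_1$. It therefore fixes a curve of $\Sigma$ through $0$, and this curve must lie in $\Fix\rho(\gamma_1)=\xi_1$. I then run an open--closed argument along $\xi_1$. At every point of $\Sigma\cap\xi_1$, the involution $\gamma_1$ acts either as a reflection with a fixed arc or as a half-turn with an isolated fixed point. Along the connected piece through $0$ the first type persists, and the arc can only terminate on $\partial\B^3$. This gives $\xi_1\subset\Sigma$, and by conjugation also $\xi_2\subset\Sigma$.

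For the relative orientation representation:
\begin{itemize}
\item Along $\xi_1$ the unit normal is orthogonal to $e_1$. The half-turn about $\xi_1$ flips it, so $\epsilon(\gamma_1)=-1$.
\item At $0$ the normal is $\pm e_3$, which is fixed by $\rho(\gamma_2)$, so $\epsilon(\gamma_2)=1$.
\end{itemize}
Hence $\epsilon=\m_1$.

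\emph{Step 3: claims \ref{lem:fundamental_domain_torus-ii}--\ref{lem:fundamental_domain_torus-iii}.} With Steps 1--2 in hand, every $\gamma\in\dih_4$ acts on $\Sigma$ as a reflection with a fixed arc, or as a rotation with the fixed points already identified. The quotients $\pi_H(\Sigma)$ and $\pi_G(\Sigma)$ are then compact orbifolds with mirror edges, genuine boundary from $\partial\Sigma$, and corner reflectors. The corner reflectors occur at the image of $0$, with local group $\dih_4$ and angle $\pi/4$, and at the endpoints of the mirror arcs on $\partial\Sigma$, with right angles.

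I then write the orbifold Euler characteristic identity
\[
\chi^{\mathrm{orb}}(\Sigma/G)=\chi(\Sigma)/8=-\tfrac18,
\]
and solve the resulting Diophantine equation in the number of corners of each type, exactly as in \eqref{eqn:diophantine-knoid}. The expected unique solution is a disc with one corner of angle $\pi/4$ and three right angles, which is the quadrilateral of claim \ref{lem:fundamental_domain_torus-iii}. The cyclic order of its edges is then read off from which fixed sets meet at each corner.

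For $H=\langle\gamma_2,\gamma_1\gamma_2\gamma_1\rangle\cong\Z_2\times\Z_2$, generated by two reflections, I proceed the same way with $\chi(\Sigma)/4=-\tfrac14$. This shows that $\pi_H(\Sigma)$ is a disc, and that the arc $\pi_H(\xi_1)$ runs from the image of $0$ to the boundary and separates it. This gives claim \ref{lem:fundamental_domain_torus-ii}.

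The main obstacle I expect is Step 1 together with the open--closed part of Step 2, because without minimality one cannot invoke the analytic structure of fixed sets. I would handle it by combining the above Euler characteristic counts with the fact that fixed sets of isometric involutions are totally geodesic submanifolds.
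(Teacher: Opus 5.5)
You have a genuine gap in Step~1, and all of (i) rests on it: you never show that $R=(\gamma_1\gamma_2)^2$ preserves the orientation of $\Sigma$.

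\emph{The boundary argument is wrong as stated.} If a nontrivial isometric involution fixes $x\in\partial\Sigma$, then $dR_x$ fixes the conormal. It must reverse $T_x\partial\Sigma$, since otherwise $dR_x=\mathrm{id}$ and $R=\mathrm{id}$. So its fixed set near $x$ is a geodesic arc leaving $\partial\Sigma$ \emph{orthogonally} into the interior, not a boundary arc. Here that would put an initial segment of $\xi_3$ at a pole inside $\Sigma$. That is not absurd on its face, because the lemma assumes neither minimality nor a free boundary condition.

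\emph{The interior claim is unjustified.} You assert that $\Fix(R)\cap\Sigma$ contains no arc of $\xi_3$ but give no reason. The fact that fixed sets of isometric involutions are totally geodesic does not exclude this. As a result, your Riemann--Hurwitz count does not know that $\Sigma/R$ is orientable with connected boundary. That is why the case $f=1$ looks fiddly to you.

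\emph{The fix is short.} Since $\Sigma$ is connected and orientable, preserving or reversing its orientation defines a homomorphism $\dih_4\to\{\pm1\}$. The square $R$ therefore preserves orientation; the paper phrases this as $\epsilon(R)=\epsilon(\gamma_1\gamma_2)^2=1$ together with $\det\rho(R)=1$. Hence $R$ has isolated interior fixed points and acts freely on the circle $\partial\Sigma$. So $\Sigma/R$ is orientable with the single boundary circle $\partial\Sigma/R$. This gives $\chi(\Sigma/R)=1-2g$ and $f=3-4g=3$. The annulus and the Möbius band are excluded outright, and no $\Z_4$ integrality count is needed.

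With this repaired, the rest is a legitimate alternative to the paper's route.
\begin{itemize}
\item In Step~2 you replace the citation of Ketover's lemma with a direct argument. Since $\gamma_1$ reverses orientation globally, its fixed component through $0$ is a properly embedded geodesic arc inside $\xi_1$. You also read off $\epsilon$ from the normal along $\xi_1$ and at $0$, rather than via $\det\rho$.
\item In Step~3 you use the orbifold Euler characteristic instead of Armstrong's theorem, and this works. However, your list of corner reflectors omits the order-$2$ one at $\pi_G(\pm p)$, whose stabiliser is $H$. Conversely, the mirror--boundary corners have local group $\Z_2$ and are not corner reflectors.
\item With corner reflectors of orders $4$ and $2$, no cone points, and $a$ mirror arcs ending on $\partial\Sigma$, you get $\chi(X)-\tfrac a2-\tfrac58=-\tfrac18$. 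Since $\chi(X)\leq1$, this forces $\chi(X)=a=1$.
\item The same count works for $H$, with three order-$2$ corner reflectors at $\pi_H(0)$ and $\pi_H(\pm p)$.
\end{itemize}
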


\begin{proof}
Being properly embedded in $\B^3$, the surface $\Sigma$ admits a global unit normal vector field $\nu$. 
Recalling the relative orientation representation $\epsilon\colon\dih_4\to\{\pm1\}$ from Definition~\ref{defn:relative_orientation_representation}, we have 
\[
\epsilon\bigl((\gamma_1\gamma_2)^2\bigr)=\bigl(\epsilon(\gamma_1\gamma_2)\bigr)^2=1,
\]
so $(\gamma_1\gamma_2)^2(x_1,x_2,x_3)=(-x_1,-x_2,x_3)$ preserves $\nu$. 
Since also $\det\rho\bigl((\gamma_1\gamma_2)^2\bigr)=1$, the involution $(\gamma_1\gamma_2)^2$ preserves the orientation of $\Sigma$ and $\partial\Sigma$, but does not act as the identity, as 
\[
\Fix\bigl((\gamma_1\gamma_2)^2\bigr)=\xi_3.
\]
Consequently, $(\gamma_1\gamma_2)^2$ does not fix any point on $\partial\Sigma$, and 
$\Sigma\cap\xi_3$ consists of finitely many interior points where the intersection is orthogonal by \cite[Lemma~3.4\,(2)]{Ketover2016Equivariant}.   
Let $j\in\N\cup\{0\}$ denote their number and let $F\cong\Z_2$ denote the subgroup of $H$ generated by $(\gamma_1\gamma_2)^2$. 
The quotient $\Sigma/F$ is an orientable topological surface with connected boundary. 
Hence, its Euler characteristic is $\chi(\Sigma/F)=1-2g$ for some integer $g\geq0$. 
A variant of the Riemann--Hurwitz formula (see e.\,g.~\cite[§ IV.3]{Freitag2011}) implies 
\[
-1=\chi(\Sigma)=2\chi(\Sigma/F)-j 
\]
which simplifies to $j=3-4g$. 
Since both variables are nonnegative integers, we obtain $g=0$ and $j=3$. 
This means that $\Sigma/F$ is a topological disc and that $\Sigma$ intersects $\xi_3$ in exactly three points. 
Since the action of $\gamma_1$ restricts to $x_3\mapsto-x_3$ on $\xi_3$, and since an involution of a set with three elements has a fixed point, we conclude $\Sigma\cap\xi_3=\{-p,0,p\}$ for some $p=(0,0,p_3)$ with $p_3\in\interval{0,1}$, as stated in \ref{lem:fundamental_domain_torus-0}.

The tangent space $T_0\Sigma\subset\R^3$ is a $\dih_4$-invariant plane. 
Since the representation \eqref{eqn:antiprismatic} decomposes into the irreducible representation $\st_1$ of degree $2$ on $\{x_3=0\}$ and the $1$-dimensional irreducible representation $\m_1$ on $\xi_3$, the only $\dih_4$-invariant plane in $\R^3$ is $\{x_3=0\}$. 
Consequently $d\gamma_1\vert_{T_0\Sigma}$ and $d\gamma_2\vert_{T_0\Sigma}$ are the reflections $(x_1,x_2)\mapsto(x_1,-x_2)$ and $(x_1,x_2)\mapsto(x_2,x_1)$ respectively, both orientation-reversing. 
Together with $\det\rho(\gamma_1)=1$ and $\det\rho(\gamma_2)=-1$ we obtain $\epsilon(\gamma_1)=-1$ and $\epsilon(\gamma_2)=1$, that is, $\epsilon=\m_1$ as defined in Section~\ref{sec:dihedral}. 
Recalling \eqref{eqn:extrinsicD4action}, the $\Z_2$-action of $\gamma_1$ respectively $\gamma_2\gamma_1\gamma_2$ fixes $\xi_1$ respectively $\xi_2$, and both axes are tangential to $\Sigma$ at the origin. 
Hence, $\xi_1\cup \xi_2\subset\Sigma$ by \cite[Lemma~3.4\,(2)]{Ketover2016Equivariant} as claimed in \ref{lem:fundamental_domain_torus-i}.  

It remains to prove \ref{lem:fundamental_domain_torus-ii} and \ref{lem:fundamental_domain_torus-iii}.  
Being generated by the reflections $\gamma_2$ and $\gamma_1\gamma_2\gamma_1$ 
across the orthogonal planes $\Pi_1=\{x_1=x_2\}$ and $\Pi_2=\{x_1=-x_2\}$, 
we have $H\cong\Z_2\times\Z_2$ and $F$ generated by $(\gamma_1\gamma_2)^2$ is a normal subgroup of $H$. 
Hence 
\begin{align}\label{eqn:doublequotient}
\Sigma/H=(\Sigma/F)\big/(H/F).
\end{align}
As shown above, $\Sigma/F$ is a topological disc and thus simply connected. 
$H/F\cong\Z_2$ is generated by the class of $\gamma_2$ which has fixed points, for instance the image of the origin. 
By \cite{Armstrong1968}, the quotient \eqref{eqn:doublequotient} is therefore simply connected as well. 
Being compact with nonempty boundary, $\Sigma/H$ is a topological disc. 

Since $\epsilon=\m_1$ and $\det\rho(\gamma_2)=\det\rho(\gamma_1\gamma_2\gamma_1)=-1$, the reflections $\gamma_2$ and $\gamma_1\gamma_2\gamma_1$ reverse the orientation of $\Sigma$ and therefore also the orientation of the simple closed curve $\partial\Sigma$.
Consequently, $\gamma_2$ and $\gamma_1\gamma_2\gamma_1$ each fix exactly two points of $\partial\Sigma$.
These four points are pairwise distinct and divide $\partial\Sigma$ into four arcs which are permuted simply transitively by the action of $H$, so $\pi_H(\partial\Sigma)$ is a single arc with one endpoint in $\Pi_1$ and the other in $\Pi_2$. 
Moreover, since $\gamma_2$ and $\gamma_1\gamma_2\gamma_1$ act on $\Sigma$ as orientation-reversing isometric involutions, their fixed point sets $\Sigma\cap\Pi_1$ and $\Sigma\cap\Pi_2$ are one-dimensional, intersecting at the three points $\{-p,0,p\}=\Sigma\cap\xi_3$.
Since $\Sigma/H$ is a topological disc, its boundary therefore consists of five edges, namely $\pi_H(\partial\Sigma)$ together with two edges in $\pi_H(\Sigma\cap\Pi_1)$ and two edges in $\pi_H(\Sigma\cap\Pi_2)$, alternating at the corners $\pi_H(-p)$, $\pi_H(0)$ and $\pi_H(p)$.

Because $\xi_1\subset\Sigma$ is fixed as a set by $(\gamma_1\gamma_2)^2$ (via reflection through the origin) and mapped to $\xi_2$ by $\gamma_2$, the image $\pi_H(\xi_1)$ is a properly embedded curve in $\Sigma/H$ and thus separates the 
topological disc $\Sigma/H$ into two connected components, which completes the proof of \ref{lem:fundamental_domain_torus-ii}. 

Since $H$ is a normal subgroup of index two in $G$ and the class of $\gamma_1$ generating $G/H\cong\Z_2$ fixes the point $\pi_H(0)$, the same arguments as for \eqref{eqn:doublequotient} prove that $\Sigma/G=(\Sigma/H)\big/(G/H)$ is a topological disc.
Its boundary consists of four edges: 
the line $\pi_G(\xi_1)=\pi_G(\Fix(\gamma_1))$ starting from $\pi_G(0)$, followed by  $\pi_G(\partial\Sigma)$, 
and one curve in each of $\pi_G(\Fix(\gamma_1\gamma_2\gamma_1))$ and $\pi_G(\Fix(\gamma_2))$. 
This proves~\ref{lem:fundamental_domain_torus-iii}.
\end{proof}

\subsection{Quadratic independence}
\label{sec:quadratic_independence}

When computing the nullity, we rely on Proposition~\ref{prop:rigidity} which has quadratic independence in the sense of Definition~\ref{defn:quadratic_independence} as an assumption.   
Below we verify this condition in the relevant settings. 

\begin{lemma}
\label{lem:knoid_quadratic_independence}
Given $3\leq k\in\N$, let $\Sigma_k\subset\B^3$ be any properly embedded free boundary minimal $k$-noid which is $\dih_k\times\Z_2$-equivariant in the prismatic representation \eqref{eqn:prismatic} and let $\Phi\colon\Sigma_k\to\B^3$ denote the inclusion map. 
Then the components of $\Phi$ are quadratically independent on $\partial\Sigma_k$. 
\end{lemma}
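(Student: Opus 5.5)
We must show that if $B$ is a symmetric $3\times3$ matrix with $\brt{B\Phi,\Phi}=0$ on $\partial\Sigma_k$, then $B=0$. The plan is to reduce $B$ by symmetry and then use the explicit structure of $\partial\Sigma_k$ from Corollary~\ref{cor:half_knoid}.

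\emph{Step 1: restrict to invariant matrices.} The space $\mathcal{B}$ of symmetric matrices $B$ with $\brt{B\Phi,\Phi}\equiv0$ on $\partial\Sigma_k$ is invariant under conjugation $B\mapsto\rho(\gamma)B\rho(\gamma)^{-1}$, because $\partial\Sigma_k$ is $G$-invariant and $\rho$ is orthogonal. It therefore suffices to show that every isotypic component of $\mathcal{B}$ vanishes. Under $G$ the symmetric square $S^2\R^3$ decomposes as
\[
S^2(\st_1\boxtimes\triv)\oplus(\st_1\boxtimes\Or)\oplus(\triv\boxtimes\triv),
\]
where the last summand is spanned by $e_3e_3^\intercal$. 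The first summand in turn splits into $\triv\boxtimes\triv$, spanned by $e_1e_1^\intercal+e_2e_2^\intercal$, and $\st_2\boxtimes\triv$; for $k=3$ we have $\st_2\cong\st_1$, and for $k=4$ it equals $\m_1\oplus\m_2$. Rather than using this decomposition heavily, I would argue directly.

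\emph{Step 2: the odd part.} Write $B=\begin{pmatrix}A&c\\c^\intercal&d\end{pmatrix}$ with $A$ a symmetric $2\times2$ block, $c\in\R^2$, $d\in\R$. Replacing the boundary point $x$ by $\tau x$ flips the sign of $x_3$. Adding and subtracting the identities at $x$ and $\tau x$ separates the condition into two equations on $\partial\Sigma_k$:
\[
x_3\,\brt{c,x'}=0,\qquad \brt{Ax',x'}+d\,x_3^2=0,
\]
where $x'=(x_1,x_2)$. Since $\partial\Sigma_k\cap\{x_3=0\}$ is finite by Corollary~\ref{cor:half_knoid}\,\ref{cor:half_knoid-iii}, the first equation gives $\brt{c,x'}=0$ on a dense subset of each boundary curve, hence everywhere. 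The boundary is not contained in a vertical plane, since it is invariant under the $\dih_k$ rotations with $k\geq3$. Therefore $c=0$.

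\emph{Step 3: the even part.} On the sphere $x_3^2=1-\abs{x'}^2$, so the second equation becomes
\[
\brt{(A-dI)x',x'}+d=0 \quad\text{on } \partial\Sigma_k.
\]
\begin{itemize}
\item If $d\neq0$, the projection of $\partial\Sigma_k$ to the $x_1x_2$-plane lies in the conic $\brt{(A-dI)x',x'}=-d$. This conic is not degenerate, since it avoids the origin. The projection is $\dih_k$-invariant, and so is the finite union of points $\partial\Sigma_k\cap H_j$. Intersecting the conic with its rotations by $2\pi/k$, together with real-analyticity of the boundary curves, forces the conic to be a circle $\abs{x'}=r$, or forces one boundary curve to have finitely many points, which is absurd. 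A circle means $\partial\Sigma_k$ lies on two horizontal circles $x_3=\pm\sqrt{1-r^2}$. But each boundary component is a closed curve meeting $\{x_3=0\}$ by Corollary~\ref{cor:half_knoid}\,\ref{cor:half_knoid-i}--\ref{cor:half_knoid-iii}, so it cannot lie at constant height $\pm\sqrt{1-r^2}\neq0$; and $r=1$ would mean every component lies in the equator, contradicting finiteness of $\partial\Sigma_k\cap\{x_3=0\}$.
\item If $d=0$, then $\brt{Ax',x'}=0$ on $\partial\Sigma_k$, so the projection lies in at most two lines through the origin. This contradicts $\dih_k$-invariance with $k\geq3$, unless $A=0$.
\end{itemize}

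The main obstacle is the conic step. Making rigorous that a $\dih_k$-invariant set which is a union of analytic arcs, lying in a conic, forces a circle needs care; the cleanest route is to note that the symmetric matrix $A-dI$ must be $\st_1$-invariant, i.e.\ a multiple of $I$. This holds because the span of the projected boundary points determines the quadric uniquely when enough points are present. The fallback is to use Remark~\ref{rem:quadratic_knoid-diag}-type diagonal arguments: replace $B$ by its $G$-average, which is still in $\mathcal{B}$, and treat the isotypic components separately.
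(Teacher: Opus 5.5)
There is a genuine gap in the case $d=0$ of Step~3 when $k=4$. There you claim that if $\brt{Ax',x'}=0$ on the projection of $\partial\Sigma_k$, then the projection lies in at most two lines through the origin, and that this ``contradicts $\dih_k$-invariance with $k\geq3$, unless $A=0$''. For $k=4$ this is false. The trace-free forms $x_1^2-x_2^2$ and $x_1x_2$ span exactly the summand $\st_2\boxtimes\triv\cong(\m_1\oplus\m_2)\boxtimes\triv$ that you identified in Step~1. Their zero sets $\{x_1=\pm x_2\}$ and $\{x_1x_2=0\}$ are each a union of two reflection lines of $\dih_4$, hence $\dih_4$-invariant. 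So symmetry alone cannot exclude that $\partial\Sigma_4$ lies in the union of two vertical reflection planes.

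Your suggested ``cleanest route'' fails here for the same reason. A $\dih_k$-invariant set only forces the vanishing quadric to be invariant up to a character, and the characters $\m_1,\m_2$ are exactly what occur. For $k=3$ and $k\geq5$ your argument does work, because the rotation by $2\pi/k$ acts on lines through the origin with order at least $3$.

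The missing ingredient is geometric. By Corollary~\ref{cor:half_knoid}\,\ref{cor:half_knoid-ii} together with $\tau$-symmetry, every vertical reflection plane of $\dih_k$ meets $\partial\Sigma_k$ in only finitely many points. Hence $\partial\Sigma_k$ cannot lie in the union of two such planes. This is the same non-containment the paper invokes at the end of its proof, for the planes $\{x_1=\pm x_2\}$.

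With this added, your argument is a valid variant of the paper's. The paper instead removes all three cross terms at once, using the sign changes $\gamma_1,\tau,\gamma_1\tau$, and then averages the diagonal identity over $\Z_k$. That averaging also handles what you flagged as the main obstacle more cleanly than conic intersections and analyticity. Averaging $\brt{(A-dI)x',x'}+d=0$ over the $\Z_k$-orbit of a boundary point gives $\tfrac12\tr(A-dI)\abs{x'}^2+d=0$. For $d\neq0$ this puts the projection on a circle immediately. For $d=0$ it gives $\tr A=0$, leaving only the pair of perpendicular lines discussed above.

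Your remark that the conic is nondegenerate ``since it avoids the origin'' is also inaccurate, since a pair of parallel lines avoids the origin. This does not affect the conclusion.
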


\begin{proof}
Let $B=(b_{ij})$ be a symmetric $3\times3$ matrix such that 
$\brt{Bx,x}=0$ for all $x\in\partial\Sigma_k$.
Given $x=(x_1,x_2,x_3)\in\partial\Sigma_k$ we also have
\begin{align*}
\gamma_1x &= (x_1,-x_2,x_3)\in\partial\Sigma_k, & 
\tau x  &= (x_1,x_2,-x_3)\in\partial\Sigma_k, & 
\gamma_1\tau x &= (x_1,-x_2,-x_3)\in\partial\Sigma_k
\end{align*}
by assumption. 
Evaluating $\brt{Bx,x}=0$ in each case leads to the four equations  
\begin{align}
\label{eqn:quadratic_knoid-I}
b_{11}x_1^2+b_{22}x_2^2+b_{33}x_3^2 + 2b_{12}x_1x_2 + 2b_{13}x_1x_3 + 2b_{23}x_2x_3 &= 0, 
\\
\label{eqn:quadratic_knoid-II}
b_{11}x_1^2+b_{22}x_2^2+b_{33}x_3^2 - 2b_{12}x_1x_2 + 2b_{13}x_1x_3 - 2b_{23}x_2x_3 &= 0, 
\\
\label{eqn:quadratic_knoid-III}
b_{11}x_1^2+b_{22}x_2^2+b_{33}x_3^2 + 2b_{12}x_1x_2 - 2b_{13}x_1x_3 - 2b_{23}x_2x_3 &= 0, 
\\
\label{eqn:quadratic_knoid-IV}
b_{11}x_1^2+b_{22}x_2^2+b_{33}x_3^2 - 2b_{12}x_1x_2 - 2b_{13}x_1x_3 + 2b_{23}x_2x_3 &= 0.
\end{align}
Summing \eqref{eqn:quadratic_knoid-I}--\eqref{eqn:quadratic_knoid-IV} cancels all three cross-terms, while the combinations $\text{\eqref{eqn:quadratic_knoid-I}}-\text{\eqref{eqn:quadratic_knoid-II}}+\text{\eqref{eqn:quadratic_knoid-III}}-\text{\eqref{eqn:quadratic_knoid-IV}}$, respectively $\text{\eqref{eqn:quadratic_knoid-I}}+\text{\eqref{eqn:quadratic_knoid-II}}-\text{\eqref{eqn:quadratic_knoid-III}}-\text{\eqref{eqn:quadratic_knoid-IV}}$ and $\text{\eqref{eqn:quadratic_knoid-I}}-\text{\eqref{eqn:quadratic_knoid-II}}-\text{\eqref{eqn:quadratic_knoid-III}}+\text{\eqref{eqn:quadratic_knoid-IV}}$, isolate the individual cross-terms. 
Hence 
\begin{align}
\label{eqn:quadratic_knoid-diag}
b_{11}x_1^2+b_{22}x_2^2+b_{33}x_3^2&=0, 
\\
\label{eqn:quadratic_knoid-cross}
b_{12}x_1x_2
=b_{13}x_1x_3
=b_{23}x_2x_3
&=0 
\end{align}
for every $x\in\partial\Sigma_k$. 
Since $\partial\Sigma_k$ is not contained in the union of any two coordinate planes, \eqref{eqn:quadratic_knoid-cross} yields $b_{12}=b_{13}=b_{23}=0$. 
Since $k\geq3$, averaging \eqref{eqn:quadratic_knoid-diag} over the images of $x$ under the cyclic subgroup $\Z_k$ (generated by the rotation of angle $2\pi/k$ in the $x_1x_2$-plane) replaces the coefficients $b_{11}$ and $b_{22}$ by their mean and yields 
\(
\tfrac{1}{2}(b_{11}+b_{22})(x_1^2+x_2^2)+b_{33}x_3^2=0.
\)
As $\partial\Sigma_k\subset\Sp^2$, this simplifies to 
\begin{align}
\label{eqn:20260913}
\tfrac{1}{2}(b_{11}+b_{22})+\bigl(b_{33}-\tfrac{1}{2}(b_{11}+b_{22})\bigr)x_3^2=0.
\end{align} 
Recalling that the action of the generator $\tau\in\Z_2$ preserves every component of $\partial\Sigma_k$ while reversing $x_3$, we conclude that $x_3$ vanishes somewhere on $\partial\Sigma_k$ but not identically.  
Consequently, \eqref{eqn:20260913} implies $b_{11}+b_{22}=0$ and $b_{33}=0$. 
Then \eqref{eqn:quadratic_knoid-diag} reduces to $b_{11}(x_1^2-x_2^2)=0$, and since $\partial\Sigma_k$ is not contained in the union of the planes $\{x_1=x_2\}$ and $\{x_1=-x_2\}$, we obtain $-b_{22}=b_{11}=0$ and thus $B=0$ as claimed. 
\end{proof}

\begin{remark}
\label{rem:quadratic_knoid-diag}
An alternative argument for Lemma~\ref{lem:knoid_quadratic_independence} is contained in the proof of \cite[Proposition~9.6]{KarpukhinKusnerMcGrathStern}, where the symmetry assumption is replaced by the requirement that the surface is embedded by first Steklov eigenfunctions of multiplicity three. 
In either case the number of boundary components must differ from $2$, as the boundary of the critical catenoid lies on two parallel planes.
\end{remark} 

\begin{lemma}
\label{lem:lawson_quadratic_independence}
Given $3\leq k\in\N$ let $M_k\subset\Sp^3$ be any closed, embedded minimal surface 
of genus $k-1$ which is $G=(\dih_k\times\Z_2\times\Z_2)$-equivariant in the representation \eqref{eqn:repR4}, and let $\Phi\colon M_k\to\Sp^3$ denote the inclusion map. 
Then the components of $\Phi$ are quadratically independent. 
\end{lemma}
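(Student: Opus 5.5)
We follow the proof of Lemma~\ref{lem:knoid_quadratic_independence}, now in $\R^4$ and on all of $M_k$. Let $B=(b_{ij})$ be a symmetric $4\times4$ matrix with $\brt{Bx,x}=0$ for all $x\in M_k$.

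\emph{Cross terms.} Since $M_k$ is invariant under $\rho(\gamma_1)\colon x_2\mapsto-x_2$, $\rho(\tau_1)\colon x_3\mapsto-x_3$ and $\rho(\tau_2)\colon x_4\mapsto-x_4$, we apply $\brt{B\cdot,\cdot}$ to all eight sign changes of $(x_2,x_3,x_4)$ at a fixed $x\in M_k$. Taking suitable signed sums of these eight identities isolates each monomial $b_{ij}x_ix_j$ with $i\neq j$, exactly as in \eqref{eqn:quadratic_knoid-I}--\eqref{eqn:quadratic_knoid-IV}. So $b_{ij}x_ix_j\equiv0$ on $M_k$ for every $i\neq j$, and $\sum_i b_{ii}x_i^2\equiv0$. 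The surface is not contained in the union of two coordinate hyperplanes: it is a connected minimal surface, and it is not totally geodesic since its genus is $k-1\geq2$. Hence all off-diagonal entries vanish.

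\emph{Diagonal terms.} Averaging over the cyclic subgroup $\Z_k\subset\dih_k$, with $k\geq3$, replaces $b_{11},b_{22}$ by their mean $c$. Then $|x|=1$ gives
\[
c+(b_{33}-c)x_3^2+(b_{44}-c)x_4^2=0\quad\text{on }M_k.
\]
We use the following points on $M_k$.
\begin{itemize}
\item By Lemma~\ref{lem:Riemann-Hurwitz}, $M_k$ meets $\Sp^1_\perp=\{x_1=x_2=0\}$ in four points. Only the two points $(0,0,\pm1,0)$ and $(0,0,0,\pm1)$ are invariant under the reflections in $\tau_1,\tau_2$, so by symmetry these four points are $(0,0,\pm1,0)$ and $(0,0,0,\pm1)$. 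Evaluating there gives $b_{33}=0$ and $b_{44}=0$.
\item $M_k$ meets $\Sp^2\cap\Sp^2_\perp=\{x_3=x_4=0\}$, because the curve $M_k\cap\Sp^2$ crosses $\{x_3=0\}$ by Corollary~\ref{cor:Riemann-Hurwitz} and the $\tau_1$-symmetry. At such a point the identity gives $c=0$.
\end{itemize}
Therefore $b_{11}=-b_{22}$, and $b_{11}(x_1^2-x_2^2)\equiv0$. Since $M_k$ does not lie in $\{x_1=\pm x_2\}$, we conclude $B=0$.

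\emph{Main obstacle.} The delicate step is locating the four points of $M_k\cap\Sp^1_\perp$, which requires care about which orbits are invariant. If that argument fails, we will instead use a Frankel-type argument to show that $x_3^2$ and $x_4^2$ are nonconstant and independent on $M_k$ together with the constant function $1$.
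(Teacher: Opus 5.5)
The cross-term step, the averaging over $\Z_k$ and the final step match the paper. However, the first bullet of your diagonal step is false.

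A four-point subset of $\Sp^1_\perp$ that is invariant under $\tau_1,\tau_2$ is one of two things: either $\{(0,0,\pm1,0),(0,0,0,\pm1)\}$, or a single free orbit $\{(0,0,\pm\alpha,\pm\beta)\}$ with $\alpha\beta\neq0$. Nothing forces the intersection points to be fixed by $\tau_1$ or $\tau_2$, and in fact the first option never occurs.

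Suppose $p=(0,0,1,0)\in M_k$. Then $p$ is fixed by $\dih_k$. Since $\st_1$ is irreducible for $k\geq3$, the only $\dih_k$-invariant plane in $T_p\Sp^3=\operatorname{span}(e_1,e_2,e_4)$ is $\operatorname{span}(e_1,e_2)=T_p\Sp^2$. So $M_k$ would be tangent at $p$ to the mirror sphere $\Sp^2$ of $\tau_2$. Writing $M_k$ near $p$ as the graph of $u$ over $\Sp^2$, $\tau_2$-invariance and embeddedness give $u=-u$. Unique continuation then gives $M_k=\Sp^2$, which is absurd; this is the same orthogonality used in the proof of Lemma~\ref{lem:spec1}. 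The points $(0,0,0,\pm1)$ are excluded in the same way using $\tau_1$ and $\Sp^2_\perp$.

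Hence $M_k\cap\Sp^1_\perp$ is a free orbit. Evaluating there yields only the single relation $b_{33}\alpha^2+b_{44}\beta^2=0$, so $b_{33}=b_{44}=0$ is not established. Your fallback via a ``Frankel-type argument'' does not say how the needed linear independence of $1,x_3^2,x_4^2$ on $M_k$ would follow.

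The gap closes quickly with what you already have. Your second bullet is correct and gives $c=0$. Its justification should cite Corollary~\ref{cor:half_knoid}\,\ref{cor:half_knoid-iii} and Remark~\ref{rem:half-sphere}, applied to the $k$-noid $M_k\cap\{x_4\geq0\}$; $\tau_1$-invariance of the whole union $M_k\cap\Sp^2$ does not by itself produce a crossing when $k$ is even. With $c=0$, the averaged identity reads $b_{33}x_3^2+b_{44}x_4^2=0$ on $M_k$. If $(b_{33},b_{44})\neq(0,0)$, the zero set of this binary form in $(x_3,x_4)$ lies in at most two lines through the origin. Then $M_k$ would lie in the union of at most two great spheres $\{\mu x_3+\nu x_4=0\}$. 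Your cross-term argument already excludes this, since it works for any two great spheres, not only coordinate ones.

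The paper argues differently. It works along the $k\geq3$ disjoint curves of $M_k\cap\Sp^2_\perp$ from Corollary~\ref{cor:Riemann-Hurwitz}. If $b_{44}\neq c$, then $x_4^2$ is constant on these curves, confining them to at most two circles, which is impossible. Hence $b_{44}=c$, then $c=0$, and interchanging $x_3,x_4$ gives $b_{33}=0$. That route needs only $M_k\cap\Sp^2_\perp\neq\emptyset$. Your repaired route needs a point of $M_k$ on $\{x_3=x_4=0\}$ but avoids the curve counting.
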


\begin{proof}
Let $B=(b_{ij})$ be a symmetric $4\times4$ matrix such that $\brt{Bx,x}=0$ for all $x\in M_k$. 
Recalling \eqref{eqn:repR4}, the generator $\gamma_1\in\dih_k$ together with $\tau_1$ and $\tau_2$ generate a subgroup of $G$ of order eight, acting on $\R^4$ by the independent sign changes 
\begin{align*}
x=(x_1,x_2,x_3,x_4)&\mapsto(x_1,\pm x_2,\pm x_3,\pm x_4), 
\end{align*}
under which $M_k$ is invariant. 
Evaluating $\brt{Bx,x}=0$ at the eight images of $x$ and forming suitable linear combinations as in the proof of Lemma~\ref{lem:knoid_quadratic_independence}, we therefore obtain
\begin{align}
\label{eqn:quadratic_lawson-diag}
b_{11}x_1^2+b_{22}x_2^2+b_{33}x_3^2+b_{44}x_4^2&=0, 
\\
\label{eqn:quadratic_lawson-cross}
 b_{12}x_1x_2
=b_{13}x_1x_3
=b_{14}x_1x_4
=b_{23}x_2x_3
=b_{24}x_2x_4
=b_{34}x_3x_4
&=0
\end{align}
for every $x\in M_k$. 
Since $M_k$ is smooth of nonzero genus, it is not contained in the union of any two great spheres and \eqref{eqn:quadratic_lawson-cross} yields $b_{ij}=0$ for all integers $1\leq i<j\leq 4$. 

Since $k\geq3$, averaging \eqref{eqn:quadratic_lawson-diag} over the images of $x$ under the cyclic subgroup $\Z_k$ of $\dih_k$ replaces the coefficients $b_{11}$ and $b_{22}$ by their mean $a\vcentcolon=\tfrac{1}{2}(b_{11}+b_{22})$ and yields 
\(
a(x_1^2+x_2^2)+b_{33}x_3^2+b_{44}x_4^2=0.
\)
As $M_k\subset\Sp^3$, this simplifies to 
\begin{align}
\label{eqn:quadratic_lawson-sphere}
a+(b_{33}-a)x_3^2+(b_{44}-a)x_4^2&=0.
\end{align}
Recalling $\Sp^2_\perp=\{x\in\Sp^3\mid x_3=0\}$, Corollary~\ref{cor:Riemann-Hurwitz} 
states that the intersection $M_k\cap\Sp^2_\perp$ consists of $k$ disjoint curves. 
Along these we have $x_3=0$, so \eqref{eqn:quadratic_lawson-sphere} reduces to 
\begin{align}
\label{eqn:reduced_equation}
a+(b_{44}-a)x_4^2=0 
\end{align}
for all $x\in M_k\cap\Sp^2_\perp$. 
If $b_{44}\neq a$, then $x_4^2$ would be constant on $M_k\cap\Sp^2_\perp$, which would therefore be contained in the set $\{x\in\Sp^3\mid x_3=0,~x_4^2=-a/(b_{44}-a)\}$ consisting of at most two circles, contradicting $k\geq3$. 
Hence $b_{44}=a$ and \eqref{eqn:reduced_equation} yields $a=0$. 
Interchanging the roles of $x_3$ and $x_4$ and arguing along the $k$ circles of $M_k\cap\Sp^2$ likewise yields $b_{33}=a=0$. 
Recalling $a=\tfrac{1}{2}(b_{11}+b_{22})$, equation \eqref{eqn:quadratic_lawson-diag} reduces to $b_{11}(x_1^2-x_2^2)=0$. 
Since $M_k$ is not contained in the union of the great spheres $\{x_1=x_2\}$ and $\{x_1=-x_2\}$, we conclude $-b_{22}=b_{11}=0$ and thus $B=0$ as claimed. 
\end{proof}

\begin{lemma}
\label{lem:torus_quadratic_independence}
Let $M_1\subset\B^3$ be as in Theorem~\ref{thm:index-fbm-torus} and let $\Phi\colon M_1\to\B^3$ denote the inclusion map.
Then the components of $\Phi$ are quadratically independent on $\partial M_1$.
\end{lemma}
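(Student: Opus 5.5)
The plan is to mimic the proof of Lemma~\ref{lem:knoid_quadratic_independence}. Suppose $B=(b_{ij})$ is a symmetric $3\times3$ matrix with $\brt{Bx,x}=0$ for all $x\in\partial M_1$. We first use the subgroup of $\dih_4$ that acts on $\R^3$ by diagonal sign changes. By \eqref{eqn:extrinsicD4action}, $\rho(\gamma_1)=\operatorname{diag}(1,-1,-1)$. The element $(\gamma_1\gamma_2)^2$ is the half-turn $(x_1,x_2,x_3)\mapsto(-x_1,-x_2,x_3)$. Together they generate a Klein four-group acting by $x\mapsto(\pm x_1,\pm x_2,\pm x_3)$ with an even number of sign flips. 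Evaluating $\brt{Bx,x}=0$ at the four images of $x\in\partial M_1$ and combining linearly as in \eqref{eqn:quadratic_knoid-I}--\eqref{eqn:quadratic_knoid-IV} gives two conclusions on $\partial M_1$. First, the diagonal part $b_{11}x_1^2+b_{22}x_2^2+b_{33}x_3^2$ vanishes. Second, each cross term $b_{12}x_1x_2$, $b_{13}x_1x_3$, $b_{23}x_2x_3$ vanishes separately. Each of these is odd under exactly two of the three nontrivial elements, so the signs separate them.

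To kill the cross terms, it suffices that $\partial M_1$ is not contained in the union of two coordinate planes. $\partial M_1$ is a single smooth closed curve on $\Sp^2$. If it lay in $\{x_i=0\}\cup\{x_j=0\}$, it would be a great circle, by connectedness and smoothness away from the two points where the two great circles cross. We then need to rule out that $\partial M_1$ is a coordinate great circle, and there are two cases.
\begin{itemize}
\item A horizontal circle $\{x_3=0\}\cap\Sp^2$ would contain the endpoints of $\xi_1$. These are fixed by $\gamma_1$, which would then fix four boundary points, but $\gamma_1\iota$ has exactly two fixed points, as used in the proof of Theorem~\ref{thm:DF_punctured_torus}.
\item A vertical circle is not invariant under the rotation $\gamma_1\gamma_2$ of order four.
\end{itemize}
Hence $b_{12}=b_{13}=b_{23}=0$.

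For the diagonal part, we average over the rotation $\gamma_1\gamma_2$, which acts by a quarter turn on $(x_1,x_2)$ and by $-1$ on $x_3$. This replaces $b_{11},b_{22}$ by their mean $a$. Using $\abs{x}=1$ on $\partial M_1$ gives
\[
a+(b_{33}-a)x_3^2=0 \quad\text{on } \partial M_1.
\]
Now $x_3$ takes both signs on $\partial M_1$. Indeed, $\gamma_1$ maps $\partial M_1$ to itself and reverses $x_3$, and $x_3\not\equiv0$ by the horizontal-circle exclusion above. So $x_3$ vanishes somewhere and is nonconstant, and therefore $a=0$ and $b_{33}=0$.

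Finally, the diagonal identity reduces to $b_{11}(x_1^2-x_2^2)=0$. We therefore need $\partial M_1\not\subset\{x_1=x_2\}\cup\{x_1=-x_2\}$. By the same great-circle argument, $\partial M_1$ would otherwise be a vertical great circle in $\{x_1=\pm x_2\}$, and such a circle is not $\gamma_1\gamma_2$-invariant. Hence $b_{11}=b_{22}=0$ and $B=0$. The main obstacle is the careful justification of the non-containment statements. The key fact to prove there is that a smooth embedded closed curve contained in a union of two great circles must be one of them. It also helps to check that the fixed points of $\gamma_1$ and $\gamma_2$ on $\partial M_1$ from Lemma~\ref{lem:fundamental_domain_torus} are incompatible with $\partial M_1$ being a coordinate circle.
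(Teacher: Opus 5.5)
Your overall scheme (use the Klein four-group of sign changes to split off the cross terms, then treat the diagonal part) matches the paper's. There is, however, a genuine gap: the step ruling out that $\partial M_1$ is the horizontal great circle $\{x_3=0\}\cap\Sp^2$ does not work.

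This step is essential. You need it both for $b_{13}=b_{23}=0$ and for $x_3\not\equiv0$ on $\partial M_1$. Moreover, the lemma would be false for a surface whose boundary is the equator: take $B=e_3e_3^{\intercal}$.

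Your reason is incorrect. On the equator, $\rho(\gamma_1)=\operatorname{diag}(1,-1,-1)$ acts as the reflection $(x_1,x_2)\mapsto(x_1,-x_2)$. It fixes only $(\pm1,0,0)$, which are exactly the two fixed points of $\gamma_1\iota$, so there is no contradiction. In fact the equator is $\dih_4$-invariant with the same fixed-point data as $\partial M_1$: two fixed points for each reflection and none for the nontrivial rotations. So no argument based on fixed points of the $\dih_4$-action can exclude it. Notice that your proposal never uses that $M_1$ is minimal.

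The missing ingredient is exactly minimality. The coordinate function $x_3$ is harmonic on $M_1$. If $x_3$ vanished on $\partial M_1$, the maximum principle would force $x_3\equiv0$ on $M_1$. Then $M_1\subset\{x_3=0\}$ would be the flat disc, contradicting genus one. This is how the paper argues: ``the harmonic coordinate function $x_3$ attains a strictly positive maximum on $\partial M_1$''.

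With this replacement the rest of your argument goes through, including your vertical-circle exclusions and the great-circle reasoning. The paper also skips your averaging step by reading off $b_{11}=b_{22}=0$ directly from $(\pm1,0,0),(0,\pm1,0)\in\partial M_1$, which follows from $\xi_1\cup\xi_2\subset M_1$.
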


\begin{proof}
Let $B=(b_{ij})$ be a symmetric $3\times3$ matrix such that for all $x=(x_1,x_2,x_3)\in\partial M_1$
\begin{align}\label{eqn:quadratic}
\brt{Bx,x} = b_{11}x_1^2 + b_{22}x_2^2 + b_{33}x_3^2 + 2b_{12}x_1x_2 + 2b_{13}x_1x_3 + 2b_{23}x_2x_3 = 0.
\end{align}
Lemma~\ref{lem:fundamental_domain_torus}\,\ref{lem:fundamental_domain_torus-i} 
implies $(\pm1,0,0),\,(0,\pm1,0)\in\partial M_1$ and thus $b_{11}=b_{22}=0$.
Evaluating \eqref{eqn:quadratic} at any $x\in\partial M_1$ and its images under the symmetries $\gamma_1,(\gamma_1\gamma_2)^2,\gamma_1(\gamma_1\gamma_2)^2 \in\dih_4$ we obtain 
\begin{align}
\label{eqn:quadratic-I}
x&=(x_1,x_2,x_3) &&\Rightarrow & 
b_{33}x_3^2 + 2b_{12}x_1x_2 + 2b_{13}x_1x_3 + 2b_{23}x_2x_3 &= 0, 
\\
\label{eqn:quadratic-II}
\gamma_1x &= (x_1,-x_2,-x_3) &&\Rightarrow & 
b_{33}x_3^2 - 2b_{12}x_1x_2 - 2b_{13}x_1x_3 + 2b_{23}x_2x_3 &= 0, 
\\
\label{eqn:quadratic-III}
(\gamma_1\gamma_2)^2 x  &= (-x_1,-x_2,x_3) &&\Rightarrow & 
b_{33}x_3^2 + 2b_{12}x_1x_2 - 2b_{13}x_1x_3 - 2b_{23}x_2x_3 &= 0, 
\\
\label{eqn:quadratic-IV}
\gamma_1(\gamma_1\gamma_2)^2 x &= (-x_1,x_2,-x_3) &&\Rightarrow & 
b_{33}x_3^2 - 2b_{12}x_1x_2 + 2b_{13}x_1x_3 - 2b_{23}x_2x_3 &= 0.
\end{align}
Summing equations \eqref{eqn:quadratic-I}--\eqref{eqn:quadratic-IV} cancels all three cross-terms and yields $4b_{33}x_3^2 = 0$. 
Since $M_1$ has genus $1$, it is not contained in the plane $\{x_3=0\}$, so the harmonic coordinate function $x_3$ attains a strictly positive maximum on $\partial M_1$ and we obtain $b_{33}=0$. 
Adding equation \eqref{eqn:quadratic-I} to equations \eqref{eqn:quadratic-II}--\eqref{eqn:quadratic-IV} then yields 
\begin{align*}
4b_{23}x_2x_3 &= 0, &
4b_{12}x_1x_2 &= 0, & 
4b_{13}x_1x_3 &= 0 
\end{align*}
for every $x\in\partial M_1$. 
Since $\partial M_1$ is a smooth connected curve which cannot be contained in any of the coordinate planes,
$b_{23}=b_{12}=b_{13}=0$ follows, so necessarily $B=0$ as claimed.
\end{proof}

 
\setlength{\parskip}{1ex plus 1pt minus 2pt}
\bibliography{fbms-bibtex}

\printaddress

\end{document}